\DeclareMathAlphabet{\mathcal}{OMS}{cmsy}{m}{n}
\DeclareSymbolFont{largesymbols}{OMX}{cmex}{m}{n}
\newtheorem{theorem}{Theorem}[section]
\newtheorem{mainthm}{Theorem}
\newtheorem{lemma}[theorem]{Lemma}
\newtheorem{proposition}[theorem]{Proposition}
\newtheorem{corollary}[theorem]{Corollary}
\newtheorem*{definition*}{Definition}
\newtheorem*{observation*}{Observation}
\newtheorem*{assumption*}{Assumption}
\newtheorem*{question*}{Question}
\newtheorem*{conjecture*}{Conjecture}
\theoremstyle{definition}
\newtheorem{definition}{Definition}
\theoremstyle{remark}
\newtheorem{remark}{Remark}[section]
\newtheorem*{remark*}{Remark}
\newtheorem{example}[remark]{Example}
\newtheorem{problem}{Problem}
\newcommand{\R}{\mathbb{R}}
\newcommand{\N}{\mathbb{N}}
\newcommand{\Z}{\mathbb{Z}}
\newcommand{\Q}{\mathbb{Q}}
\newcommand{\D}{\mathbb{D}}
\newcommand{\C}{\mathbb{C}}
\newcommand{\T}{\mathbb{T}}
\newcommand{\MM}{\mathcal{M}}
\newcommand{\Tr}{\mathrm{Tr}}
\newcommand{\Sol}{\mathrm{Sol}}
\newcommand{\spann}{\mathrm{span}}
\newcommand{\supp}{\mathrm{supp}}
\newcommand{\hank}{\mathrm{Hank}}
\newcommand{\BMOA}{\mathrm{BMOA}}
\newcommand{\diag}{\mathrm{diag}}
\newcommand{\adiag}{\mathrm{adiag}}
\newcommand{\oss}{\textit{o.s.s.\,\,}}
\newcommand{\an}{\text{\, and \,}}
\newcommand{\ch}{\mathbbm{1}}
\renewcommand{\thefootnote}{\alph{footnote}}
\newcommand{\astfootnote}[1]{%
\let\oldthefootnote=\thefootnote%
\setcounter{footnote}{0}%
\renewcommand{\thefootnote}{\fnsymbol{footnote}}%
\footnote{#1}%
\let\thefootnote=\oldthefootnote%
}
\numberwithin{equation}{section}
\begin{document}

\title[SAP of Hankel systems]{Self-absorption of Hankel systems on monoids \\ --{\MakeLowercase{ a seemingly universal property}}}


\author
{Yong Han}
\address
{Yong HAN: School of Mathematical Sciences, Shenzhen University, Shenzhen 518060, Guangdong, China}
\email{hanyong@szu.edu.cn}

\author
{Yanqi Qiu}
\address
{Yanqi Qiu: School of Fundamental Physics and Mathematical Sciences, HIAS, University of Chinese Academy of Sciences, Hangzhou 310024, China; Institute of Mathematics, AMSS, Chinese Academy of Sciences, Beijing 100190, China}
\email{yanqi.qiu@hotmail.com, yanqiqiu@ucas.ac.cn}

\author{Zipeng Wang}
\address{Zipeng WANG: College of Mathematics and Statistics, Chongqing University, Chongqing
401331, China}
\email{zipengwang2012@gmail.com}

\begin{abstract}
Given any cancellative monoid $\mathcal{M}$, we study the Hankel system determined by its multiplication table (see \eqref{def-gamma-M}). We prove that the Hankel system admits self-absorption property (Definition~\ref{def-self-tensor}) provided that the monoid $\mathcal{M}$ has the  local algebraic structure: 
\[
\big(ax  = by, cx=dy, az=bw  \,\, \text{in $\mathcal{M}$}\big)\Longrightarrow \big(cz=dw \,\, \text{in $\mathcal{M}$}\big). 
\] Our result holds for all group-embeddable monoids and goes beyond. In particular, it works for all cancellative Abelian monoids and most common non-Abelian cancellative monoids such as 
$$
\mathrm{SL}_d(\N): = \big\{[a_{ij}]_{1\le i,j\le d}\in \mathrm{SL}_d(\Z)\big| a_{ij} \in \N\big\}. 
$$

The Hankel system determined by the multiplication table of a monoid  is further generalized to that determined by level sets of any abstract two-variable map (see \eqref{def-G-Phi}). We introduce an algebraic notion of lunar maps (Definition~\ref{def-SD}) and establish a stronger hereditary self-absorption property (Definition~\ref{def-hsap}) for the corresponding generalized Hankel systems. As a consequence, we prove the self-absorption property for arbitrary spatial compression of the regular representation system  $\{\lambda_G(g)\}_{g\in G}$ of any discrete group $G$,  as well as  the Hankel system $\{\Gamma_\ell^\Phi\}$  determined by the level sets of any rational map of the form $\Phi(x,y)=a x^m  + b y^n$ with $a,b,m,n\in \mathbb{Z}^*$: 
\[
\Gamma_\ell^{\Phi}(x, y)= \mathds{1}(a x^m + b y^n= \ell), \quad x, y\in \mathbb{N}^*, \, \ell\in \Phi (\mathbb{N}^*\times \mathbb{N}^*).
\]

Our result is already new in the classical setting of the additive monoid  $\mathbb{N}$ of non-negative integers.  The self-absorption property is  applied to the study of completely bounded Fourier multipliers between Hardy spaces.  In the case of endpoint spaces, we obtain the full  characterizations: 
\[
(H^1, \mathrm{BMOA})_{cb} = \mathrm{BMOA}  \an (H^1, H^2)_{cb}  =  (H^2, \mathrm{BMOA})_{cb} = \mathrm{BMOA}^{(2)}.
\] 
Then, using complex interpolation, we obtain, for any $1\le p\le 2 \le q<\infty$, 
\[
\mathrm{BMOA}^{(u)} \subset (H^p, H^q)_{cb} \quad \text{with $\frac{1}{u} = \frac{1}{p} - \frac{1}{q}$}.
\]
Here $\mathrm{BMOA}^{(u)}$ is a new  Banach space of  analytic functions on the unit circle $\T$ with 
\[
\| \varphi\|_{\BMOA^{(u)}}\approx   |\widehat{\varphi}(0)| +  \sup_{n\ge 1} \Big\{ \sum_{k=1}^\infty  \Big(\sum_{kn\le j < (k+1)n} | \widehat{\varphi}(j)|^u \Big)^2\Big\}^{1/2u}. 
\]
 Further applications are:  i) exact complete bounded norm of the Carleman embedding in any   dimension;  ii) mixed Fourier-Schur multiplier inequalities with critical exponent $4/3$;  iii) failure of hyper-complete-contractivity for  the Poisson semigroup. 
\end{abstract}

\subjclass{Primary 46L07, 47B35, 47A80; Secondary 42A45, 30H10, 43A20}
\keywords{self-absorption property; tensor products; Hankel systems on monoids; complete bounded Fourier multipliers; Hardy and BMOA spaces.}

\maketitle

\setcounter{tocdepth}{1}

\tableofcontents

\setcounter{equation}{0}

 \section*{Foreword: What does absorption mean and why it is natural ?}
{\flushleft \bf I. Fell's absorption in group representation theory.}
The well-known Fell's aborption principle states that,  the left regular representation $\lambda_G$  of any discrete countable group $G$ absorbs all its unitary representations. In other words, for any unitary representation $\pi: G \rightarrow \mathcal{U}(H)$,   there exists a unitary operator $U \in \mathcal{U}(\ell^2(G)\otimes_2 H)$ such that 
\[
\lambda_G(g) \otimes \pi(g) =  U [\lambda_G(g) \otimes Id_H] U^{-1}, \quad \forall g \in G.  
\]
This means that all operators $\lambda_G(g)\otimes \pi(g)$ can be  simultaneously block-diagonalized, with all block-diagonal entries being $\lambda_G(g)$.  

{\flushleft \bf II. A new absorption in operator-space theory.}    We say that a  family of Hilbert-space operators $\{x_i: i\in I\} \subset B(H)$ contractively absorbs another family $\{y_i: i\in I\}\subset B(K)$   if there exist two contractive operators $U, V$ on $H\otimes_2 K$, such that all operators $x_i\otimes y_i$ simultaneously factor through block-diagonalized operators as: 
\begin{align}\label{xy-ab}
 x_i \otimes y_i     = U (x_i \otimes Id_K) V, \quad \forall i \in I. 
\end{align}

The main focus of this paper is the self-absorption property. Specifically,  we consider the problem when the family $\{x_i: i \in I\}$ contractively absorbs itself.  In fact, from the proofs of our main results (Theorem \ref{thm-lunar-monoid} in \S \ref{sec-intro} and Theorem \ref{thm-Phi} in \S \ref{sec-hank-monoid}), we obtain \eqref{xy-ab} for   very general  Hankel systems  with $U$ and $V$   even being  partial isometries (see \eqref{simul-diag} and \eqref{CD-Hab}). 

The absorption property mentioned above is an algebraic property of the  Hankel systems. However,  for simplicity, we shall state our main results  in the framework of the operator-space theory in a slightly weaker version. Indeed,  the simultaneous factorization \eqref{xy-ab} implies  the following contractivity inequalities:  
\begin{align}\label{cc-ineq}
\|\sum_{i} c_i \otimes x_i \otimes y_i\| \le  \|\sum_{i} c_i \otimes x_i \|, 
\end{align}
where $(c_i)_{i}$ is any finitely supported family in an arbitrary $C^*$-algebra.  Conversely, the fundamental factorization/extension theorem in operator-space theory (consequence of the Stinespring and Arveson's dialation theorems) says that \eqref{cc-ineq} is equivalent to the existence of a $C^*$-representation $\pi: B(H)\rightarrow B(\widehat{H})$ and two contractive operators $U, V\in B(\widehat{H} , H\otimes_2 K)$ with 
\[
x_i \otimes y_i   = U  \pi( x_i) V^*, \quad \forall i \in I. 
\]

The inequality \eqref{cc-ineq}  has close connections to harmonic analysis: it appears to be related to the well-known open problem of the existence of Nehari Theorem on higher dimensional torus (see Problem \ref{prob-d-torus} in \S \ref{sec-intro}),  and  it will be applied to the study of completely bounded Fourier multipliers between Hardy spaces (see Section \ref{sec-app}).  For instance,  we show that, within the framework of operator-space theory,  it is surprising that Janson's hypercontractivity fails on Hardy spaces (see Corollary \ref{cor-hyper}).

Finally, it is worth mentioning the following  universal property for an arbitrary family of unit norm Boolean operators $\{x_i\}$ (see \S \ref{sec-boolen-graph} for its definition) :   under an appropriate positivity condition on the coefficients $c_i$, we always have (see Proposition \ref{prop-positive})
 \[
\big\| \sum_{i} c_i \otimes   x_i^{\otimes m} \big\| =  \big\| \sum_{i} c_i \otimes   x_i  \big\|. 
\]

{\flushleft \bf  III.  Relation with Graph-theory. }  For  any family $\{x_i: i\in I\}$ of  unit norm Boolean operators with disjoint supports (see \S \ref{sec-boolen-graph}),  one can naturally associate an oriented colored graph $\mathcal{G}$.  Similarly, an associated oriented colored graph  $\widehat{\mathcal{G}}$ is constructed from $\{x_i \otimes x_i: i\in I\}$.  In a subsequent paper, we show that, if all connected components  of $\widehat{\mathcal{G}}$  (after certain natural operations)  can be reduced to saturated subgraphs of $\mathcal{G}$, then $\{x_i: i \in I\}$ contractively absorbs itself.  This realization allows us, on the one hand,  to verify the self-absorption  property for various specific  examples of families of Boolean operators, and on the other hand, provides a conjectured equivalent graph-theory criterion of the self-aborption property in the context of Boolean operators.

\section{Introduction}\label{sec-intro}

This paper is mainly devoted to the study of Hankel systems on abstract monoids.  The key ingredients are the notion of   self-absorption property (Definition~\ref{def-self-tensor}), lunar monoids (Definition~\ref{def-AAM}), SAP monoids (Definition~\ref{def-SAP-monoid}) and the coupled foliations (Figure~\ref{fig-foliation}).  

We begin by introducing the definition of the Hankel systems on monoids. Let $\MM$ be a cancellative monoid  (not necessarily Abelian). The  associated Hankel system 
$\{\Gamma_t^\MM\}_{t\in \MM} \subset B(\ell^2(\MM))$  is a collection of operators defined as follows: 
\begin{align}\label{def-gamma-M}
\Gamma_t^\MM(s_1, s_2)= \mathds{1}(s_1s_2 = t),  \quad s_1, s_2 \in \MM. 
\end{align}
Here, we recall that a cancellative monoid $\MM$ refers to a semigroup with a  unit element in which  the implication $(abc = adc) \Longrightarrow (b=d)$ holds in $\MM$.  Note that   $\MM$ is cancellative if and only if   $\| \Gamma_t^\MM\|_{B(\ell^2(\MM))} = 1$ for all $t\in \MM$.

For instance, if $\MM = \N^d$ (resp. $\N^{(\infty)}$) is the free Abelian monoid of $d$ generators (resp. countably infinite generators),  then  the operator $\Gamma_t^{\MM}$ is unitarily equivalent,  via Fourier transform, to an elementary small Hankel operator on the Hardy space of the   $d$-dimensional torus $\T^d$ (resp. infinite dimensional torus $\T^\N$).   Our work on self-absorptioin property of the Hankel system on $\N^d$ is closely related to the open problem of the existence of higher dimensional Nehari-Sarashon-Page Theorem, see Problem~\ref{prob-d-torus} and Problem~\ref{prob-LF}  and the discussions there. 

 The Hankel operators or Hankel matrices associated with the  system $\{\Gamma_t^\MM\}_{t\in\MM}$ are defined as follows: 
\begin{align}\label{hankel-matrix}
\sum_{t\in \MM} c_t\otimes \Gamma_t^\MM  = [c_{s_1 s_2}]_{s_1, s_2\in \MM}, 
\end{align}
where $c_t$ is either in $\C$ or belongs to a $C^*$-algebra.  In the classical setting of the monoid $\N$, the study of  Hankel operators is closely related to the theory of  one-dimensional discrete time Gaussian processes (see \cite[Chapters 8 and 9]{Peller-book}).   In order to investigate Gaussian processes on abstract monoids, one needs to study positive definite Hankel matrices of the form  \eqref{hankel-matrix} or  similar forms, see \cite{duke-hankel-semigroup, mathz-semigroup, Bernoulli-hankel-semigroup} for details.  

In the case of free monoids (or more general trace monoids), Fliess \cite{Fliess} studied the Hankel matrices  of the form \eqref{hankel-matrix} with coefficients   $c_t$ belonging to  an abstract semiring. These Hankel matrices on free monoids also play a role in quantized calculus for non-commutatative geometry.  Connes \cite[Section 4.5]{Connes} conjectured a non-commutative analogue of the  well-known Kronecker theorem on the relation between finite rank Hankel matrices and rational functions. Duchamp and Reutenauer \cite{invention-connes} confirmed  Connes' conjecture and their proof is based on Hankel matrices  on the free monoids (see \cite[Thm. 2.1.6]{book-control}).  See also \cite{MIYAGAWA2022109609} for similar phenomenon  of free semicircular elements.

In this paper, the Hankel systems on monoids are further generalized to the non-classical ones determined by level sets of abstract two-variable maps. More precisely, given  three sets $\mathcal{A}, \mathcal{X}, \mathcal{L}$ and a  map $\Phi: \mathcal{A}\times \mathcal{X}\rightarrow \mathcal{L}$, we consider the non-classical  Hankel system   $\{\Gamma_\ell^\Phi\}_{\ell\in \mathcal{L}}$  with $\Gamma_\ell^\Phi$ the matrices defined by
\begin{align}\label{non-cl-hankel}
\Gamma_\ell^\Phi(a, x)= \mathds{1}(\Phi(a, x)= \ell), \quad (a, x)\in \mathcal{A}\times \mathcal{X}. 
\end{align}
The associated Hankel matrices are then defined as
\begin{align}\label{g-h-M}
\sum_{\ell\in \mathcal{L}} c_\ell \otimes \Gamma_\ell^\Phi = [c_{\Phi(a,x)}]_{a\in \mathcal{A}, \, x\in \mathcal{X}},
\end{align}
where $c_\ell$ is either in $\C$ or belongs to a $C^*$-algebra.

 The consideration of such non-classical Hankel systems $\{\Gamma_\ell^\Phi\}_{\ell\in \mathcal{L}}$ is not merely a pure mathematical curiosity,   but rather  it is essential for proving the hereditary self-absorption property  (Definition~\ref{def-hsap})  for the Hankel system on  the simplest monoid $\N$, for the big Hankel system on the higher dimensional torus $\T^d$ or the infinite dimensional torus $\T^\N$  and for the regular representation system of any discrete group (see Corollaries \ref{cor-comp}, \ref{cor-gp-compression} and  \ref{cor-big-hankel}).  In particular,  our result implies a recent result of Katsoulis \cite[Cor. 5.3]{Katsoulis:2023aa}.  Note  also that the Hankel matrices studied  in \cite{duke-hankel-semigroup} are  associated to the monoids with an involution and they  are of the form  \eqref{g-h-M} for a suitable map $\Phi$.

For simplicity, in the remaining part of the introduction, we mainly concentrate on the Hankel systems on monoids. The results concerning non-classical hankel systems will be given in \S \ref{sec-hank-monoid}.

{\flushleft \bf I.  Self-absorption property.}
The following definition is inspired by Pisier's operator Hilbert space $OH$.
 Here we recall that $OH$ is the Hilbert space $\ell^2= \ell^2(\N)$ equipped with the unique  {\it operator space structure} (abbrev. \oss,  see  \cite[Chapters 1-5]{Pisier-Operator-space-book} and \cite{Ruan-JFA-Subspces, Ruan-book} for the background of \oss) such that  for  any orthonormal vectors $(v_i)_{i\in I}$ in $OH$ and  any finitely supported sequence $(a_i)_{i\in I}$ in any  $C^*$-algebra $A$, 
\begin{align}\label{def-OH}
\Big\| \sum_{i\in I} v_i \otimes a_i\Big\|_{OH\otimes_{min} A}  = \Big\| \sum_{i\in I} a_i \otimes \overline{a}_i\Big\|_{A\otimes_{min}\overline{A}}^{1/2},
\end{align}
where $\otimes_{min}$ denotes the standard minimal tensor product and $\overline{A}$ denotes the complex conjugate of the $C^*$-algebra $A$ (see  \cite[Section 2.3]{newbook-pisier}).

\begin{definition}[Self-absorption property]\label{def-self-tensor}
Given $\kappa \ge 1$. A family (also called a system) $\mathcal{F}=\{x_i\}_{i\in I}$  in a $C^*$-algebra $A$ or in an operator space is said to have  $\kappa$-self-absorption property (abbrev. $\kappa$-SAP) if the following inequalities hold for any finitely supported family $\{b_i\}_{i\in I}$ in  any other $C^*$-algebra $B$:  
\begin{align}\label{def-kappa}
 \frac{1}{\kappa} \Big \| \sum_{i\in I}  b_i \otimes x_i \Big \|_{B\otimes_{min} A}  \le  \Big \| \sum_{i\in I}  b_i \otimes x_i \otimes \overline{x}_i\Big \|_{B\otimes_{min} A \otimes_{min} \overline{A}}  \le    \kappa \Big\| \sum_{i\in I}  b_i \otimes x_i  \Big\|_{B\otimes_{min} A}. 
\end{align}
For simplicity, we denote $1$-SAP just by SAP. 
\end{definition}

\begin{definition}[Lunar monoids]\label{def-AAM}
We say that a monoid $\MM$ is a {\it lunar monoid} (or we say $\MM$ is lunar) if it is cancellative and satisfies the algebraic  lunar condition :
\begin{align}\label{def-lunar-intro}
\big(ax  = by, cx=dy, az=bw  \,\, \text{in $\MM$}\big)\Longrightarrow \big(cz=dw \,\, \text{in $\MM$}\big). 
\end{align}
\end{definition}

Our terminology ``lunar condition" comes from the theory of group-embeddability of monoids (\cite{Malcev-I, Malcev-II} and \cite{Lambk-immersibility}).  Indeed, the condition \eqref{def-lunar-intro} is the first-order one of a series of  conditions (called lunar conditions there) in Jackson's  thesis with his thesis supervisor Halperin (see \cite{Bush-George-TAMS}).

\begin{mainthm}\label{thm-lunar-monoid}
For any lunar monoid $\MM$, the Hankel system $\{\Gamma_t^\MM\}_{t\in \MM}$  has SAP. 
\end{mainthm}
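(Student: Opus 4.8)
Write $\Gamma_t:=\Gamma_t^{\MM}$ throughout. The plan is to deduce the self-absorption inequalities \eqref{def-kappa} with $\kappa=1$ from the simultaneous factorization scheme announced in the Foreword: I will produce, \emph{uniformly in $t\in\MM$}, two partial isometries $U,V$ with
\begin{align*}
\Gamma_t\otimes\overline{\Gamma_t}=U\,\big(\Gamma_t\otimes\mathrm{Id}\big)\,V,\qquad\forall\,t\in\MM,
\end{align*}
whence the upper bound $\big\|\sum_i b_i\otimes\Gamma_{t_i}\otimes\overline{\Gamma_{t_i}}\big\|\le\big\|\sum_i b_i\otimes\Gamma_{t_i}\big\|$ follows exactly as \eqref{xy-ab} yields \eqref{cc-ineq}. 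Since each $\Gamma_t$ has entries in $\{0,1\}$, after the canonical identification of $\overline{B(\ell^2(\MM))}$ with $B(\ell^2(\MM))$ we have $\overline{\Gamma_t}=\Gamma_t$, so it is enough to handle $\Gamma_t\otimes\Gamma_t$. The reverse inequality in \eqref{def-kappa} is elementary and needs nothing beyond the $0/1$ structure: if $W\colon\ell^2(\MM)\to\ell^2(\MM)\otimes_2\ell^2(\MM)$ is the diagonal isometry $\delta_s\mapsto\delta_s\otimes\delta_s$, then $W^*(\Gamma_t\otimes\Gamma_t)W=\Gamma_t$ for every $t$, and compressing $\sum_i b_i\otimes\Gamma_{t_i}\otimes\Gamma_{t_i}$ by $\mathrm{Id}_B\otimes W$ gives $\big\|\sum_i b_i\otimes\Gamma_{t_i}\big\|\le\big\|\sum_i b_i\otimes\Gamma_{t_i}\otimes\Gamma_{t_i}\big\|$.

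Everything therefore reduces to the construction of $U,V$, and this is where \eqref{def-lunar-intro} enters. First I would observe that the nonzero positions of $\sum_t\Gamma_t\otimes\Gamma_t$ are controlled by the bipartite relation $R$ on $\MM\times\MM$ given by $(s_1,s_1')\mathrel{R}(s_2,s_2')\iff s_1s_2=s_1's_2'$, with $(s_1,s_1')$ the row-pair and $(s_2,s_2')$ the column-pair. Unwinding \eqref{def-lunar-intro} with $(a,b,x,y)$ read off as $(s_1,s_1',s_2,s_2')$, one sees that the lunar condition amounts to: two row-pairs that share one common $R$-neighbour have \emph{identical} sets of $R$-neighbours, and symmetrically for column-pairs. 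Hence ``having a common $R$-neighbour'' is an equivalence relation on each of the two sides (restricted to the pairs admitting at least one neighbour), and $R$ induces a canonical bijection between the two quotients. Identifying them yields a single ``colour set'' $\mathcal C$ together with maps $c_L,c_R$, defined on those row-pairs, resp.\ column-pairs, that admit a neighbour, such that
\begin{align*}
(s_1,s_1')\mathrel{R}(s_2,s_2')\iff c_L(s_1,s_1')=c_R(s_2,s_2');
\end{align*}
moreover cancellativity of $\MM$ makes $(s_1,s_1')\mapsto(s_1,c_L(s_1,s_1'))$ and $(s_2,s_2')\mapsto(s_2,c_R(s_2,s_2'))$ injective. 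This colouring is the concrete incarnation of the coupled foliations of the Introduction.

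With the colouring in hand I would define $V\colon\ell^2(\MM)\otimes_2\ell^2(\MM)\to\ell^2(\MM)\otimes_2\ell^2(\mathcal C)$ and $U\colon\ell^2(\MM)\otimes_2\ell^2(\mathcal C)\to\ell^2(\MM)\otimes_2\ell^2(\MM)$ on basis vectors by
\begin{align*}
V(\delta_{s_2}\otimes\delta_{s_2'})=\delta_{s_2}\otimes\delta_{c_R(s_2,s_2')},\qquad U(\delta_a\otimes\delta_\gamma)=\delta_a\otimes\delta_{s_1'}\ \ (c_L(a,s_1')=\gamma),
\end{align*}
with the value set to $0$ when $(s_2,s_2')$ has no $R$-neighbour, resp.\ when no such $s_1'$ exists; the two injectivity statements make $U,V$ partial isometries. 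A direct computation on the basis $\{\delta_{s_2}\otimes\delta_{s_2'}\}$ — using that $\Gamma_t\delta_s=\delta_r$ with $rs=t$ when $t\in\MM s$ and $=0$ otherwise, together with the biconditional defining the colouring — then gives $U(\Gamma_t\otimes\mathrm{Id})V=\Gamma_t\otimes\Gamma_t$ for every $t$; here the side-condition ``$(s_2,s_2')$ has a neighbour'' built into $V$ is automatically met whenever the right-hand side is nonzero, so it is harmless. Substituting this identity into $\sum_i b_i\otimes\Gamma_{t_i}\otimes\Gamma_{t_i}=(\mathrm{Id}_B\otimes U)\big(\sum_i b_i\otimes\Gamma_{t_i}\otimes\mathrm{Id}_{\ell^2(\mathcal C)}\big)(\mathrm{Id}_B\otimes V)$ and using $\|\mathrm{Id}_B\otimes U\|,\|\mathrm{Id}_B\otimes V\|\le1$ together with $\big\|\sum_i b_i\otimes\Gamma_{t_i}\otimes\mathrm{Id}_{\ell^2(\mathcal C)}\big\|\le\big\|\sum_i b_i\otimes\Gamma_{t_i}\big\|$ yields the upper bound in \eqref{def-kappa}; embedding $\mathcal C$ into $\MM$ (legitimate since $|\mathcal C|\le|\MM|^2$) lets one phrase $U,V$ on $\ell^2(\MM)\otimes_2\ell^2(\MM)$ if desired. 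Together with the reverse inequality this is SAP.

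I expect the genuinely delicate step to be the middle paragraph: converting the single first-order axiom \eqref{def-lunar-intro} into an honest global colouring — equivalently, into the statement that $R$ is a disjoint union of complete bipartite blocks whose two sides carry canonically matched colour-sets. For group-embeddable $\MM$ one could short-cut this by taking $c_L(s_1,s_1')=s_1^{-1}s_1'$ and $c_R(s_2,s_2')=s_2(s_2')^{-1}$ in the enveloping group, but Theorem~A assumes strictly less — only the first of the lunar conditions of \cite{Bush-George-TAMS} — so $\mathcal C$ must be manufactured intrinsically from $R$-neighbourhoods rather than from ``differences''; this is precisely why the hypothesis takes the form \eqref{def-lunar-intro} and why the argument is organized around foliations. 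A secondary point requiring care is the bookkeeping of pairs $(s_1,s_1')$ with $s_1\MM\cap s_1'\MM=\emptyset$ (and the right-handed analogue): these are exactly the zero rows, resp.\ columns, of every $\Gamma_t\otimes\Gamma_t$, hence cause no difficulty, but the factorization identity must be checked to respect them.
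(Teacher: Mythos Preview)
Your proposal is correct and follows essentially the same route as the paper: your colour set $\mathcal{C}$ is precisely the paper's quotient $[\MM^2]_{\mathrm{right}}$, your operators $V$ and $U$ are the direct sums over colours of the paper's per-leaf intertwiners $P^{[a,b]}$ and $Q^{[a,b]}$, and your global factorization $\Gamma_t\otimes\Gamma_t=U(\Gamma_t\otimes\mathrm{Id}_{\ell^2(\mathcal C)})V$ is a compact repackaging of the simultaneous block-diagonalization \eqref{simul-diag} together with the commutative diagrams \eqref{CD-Hab}. The lower bound via the diagonal isometry $W$ is likewise exactly the content of the paper's diagonal-subspace argument \eqref{CD-diag-subspace}.
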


Theorem~\ref{thm-lunar-monoid} follows from  a stronger result in Theorem~\ref{thm-lunar-sap}, which states that any lunar monoid induces  a Hankel system with the stronger {\it hereditary SAP} (see Definition~\ref{def-hsap}).   With  extra efforts,   our formalism can be adapted to operators on $\ell^p$-spaces or on more general symmetric sequence-spaces. 

Note that the lunar condition of a monoid is a {\it local  algebraic condition}: to check whether  a monoid  is lunar or not, it suffices to study the configurations of all $4\times 4$ blocks in its multiplication table. Theorem~\ref{thm-lunar-monoid} means that,  such local algebraic  condition of a monoid implies  a global analytic  property --the SAP--for the associated Hankel system.  Furthermore,  we conjecture that, the above local algebraic condition is equivalent to the global analytic one.  See Problem~\ref{prob-sap-lunar}  and the more tractable  Problem~\ref{prob-hsap-lunar}, Problem~\ref{prob-hsap}  for related discussions. 

It can be checked directly that the lunar condition holds true for all  group-embeddable monoids. Therefore, we obtain for instance the SAP for the Hankel systems associated with the following monoids: the free monoids (submonoids of  free groups), the submonoid $\mathrm{SL}_d(\N)$ of $\mathrm{SL}_d(\Z)$ defined by
\[
\mathrm{SL}_d(\N): = \big\{[a_{ij}]_{1\le i,j\le d}\in \mathrm{SL}_d(\Z)\big| a_{ij} \in \N\big\}. 
\]
Note also that, any cancellative Abelian monoid is  lunar, since it can be embedded into its Grothendieck group (see, e.g., \cite[Section 2.A, page 52]{Bruns-Polytopes-K-theory}). 
\begin{corollary}\label{cor-abelian-sap}
The Hankel system on any cancellative Abelian monoid  has SAP. 
\end{corollary}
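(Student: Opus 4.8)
The plan is to deduce Corollary~\ref{cor-abelian-sap} directly from Theorem~\ref{thm-lunar-monoid}: it suffices to check that every cancellative Abelian monoid is \emph{lunar} in the sense of Definition~\ref{def-AAM}. Such a monoid is cancellative by hypothesis, so the only point requiring an argument is the algebraic lunar condition \eqref{def-lunar-intro}. I would establish it by passing to the group of fractions (Grothendieck group).

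First I would recall that construction. Given a cancellative Abelian monoid $\MM$, written multiplicatively with unit $e$, set $G(\MM) := (\MM\times\MM)/\!\sim$ where $(a,b)\sim (c,d)$ iff $ad = bc$ in $\MM$. Cancellativity is exactly what makes $\sim$ transitive, hence an equivalence relation, and $G(\MM)$ an Abelian group under coordinatewise multiplication, with $[(a,b)]^{-1}=[(b,a)]$; one thinks of $[(a,b)]$ as the formal fraction $a/b$. The canonical map $\iota\colon \MM\to G(\MM)$, $\iota(a)=[(a,e)]$, is a monoid homomorphism, and it is injective precisely because $\MM$ is cancellative: $[(a,e)]=[(b,e)]$ forces $ae = be$, i.e.\ $a=b$. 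Thus $\MM$ embeds into the group $G(\MM)$.

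Then I would verify the lunar implication inside $G(\MM)$. Suppose $ax = by$, $cx = dy$, $az = bw$ hold in $\MM$; applying $\iota$, they hold in $G(\MM)$ as well. From $ax=by$ we get $x = a^{-1}by$ in $G(\MM)$; substituting into $cx=dy$ and cancelling $y$ (now invertible) gives $ca^{-1}b = d$; from $az = bw$ we get $z = a^{-1}bw$, whence $cz = ca^{-1}bw = dw$ in $G(\MM)$. Since $cz$ and $dw$ lie in $\iota(\MM)$ and $\iota$ is injective, $cz = dw$ holds in $\MM$. This is \eqref{def-lunar-intro}, so $\MM$ is lunar, and Theorem~\ref{thm-lunar-monoid} yields SAP for $\{\Gamma_t^\MM\}_{t\in\MM}$. (The very same computation shows that any group-embeddable cancellative monoid, Abelian or not, is lunar.)

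There is essentially no obstacle here: all the analytic content sits in Theorem~\ref{thm-lunar-monoid}, and the above is just a routine appeal to the group of fractions together with injectivity of the canonical map, where cancellativity is used in an essential way. If one wishes to bypass the embedding entirely, the lunar identity in the Abelian case can also be checked by a one-line manipulation: $(cz)(ax) = (cx)(az) = (dy)(bw) = (dw)(by) = (dw)(ax)$, where the outer equalities are commutativity and the inner two substitute $cx=dy$, $az=bw$, $by=ax$; cancelling $ax$ gives $cz = dw$.
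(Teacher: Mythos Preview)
Your proposal is correct and follows essentially the same approach as the paper: the paper deduces the corollary from Theorem~\ref{thm-lunar-monoid} by observing (just before the statement) that any cancellative Abelian monoid embeds into its Grothendieck group and is therefore lunar. You have simply supplied the routine details of that embedding and of the lunar verification, together with a pleasant alternative one-line cancellation argument that avoids the embedding altogether.
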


 As a consequence, we obtain the SAP (and in fact the much stronger hereditary SAP in the sense of Definition~\ref{def-hsap}) for  both the system of small Hankel operators and that of big Hankel operators on the higher dimensional torus  $\T^d$ ($d\ge 1$), as well as on  the infinite dimensional torus $\T^\N$.  Note that, by the unique factorization integers theorem, the  small Hankel operators on $\T^\N$  can be transformed to multiplicative Hankel operators associated to the multiplicative monoid  of positive integers $\N^* = \N\setminus \{0\}$ (see \cite{Helson2010,Ortega-2012,Brevig-2021}).

It should be noted that  Theorem~\ref{thm-lunar-monoid} goes beyond the class of group-embeddable monoids. In fact,  Bush  \cite{Bush-George-TAMS}  showed that the lunar condition is not sufficient for the group-embeddability.

Theorem~\ref{thm-lunar-monoid} holds  in  a much more general abstract setting (see Theorem~\ref{thm-Phi} below). Namely, by introducing a key algebraic notion of {\it lunar maps}  (see Definition~\ref{def-SD}),  we obtain SAP for non-classical Hankel systems \eqref{non-cl-hankel} associated with any   lunar map. Here we only mention two concrete such examples:
\begin{itemize}
\item Arbitrary spatial compression  of the regular representation system of any discrete group. See Corollaries~\ref{cor-comp} and \ref{cor-gp-compression}. 
\item  The Hankel system $\{\Gamma_\ell^\Phi\}$  determined by the level sets of any rational map $\Phi(x,y)=a x^m  + b y^n$ with $a,b,m,n\in \mathbb{Z}^* = \Z\setminus \{0\}$: 
\[
\Gamma_\ell^{\Phi}(x, y)= \mathds{1}(a x^m + b y^n= \ell), \quad x, y\in \mathbb{N}^* = \N\setminus\{0\}, \, \ell\in \Phi (\mathbb{N}^*\times \mathbb{N}^*).
\]
See Example~\ref{ex-polynomial} for details.  It seems to be a natural problem of determine all the lunar rational maps or lunar polynomial maps, see Problem~\ref{prob-lunar-poly} in \S \ref{sec-lunar-map}.
\end{itemize}

We conjecture that lunar condition on a map is equivalent to the SAP of its associated Hankel system, see Problem~\ref{prob-hsap}.  For a concrete example of non-lunar map inducing a non-SAP Hankel system, see the map induced by a multicolor checkerboard in Example~\ref{ex-3times3}.

\medskip

{\flushleft \bf II.  Harmonic analysis applications of SAP.}   In the classical setting of the additive monoid of non-negative integers $\N = \{0, 1, 2, \cdots\}$, as well as all the more general free Abelian monoids $\N^d$, the self-absorption property is  applied to the study of completely bounded Fourier multipliers between Hardy spaces (see Section \ref{sec-app}). The reader is refered to \cite{arnold2022s1bounded,arnold2023hankel,Parcet-annals} for recent works about completely bounded multipliers.      

All notations below are classical and will be  recalled in \S\ref{sec-notation}. All subspaces or quotient spaces of $L^p$ are equipped with their standard  \oss, see  \cite[Chapters 1-5]{Pisier-Operator-space-book} and  \cite[Chapters 1-2]{Pisier-Non-Commutative-vector}.   For simplifying notation, throughout the paper, by writing $L^p, H^p$ etc, we mean $L^p(\T), H^p(\T)$ respectively.   We identify $L^\infty/\overline{H_0^\infty}$ with $\BMOA$.  All spaces of completely bounded Fourier multipliers (such as $(H^p, H^q)_{cb}$,  $(H^p, \BMOA)_{cb}$ etc.) are equipped with their natural \oss 

Based on Grothendieck’s work \cite{Grothendieck-1953}, Pisier \cite{Pisier-similar-cb} obtained a striking description of $(H^1,H^1)_{cb}$ as follows: $\varphi \in (H^1, H^1)_{cb}$ if and only if there exist $v, w \in \ell^\infty(\ell^2)$ such that the  matrix $[\langle v_m, w_n\rangle]_{m,n\in\N}$ is of  Hankel type and 
\begin{align}\label{phi-dec}
\widehat{\varphi}(m+n)  = \langle v_m, w_n\rangle_{\ell^2} \quad \text{for all $m,n\in \N$}. 
\end{align}
   Very recently, in the same spirit, Arnold, Le Merdy and  Zadeh  \cite{arnold2022s1bounded,arnold2023hankel} described  $(H^p(\R),H^p(\R))_{cb}$ for $1\leq p<\infty$. 
However, in general, it seems difficult to determine pairs $(v,w)$  of vectors in $\ell^\infty(\ell^2)$ such that $[\langle v_m, w_n\rangle]_{m,n\in\N}$  is of Hankel type.  Therefore,  it would be interesting to give an effective method to determine when $\varphi$ satisfies \eqref{phi-dec}.

On the other hand, the question of characterizing $(H^p, H^q)_{cb}$ seems open  for any other non-trivial distinct pairs of $(p,q)$ for $1\leq p, q<\infty$. 

To state our new findings, we introduce analytic function spaces  $\BMOA^{(p)}$ on $\T$ as follows. For any $1\le p<\infty$, define 
\begin{align}\label{def-bmoap}
\BMOA^{(p)}: = \Big\{\varphi = \sum_{n=0}^\infty \widehat{\varphi}(n) e^{in\theta}\Big|  \| \varphi\|_{\BMOA^{(p)}} = \Big\| \sum_{n=0}^\infty |\widehat{\varphi}(n)|^p e^{in\theta}\Big\|_{\BMOA}^{1/p} <\infty \Big\}.
\end{align}
By a well-known result of Fefferman on $\BMOA$ functions with non-negative Fourier coefficients (see, e.g., \cite[Thm. 6.4.3]{Mulitipier-Fefferman-condition}),
\[
\| \varphi\|_{\BMOA^{(p)}}\approx   |\widehat{\varphi}(0)| +  \sup_{n\ge 1} \Big\{ \sum_{k=1}^\infty  \Big(\sum_{kn\le j < (k+1)n} | \widehat{\varphi}(j)|^p \Big)^2\Big\}^{1/2p}. 
\]
By convention, we set 
\begin{align}\label{def-bmoa-inf}
\BMOA^{(\infty)}: = \Big\{\varphi = \sum_{n=0}^\infty \widehat{\varphi}(n) e^{in\theta}\Big|  \| \varphi\|_{\BMOA^{(\infty)}} =  \sup_{n\ge 0}|\widehat{\varphi}(n)|<\infty\Big\}.
\end{align}

\begin{corollary}[see Proposition \ref{thm-hardy} for its complete isometric version]
We have the following isometric isomorphisms:
\[
(H^1, \mathrm{BMOA})_{cb} = \mathrm{BMOA}  \an (H^1, H^2)_{cb}  =  (H^2, \mathrm{BMOA})_{cb} = \mathrm{BMOA}^{(2)}.
\]
\end{corollary}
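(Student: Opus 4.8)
The plan is to derive each identification from the self-absorption property of the Hankel system on the monoid $\N$ (Corollary~\ref{cor-abelian-sap}, applied with $\MM = \N$), combined with the classical description of $(H^1,H^1)_{cb}$ via Hankel-type Gram matrices recalled in \eqref{phi-dec}. First I would unwind the definitions: a Fourier multiplier $\varphi$ acts on the Hardy scale by $e^{in\theta}\mapsto \widehat{\varphi}(n) e^{in\theta}$, and its completely bounded norm between two operator spaces of analytic functions is computed by tensoring against an arbitrary $C^*$-algebra $B$ and testing on matrices $\sum_n c_n \otimes e^{in\theta}$; since the Hankel matrix $[c_{s_1+s_2}]$ is exactly $\sum_t c_t\otimes \Gamma_t^{\N}$, the cb-norm of $\varphi: X\to Y$ translates into a comparison of norms of Hankel matrices with Schur-multiplied coefficients. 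The key point is that $H^2$, as an operator space, is $OH$ (up to the standard identification), so by \eqref{def-OH} the norm of $\sum_n c_n\otimes e^{in\theta}$ in $B\otimes_{min} H^2$ equals $\|\sum_n c_n\otimes \Gamma_n^{\N}\otimes \overline{\Gamma_n^{\N}}\|^{1/2}$ after recognizing that $\Gamma_n^{\N}$ restricted to the appropriate subspace is the partial-isometry building block, i.e. $e^{in\theta}$ plays the role of $x_i$ in Definition~\ref{def-self-tensor}. Similarly $\BMOA = L^\infty/\overline{H^\infty_0}$ carries the column/row-type \oss that makes $\|\sum_n c_n\otimes e^{in\theta}\|_{B\otimes_{min}\BMOA}$ equal to the norm of the one-sided infinite Hankel matrix $[c_{s_1+s_2}]_{s_1,s_2\ge 0}$ acting $\ell^2\to\ell^2$, which is precisely $\|\sum_n c_n\otimes\Gamma_n^{\N}\|$.

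With these dictionary entries in place, the proof splits into the announced pieces. For $(H^1,\BMOA)_{cb} = \BMOA$: the cb-norm of $\varphi:H^1\to\BMOA$ is, by Pisier's factorization/extension theorem (the abstract form recalled in the Foreword, equation \eqref{cc-ineq} and the lines around it), the supremum over $B$ and over contractions of $\sum_n c_n\otimes \Gamma_n^{\N}$ with coefficients forced by $\widehat{\varphi}$; this is exactly the completely bounded (equivalently, by Nehari--Sarason--Page and its operator-valued version, the plain) norm of the Hankel symbol, which is the $\BMOA$ norm of $\varphi$. For $(H^2,\BMOA)_{cb} = \BMOA^{(2)}$: here the source is $OH$, so the relevant quantity is $\|\sum_n c_n\otimes\Gamma_n^{\N}\|$ where now the coefficients incorporate a square-root twisting coming from \eqref{def-OH} — tracking this through Fefferman's theorem on $\BMOA$ functions with nonnegative Fourier coefficients (quoted right before \eqref{def-bmoap}) produces exactly the block-$\ell^2$ expression defining $\|\varphi\|_{\BMOA^{(2)}}$. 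For $(H^1,H^2)_{cb} = \BMOA^{(2)}$: this is where SAP does the real work. The target $H^2 = OH$ forces an extra $\overline{x}_n$ factor, i.e. we must compare $\|\sum_n c_n\otimes\Gamma_n^{\N}\otimes\overline{\Gamma_n^{\N}}\|$ with $\|\sum_n c_n\otimes\Gamma_n^{\N}\|$; the SAP inequality \eqref{def-kappa} with $\kappa = 1$ says these are equal, collapsing $(H^1,H^2)_{cb}$ onto the same Hankel quantity that governed $(H^2,\BMOA)_{cb}$, hence onto $\BMOA^{(2)}$.

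I would organize the write-up so that all three identifications are reduced to a single lemma: the norm of $\sum_n c_n\otimes\Gamma_n^{\N}$ with a prescribed Schur mask equals the corresponding $\BMOA$- or $\BMOA^{(2)}$-norm, proved once using Fefferman's theorem, with SAP invoked only to move the $\overline{\Gamma_n^{\N}}$ factor in and out. The main obstacle I anticipate is bookkeeping rather than conceptual: one must be careful that the operator space structure on $H^2$ really is $OH$ and not its opposite or conjugate (the formula \eqref{def-OH} involves $\overline{A}$, and the Hankel operators $\Gamma_n^{\N}$ are real, so $\overline{\Gamma_n^{\N}}=\Gamma_n^{\N}$, which is what makes the self-tensor $\Gamma_n^{\N}\otimes\overline{\Gamma_n^{\N}}$ land back in a Hankel-type system), and that the cb-norm of a Fourier multiplier into a quotient space like $\BMOA$ is computed via the lifting description rather than naively. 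A secondary subtlety is ensuring the supremum over auxiliary $C^*$-algebras $B$ in Definition~\ref{def-self-tensor} matches the one implicit in the cb-norm, which is standard but should be stated; the full complete-isometry claim (Proposition~\ref{thm-hardy}) requires running the same argument at the level of matrix amplifications, which is automatic once everything is phrased with a general $B$.
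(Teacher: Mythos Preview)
Your proposal has the right conceptual skeleton --- Nehari--Sarason--Page identifies $\BMOA$ with $\hank(\ell^2)$, the $OH$ structure on $H^2$ produces the self-tensor $\Gamma_n\otimes\overline{\Gamma}_n$, and SAP collapses that self-tensor --- but two genuine gaps prevent it from being a proof.

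First, you never explain how to pass from the cb-norm of a multiplier $T_m: H^1\to X$ to a norm of the form $\|\sum_n m_n\otimes e^{in\theta}\otimes x_n\|$ in a minimal tensor product. The embedding $(H^1)^*\otimes_{min} X \hookrightarrow CB(H^1,X)$ only captures $T_m$ when $\sum_n \|m_n\|<\infty$, which is not assumed. The paper handles this by a Poisson-dilation approximation (their Lemmas on $T_{m(r)}$ with $m_n(r)=r^n m_n$): one shows $\|T_m\|_{cb}=\sup_{0<r<1}\|T_{m(r)}\|_{cb}$ using density of $\{P_r*g\}$ in the unit ball of $H^1(S_1)$, and then each $T_{m(r)}$ does lie in the tensor-product image where the NSP/SAP computation goes through. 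Without this step your argument is formal: you are manipulating an infinite sum $\sum_n m_n\otimes\Gamma_n\otimes\Gamma_n$ that has no a priori meaning in $B(\ell^2\otimes_2\ell^2)$.

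Second, your treatment of $(H^2,\BMOA)_{cb}=\BMOA^{(2)}$ via Fefferman's theorem on nonnegative Fourier coefficients is the wrong tool for an \emph{isometric} identification: Fefferman's condition gives only the equivalent norm $\approx$ displayed just after \eqref{def-bmoap}, not the defining norm of $\BMOA^{(2)}$. The paper instead gets this equality in one line from duality: since $(H^1)^*=\BMOA$ completely isometrically, one has $(H^1,H^2)_{cb}=(H^2,\BMOA)_{cb}$ directly, and then only $(H^1,H^2)_{cb}=\BMOA^{(2)}$ needs the SAP computation. Finally, the reference to the $(H^1,H^1)_{cb}$ description \eqref{phi-dec} is a red herring --- it plays no role here.
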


We then prove  in Lemmas \ref{prop-bmoap} and \ref{prop-scale} that,  $\BMOA^{(p)}$ are Banach spaces and the family $\{\BMOA^{(p)}: 1\le p\le \infty\}$ forms a  complex interpolation scale.   
\begin{corollary}[see Theorem \ref{cor-interpolation}]\label{cor-interpolation}
For any $1\le p\le 2 \le q<\infty$, 
 \begin{align*}
\BMOA^{(u)} \xhookrightarrow[\textit{inclusion}]{\textit{bounded}} &  (H^p, H^q)_{cb} \quad \text{with $\frac{1}{u} = \frac{1}{p} - \frac{1}{q}$}. 
\end{align*} 
\end{corollary}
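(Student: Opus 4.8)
The plan is to obtain the inclusion by interpolating between the two endpoint statements already isolated in the preceding corollary and by exploiting the complex interpolation scale structure of the $\BMOA^{(p)}$ spaces. Concretely, I would set up two endpoint inclusions. First, from $(H^1,H^2)_{cb}=\BMOA^{(2)}$ and $(H^2,\BMOA)_{cb}=\BMOA^{(2)}$ we have bounded (indeed isometric) inclusions at the two ``corner'' points $(p,q)=(1,2)$ and $(p,q)=(2,\infty)$ (reading $\BMOA$ as the $q=\infty$ endpoint of the $H^q$ scale at the level of cb-multipliers), both with $u=2$. Second, at the opposite end, the trivial bound $\BMOA^{(\infty)}=\ell^\infty(\widehat{\cdot}\,)\hookrightarrow (H^2,H^2)_{cb}$ holds because a Fourier multiplier with bounded symbol acts contractively (completely contractively, via the Hilbertian $OH$-type structure of $H^2$) on $H^2$; this is the point $(p,q)=(2,2)$ with $u=\infty$.

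Next I would interpolate. Fix $1\le p\le 2\le q<\infty$ and write $\tfrac1u=\tfrac1p-\tfrac1q$. Choosing $\theta\in[0,1]$ with $\tfrac1p=\tfrac{1-\theta}{1}+\tfrac{\theta}{2}$ and simultaneously $\tfrac1q=\tfrac{1-\theta}{2}+\tfrac{\theta}{2}$—i.e. realizing $(H^p,H^q)$ as a complex interpolation couple between the endpoint pairs where the inclusion is already known—one gets, by bilinear complex interpolation of the multiplier map and by the interpolation identity for the Hardy-space scale (equipped with their standard o.s.s., as recalled in \S\ref{sec-notation}), that the multiplier map is bounded from the interpolated source space into $(H^p,H^q)_{cb}$. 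The source space is then identified, using Lemmas~\ref{prop-bmoap} and \ref{prop-scale}, which assert precisely that $\{\BMOA^{(p)}:1\le p\le\infty\}$ is a complex interpolation scale with $[\BMOA^{(p_0)},\BMOA^{(p_1)}]_\theta=\BMOA^{(p_\theta)}$ for the expected $p_\theta$; matching exponents forces the source space to be $\BMOA^{(u)}$ with $\tfrac1u=\tfrac1p-\tfrac1q$.

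The main obstacle I anticipate is the bookkeeping of the interpolation at the level of completely bounded multiplier spaces rather than just bounded ones: one must check that the identification $[(H^{p_0},H^{q_0})_{cb},(H^{p_1},H^{q_1})_{cb}]_\theta$ behaves correctly, which requires that the relevant Hardy/BMOA spaces interpolate compatibly in the operator-space category (so that duality and the $OH$-type structure used to define $\BMOA^{(2)}$ as $(H^1,H^2)_{cb}$ are respected), and that the cb-norm inequality \eqref{cc-ineq} coming from SAP survives interpolation. Here the SAP input (Theorem~\ref{thm-lunar-monoid} applied to $\MM=\N$, giving \eqref{cc-ineq} with $m$-fold tensor powers) is what upgrades the plain multiplier bounds to completely bounded ones at the endpoints, so the interpolation argument only needs the endpoint cb-statements as black boxes. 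A secondary, more routine point is verifying the $q=\infty$ endpoint $(H^p,\BMOA)_{cb}$ is the correct abstract limit of $(H^p,H^q)_{cb}$ as $q\to\infty$ in the interpolation sense; this is handled by the standard identification $\BMOA=[\mathrm{BMOA},H^2]$-type reiteration together with the preceding corollary's identity $(H^1,\BMOA)_{cb}=\BMOA=\BMOA^{(1)}$, which fixes the $p=1$ edge and makes the whole square consistent.
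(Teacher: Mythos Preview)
Your proposal has a genuine gap at the interpolation step. The specific parametrization you write down, $\tfrac1p=\tfrac{1-\theta}{1}+\tfrac{\theta}{2}$ and $\tfrac1q=\tfrac{1-\theta}{2}+\tfrac{\theta}{2}$, forces $q=2$ for every $\theta$, so it only recovers the segment $1\le p\le 2$, $q=2$, not the full region $1\le p\le 2\le q<\infty$. Listing the three corner points $(1,2)$, $(2,\infty)$, $(2,2)$ is not enough either: a single complex interpolation between two of them gives only a line in the $(1/p,1/q)$-square, and the student does not explain how to cover the interior. Also, the side remark $(H^1,\BMOA)_{cb}=\BMOA=\BMOA^{(1)}$ is incorrect: only the contractive inclusion $\BMOA^{(1)}\hookrightarrow\BMOA$ holds, and the paper uses precisely this inclusion, not equality.

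More seriously, you wave past the one technical point that requires real work: when both the source and the target Hardy spaces vary under interpolation, why does the cb-multiplier norm interpolate correctly? The identity $[(H^{p_0},H^{q_0})_{cb},(H^{p_1},H^{q_1})_{cb}]_\theta = (H^{p_\theta},H^{q_\theta})_{cb}$ is not a formality. The paper handles this in three stages. First, with the source fixed at $H^1$, interpolate only the target between $L^\infty$ and $L^2$ to obtain $\BMOA^{(q')}\hookrightarrow (H^1,H^q)_{cb}$ for every $q\in[2,\infty]$ (here Lemma~\ref{prop-scale} identifies the source scale). Second, interpolate this family against $\BMOA^{(2)}=(H^2,\BMOA)_{cb}$; the crucial input is the \emph{Observation} that $H^{r_\theta}\simeq (H^1,H^r)_\theta$ and $H^{r_\theta'}\simeq(\BMOA,H^{r'})_\theta$ hold as \emph{complete} isomorphisms, which relies on the vector-valued Hardy-space interpolation of Xu and the non-commutative $S_p[\cdot]$ machinery of Pisier. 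This step only covers the triangle $\tfrac1r\ge\tfrac1s+\tfrac12$ (see Figure~\ref{fig-interpolation}). Third, a final interpolation with the trivial endpoint $(H^2,H^2)_{cb}=\BMOA^{(\infty)}$ fills in the remaining region. Your sketch has neither the geometric bookkeeping nor the operator-space interpolation lemma that makes the second step go through.
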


Further applications are  as follows. 

\begin{itemize}
\item[1)] Fourier-Schur multiplier inequalities with critical exponent $4/3$ (Corollary \ref{cor-FS-mul} and Proposition \ref{prop-43-optimal}): For any  $\ell^2$-column-vector-valued function $f\in H^1(\ell^2)$,
\[
\Big\| \Big(\int_\T \big[(f*\varphi)  (f*\varphi)^*\big] dm \Big)^{1/2} \Big\|_{S_{4/3}} \le   \| \varphi \|_{\BMOA^{(2)}} \cdot \| f\|_{H^1(\ell^2)}. 
\]
The exponent $4/3$ is critical: it can not be replaced by any exponent $p< 4/3$. 
\item[2)]  Exact  complete bounded norm of the Carleman embedding (see Corollary~\ref{cor-carleman} for the one-dimensional case and Corollary~\ref{cor-higher-carleman} for its higher dimensional analogue):  For any $f\in H^1(S_1)$ and $g \in  \C[z] \otimes S_\infty$ (the constant $\sqrt{\pi}$ below is optimal), 
\[
\Big| \int_\D \Tr \big(f(z) \overline{g(z)}\big) dA(z) \Big| \le \sqrt{\pi} \| f\|_{H^1(S_1)}  \Big\|  \int_\D  \big[ g(z) \otimes \overline{g(z)}  \big] dA(z) \Big\|_{B(\ell^2\otimes_2 \ell^2)}^{1/2}. 
\]
\item[3)] Failure of hyper-complete-contractivity for  the Poisson semigroup (Corollary \ref{cor-hyper}): Consider the  Poisson convolution $P_r$  on $\T$.    Janson's hyper-contractivity \cite{Janson-Poisson-hypercontractivity} states that $P_r$ is contractive from $H^1$ to $H^2$ if and only if $r\le \sqrt{1/2}$.  However,  
\[
\text{$\| P_r\|_{(H^1, H^2)_{cb}} = (1-r^4)^{-1/2}>1$ for any $r\in (0, 1)$}.
\]   
\item[4)]An inequality for Hankel operators (see Corollary~\ref{cor-S4-hankel} for its higher dimensional analogue):  Let $\varphi\in H^1$ and set $\varphi^{\dag}(e^{i\theta}) = \overline{\varphi(e^{-i \theta}})$, then for any mutually orthogonal functions  $(f_k)_{k\ge 1}$  in $H^2$, 
\[
\big\|\sum_{k=1}^\infty (\Gamma_{f_k*\varphi})^*  \Gamma_{f_k*\varphi} \big\|_{B(\overline{H^2})}^{1/2} \le  \|\Gamma_{\varphi*\varphi^\dag} \|_{\hank(\overline{H^2}, H^2)}^{1/2} \big(\sum_{k=1}^\infty \| f_k\|_{H^2}^4\big)^{1/4},  
\]
where $\hank(\overline{H^2}, H^2)$ denotes the space of Hankel operators and $\Gamma_\psi$ denotes the corresponding Hankel operator with symbol $\psi$ (see \S \ref{sec-nsp} for the notation). 
\end{itemize}

\medskip

{\flushleft \bf  III. Coupled foliations.} The main ingredient in the proof of Theorem~\ref{thm-lunar-monoid} is the construction and analysis of the {\it coupled foliations} illustrated in Figure~\ref{fig-foliation} for the following two subsets in $\MM^2$ (in Abelian case, they both equal $\MM^2$, the roles of these two subsets will be explained soon): 
\begin{align*}
&(\MM^2)_{\mathrm{left}\,} \,: = \{(x,y)\in \MM^2:\exists (a,b), \, s.t.\,\, ax = by\};
\\
&(\MM^2)_{\mathrm{right}}:  = \{(a,b)\in \MM^2:\exists (x,y), \, s.t.\,\, ax = by\}.
\end{align*}
The lunar condition  \eqref{def-lunar-intro} is crucial in our construction. 
\begin{figure}[h]
\begin{center}\label{fig-foliation}
\includegraphics[width=0.5\linewidth]{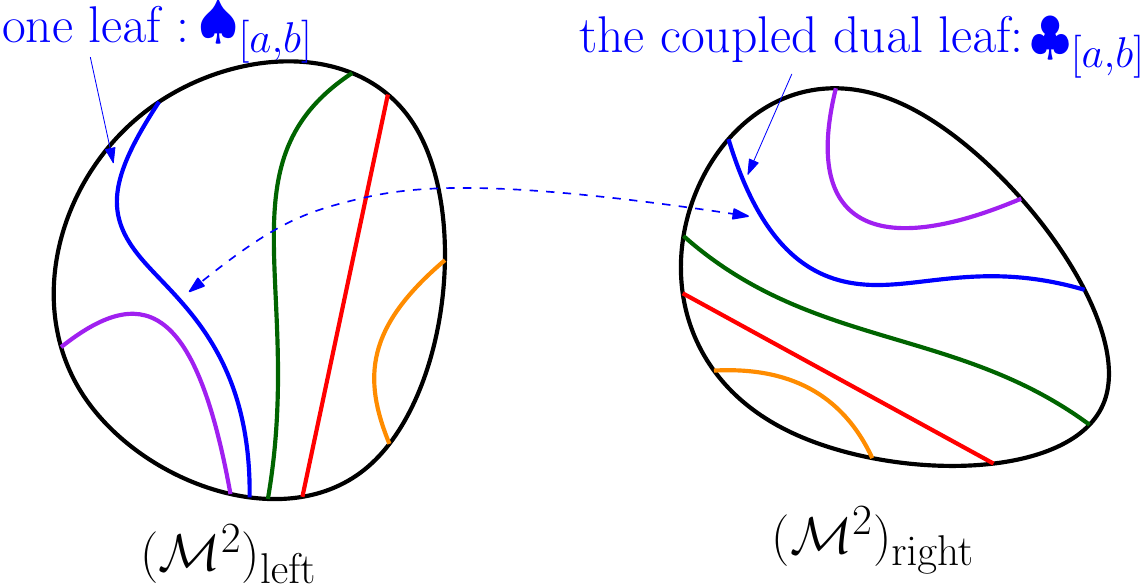}
\caption{The coupled foliation structures.}
\end{center}
\end{figure}
  Under this lunar condition, we may define an equivalence relation on $(\MM^2)_{\mathrm{right}}$ by 
\[
(a,b)\sim (c,d) \stackrel{def}{\Longleftrightarrow} \exists (x,y), \,s.t.\, ax = by \,\, \&\, \, cx=dy. 
\]
Denote by  $[\MM^2]_{\mathrm{right}} = (\MM^2)_{\mathrm{right}}/\!{\sim}$ the quotient space and by   $[a,b] \in [\MM^2]_{\mathrm{right}}$ the  equivalence class of  $(a,b) \in (\MM^2)_{\mathrm{right}}$. Then   the coupled foliation decompositions are given by 
\begin{align}\label{coup-fol}
(\MM^2)_{\mathrm{left}} = \bigsqcup_{[a,b]\in [\MM^2]_{\mathrm{right}}} \spadesuit_{[a,b]} \an (\MM^2)_{\mathrm{right}} = \bigsqcup_{[a,b]\in [\MM^2]_{\mathrm{right}}} \clubsuit_{[a,b]}
\end{align}
with  $\spadesuit_{[a,b]}: = \{(x,y)\in \MM^2: ax  = by\}$
 and $\clubsuit_{[a,b]}: =  \{(c,d)\in \MM^2:  (c,d)\sim (a,b)\}$.  The lunar condition ensures that the definition of $\spadesuit_{[a,b]}$ depends only on the equivalence class of $(a,b)$.

A key property shared by all these leaves $\spadesuit_{[a,b]}, \clubsuit_{[a,b]}$ is the ``{\it one-dimensionality}". More precisely,  let $\pi_1, \pi_2$ be the usual coordinate  projections on $\MM^2$,  then  the lunar condition combined with the cancellativity implies that  all the following maps 
\[
\pi_1|_{\spadesuit_{[a,b]}}, \quad \pi_2|_{\spadesuit_{[a,b]}}, \quad  \pi_1|_{\clubsuit_{[a,b]}}, \quad \pi_2|_{\clubsuit_{[a,b]}}
\]
 are injective  for any $[a,b]\in [\MM^2]_{\mathrm{right}}$. 

\begin{remark}
Consider the dual  equivalence relation on $(\MM^2)_{\mathrm{left}}$ defined by:   $(x,y) \sim' (z,w)$ if and only if there exists $(a,b)$ such that $ax = by \, \&\, az=bw$.   The relation $\sim'$ induces a natural decomposition of $(\MM^2)_{\mathrm{left}}$. One can show that,  under the lunar condition on $\MM$, this decomposition   coincides with  the one given in \eqref{coup-fol}. However, it should be emphasized that, the one-to-one correspondance given by  \eqref{coup-fol} between the leaves in $(\MM^2)_{\mathrm{left}}$ and $(\MM^2)_{\mathrm{right}}$  is important for us. 
\end{remark}

{\flushleft \bf IV.  How coupled foliations  lead to SAP ? }  The central step to SAP is the   {\it simultaneous block-diagonalizations} of 
 operators $\Gamma_t^\MM\otimes \Gamma_t^\MM$ for all $t\in \MM$.  Such simultaneous block-diagonalizations will be explicitly realized by the coupled foliations in \eqref{coup-fol}. 

Now, we explain  the roles of $(\MM^2)_{\mathrm{left}}$ and $(\MM^2)_{\mathrm{right}}$: they induce two subspaces $\ell^2 ((\MM^2)_{\mathrm{left}})$ and  $\ell^2(  (\MM^2)_{\mathrm{right}})$ of $\ell^2(\MM^2)$. Moreover, using the canonical identification of $\ell^2(\MM^2)$ with $\ell^2(\MM)\otimes_2 \ell^2(\MM)$ and hence the canonical  identifications of their subspaces,   one can show that 
\begin{itemize}
\item[(i)] the orthogonal complement  of $\ell^2 ((\MM^2)_{\mathrm{left}})$ in $\ell^2(\MM^2)$ lies in the  kernel of  $\Gamma_t^\MM\otimes \Gamma_t^\MM$ simultaneously for all $t\in \MM$: 
\[
[ \ell^2 ((\MM^2)_{\mathrm{left}})]^\perp = \ell^2(\MM^2)\ominus \ell^2 ((\MM^2)_{\mathrm{left}}) \subset  \bigcap_{t\in \MM}\ker (\Gamma_t^\MM\otimes \Gamma_t^\MM);
\] 
\item[(ii)] the images of $\ell^2(\MM^2)$ under all the maps $\Gamma_t^\MM\otimes \Gamma_t^\MM$  are contained in $\ell^2 ((\MM^2)_{\mathrm{right}})$: 
\[
  \overline{\spann} \big\{ \bigcup_{t\in \MM}\mathrm{Im} (\Gamma_t^\MM\otimes \Gamma_t^\MM)  \big\} \subset  \ell^2 ((\MM^2)_{\mathrm{right}}). 
\]
\end{itemize}

Next we explain how   the coupled foliations \eqref{coup-fol} are used to construct simultaneous block-diagonalization of all $\Gamma_t^\MM\otimes \Gamma_t^\MM$. 
More precisely, the coupled foliations induce the following coupled orthogonal decompositions: 
\[
\ell^2 ((\MM^2)_{\mathrm{left}})  =  \bigoplus_{[a,b]\in [\MM^2]_{\mathrm{right}}}  \ell^2(\spadesuit_{[a,b]})  \an \ell^2 ((\MM^2)_{\mathrm{right}})  = \bigoplus_{[a,b]\in [\MM^2]_{\mathrm{right}}} \ell^2(\clubsuit_{[a,b]}). 
\]
We further show that
\[
(\Gamma_t^\MM\otimes \Gamma_t^\MM) (\ell^2(\spadesuit_{[a,b]})) \subset \ell^2(\clubsuit_{[a,b]}), \quad \text{for all $t\in \MM$. }
\]

Finally,  by the ``one-dimensional structure" of  all the leaves in the foliations \eqref{coup-fol}, for  each pair of the   coupled leaves $(\spadesuit_{[a,b]}, \clubsuit_{[a,b]})$, the family  $\{(\Gamma_t^\MM\otimes \Gamma_t^\MM)|_{ \ell^2(\spadesuit_{[a,b]})}\}_{t\in \MM}$ is shown to  satisfy  the following 
 simultaneous commutative diagram (as before, here `` simultaneous" means that the contractive interwining operators $R^{[a,b]}$ and $S^{[a,b]}$ only depend on the equivalence class $[a,b]\in [\MM^2]_{\mathrm{right}}$ and are both independent of $t\in \MM$): 
\begin{equation*}
\begin{tikzcd}
\ell^2(\spadesuit_{[a,b]}) \arrow{r}{  \Gamma_t^\MM \otimes \Gamma_t^\MM} \arrow[d, "R^{[a,b]}" swap]& \ell^2(\clubsuit_{[a,b]})
\\
\ell^2(\MM) \arrow{r}{\Gamma_t^\MM} & \ell^2(\MM)  \arrow[u, "S^{[a,b]}" swap]
\end{tikzcd},
\end{equation*}
with  $R^{[a,b]}, S^{[a,b]}$ two explicitly constructed operators such that  
\[
\| R^{[a,b]} \| \le 1 \an \| S^{[a,b]}\|\le 1.
\] 

As a result,  all operators $\Gamma_t^\MM\otimes \Gamma_t^\MM$ are  simultaneously  block-diagonalized,  with notably  simplified diagonal-blocks. In notation, we establish the decomposition
\[
\Gamma_t^\MM \otimes \Gamma_t^\MM =  \Big([ \ell^2 ((\MM^2)_{\mathrm{left}})]^\perp \xrightarrow{0} 0\Big) \oplus \bigoplus_{[a,b]\in [\MM^2]_{\mathrm{right}}}   \Big( \ell^2(\spadesuit_{[a,b]})  \xrightarrow{S^{[a,b]}  \Gamma_t^\MM   R^{[a,b]}} \ell^2(\clubsuit_{[a,b]}) \Big). 
\]

\medskip

{\flushleft \bf V.  Open problems and further discussions.} The SAP for the Hankel system on $\N^d$ established in this paper (Corollary \ref{cor-abelian-sap}) is closely  related to the well-known open problem on the existence of a higher dimensional  Nehari-Sarason-Page Theorem  on $\T^d$ ($d\ge 2$). Indeed, by  the SAP for the Hankel system on $\N^d$,  a positive answer to the following  Problem~\ref{prob-d-torus} would follow from   the existence of such a higher dimensional NSP theorem. 

\begin{problem}\label{prob-d-torus}
Let $d\ge 2$ be an integer and consider  the operator space $L^\infty(\T^d)/\overline{H_0^\infty(\T^d)}$ (equipped with its natural  quotient \oss).   Does the family  $\{e^{in \cdot \theta}\}_{n\in \N^d} \subset L^\infty(\T^d)/\overline{H_0^\infty(\T^d)}$ have $\kappa$-SAP for a finite $\kappa$ ?  
\end{problem}

We say that a  multiplier  $T$ from  $H^1(\T^d)$ to  $L^\infty(\T^d)/\overline{H_0^\infty(\T^d)}$  is {\it liftable} if it satisfies the commutative diagram: 
\begin{equation*}
\begin{tikzcd}
L^1 (\T^d) \arrow{r}{K}& L^\infty (\T^d) \arrow[d, "Q" swap, "\textit{quotient}"]
\\
H^1(\T^d) \arrow{r}{T}\arrow[u, "\textit{inclusion}" swap, "\mathcal{I}"]&  L^\infty(\T^d)/\overline{H_0^\infty(\T^d)} 
\end{tikzcd},
\end{equation*}
where $K$ is a bounded convolution operator. 

Using similar arguments in the proof of Corollary \ref{cor-hardy-one-inf} for the one-dimensional case,  one can show that Problem~\ref{prob-d-torus} is  equivalent to  the following 

\begin{problem}\label{prob-LF}
 Is  every  multiplier   in  $(H^1(\T^d), L^\infty(\T^d)/\overline{H_0^\infty(\T^d)})_{cb}$  liftable ? 
\end{problem}

\begin{definition}[SAP monoids]\label{def-SAP-monoid}
A monoid is called a {\it SAP monoid} if its associated Hankel system  has SAP.  In such situation, we also say that the monoid has SAP. 
\end{definition}
The  SAP of a monoid can be seen as an analytic property (the operator space structure) of an algebraic object (the associated Hankel system), while the  lunar condition for a monoid  is a pure algebraic property. Theorem~\ref{thm-lunar-monoid} says that,  this  algebraic property implies the  analytic one.  

\begin{problem}\label{prob-sap-lunar}
Does the class of SAP monoids coincide with that of lunar ones ? 
\end{problem}

 It is not even known to us whether SAP is preserved by taking submonoids. 

\begin{problem} Can  SAP always be inherited by submonoids ? 
\end{problem}

The following table provides informations of some classical  operations  preserving the class of lunar monoid and that of   SAP monoid respectively.   

\begin{center}
\begin{tabular}{|c|c|c|c|}
\hline
    &  Cartesian product &  free product   &   submonoid  \\
\hline
lunar monoid (algebraic property)  & $\checkmark$ & $\checkmark$ & $\checkmark$
\\
\hline
SAP monoid (analytic property) & $\checkmark$ &  $\checkmark$ \footnote{The proof that free product preserves the class of SAP monoids requires much more technical details, so  we leave it to a separate work.} & ? 
\\
\hline
\end{tabular}
\end{center}

We believe that not all cancellative monoids have SAP. However, at this moment, we are not aware of such examples. 

\begin{problem} 
Do there exist non-SAP cancellative monoids ? 
\end{problem}

{\flushleft\bf Acknowledgements.} This work is supported by the National Natural Science Foundation of China (No.12288201).  YH is supported by the grant NSFC 12131016 and 12201419,  ZW is supported by NSF of Chongqing (CSTB2022BSXM-JCX0088,
CSTB2022NSCQ-MSX0321) and FRFCU (2023CDJXY-043).

\section{The simplest SAP monoid: an illustration}\label{sec-one-dim}

To illustrate our main idea, we include an algebraic proof of the SAP for the simplest additive monoid $(\N, +)$ of non-negative integers. See Proposition~\ref{thm-doubling} and Corollary~\ref{cor-simple-n}. 

By identifying the Hankel operators on $\ell^2(\N)$ with the standard Hankel operators on the Hardy spaces of $\T$, one can prove the SAP for the Hankel system on $\N$ in a pure analytic method by  applying the celebrated Nehari-Sarason-Page Theorem (which is briefly recalled in \S \ref{sec-nsp}).  

However, the Nehari-Sarason-Page Theorem is not available in general situations.  Our algebraic proof is self-contained  and can be easily adapted to all cancellative Abelian monoids and more general non-Abelian monoids (see \S \ref{sec-real-hankel-monoid} and \S \ref{sec-hank-monoid}).  Moreover,  even in the setting of $\N$,   this algebraic  approach leads to a stronger hereditary SAP  (see Definition~\ref{def-hsap}, Theorem~\ref{thm-lunar-sap} and Corollary~\ref{cor-comp})  and it can even be adapted to operators on $\ell^p$-spaces or on more general symmetric sequence-spaces, both of which clearly do not follow from the Nehari-Sarason-Page Theorem.

\subsection{The simplest SAP monoid $(\N,+)$} Throughout the paper, $B(\mathscr{H}, \mathscr{K})$ denotes the bounded linear operators  from a Hilbert space $\mathscr{H}$ to another one $\mathscr{K}$ and $B(\mathscr{H}) = B(\mathscr{H}, \mathscr{H})$ and  $\mathscr{H}\otimes_2\mathscr{K}$ denotes the Hilbertian tensor product of $\mathscr{H}$ and $\mathscr{K}$.

 For simplifying notation, we always write $\ell^2  = \ell^2(\N)$. 
The standard Hankel matrix associated to  a complex sequence  $a= (a_i)_{i\ge 0} \in \C^\N$ is defined by 
\[
\Gamma_a = [a_{i+j}]_{i,j\ge 0}. 
\]  
Let $\hank(\ell^2)$ denote the collection  of all bounded $\Gamma_a \in B(\ell^2)$.
More generally, for any sequence $(x_n)_{n\ge 0}$ in $B(\mathscr{H})$, we define a block Hankel matrix (which a priori does not represent an element in $B(\mathscr{H}\otimes_2 \ell^2)$): 
\[
\Gamma_x  = \Gamma_{(x_n)_{n\in\N}}: = [x_{i+j}]_{i,j\ge 0}.
\]
Informally, we may write 
$
\Gamma_x = \sum_{n=0}^\infty x_n \otimes \Gamma_n, 
$
where  the operator $\Gamma_n$ (for any non-negative integer $n\ge 0$) is defined by 
\begin{align}\label{def-gamma-n}
\Gamma_n(i,j)= \ch(i+j = n), \quad i, j \ge 0. 
\end{align}
In particular, for any $\Gamma_a \in \hank(\ell^2)$, define a block Hankel matrix by the informal series: 
\[
J_2(\Gamma_a) : =   \Gamma_{(a_n \Gamma_n)_{n\in\N}}  = \sum_{n=0}^\infty a_n \Gamma_n \otimes \Gamma_n. 
\]

\begin{proposition}\label{thm-doubling}
Let  $\Gamma_a \in \hank(\ell^2)$. Then $J_2(\Gamma_a)\in B(\ell^2\otimes_2\ell^2)$.  Moreover,   the linear map $J_2: \hank(\ell^2) \rightarrow B(\ell^2\otimes_2 \ell^2)$ is a complete isometric embedding. 
\end{proposition}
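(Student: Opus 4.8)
The plan is to carry out, in the concrete case $\MM=(\N,+)$, the simultaneous block-diagonalization of the operators $\Gamma_n\otimes\Gamma_n$ sketched in \S\ref{sec-intro}. Identify $\ell^2\otimes_2\ell^2$ with $\ell^2(\N^2)$ and, for $k\in\Z$, set
\[
\spadesuit_k:=\{(x,y)\in\N^2:y-x=k\},\qquad \clubsuit_k:=\{(c,d)\in\N^2:c-d=k\};
\]
these are exactly the leaves of the coupled foliation of $\N^2$, and they give two orthogonal decompositions $\ell^2(\N^2)=\bigoplus_{k\in\Z}\ell^2(\spadesuit_k)=\bigoplus_{k\in\Z}\ell^2(\clubsuit_k)$. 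A direct computation with matrix coefficients shows $(\Gamma_n\otimes\Gamma_n)e_{(x,y)}=e_{(n-x,\,n-y)}$ when $x,y\le n$ and $=0$ otherwise; since $(n-x)-(n-y)=y-x$, each $\Gamma_n\otimes\Gamma_n$ maps $\ell^2(\spadesuit_k)$ into $\ell^2(\clubsuit_k)$.

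Next I would fix $k$ and write $(\Gamma_n\otimes\Gamma_n)|_{\ell^2(\spadesuit_k)}$ in the natural ``one-dimensional'' coordinates on the two leaves: parametrize both $\spadesuit_k$ and $\clubsuit_k$ by their smaller entry, which identifies $\ell^2(\spadesuit_k)$ and $\ell^2(\clubsuit_k)$ with $\ell^2$ via unitaries $R_k$ and $S_k$, with respective basis vectors $f_j$ and $g_i$. The formula above then reads $\langle g_i,(\Gamma_n\otimes\Gamma_n)f_j\rangle=\ch(i+j=n-|k|)$, i.e.
\[
(\Gamma_n\otimes\Gamma_n)|_{\ell^2(\spadesuit_k)}=S_k\,\Gamma_{n-|k|}\,R_k\qquad(n\ge0),
\]
with the convention $\Gamma_m=0$ for $m<0$. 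Summing against $a=(a_n)_{n\ge0}$ — and noting that for each fixed pair of basis vectors only one term of the informal series $J_2(\Gamma_a)=\sum_n a_n\Gamma_n\otimes\Gamma_n$ survives, so its matrix is genuinely well defined — we obtain the orthogonal-block decomposition
\[
J_2(\Gamma_a)=\bigoplus_{k\in\Z}S_k\,\Gamma_{(a_{n+|k|})_{n\ge0}}\,R_k .
\]

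It remains to read off the norm. For the forward shift $W$ on $\ell^2$ one has $\Gamma_{(a_{n+j})_{n\ge0}}=(W^{j})^{*}\Gamma_a$ (the Hankel matrix of the $j$-shifted sequence is the submatrix of $\Gamma_a$ obtained by deleting the first $j$ rows), so $\|\Gamma_{(a_{n+|k|})_n}\|\le\|\Gamma_a\|$ for every $k$, with equality for $k=0$, where the block is unitarily $\Gamma_a$ itself. Hence the displayed direct sum is a bounded operator of norm exactly $\|\Gamma_a\|$, which proves both $J_2(\Gamma_a)\in B(\ell^2\otimes_2\ell^2)$ and that $J_2$ is isometric. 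For the completely isometric statement one repeats the argument verbatim with the scalars $a_n$ replaced by operator coefficients $x_n$ (equivalently, one applies $\mathrm{id}_{M_N}\otimes J_2$ to a block Hankel matrix $\Gamma_x=[x_{i+j}]$): the foliation acts only on the $\ell^2(\N^2)$-leg, so the same decomposition persists with $\Gamma_{n-|k|}$ replaced by $\Gamma_{(x_{n+|k|})_n}=(W^{|k|})^{*}\Gamma_x$, each a complete contraction of $\Gamma_x$ with equality at $k=0$; therefore $\|\mathrm{id}_{M_N}\otimes J_2\|=1$ for all $N$.

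The one delicate point is the bookkeeping in the middle step: choosing $R_k,S_k$ so that the $k$-th diagonal block is exactly $\Gamma_{n-|k|}$ (and not, say, $\Gamma_n$, or a shift in the wrong direction) requires treating the cases $k\ge0$ and $k<0$ separately and matching indices carefully; once this is done correctly, everything else is routine and the passage to operator coefficients is purely formal.
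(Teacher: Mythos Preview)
Your proof is correct and follows essentially the same coupled-foliation/block-diagonalization idea as the paper: your $\spadesuit_k$ is exactly the paper's $\mathcal{H}_k$, and the statement that $\Gamma_n\otimes\Gamma_n$ sends $\ell^2(\spadesuit_k)$ into $\ell^2(\clubsuit_k)=\mathcal{H}_{-k}$ is Lemma~\ref{lem-offdiag}. The difference is only in the bookkeeping of the intertwiners. You choose \emph{unitary} $R_k,S_k$ (parametrizing each leaf by its smaller coordinate), which forces the $k$-th block to be the \emph{shifted} Hankel $\Gamma_{n-|k|}$; summing in $n$ yields $\Gamma_{(a_{n+|k|})_n}=(W^{|k|})^*\Gamma_a$, whence the uniform bound and the equality at $k=0$. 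The paper instead keeps the block equal to $\Gamma_n$ for every $k$ by allowing the intertwiner $W_\ell:\mathcal{H}_\ell\to\mathcal{H}_0$ to be a non-surjective isometry; this packages everything as $\Gamma_n\otimes\Gamma_n=\mathcal{V}\mathcal{F}\mathcal{R}_U(\Gamma_n)\mathcal{W}$ with $\mathcal{R}_U$ a genuine $C^*$-representation (Proposition~\ref{prop-mul-repn}), from which complete contractivity of $J_2$ is immediate. Your route has the minor advantage of bypassing the dilation Lemma~\ref{lem-Pr} (you argue directly with the entrywise well-defined matrix), while the paper's route generalizes verbatim to arbitrary lunar maps (Theorem~\ref{thm-Phi}), where there is no shift $W$ available and one really needs contractive rather than unitary intertwiners.
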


\begin{corollary}\label{cor-simple-n}
The monoid $(\N,+)$ has SAP. 
\end{corollary}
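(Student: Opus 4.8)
The plan is to deduce Corollary~\ref{cor-simple-n} directly from Proposition~\ref{thm-doubling} by unwinding the definitions of SAP (Definition~\ref{def-self-tensor}) and of the Hankel system $\{\Gamma_t^\MM\}_{t\in\MM}$ on the monoid $\MM=(\N,+)$. Note that for $\MM=\N$ the multiplication-table operator $\Gamma_n^{\N}$ of \eqref{def-gamma-M} is exactly the operator $\Gamma_n$ of \eqref{def-gamma-n}, since $s_1s_2=t$ becomes $s_1+s_2=n$. So the assertion ``$(\N,+)$ has SAP'' means precisely that for every finitely supported family $(b_n)_{n\ge 0}$ in an arbitrary $C^*$-algebra $B$,
\[
\Big\|\sum_{n} b_n\otimes\Gamma_n\Big\|_{B\otimes_{min}B(\ell^2)}=\Big\|\sum_{n} b_n\otimes\Gamma_n\otimes\overline{\Gamma_n}\Big\|_{B\otimes_{min}B(\ell^2)\otimes_{min}\overline{B(\ell^2)}}.
\]

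First I would record that each $\Gamma_n$ is a self-adjoint (indeed real symmetric) matrix, so under the canonical identification $\overline{B(\ell^2)}\cong B(\overline{\ell^2})\cong B(\ell^2)$ one has $\overline{\Gamma_n}=\Gamma_n$; hence the right-hand side above is $\|\sum_n b_n\otimes\Gamma_n\otimes\Gamma_n\|$, where $\Gamma_n\otimes\Gamma_n$ acts on $\ell^2\otimes_2\ell^2$. Next, the key observation is that the block Hankel operator $J_2$ of Proposition~\ref{thm-doubling} is exactly the gadget that converts $\sum_n b_n\otimes\Gamma_n$ into $\sum_n b_n\otimes\Gamma_n\otimes\Gamma_n$: more precisely, for a finitely supported sequence $(b_n)$ in $B$ the element $\sum_n b_n\otimes\Gamma_n$ is the ``symbol'' of a (bounded, since finitely supported) block Hankel matrix $\Gamma_{(b_n)}\in B\otimes_{min}B(\ell^2)$, and applying $J_2$ entrywise (equivalently, applying $\mathrm{id}_B\otimes J_2$) sends it to $\sum_n b_n\otimes (a_n\mapsto a_n\Gamma_n)$-type object, i.e.\ to $\sum_n b_n\otimes\Gamma_n\otimes\Gamma_n$. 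Concretely: the map $J_2$ of Proposition~\ref{thm-doubling} is defined on $\hank(\ell^2)$, and by its very definition $J_2\big(\sum_n c_n\Gamma_n\big)=\sum_n c_n\Gamma_n\otimes\Gamma_n$ whenever the left side is bounded; since Proposition~\ref{thm-doubling} asserts $J_2$ is a \emph{complete} isometry, tensoring with $\mathrm{id}_B$ preserves norms, which is exactly the displayed equality.

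The steps, in order, are: (1) identify $\Gamma_n^{\N}$ with $\Gamma_n$ and note $\overline{\Gamma_n}=\Gamma_n$, reducing the SAP inequality to the equality of the two norms displayed above with $\overline{\Gamma_n}$ replaced by $\Gamma_n$; (2) for a finitely supported $(b_n)\subset B$, observe $\sum_n b_n\otimes\Gamma_n$ lies in $B\otimes\hank(\ell^2)$ (finite sums of Hankel operators are Hankel), and that $(\mathrm{id}_B\otimes J_2)\big(\sum_n b_n\otimes\Gamma_n\big)=\sum_n b_n\otimes\Gamma_n\otimes\Gamma_n$ by the defining formula for $J_2$; (3) invoke the complete isometry of $J_2$ from Proposition~\ref{thm-doubling}: since $J_2$ is completely isometric, $\|(\mathrm{id}_B\otimes J_2)(\xi)\|=\|\xi\|$ for every $\xi\in B\otimes_{min}\hank(\ell^2)$, which with (2) gives exactly the desired equality; (4) since $\kappa=1$ works, $(\N,+)$ has SAP, i.e.\ $1$-SAP. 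The one genuinely delicate point — the main obstacle, though it is more bookkeeping than depth — is the careful matching of conventions in step (1)–(2): making sure that the complex conjugate $\overline{B(\ell^2)}$ in Definition~\ref{def-self-tensor} and the second tensor factor $B(\ell^2)$ carrying $J_2$'s output are identified compatibly (using that the $\Gamma_n$ are real symmetric so conjugation is trivial), and that ``completely isometric'' in Proposition~\ref{thm-doubling} is applied with the correct $C^*$-algebra $B$ in the auxiliary slot rather than just with matrix amplifications. Once these identifications are pinned down, the corollary is immediate.
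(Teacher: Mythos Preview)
Your proposal is correct and is exactly the intended deduction: the paper states Corollary~\ref{cor-simple-n} immediately after Proposition~\ref{thm-doubling} without a separate proof, and your unwinding (identify $\Gamma_n^{\N}=\Gamma_n$, use $\overline{\Gamma_n}=\Gamma_n$, then invoke the complete isometry of $J_2$) is precisely what the reader is expected to supply. The only remark is that your worry about ``general $C^*$-algebra $B$ versus matrix amplifications'' is a non-issue: by definition of the minimal tensor norm (or by embedding $B\hookrightarrow B(H)$ and approximating by finite compressions), a complete isometry $T$ automatically satisfies $\|(\mathrm{id}_B\otimes T)(\xi)\|=\|\xi\|$ for every $C^*$-algebra $B$ and every $\xi$ in the algebraic tensor product.
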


\begin{remark}
Let $C^*(\{\Gamma_n\}_{n\in\N}) \subset B(\ell^2)$ (resp. $C^*(\{\Gamma_n \otimes\}_{n\in\N}) \subset B(\ell^2\otimes_2\ell^2)$)  denote the $C^*$-algebra generated by the family $\{\Gamma_n\}_{n\in\N}$ (resp. $\{\Gamma_n\otimes \Gamma_n\}_{n\in\N}$). Then one may check that the map $\Gamma_n\mapsto \Gamma_n\otimes \Gamma_n (\forall n\in\N)$ can not be extended to a $C^*$-representation from $C^*(\{\Gamma_n\}_{n\in\N})$ to $C^*(\{\Gamma_n\otimes \Gamma_n\}_{n\in\N})$.  Indeed,     by direct computation, one obtains   
\[ 
\|  2\Gamma_0\otimes \Gamma_0  - 2 \Gamma_1\otimes \Gamma_1    - (\Gamma_1\otimes \Gamma_1)^2\|= 3  > \| 2 \Gamma_0 -  2 \Gamma_1  - \Gamma_1^2 \|=\sqrt{5}.
\]
The same computation also implies that the system $\{Id\} \cup\{\Gamma_n\}_{n\in\N}$ has no SAP since 
\[
\|  2\Gamma_0\otimes \Gamma_0  - 2 \Gamma_1\otimes \Gamma_1    - Id\otimes Id\|= 3  > \| 2 \Gamma_0 -  2 \Gamma_1  - Id \|=\sqrt{5}.
\]
\end{remark}

\subsection{Proof of Proposition \ref{thm-doubling}}

Let $\{e_i: i \in  \N\}$ be the standard orthonormal basis of  $\ell^2$. For any integer  $k \ge 0$, define closed subspaces $\mathcal{H}_k$  and  $\mathcal{H}_{-k}$ of $\ell^2\otimes_2 \ell^2$ as 
\begin{align*}
\mathcal{H}_k = \overline{\spann} \{e_i \otimes e_{i+k}: i\in \N \} \an 
\mathcal{H}_{-k} = \overline{\spann} \{e_{i+k} \otimes e_{i}: i \in \N\}.
\end{align*}
 Clearly, $\mathcal{H}_\ell$ are mutually  orthogonal and 
\begin{align}\label{double-ortho-H}
\ell^2\otimes_2\ell^2 = \bigoplus_{\ell\in \Z} \mathcal{H}_\ell. 
\end{align}
Under the above orthogonal decomposition,  any operator $A \in B(\ell^2\otimes_2\ell^2)$ has  a block-matrix representation: 
\[
A = [A_{\ell, \ell'}]_{\ell, \ell'\in\Z} \,\,\text{\,with\,}\, \,A_{\ell, \ell'} \in B(\mathcal{H}_{\ell'}, \mathcal{H}_\ell). 
\]
In particular, an operator $D \in B(\ell^2\otimes_2\ell^2)$ (resp. $D' \in B(\ell^2\otimes_2 \ell^2)$) is called of block diagonal form (resp. of block anti-diagonal form), if 
\[
D  =
\left[
\begin{array}{ccccc}
\ddots& & & &
\\
&D_{-1}  & &&
\\
&& D_0 &&
\\
&&&D_1&
\\ 
&&&&\ddots
\end{array}
\right], \quad 
D' = \left[
\begin{array}{ccccc}
&&&&\iddots
\\
&&&D_{-1}'&
\\
&&D_0'&&
\\
&D_{1}'&&&
\\
\iddots&&&&
\end{array}
\right], 
\]
where $D_\ell \in B(\mathcal{H}_\ell, \mathcal{H}_\ell)$ and $D_{\ell}'\in B(\mathcal{H}_\ell, \mathcal{H}_{-\ell})$. For simplification, we write 
\begin{align}\label{def-diag-anti}
D = \diag  (D_\ell)_{\ell\in\Z} \an D' =  \adiag(D_\ell')_{\ell\in\Z}.
\end{align}

\begin{lemma}\label{lem-diag}
The subspace $\mathcal{H}_0 \subset \ell^2\otimes_2 \ell^2$ is $\Gamma_n \otimes \Gamma_n$-invariant for all $n\in \N$. Moreover, we have the following commutative diagram:
\begin{equation}\label{diag-cd}
\begin{tikzcd}
\mathcal{H}_0 \arrow{r}{ \qquad \Gamma_n\otimes \Gamma_n\qquad} \arrow[d, "\simeq", "U" swap]& \mathcal{H}_0 \arrow[d, "U", "\simeq" swap]
\\
\ell^2 \arrow{r}{\quad \Gamma_n \quad} & \ell^2 
\end{tikzcd},
\end{equation} 
where $U$ is the unitary operator such that 
\begin{align}\label{def-U}
\begin{array}{cccl}
U: & \mathcal{H}_0 &\rightarrow & \ell^2
\\
 & e_i\otimes e_i &  \mapsto & e_i, \quad i \in \N.
\end{array}
\end{align}
 In other words, for all $n\in \N$, 
\begin{align}\label{diag-interwining}
(\Gamma_n\otimes \Gamma_n)|_{\mathcal{H}_0} =   U^{-1} \Gamma_n U.
\end{align}
\end{lemma}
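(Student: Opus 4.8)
The plan is to verify everything directly on the standard orthonormal basis and then extend by linearity and continuity. First I would record the action of $\Gamma_n$ on basis vectors: from \eqref{def-gamma-n}, one has $\Gamma_n e_j = e_{n-j}$ whenever $0\le j\le n$ and $\Gamma_n e_j = 0$ otherwise. In particular $\Gamma_n$ is a partial isometry (the reversal $e_j\mapsto e_{n-j}$ on $\mathrm{span}\{e_0,\dots,e_n\}$, and $0$ elsewhere), so $\|\Gamma_n\|=1$ and hence $\Gamma_n\otimes\Gamma_n$ is a bounded operator of norm $1$ on $\ell^2\otimes_2\ell^2$; this legitimizes all the manipulations below.

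Next I would compute $(\Gamma_n\otimes\Gamma_n)(e_i\otimes e_i)$ for the spanning vectors of $\mathcal{H}_0$. Since $\Gamma_n\otimes\Gamma_n$ is an elementary tensor, this equals $\Gamma_n e_i\otimes\Gamma_n e_i$, which is $e_{n-i}\otimes e_{n-i}$ when $i\le n$ and $0$ otherwise. In either case the result lies in $\mathcal{H}_0$, so the dense subspace $\mathrm{span}\{e_i\otimes e_i:i\in\N\}$ is mapped into $\mathcal{H}_0$; since $\mathcal{H}_0$ is closed and $\Gamma_n\otimes\Gamma_n$ is bounded, the closed subspace $\mathcal{H}_0$ is $(\Gamma_n\otimes\Gamma_n)$-invariant.

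Finally, to establish \eqref{diag-interwining} (equivalently the commutativity of \eqref{diag-cd}) it suffices, by linearity, continuity, and the fact that $U$ is unitary, to check the identity $U(\Gamma_n\otimes\Gamma_n)(e_i\otimes e_i)=\Gamma_n U(e_i\otimes e_i)$ on the spanning vectors $e_i\otimes e_i$. Using the computation above together with \eqref{def-U}, the left-hand side equals $U(e_{n-i}\otimes e_{n-i})=e_{n-i}$ when $i\le n$ and $0$ otherwise, while the right-hand side equals $\Gamma_n e_i$, which is precisely $e_{n-i}$ when $i\le n$ and $0$ otherwise. The two sides agree, so the identity holds on a dense subspace of $\mathcal{H}_0$ and therefore on all of $\mathcal{H}_0$.

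I do not expect any genuine obstacle: the lemma is a bookkeeping identity expressing that $\Gamma_n\otimes\Gamma_n$, restricted to the ``diagonal copy'' $\mathcal{H}_0$ of $\ell^2$ inside $\ell^2\otimes_2\ell^2$, is nothing but a relabelled copy of $\Gamma_n$ itself. The only points that deserve an explicit word are the boundedness of $\Gamma_n\otimes\Gamma_n$ (handled by $\|\Gamma_n\|=1$) and the routine passage from the dense span of $\{e_i\otimes e_i\}$ to the closed subspace $\mathcal{H}_0$.
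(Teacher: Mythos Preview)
Your proof is correct and follows essentially the same approach as the paper: both compute $(\Gamma_n\otimes\Gamma_n)(e_i\otimes e_i)=\mathds{1}(i\le n)\,e_{n-i}\otimes e_{n-i}$ directly from \eqref{def-gamma-n} and then verify the intertwining relation $U(\Gamma_n\otimes\Gamma_n)=\Gamma_n U$ on the spanning vectors $e_i\otimes e_i$. Your version is slightly more explicit about the boundedness and density considerations, but the argument is the same.
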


\begin{proof}
Fix $n\in \N$. By  the definition of  $\Gamma_n$, we have  
\begin{align}\label{gamma-map}
\Gamma_n e_i   = \mathds{1}(i\le n) e_{n-i}, \quad  i \in \N. 
\end{align}
Hence,  for any $i \in \N$, 
\[
 (\Gamma_n \otimes \Gamma_n)  (e_i\otimes e_i) = \mathds{1}(i\le n) e_{n-i} \otimes e_{n-i}.   
\]
It follows that $\mathcal{H}_0$ is $\Gamma_n \otimes \Gamma_n$-invariant.  Now by \eqref{def-U}, for any $i \in \N$, 
\begin{align*}
\Gamma_n U(e_i\otimes e_i)  = &  \Gamma_n e_i = \mathds{1}(i\le n) e_{n-i}, 
\\
U (\Gamma_n \otimes \Gamma_n) (e_i \otimes e_i) & = U (\Gamma_n e_i \otimes \Gamma_n e_i) = \mathds{1}(i\le n) e_{n-i}. 
\end{align*}
The desired equality $\Gamma_n U = U (\Gamma_n \otimes \Gamma_n)$ then follows.  
\end{proof}

For any integer $k\ge 0$, define operators  $W_{\pm k}: \mathcal{H}_{\pm k} \rightarrow \mathcal{H}_0$  and $V_{\pm k}: \mathcal{H}_0 \rightarrow \mathcal{H}_{\pm k}$   as  follows: for any $i\in\N$, 
\begin{align}\label{def-WV}
\left\{ 
\begin{array}{l}
W_k(e_i \otimes e_{i+k}) = e_{i+k} \otimes e_{i+k},
\\
W_{-k}(e_{i+k} \otimes e_{i}) = e_{i+k} \otimes e_{i+k},  
\\
V_k(e_i\otimes e_i) =   e_{i}\otimes e_{i+k},
\\
V_{-k} (e_i\otimes e_i)= e_{i+k}\otimes e_i.
\end{array}
\right.
\end{align}
  Note that $W_{\pm k}$ are isometric embeddings and $V_{\pm k}$ are unitary operators.  In particular,  $W_0 = V_0 = I_{\mathcal{H}_0}$, where $I_{\mathcal{H}_0}$ denotes the identity map on $\mathcal{H}_0$.

\begin{remark}\label{rem-easy}
The operation rules in \eqref{def-WV} are easy to remember. Indeed, these operations share a common rule of increasing  only one of the indices  of a tensor $e_i\otimes e_j$ (even for the operators with negative indices as $W_{-k}$ and $V_{-k}$) in an appropriate way.  
\end{remark}

\begin{lemma}\label{lem-offdiag}
For any $\ell\in \Z$ and any $n\in \N$,  the operator $\Gamma_n\otimes \Gamma_n$ maps  $\mathcal{H}_\ell$ into  $\mathcal{H}_{-\ell}$: 
\[
(\Gamma_n \otimes \Gamma_n)|_{\mathcal{H}_\ell} \in B(\mathcal{H}_\ell, \mathcal{H}_{-\ell}).
\]
 Moreover, we have the following commutative diagram: 
\begin{equation*}
\begin{tikzcd}
\mathcal{H}_\ell \arrow[r, "\Gamma_n \otimes \Gamma_n"]   \arrow[d, "W_\ell" swap]&  \mathcal{H}_{-\ell}
\\
\mathcal{H}_0 \arrow[r, "\Gamma_n\otimes \Gamma_n"] & \mathcal{H}_0  \arrow[u,"V_{-\ell}" swap]
\end{tikzcd}.
\end{equation*}
In other words, 
\begin{align}\label{interwining-VW}
 (\Gamma_n \otimes \Gamma_n)|_{\mathcal{H}_\ell}= V_{-\ell}  \big [ (\Gamma_n\otimes \Gamma_n)|_{\mathcal{H}_0}\big]  W_\ell. 
\end{align}
Consequently, with respect to the orthogonal decomposition \eqref{double-ortho-H},  the operator $\Gamma_n\otimes \Gamma_n$ is of block anti-diagonal form.  According to  the notation \eqref{def-diag-anti}, we can write
\[
\Gamma_n\otimes \Gamma_n =  \adiag \Big ( V_{-\ell}  \big [ (\Gamma_n\otimes \Gamma_n)|_{\mathcal{H}_0}\big]  W_\ell\Big)_{\ell\in\Z}.
\]
\end{lemma}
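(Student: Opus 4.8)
The plan is to reduce everything to a direct computation on the orthonormal vectors spanning each $\mathcal{H}_\ell$, using only the explicit action $\Gamma_n e_i = \mathds{1}(i\le n)\,e_{n-i}$ recorded in \eqref{gamma-map}. Since all operators involved are bounded and the $\mathcal{H}_\ell$ are closed spans of such vectors, it suffices to verify the claimed identities on a spanning set and extend by linearity and continuity.

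First I would dispose of the invariance statement. Fix $\ell=k\ge 0$ and a spanning vector $e_i\otimes e_{i+k}\in\mathcal{H}_k$; then
\[
(\Gamma_n\otimes\Gamma_n)(e_i\otimes e_{i+k})=\mathds{1}(i\le n)\,\mathds{1}(i+k\le n)\,e_{n-i}\otimes e_{n-i-k}.
\]
When this vector is nonzero its first index exceeds its second by $k\ge 0$, so it lies in $\mathcal{H}_{-k}$; the case $\ell=-k<0$ is the mirror image, with $e_{i+k}\otimes e_i\mapsto\mathds{1}(i\le n)\mathds{1}(i+k\le n)\,e_{n-i-k}\otimes e_{n-i}\in\mathcal{H}_k$. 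Hence $(\Gamma_n\otimes\Gamma_n)|_{\mathcal{H}_\ell}\in B(\mathcal{H}_\ell,\mathcal{H}_{-\ell})$ for every $\ell\in\Z$.

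Next I would chase the square. With $\ell=k\ge0$ and $e_i\otimes e_{i+k}\in\mathcal{H}_k$, following the lower route: $W_k$ produces $e_{i+k}\otimes e_{i+k}$; then $(\Gamma_n\otimes\Gamma_n)|_{\mathcal{H}_0}$ produces $\mathds{1}(i+k\le n)\,e_{n-i-k}\otimes e_{n-i-k}$; then $V_{-k}$ raises the first index by $k$, yielding $\mathds{1}(i+k\le n)\,e_{n-i}\otimes e_{n-i-k}$. This equals the direct image computed above because for $k\ge 0$ one has $\mathds{1}(i\le n)\mathds{1}(i+k\le n)=\mathds{1}(i+k\le n)$. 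The case $\ell<0$ is symmetric (now $V_{-\ell}=V_{k}$ raises the second index instead), and together these give \eqref{interwining-VW}. If desired, one may then invoke Lemma~\ref{lem-diag} to replace $(\Gamma_n\otimes\Gamma_n)|_{\mathcal{H}_0}$ by $U^{-1}\Gamma_n U$, which is what makes this factorization useful later on.

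The block form is then automatic: relative to the decomposition $\ell^2\otimes_2\ell^2=\bigoplus_{\ell\in\Z}\mathcal{H}_\ell$ of \eqref{double-ortho-H}, the invariance $(\Gamma_n\otimes\Gamma_n)(\mathcal{H}_\ell)\subset\mathcal{H}_{-\ell}$ forces the block-matrix of $\Gamma_n\otimes\Gamma_n$ to be supported on the positions $(-\ell,\ell)$, which is precisely block anti-diagonal form in the sense of \eqref{def-diag-anti}, with $(-\ell,\ell)$-entry equal to $V_{-\ell}\big[(\Gamma_n\otimes\Gamma_n)|_{\mathcal{H}_0}\big]W_\ell$. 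The only point that demands any care is the bookkeeping of the truncation indicators in the diagram chase: one must notice that detouring through $\mathcal{H}_0$ via $W_\ell$ and $V_{-\ell}$, which (as observed in Remark~\ref{rem-easy}) only raises a tensor index, loses no support, precisely because it is the larger of the two indices that already decides whether $\Gamma_n\otimes\Gamma_n$ annihilates the vector. Beyond that there is no real obstacle.
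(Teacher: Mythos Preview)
Your proof is correct and follows essentially the same approach as the paper: both verify the invariance and the intertwining relation by direct computation on the spanning vectors $e_i\otimes e_{i+k}$ (and their mirror images), tracking the indicator $\mathds{1}(i+k\le n)$ through the definitions of $W_{\pm k}$ and $V_{\pm k}$. Your explicit remark that $\mathds{1}(i\le n)\mathds{1}(i+k\le n)=\mathds{1}(i+k\le n)$ is the one place where you are slightly more careful than the paper, which simply writes the simplified indicator from the start.
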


\begin{proof}
The case $\ell=0$ is trivial.  Now take any positive integer $k \ge 1$. Then for all $i \in \N$,
\begin{align*}
(\Gamma_n \otimes \Gamma_n) (e_i\otimes e_{i+k}) & = \mathds{1}(i+k\le n) e_{n-i} \otimes e_{n-i-k} \in \mathcal{H}_{-k},
\\
(\Gamma_n \otimes \Gamma_n) (e_{i+k} \otimes e_i) &= \mathds{1}(i+k\le n) e_{n-i-k} \otimes e_{n-i} \in \mathcal{H}_{k}.
\end{align*}
It follows  that $(\Gamma_n\otimes \Gamma_n)(\mathcal{H}_\ell) \subset \mathcal{H}_{-\ell}$ for any integer $\ell\in \Z\setminus \{0\}$.   By the definitions of $W_{\pm k}, V_{\pm k}$ in \eqref{def-WV} and  Remark~\ref{rem-easy}, we have 
\begin{align*}
V_{-k} (\Gamma_n\otimes \Gamma_n) W_k   (e_i\otimes e_{i+k}) & =  V_{-k} (\Gamma_n\otimes \Gamma_n)  (e_{i+k}\otimes e_{i+k}) 
\\
&  = V_{-k} (\mathds{1}(i+k\le n) e_{n-i-k}\otimes e_{n-i-k})
\\
& = \mathds{1}(i+k\le n)  e_{n-i}\otimes e_{n-i-k}
\end{align*}
and 
\begin{align*}
V_k(\Gamma_n\otimes \Gamma_n) W_{-k} (e_{i+k}\otimes e_i) &= V_k(\Gamma_n\otimes \Gamma_n) (e_{i+k}\otimes e_{i+k})  
\\
& = V_k ( \mathds{1}(i+k\le n) e_{n-i-k}\otimes e_{n-i-k}) 
\\
& = \mathds{1}(i+k\le n) e_{n-i-k}\otimes e_{n-i}.
\end{align*}
Therefore, for all $i \in \N$, 
\begin{align*}
V_{-k} (\Gamma_n\otimes \Gamma_n) W_k   (e_i\otimes e_{i+k}) &= (\Gamma_n\otimes \Gamma_n) (e_i \otimes e_{i+k}),
\\
V_k(\Gamma_n\otimes \Gamma_n) W_{-k} (e_{i+k}\otimes e_i) & = (\Gamma_n\otimes \Gamma_n) (e_{i+k}\otimes e_i).
\end{align*}
The desired equality \eqref{interwining-VW} follows. 
\end{proof}

Let $\bigoplus_{\ell\in \Z} \mathcal{H}_0$ denote the Hilbert space 
\[
 \bigoplus_{\ell\in \Z} \mathcal{H}_0 = \Big\{ (v_\ell)_{\ell\in\Z}\Big|  \text{$v_\ell\in \mathcal{H}_0$ for all $\ell\in\Z$ and $\sum_{\ell\in\Z} \| v_\ell\|^2 <\infty$} \Big\} 
\]
carrying the natural inner product. The unitary operator $U:   \mathcal{H}_0 \rightarrow \ell^2$ defined in \eqref{def-U} induces a faithful $C^*$-representation:
\begin{align}\label{def-rhoU}
\begin{array}{cccl}
\mathcal{R}_U:  &B(\ell^2) &\xrightarrow[\textit{representation}]{\textit{faithful}}& B\big(\bigoplus_{\ell\in \Z} \mathcal{H}_0\big)
\vspace{2mm}
\\
& T & \mapsto&  \mathcal{R}_U(T)=  (U^{-1} T U)^{\oplus \Z}
\end{array},
\end{align}
where the operator $(U^{-1} T U)^{\oplus \Z}$ is given by $(U^{-1} T U)^{\oplus \Z} ((v_\ell)_{\ell\in\Z})  = ( U^{-1} T U v_\ell)_{\ell\in\Z}$. 

Recall the orthogonal decomposition in \eqref{double-ortho-H}:
$\ell^2\otimes_2\ell^2 = \bigoplus_{\ell\in \Z} \mathcal{H}_\ell.$
\begin{proposition}\label{prop-mul-repn}
For any $n\in \N$, the operator $\Gamma_n\otimes \Gamma_n$ admits the factorization:
\begin{align}\label{double-gamma-dec}
\Gamma_n\otimes \Gamma_n =  \mathcal{V}  \mathcal{F}  \mathcal{R}_U(\Gamma_n)  \mathcal{W}, 
\end{align}
where $\mathcal{V}, \mathcal{W}$ are defined by  (recall the definitions of $W_{\pm k}, V_{\pm k}$ in \eqref{def-WV})
\[
   \bigoplus_{\ell\in \Z} \mathcal{H}_0  \xrightarrow{ \mathcal{V} = \bigoplus_{\ell\in\Z} V_\ell} \bigoplus_{\ell\in\Z} \mathcal{H}_\ell  
 , \quad 
    \bigoplus_{\ell\in\Z} \mathcal{H}_\ell \xrightarrow{\mathcal{W} = \bigoplus_{\ell\in\Z} W_\ell} \bigoplus_{\ell\in \Z} \mathcal{H}_0
\]
and $\mathcal{F}: \bigoplus_{\ell\in \Z} \mathcal{H}_0 \rightarrow \bigoplus_{\ell\in \Z} \mathcal{H}_0$ is the unitary operator defined by
$
\mathcal{F}( (v_\ell)_{\ell\in\Z})= (v_{-\ell})_{\ell\in\Z}$. 
\end{proposition}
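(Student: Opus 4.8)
The plan is to assemble the factorization \eqref{double-gamma-dec} directly from the two structural lemmas already proved. By Lemma~\ref{lem-offdiag}, with respect to the orthogonal decomposition $\ell^2\otimes_2\ell^2 = \bigoplus_{\ell\in\Z}\mathcal{H}_\ell$, the operator $\Gamma_n\otimes\Gamma_n$ is block anti-diagonal: it sends $\mathcal{H}_\ell$ into $\mathcal{H}_{-\ell}$ and on that block it equals $V_{-\ell}\,\big[(\Gamma_n\otimes\Gamma_n)|_{\mathcal{H}_0}\big]\,W_\ell$. By Lemma~\ref{lem-diag}, the central block $(\Gamma_n\otimes\Gamma_n)|_{\mathcal{H}_0}$ is, via the unitary $U\colon\mathcal{H}_0\to\ell^2$, intertwined with $\Gamma_n$, i.e.\ $(\Gamma_n\otimes\Gamma_n)|_{\mathcal{H}_0} = U^{-1}\Gamma_n U$. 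First I would substitute the second identity into the first to obtain, for every $\ell\in\Z$,
\[
(\Gamma_n\otimes\Gamma_n)|_{\mathcal{H}_\ell} = V_{-\ell}\,U^{-1}\Gamma_n U\,W_\ell .
\]

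Next I would read this family of block identities ($\ell\in\Z$) as a single operator identity on $\ell^2\otimes_2\ell^2$. The map $\mathcal{W} = \bigoplus_{\ell\in\Z}W_\ell$ sends $\bigoplus_\ell\mathcal{H}_\ell = \ell^2\otimes_2\ell^2$ into $\bigoplus_\ell\mathcal{H}_0$, carrying the $\ell$-th summand $\mathcal{H}_\ell$ onto the $\ell$-th copy of $\mathcal{H}_0$; then $\mathcal{R}_U(\Gamma_n) = (U^{-1}\Gamma_n U)^{\oplus\Z}$ applies $U^{-1}\Gamma_n U$ on each copy; the flip $\mathcal{F}\colon (v_\ell)_\ell\mapsto(v_{-\ell})_\ell$ moves the $\ell$-th copy of $\mathcal{H}_0$ to the $(-\ell)$-th copy; and finally $\mathcal{V} = \bigoplus_\ell V_\ell$ sends the $(-\ell)$-th copy of $\mathcal{H}_0$ onto $\mathcal{H}_{-\ell}$ via $V_{-\ell}$. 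Tracking a vector $v\in\mathcal{H}_\ell$ through the composition $\mathcal{V}\,\mathcal{F}\,\mathcal{R}_U(\Gamma_n)\,\mathcal{W}$ thus yields exactly $V_{-\ell}\,U^{-1}\Gamma_n U\,W_\ell\, v$, which by the displayed identity equals $(\Gamma_n\otimes\Gamma_n)v$. Since the $\mathcal{H}_\ell$ span a dense subspace and $\Gamma_n\otimes\Gamma_n$ is bounded, \eqref{double-gamma-dec} follows. I would also note in passing that $\mathcal{V}$, $\mathcal{F}$, $\mathcal{W}$ are contractions (indeed $\mathcal{V},\mathcal{F}$ are unitary and $\mathcal{W}$ is a direct sum of isometries, hence an isometry), and $\mathcal{R}_U$ is a faithful $*$-representation, so the factorization is of the block-diagonalized contractive form advertised in the introduction.

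The only genuine bookkeeping point — and the place where a careless argument could go wrong — is matching the index shifts of $\mathcal{F}$, $\mathcal{V}$, $\mathcal{W}$ so that the composition really lands a vector of $\mathcal{H}_\ell$ in $\mathcal{H}_{-\ell}$ rather than in some other block; this is precisely why $\mathcal{F}$ is inserted between $\mathcal{R}_U(\Gamma_n)$ and $\mathcal{V}$, and why it is $V_{-\ell}$ (not $V_\ell$) that must act on the image. I expect this indexing check to be the main, though entirely routine, obstacle; everything else is immediate from Lemmas~\ref{lem-diag} and~\ref{lem-offdiag}.
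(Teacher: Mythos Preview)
Your proposal is correct and follows essentially the same route as the paper: combine Lemmas~\ref{lem-diag} and~\ref{lem-offdiag} to get $(\Gamma_n\otimes\Gamma_n)|_{\mathcal{H}_\ell}=V_{-\ell}U^{-1}\Gamma_n U\,W_\ell$, then track a vector in $\mathcal{H}_\ell$ through $\mathcal{V}\mathcal{F}\mathcal{R}_U(\Gamma_n)\mathcal{W}$ and verify the indices match. The paper carries out exactly this computation on an arbitrary $\xi\in\mathcal{H}_{\ell_0}$.
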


\begin{proof}
By Lemmas \ref{lem-diag} and \ref{lem-offdiag}, 
\begin{align*}
 \Gamma_n\otimes \Gamma_n & =  \adiag \Big( V_{-\ell}  \big [ (\Gamma_n\otimes \Gamma_n)|_{\mathcal{H}_0}\big]  W_\ell \Big)_{\ell\in\Z} 
\\
& = \adiag \Big( V_{-\ell}  U^{-1}\Gamma_n U W_\ell \Big)_{\ell\in\Z}. 
\end{align*}
Take an arbitray $\ell_0\in\Z$ and  $\xi \in \mathcal{H}_{\ell_0}$. First of all, 
\[
(\Gamma_n\otimes \Gamma_n)(\xi) = V_{-{\ell_0}} U^{-1} \Gamma_n U W_{\ell_0}(\xi) \in \mathcal{H}_{-{\ell_0}}.
\]
On the other hand, by the definitions of $\mathcal{V}, \mathcal{F}, \mathcal{R}_U$ and $\mathcal{W}$, 
\begin{align*}
[\mathcal{V}\mathcal{F}\mathcal{R}_U(\Gamma_n)\mathcal{W} ] (\xi) & = \Big[\Big( \bigoplus_{\ell\in\Z} V_\ell\Big)  \circ \mathcal{F}  \circ (U^{-1} \Gamma_n U)^{\oplus \Z} \circ  \Big(\bigoplus_{\ell\in\Z} W_\ell\Big) \Big](\xi)
\\
&= \Big[\Big( \bigoplus_{\ell\in\Z} V_\ell\Big)  \circ \mathcal{F} \Big]  (\cdots, 0,  \underbrace{U^{-1} \Gamma_n U W_{\ell_0} (\xi)}_{\text{at the $\ell_0$-th position}}, 0, \cdots) 
\\
 & = \Big( \bigoplus_{\ell\in\Z} V_\ell\Big)  (\cdots, 0,  \underbrace{U^{-1} \Gamma_n U W_{\ell_0} (\xi)}_{\text{at the $(-\ell_0)$-th position}}, 0, \cdots)
\\
& = (\cdots, 0,  \underbrace{ V_{-\ell_0}U^{-1} \Gamma_n U W_{\ell_0} (\xi)}_{\text{at the $(-\ell_0)$-th position}}, 0, \cdots)
\\
&= V_{-{\ell_0}} U^{-1} \Gamma_n U W_{\ell_0}(\xi) \in \mathcal{H}_{-{\ell_0}}.
\end{align*}
Hence 
\[
(\Gamma_n\otimes \Gamma_n)(\xi)  = [\mathcal{V}\mathcal{F}\mathcal{R}_U(\Gamma_n)\mathcal{W} ] (\xi). 
\]
Since $\ell_0\in \Z$ and $\xi \in \mathcal{H}_{\ell_0}$ are chosen arbitrarily, the desired equality  \eqref{double-gamma-dec} follows.  
\end{proof}

Now we are ready to prove Proposition~\ref{thm-doubling}.   We need the following elementary equality.

\begin{lemma}\label{lem-Pr}
For any Hilbert space $\mathscr{H}$ and any bounded sequence $(x_n)_{n\ge 0}$ in $B(\mathscr{H})$,
\begin{align}\label{dr-eq}
\Big\|\sum_{n=0}^\infty x_n \otimes \Gamma_n\Big\|_{B(\mathscr{H} \otimes_2 \ell^2)} = \sup_{0<r<1} \Big\|\sum_{n=0}^\infty  r^n x_n \otimes \Gamma_n\Big\|_{B(\mathscr{H} \otimes_2 \ell^2)} \in [0, \infty].
\end{align}
\end{lemma}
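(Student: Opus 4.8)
The plan is to exploit the homogeneity of the operators $\Gamma_n$ under a natural diagonal unitary action on $\ell^2(\N)$, which makes the block-Hankel matrix $\sum_n x_n\otimes\Gamma_n$ transform covariantly when we dilate the variables. Concretely, for $0<r<1$ let $D_r\in B(\ell^2)$ be the diagonal contraction $D_r e_i = r^i e_i$, $i\in\N$. From the formula $\Gamma_n e_i = \ch(i\le n)e_{n-i}$ (equation \eqref{gamma-map}) one computes directly that $D_r\Gamma_n D_r = r^n\Gamma_n$ for every $n$, since $D_r$ contributes a factor $r^i$ on the right and $r^{n-i}$ on the left. Hence for any bounded sequence $(x_n)$ in $B(\mathscr H)$,
\[
(Id_{\mathscr H}\otimes D_r)\Big(\sum_{n=0}^\infty x_n\otimes\Gamma_n\Big)(Id_{\mathscr H}\otimes D_r) = \sum_{n=0}^\infty r^n x_n\otimes\Gamma_n,
\]
and since $\|Id_{\mathscr H}\otimes D_r\|\le 1$ this gives immediately $\big\|\sum_n r^n x_n\otimes\Gamma_n\big\|\le\big\|\sum_n x_n\otimes\Gamma_n\big\|$, which is the inequality ``$\ge$'' in \eqref{dr-eq} (including the degenerate case where the right-hand side is $+\infty$, in which there is nothing to prove).

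For the reverse inequality ``$\le$'', the point is to recover the full operator as a limit of its dilates. I would first reduce to the case where only finitely many $x_n$ are nonzero, and in that case the series is a genuine finite sum so $D_r\Gamma_n D_r = r^n\Gamma_n\to\Gamma_n$ in norm as $r\uparrow 1$ for each of the finitely many indices $n$, giving $\sum_n r^n x_n\otimes\Gamma_n\to\sum_n x_n\otimes\Gamma_n$ in $B(\mathscr H\otimes_2\ell^2)$ and hence convergence of norms. Thus for finitely supported sequences the supremum on the right is actually attained in the limit $r\to 1^-$ and equals the left-hand side. To pass to general bounded $(x_n)$, note that the left-hand side of \eqref{dr-eq} is by definition $\sup_N\big\|\sum_{n\le N}x_n\otimes\Gamma_n\big\|$ when one interprets the informal series as the supremum of norms of its finite truncations (this is how block Hankel matrices are defined just before the lemma); applying the finitely-supported case to each truncation $(x_n)_{n\le N}$ and using that truncation decreases the right-hand side (again by the contraction argument, replacing some $x_n$ by $0$) yields $\le$.

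The only genuinely delicate point is the bookkeeping around the ``informal series'': one must fix the convention that $\big\|\sum_n x_n\otimes\Gamma_n\big\|$ denotes $\sup_N\big\|\sum_{n=0}^N x_n\otimes\Gamma_n\big\|\in[0,\infty]$, check that the two suprema (over truncations $N$ and over dilations $r$) can be interchanged, and observe that $\sum_{n=0}^N r^n x_n\otimes\Gamma_n\to\sum_{n=0}^\infty r^n x_n\otimes\Gamma_n$ for fixed $r<1$ because the tail $\sum_{n>N}r^n x_n\otimes\Gamma_n$ has norm at most $(\sup_n\|x_n\|)\sum_{n>N}r^n\to 0$ — here the geometric decay of $r^n$ and $\|\Gamma_n\|=1$ make the series genuinely norm-convergent for $r<1$, which is exactly why the regularized object is well-behaved. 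Assembling these observations gives both inequalities and hence the claimed identity.
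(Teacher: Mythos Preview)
Your first inequality (using $D_r$ to show $\|\sum_n r^n x_n\otimes\Gamma_n\|\le\|\sum_n x_n\otimes\Gamma_n\|$) is correct and matches the paper exactly. The reverse direction has a genuine gap.

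You interpret the left-hand side as $\sup_N\|\sum_{n\le N}x_n\otimes\Gamma_n\|$ and then claim that ``truncation decreases the right-hand side'', i.e.\ that $\|\sum_{n\le N}r^nx_n\otimes\Gamma_n\|\le\|\sum_n r^nx_n\otimes\Gamma_n\|$. But this is precisely the assertion that anti-diagonal truncation (setting $x_n=0$ for $n>N$) is contractive on Hankel operators, which is \emph{false}: in $\BMOA$ language it says the Fourier partial-sum projection $S_N$ is a contraction, whereas in fact $\|S_N\|_{\BMOA\to\BMOA}\asymp\log N$. The $D_r$ identity $D_r\Gamma_nD_r=r^n\Gamma_n$ shows only that Poisson dilation is contractive; it says nothing about chopping the series in $n$. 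Your claim that $\|\Gamma_x\|=\sup_N\|\sum_{n\le N}x_n\otimes\Gamma_n\|$ ``by definition'' is likewise not what the paper defines, and the equality does not hold (only the inequality $\le$ survives, since the $L\times L$ block $[x_{i+j}]_{0\le i,j\le L}$ is a compression of $\sum_{n\le 2L}x_n\otimes\Gamma_n$).

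The paper avoids this by using \emph{spatial} truncations $[x_{i+j}]_{0\le i,j\le L}=P_L\Gamma_xP_L$ rather than truncating the sum in $n$. Then $\|\Gamma_x\|=\sup_L\|P_L\Gamma_xP_L\|$ is the standard fact for infinite matrices; for each fixed $L$ the finite-matrix norm $\|[r^{i+j}x_{i+j}]_{0\le i,j\le L}\|$ is continuous at $r=1$; and the needed compression inequality $\|[r^{i+j}x_{i+j}]_{0\le i,j\le L}\|\le\|\Gamma_{(r^nx_n)}\|$ is trivial because $P_L$ is an orthogonal projection. Your argument becomes correct (and essentially identical to the paper's) once you swap the $n$-truncation for this $L\times L$ spatial truncation.
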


\begin{proof}
For any $r\in (0,1)$, define a block diagonal operator 
$
D_r= \diag(I_{\mathscr{H}}, rI_{\mathscr{H}}, r^2I_{\mathscr{H}}, \cdots)$  on $\mathscr{H} \otimes_2\ell^2 = \ell^2(\mathscr{H})$, where $I_{\mathscr{H}}$ is the identity map on $\mathscr{H}$. Clearly, $\| D_r\|=1$. Hence  
\[
\sup_{0<r<1} \| \Gamma_{(r^n x_n)_{n\in\N}}\|  =  \sup_{0<r<1} \|D_r \Gamma_{(x_n)_{n\in\N}} D_r \| \le \| \Gamma_{(x_n)_{n\in\N}}\|. 
\]
 Conversely,  by  the standard approximation,
\begin{align*}
\|\Gamma_{(x_n)_{n\in\N}}\|   = \sup_{L\ge 0}\,\, \sup_{0< r<1} \| [r^{i+j}x_{i+j}]_{0\le i, j \le L}\|
  \le \sup_{0<r<1}\| [r^{i+j}x_{i+j}]_{i, j \ge 0}\| =  \sup_{0<r<1} \| \Gamma_{(r^n x_n)_{n\in\N}}\|.
\end{align*}
The desired equality then follows. 
\end{proof}

\begin{remark}\label{rem-Pr}
The equality \eqref{dr-eq}  applied to  $(x_n\otimes \Gamma_n)_{n\in\N}$ in $B(\mathscr{H}\otimes_2 \ell^2)$ yields
\[
\Big\|\sum_{n=0}^\infty x_n  \otimes \Gamma_n \otimes \Gamma_n\Big\|_{B(\mathscr{H} \otimes_2 \ell^2 \otimes_2 \ell^2)} = \sup_{0<r<1} \Big\|\sum_{n=0}^\infty  r^n x_n \otimes \Gamma_n \otimes \Gamma_n\Big\|_{B(\mathscr{H}\otimes_2 \ell^2 \otimes_2 \ell^2)}.
\] 
Note that if one side of the above equality is finite, then the sequence $(x_n)_{n\in\N}$ must be bounded. 
\end{remark}

\begin{proof}[Proof of Proposition \ref{thm-doubling}]
Consider the following  linear subspace of $\hank(\ell^2)$ (which is clearly not norm-closed nor norm-dense): 
\[
\hank^{(\infty)}(\ell^2)  = \Big\{ \sum_{n=0}^\infty a_n \Gamma_n\Big| a_n\in \C \an \sum_{n=0}^\infty |a_n|<\infty\Big\} \subset \hank(\ell^2).
\]
By Lemma \ref{lem-Pr} and Remark \ref{rem-Pr}, we only need to show that the restriction of $J_2$ on $\hank^{(\infty)}(\ell^2)$ is completely isometric. By Proposition \ref{prop-mul-repn},  restricted on $\hank^{(\infty)}(\ell^2)$,  
\[
J_2 \Big(\sum_{n=0}^\infty a_n \Gamma_n\Big)  =  \sum_{n=0}^\infty a_n \Gamma_n\otimes  \Gamma_n = \sum_{n=0}^\infty a_n   \mathcal{V}  \mathcal{F}  \mathcal{R}_U(\Gamma_n)  \mathcal{W} =   \mathcal{V}  \mathcal{F}  \mathcal{R}_U\Big(\sum_{n=0}^\infty a_n \Gamma_n\Big)  \mathcal{W}. 
\]
The operator  $\mathcal{R}_U$ defined in \eqref{def-rhoU} is a  $C^*$-representation, hence $\| \mathcal{R}_U\|_{cb}\le 1$. Since 
$\| \mathcal{F}\| \le 1, \| \mathcal{V}\|\le 1$ and $\| \mathcal{W}\| \le 1$, 
we have  (see, e.g., \cite[Thm. 1.6]{Pisier-Operator-space-book})
\[
\|J_2: \hank^{(\infty)}(\ell^2) \rightarrow B(\ell^2 \otimes_2 \ell^2)\|_{cb} \le  \| \mathcal{V} \mathcal{F}\| \| \mathcal{R}_U\|_{cb} \| \mathcal{W}\| \le 1. 
\]
Conversely,  by the equality \eqref{diag-interwining}, 
\[
 \sum_{n=0}^\infty a_n \Gamma_n   =  U \Big[\Big(\sum_{n=0}^\infty a_n \Gamma_n \otimes \Gamma_n\Big)\Big|_{\mathcal{H}_0}  \Big] U^{-1}= U\Big[ J_2   \Big(\sum_{n=0}^\infty a_n \Gamma_n \Big)\Big|_{\mathcal{H}_0}\Big]U^{-1}. 
\]
It follows that the map 
\[
\begin{array}{ccc}
J_2 ( \hank^{(\infty)}(\ell^2) )  & \xrightarrow{J_2^{-1}} &   \hank^{(\infty)}(\ell^2)
\vspace{3mm}
\\
 J_2   \Big(\sum_{n=0}^\infty a_n \Gamma_n \Big) &  \mapsto  & \sum_{n=0}^\infty a_n \Gamma_n 
\end{array}
\]
is completely contractive and we complete the whole proof. 
\end{proof}

\section{Self-absorption property: general results}\label{sec-st}

Recall the notion  of SAP introduced in Definition \ref{def-self-tensor}. 
In this section, we give several general properties of SAP: 
\begin{itemize}
\item SAP is preserved by tensor product (Proposition \ref{prop-tensor}). 
\item  Any system of Boolean operators has a restricted-version SAP (Proposition \ref{prop-positive}). 
\item In general,  SAP is neither preserved by direct sum (Example \ref{ex-overlap}) nor  preserved by spatial compression (Example \ref{ex-compression}).
\item The standard Fell's absorption principle implies that the regular representation system of any group has SAP (Example~\ref{ex-regular}).  However, it is quite non-trivial to prove that any spatial compression of the regular representation system of a group also has SAP (Corollary~\ref{cor-gp-compression}). Note that our result implies in particular a recent result of Katsoulis \cite[Cor. 5.3]{Katsoulis:2023aa}. 
\end{itemize}

\subsection{Tensor product preserves SAP}
Given two families $\mathcal{F}_1 = \{x_i\}_{i\in I}$ and $\mathcal{F}_2 = \{y_j\}_{j\in J}$ in two $C^*$-algebras $A_1$ and $A_2$ respectively, slightly abusing the notation $\otimes_{min}$, we denote
\[
\mathcal{F}_1 \otimes_{min} \mathcal{F}_2  = \{x_i \otimes y_j\}_{ (i,j)\in I\times J} \subset A_1 \otimes_{min} A_2. 
\]

Denote by $\kappa_{SA}(\mathcal{F})$ the best constant in the defining inequality  \eqref{def-kappa} of $\kappa$-SAP of a family $\mathcal{F}$. Hence $\mathcal{F}$ has SAP if and only if $\kappa_{SA}(\mathcal{F}) = 1$.

\begin{proposition}\label{prop-tensor}
For any two families $\mathcal{F}_1 \subset A_1$ and $\mathcal{F}_2\subset A_2$ in $C^*$-algebras $A_1$ and $A_2$, 
\[
\kappa_{SA}(\mathcal{F}_1\otimes_{min} \mathcal{F}_2)\le \kappa_{SA}(\mathcal{F}_1) \kappa_{ST}(\mathcal{F}_2).
\]
In particular, if both $\mathcal{F}_1$ and $\mathcal{F}_2$ have SAP, then so does $\mathcal{F}_1\otimes_{min} \mathcal{F}_2$. 
\end{proposition}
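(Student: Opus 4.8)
The plan is to reduce the defining inequality \eqref{def-kappa} for the tensor family $\mathcal{F}_1\otimes_{min}\mathcal{F}_2=\{x_i\otimes y_j\}_{(i,j)\in I\times J}$ to the corresponding inequalities for $\mathcal{F}_1$ and $\mathcal{F}_2$ separately, by peeling off the two factors one at a time. We may assume $\kappa_{SA}(\mathcal{F}_1)$ and $\kappa_{ST}(\mathcal{F}_2)$ are finite (otherwise there is nothing to prove); since all coefficient families in \eqref{def-kappa} are finitely supported, every norm below automatically lies in $[0,\infty)$. Fix a $C^*$-algebra $B$ and a finitely supported family $(b_{ij})$ in $B$. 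The starting observation is that $\overline{A_1\otimes_{min}A_2}=\overline{A_1}\otimes_{min}\overline{A_2}$ canonically, with $\overline{x_i\otimes y_j}=\overline{x_i}\otimes\overline{y_j}$ (see \cite[Section 2.3]{newbook-pisier}), and that for finite sums every associativity/commutativity map between minimal tensor products of $C^*$-algebras is an isometric $*$-isomorphism. Swapping the third and fourth legs therefore gives
\[
\Big\|\sum_{i,j}b_{ij}\otimes(x_i\otimes y_j)\otimes\overline{(x_i\otimes y_j)}\Big\|=\Big\|\sum_{i,j}b_{ij}\otimes(x_i\otimes\overline{x_i})\otimes(y_j\otimes\overline{y_j})\Big\|,
\]
where on the right $b_{ij}\in B$, $x_i\otimes\overline{x_i}\in A_1\otimes_{min}\overline{A_1}$ and $y_j\otimes\overline{y_j}\in A_2\otimes_{min}\overline{A_2}$.

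First I would peel off $\mathcal{F}_2$: put $c_j:=\sum_i b_{ij}\otimes(x_i\otimes\overline{x_i})$, an element of the $C^*$-algebra $B':=B\otimes_{min}(A_1\otimes_{min}\overline{A_1})$, and apply the defining inequality of $\kappa_{ST}(\mathcal{F}_2)$ to the family $\{y_j\}_{j\in J}$ with coefficients $(c_j)$ in $B'$; this bounds the common value of the displayed identity above and below by $\kappa_{ST}(\mathcal{F}_2)^{\pm1}\big\|\sum_{i,j}b_{ij}\otimes(x_i\otimes\overline{x_i})\otimes y_j\big\|$. Then I would move the $y_j$-leg past the $(x_i\otimes\overline{x_i})$-leg by another commutativity isomorphism and peel off $\mathcal{F}_1$: put $a_i:=\sum_j b_{ij}\otimes y_j$, an element of $B\otimes_{min}A_2$, and apply the defining inequality of $\kappa_{SA}(\mathcal{F}_1)$ to $\{x_i\}_{i\in I}$ with coefficients $(a_i)$; this bounds $\big\|\sum_{i,j}b_{ij}\otimes(x_i\otimes\overline{x_i})\otimes y_j\big\|$ above and below by $\kappa_{SA}(\mathcal{F}_1)^{\pm1}\big\|\sum_{i,j}b_{ij}\otimes x_i\otimes y_j\big\|$. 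Combining the two two-sided estimates (and using once more that the relevant leg permutation is isometric) yields
\[
\frac{1}{\kappa_{SA}(\mathcal{F}_1)\kappa_{ST}(\mathcal{F}_2)}\Big\|\sum_{i,j}b_{ij}\otimes(x_i\otimes y_j)\Big\|\le\Big\|\sum_{i,j}b_{ij}\otimes(x_i\otimes y_j)\otimes\overline{(x_i\otimes y_j)}\Big\|\le\kappa_{SA}(\mathcal{F}_1)\kappa_{ST}(\mathcal{F}_2)\Big\|\sum_{i,j}b_{ij}\otimes(x_i\otimes y_j)\Big\|.
\]
Since $B$ and $(b_{ij})$ are arbitrary, this is exactly $\kappa_{SA}(\mathcal{F}_1\otimes_{min}\mathcal{F}_2)\le\kappa_{SA}(\mathcal{F}_1)\kappa_{ST}(\mathcal{F}_2)$, and the ``in particular'' clause follows since SAP of $\mathcal{F}_1$ forces $\kappa_{SA}(\mathcal{F}_1)=1$ and SAP of $\mathcal{F}_2$ forces its self-tensor constant $\kappa_{ST}(\mathcal{F}_2)=1$.

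I expect the only genuine difficulty to be the bookkeeping of the tensor legs: one must be careful that at each peeling step the ``coefficients'' $c_j$, resp. $a_i$, indeed belong to a single $C^*$-algebra — this is precisely why the definition of SAP is stated with an arbitrary ambient $C^*$-algebra, and it is exactly this flexibility (applied with $B'=B\otimes_{min}A_1\otimes_{min}\overline{A_1}$ and then with $B\otimes_{min}A_2$) that makes the two reductions legitimate — and that the conjugation distributes over $\otimes_{min}$ via the expected canonical identification; everything else is a routine invocation of the isometry of $*$-isomorphisms. Note that the argument is symmetric: peeling $\mathcal{F}_1$ first would instead produce the bound $\kappa_{ST}(\mathcal{F}_1)\kappa_{SA}(\mathcal{F}_2)$.
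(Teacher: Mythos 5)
Your proposal is correct and follows essentially the same route as the paper's proof: permute the tensor legs to group $x_i\otimes\overline{x}_i$ and $y_j\otimes\overline{y}_j$, then peel off $\mathcal{F}_2$ with coefficients in $B\otimes_{min}A_1\otimes_{min}\overline{A_1}$ and subsequently $\mathcal{F}_1$ with coefficients in $B\otimes_{min}A_2$, each time invoking the defining inequality over an enlarged ambient $C^*$-algebra. (The paper writes out only the upper bound and notes the converse is similar; your explicit two-sided treatment, and your reading of $\kappa_{ST}$ as the same self-absorption constant, match the intended argument.)
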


\begin{proof}
Write $\mathcal{F}_1 = \{x_i\}_{i\in I}$ and $\mathcal{F}_2 = \{y_j\}_{j\in J}$. 
Take any finitely supported sequence $\{b_{i,j}\}$ in a $C^*$-algebra $B$, by using the canonical $C^*$-isomorphism between  $C_1\otimes_{min} C_2$ and $C_2\otimes_{min}C_1$ for any given $C^*$-algebras $C_1, C_2$, we have  (the norms are taken in the corresponding $C^*$-algebras)
\begin{align*}
& \Big\| \sum_{i\in I, j\in J}b_{i,j} \otimes x_i \otimes y_j \otimes \overline{x}_i\otimes \overline{y}_j\Big\| =  \Big\| \sum_{j\in J} \Big(\sum_{i\in I} b_{i,j} \otimes x_i  \otimes   \overline{x}_i\Big)\otimes y_j \otimes \overline{y}_j\Big\| 
\\
\le&  \kappa_{SA}(\mathcal{F}_2) \Big\| \sum_{j\in J} \Big(\sum_{i\in I} b_{i,j} \otimes x_i  \otimes   \overline{x}_i\Big)\otimes y_j \Big\|
 =  \kappa_{SA}(\mathcal{F}_2) \Big\| \sum_{i\in I}\Big(\sum_{j\in J}  b_{i,j}  \otimes y_j \Big)\otimes x_i  \otimes   \overline{x}_i\Big\| 
\\
  \le& \kappa_{SA}(\mathcal{F}_2) \kappa_{SA}(\mathcal{F}_1) \Big\| \sum_{i\in I}\Big(\sum_{j\in J}  b_{i,j}  \otimes y_j \Big)\otimes x_i \Big\|   = \kappa_{SA}(\mathcal{F}_1) \kappa_{SA}(\mathcal{F}_2) \Big\| \sum_{i\in I, j\in J }  b_{i,j}  \otimes x_i \otimes y_j  \Big\|. 
\end{align*}
The converse inequality is proved similarly. 
\end{proof}

\subsection{SAP--an restricted version}\label{sec-boolen-graph}
An operator $a\in B(\ell^2)$ is called Boolean if its standard matrix representation $[a(i,j)]_{i,j\in \N}$  satisfies $a(i,j)\in \{0,1\}$ for all $i,j$.  The support of a Boolean operator is defined by 
\[
\supp(a) = \{(i, j)\in \N^2: a(i,j)\ne 0\}.
\] 

Note  that  any non-zero element in  a SAP system has unit norm.
Proposition \ref{prop-positive} suggests that  any family of unit norm Boolean operators has a  restricted-version SAP if we require  certain positivity condition on the  coefficients. This result is probably known, but we have not found it in the literature.

\begin{proposition}\label{prop-positive}
Fix any family $\mathcal{F}$ of unit-norm Boolean operators in $B(\ell^2)$. Then for any integer $m\ge 2$ (the norms below are taken in the corresponding $C^*$-algebras), the equality
\[
\big\| \sum_{x\in \mathcal{F}} c_x \otimes   x^{\otimes m} \big\| =  \big\| \sum_{x\in \mathcal{F}} c_x \otimes   x  \big\| \quad \text{with $x^{\otimes m} = \underbrace{x \otimes \cdots \otimes x}_{\text{$m$ times}}$}
\]
holds for any finitely supported family $(c_x)_{x\in \mathcal{F}}$ of  $N\times N$ matrices  satisfying any one of the following conditions:
\begin{itemize}
\item[(c1)]  all matrices $c_x$ have non-negative coefficients; 
\item[(c2)]  $N= k^2$ and all matrices $c_x$ have the form  $c_x = b_x \otimes \overline{b}_x$ with $b_x$ an $k\times k$ matrix; 
\item[(c3)]  $N= k^2$ and all matrices $c_x$ have the form  $c_x = b_x \otimes b_x^*$ with $b_x$ an $k\times k$ matrix.
\end{itemize}
\end{proposition}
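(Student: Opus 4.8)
The plan is to prove all three cases by a single unified argument, reducing the norm identity for $x^{\otimes m}$ to the norm identity for $x$ via an explicit isometric coupling on the supports. The key observation is that each $x\in\mathcal{F}$ is a unit-norm Boolean operator, so its support $\supp(x)\subset\N^2$ is precisely the incidence set of a bipartite graph, and the unit-norm condition forces this graph to be a disjoint union of ``bicliques glued along a single shared vertex'' — more concretely, one can show that after a permutation of rows and columns $x$ is a direct sum of rank-one blocks $J_{p\times q}$ (all-ones blocks) of operator norm $\sqrt{pq}=1$, hence each block is a single entry. Wait — more carefully: $\|J_{p\times q}\|=\sqrt{pq}$, so unit norm forces $p=q=1$ only if the blocks are disjoint; but overlapping supports are allowed, so the correct statement is that $\supp(x)$, viewed as a $0$-$1$ matrix, has all its ``combinatorial rectangles'' of area $1$, i.e. $x$ has at most one nonzero entry in each row and each column. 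So each $x$ is a partial permutation operator (a partial isometry with matrix units as entries). This is the structural fact I would isolate first as a lemma.

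Given that, the second step is the coupling construction. For a partial permutation operator $x$ with $x e_j = e_{\sigma(j)}$ on $\mathrm{dom}(x)$, one has $x^{\otimes m} e_j^{\otimes m} = e_{\sigma(j)}^{\otimes m}$ and $x^{\otimes m}$ annihilates the orthogonal complement of the ``diagonal'' subspace $\Delta=\overline{\spann}\{e_j^{\otimes m}: j\in\N\}\subset (\ell^2)^{\otimes m}$. Crucially, $\Delta$ is simultaneously invariant for \emph{all} $x^{\otimes m}$ with $x\in\mathcal{F}$ (each maps diagonal vectors to diagonal vectors or to $0$), and the canonical unitary $U:\Delta\to\ell^2$, $e_j^{\otimes m}\mapsto e_j$, intertwines $x^{\otimes m}|_\Delta$ with $x$ for every $x\in\mathcal{F}$ simultaneously. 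This is exactly the mechanism of Lemma~\ref{lem-diag} above, now applied to an arbitrary family of partial permutations rather than to the Hankel operators $\Gamma_n$. Thus with $P_\Delta$ the orthogonal projection onto $\Delta$ and $\iota:\Delta\hookrightarrow(\ell^2)^{\otimes m}$, we get $x^{\otimes m}=\iota\, U^{-1} x\, U P_\Delta$ for all $x\in\mathcal{F}$.

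The third step combines this with the positivity hypothesis. For a general coefficient family this factorization only gives $\|\sum_x c_x\otimes x^{\otimes m}\|\le\|\sum_x c_x\otimes x\|$ in one direction (since $\|\iota U^{-1}\|\le1$, $\|UP_\Delta\|\le1$) and the reverse inequality from restricting to $\Delta$; but that already yields \emph{equality} unconditionally! So in fact the positivity conditions (c1)--(c3) appear to be needed not for the equality $\|\sum c_x\otimes x^{\otimes m}\|=\|\sum c_x\otimes x\|$ itself but — re-reading the statement — they are precisely what one needs when $\mathcal{F}$ is an \emph{infinite} family and the operators involved are only formal series: one must justify that $\sum_x c_x\otimes x^{\otimes m}$ defines a bounded operator iff $\sum_x c_x\otimes x$ does, and here the partial-isometry structure plus positivity lets one compare via an approximation by finite truncations, exactly as in Lemma~\ref{lem-Pr} and Remark~\ref{rem-Pr}. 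So after the unconditional inequality I would show that under (c1), using that all entries are $\ge0$, the norm of a finite sum can be computed by testing against nonnegative vectors and monotone limits, so the passage $\mathcal{F}$ finite $\Rightarrow$ general is automatic; under (c2) (resp. (c3)) one factors $c_x=b_x\otimes\overline b_x$ and uses the identity \eqref{def-OH} for $OH$ together with the fact that $x^{\otimes m}$ and $x$ are both contractions realized through the same $C^*$-representation $T\mapsto U^{-1}TU$ restricted to the relevant subspace, so the two $OH$-type norms agree.

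The main obstacle, I expect, is the structural lemma in its cleanest form together with the bookkeeping of ``which positivity hypothesis buys what.'' The partial-permutation description of unit-norm Boolean operators is elementary but must be stated carefully — the subtlety is that the \emph{family} $\mathcal{F}$ need not have disjoint supports, so the $\Delta$-invariance must be checked for the family jointly, and one should verify that on $\Delta^\perp$ every $x^{\otimes m}$ with $x\in\mathcal{F}$ vanishes (this uses $m\ge2$: for $m=1$ there is no diagonal reduction and the statement is trivially true but empty of content). A secondary technical point is handling infinite $\mathcal{F}$: I would phrase everything for finitely supported $(c_x)$ first, note that the displayed equality is then a direct consequence of the simultaneous block structure above, and only invoke (c1)--(c3) if the intended reading is that $c_x$ range over families for which convergence is an issue — in which case the three conditions are exactly the three natural settings (positive matrices, $A\otimes\overline A$ coefficients as in \eqref{def-OH}, and $A\otimes A^*$ coefficients) in which a GNS/positivity argument upgrades finite truncation norms to the full norm.
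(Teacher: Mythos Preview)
There is a genuine gap. Your central claim---that for a partial permutation $x$ the operator $x^{\otimes m}$ annihilates the orthogonal complement $\Delta^\perp$ of the diagonal subspace---is false. If $x e_j = e_{\sigma(j)}$ on $\mathrm{dom}(\sigma)$, then for $i\ne j$ both in $\mathrm{dom}(\sigma)$ one has $x^{\otimes 2}(e_i\otimes e_j)=e_{\sigma(i)}\otimes e_{\sigma(j)}\ne 0$, and this vector lies in $\Delta^\perp$. So the factorization $x^{\otimes m}=\iota\,U^{-1} x\,U P_\Delta$ does not hold; only the compression $P_\Delta\, x^{\otimes m}\, P_\Delta = \iota\,U^{-1} x\,U P_\Delta$ is correct. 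Consequently you only get the \emph{lower} bound $\|\sum c_x\otimes x\|\le\|\sum c_x\otimes x^{\otimes m}\|$ unconditionally (and indeed the paper obtains this direction exactly via the diagonal compression). The upper bound is \emph{not} unconditional: Example~\ref{ex-3times3} exhibits unit-norm Boolean operators $\Gamma_c^\Phi$ and scalar coefficients $(4,2,-1,0,0)$ (violating (c1)--(c3)) for which $\|\sum c_x\, x\otimes x\|>\|\sum c_x\, x\|$.

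The positivity hypotheses are therefore not a convergence technicality but the heart of the upper bound. The paper's argument is quite different from yours: after truncating to $d\times d$ blocks, it writes $T_m^{2n}$ via the spectral-radius formula as a sum of terms $\Tr(c_{\sigma_1}^*c_{\tau_1}\cdots c_{\sigma_n}^*c_{\tau_n})\cdot[\Tr(a_{\sigma_1}^*a_{\tau_1}\cdots a_{\sigma_n}^*a_{\tau_n})]^m$. Lemma~\ref{lem-closed} gives $\Tr(a_{\sigma_1}^*\cdots a_{\tau_n})\in\{0,1,\ldots,d\}$, so $[\cdot]^m\le d^{m-1}[\cdot]$; the positivity conditions (c1)--(c3) are used precisely to guarantee $\Tr(c_{\sigma_1}^*c_{\tau_1}\cdots)\ge 0$, so that this termwise bound passes to the sum. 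Taking $n\to\infty$ kills the factor $d^{m-1}$ and yields $T_m\le T_1$. Your proposal does not engage with the off-diagonal part of $x^{\otimes m}$ at all, and without a mechanism to control it the argument cannot be completed along the lines you sketch.
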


 The following lemma is elementary. 

\begin{lemma}\label{lem-norm-one}
A Boolean matrix $a=[a(i,j)]_{i,j\in \N}$ has  operator norm $\|a\|=1$ if and only if there exists a bijection $\sigma: I_\sigma \rightarrow J_\sigma$ between two subsets $I_\sigma, J_\sigma \subset \N$ such that
\[
a(i,j)=  \mathds{1}(i\in I_\sigma) \mathds{1}(j= \sigma(i)). 
\]
\end{lemma}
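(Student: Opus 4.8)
The plan is to identify the Boolean operators of norm $1$ with the nonzero \emph{partial permutation matrices}, i.e. those having at most one entry equal to $1$ in each row and at most one in each column; once this is done, reading off the bijection $\sigma$ is immediate. I would prove the two implications separately.

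For the ``if'' direction, suppose $a(i,j)=\mathds{1}(i\in I_\sigma)\mathds{1}(j=\sigma(i))$ for some bijection $\sigma\colon I_\sigma\to J_\sigma$ with $I_\sigma\neq\emptyset$. Then for $v=(v_j)_{j\in\N}\in\ell^2$ one has $av=\sum_{i\in I_\sigma}v_{\sigma(i)}\,e_i$, so $\|av\|^2=\sum_{i\in I_\sigma}|v_{\sigma(i)}|^2\le\|v\|^2$ because $\sigma$ is injective; hence $\|a\|\le 1$, and choosing any $i_0\in I_\sigma$ gives $ae_{\sigma(i_0)}=e_{i_0}$, so $\|a\|=1$.

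For the ``only if'' direction, assume $\|a\|=1$; in particular $a\neq 0$ and $\|a\|\le 1$. The key step is the observation that a repeated $1$ in a row or in a column already forces $\|a\|\ge\sqrt2$: if $a(i,j_1)=a(i,j_2)=1$ with $j_1\neq j_2$, the $i$-th coordinate of $a(e_{j_1}+e_{j_2})$ equals $2$, whence $\|a(e_{j_1}+e_{j_2})\|^2\ge 4=2\|e_{j_1}+e_{j_2}\|^2$; and if $a(i_1,j)=a(i_2,j)=1$ with $i_1\neq i_2$, then $\|ae_j\|^2\ge 2=2\|e_j\|^2$. Either way $\|a\|\ge\sqrt2$, contradicting $\|a\|\le 1$. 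Therefore every row and every column of $a$ contains at most one $1$. Setting $I_\sigma:=\{i:\exists\,j,\ a(i,j)=1\}$ and letting $\sigma(i)$ be the unique $j$ with $a(i,\sigma(i))=1$ for $i\in I_\sigma$, the column condition makes $\sigma$ injective, so $\sigma\colon I_\sigma\to J_\sigma:=\sigma(I_\sigma)$ is a bijection with $a(i,j)=\mathds{1}(i\in I_\sigma)\mathds{1}(j=\sigma(i))$, and $a\neq 0$ gives $I_\sigma\neq\emptyset$.

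I do not expect a genuine obstacle here: the whole argument only involves test vectors with finite support, and the hypothesis $\|a\|=1$ already guarantees that $a$ defines a bounded operator on $\ell^2$, so these computations are legitimate. The only point requiring a word of care is keeping track of the (mild) convention that ``operator norm $1$'' excludes the zero matrix, which is exactly what pins down $I_\sigma\neq\emptyset$ in the resulting description.
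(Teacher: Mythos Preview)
Your proof is correct and follows essentially the same approach as the paper's: the paper's proof is the one-line observation that a nonzero Boolean matrix has operator norm $1$ if and only if it has at most one nonzero entry in every row and every column, and your argument simply fills in the routine details of that observation (the $\sqrt{2}$ lower bound from a repeated $1$, and reading off $\sigma$).
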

\begin{proof}
It suffices to notice that a non-zero Boolean matrix has operator norm $1$ if and only if  it has at most one non-zero entry in every row  and in every column. 
\end{proof}

The unit norm Boolean  matrix $a$ in Lemma~\ref{lem-norm-one} will be denoted by $a_\sigma$ and  it is given by 
\begin{align}\label{M-sigma-sum}
 a_\sigma  = \sum_{i\in I_\sigma} e_{i, \sigma(i)} = \sum_{j\in J_\sigma} e_{\sigma^{-1}(j), j}, 
\end{align}
where $e_{i,j}$ denotes the Boolean matrix whose all entries are zero except the $(i,j)$-entry equals $1$.
Let $S_{bool}(\ell^2)$ denote the set of all unit norm Boolean operators in $B(\ell^2)$. The following lemma is simple and we omit its proof (the unit norm assumption is important). 
\begin{lemma}\label{lem-closed}
If $a_\sigma, a_\tau\in S_{bool}(\ell^2)$, then  $ a_\sigma^* \in S_{bool}(\ell^2)$ and $a_\sigma a_\tau \in S_{bool}(\ell^2)\cup \{0\}$. 
\end{lemma}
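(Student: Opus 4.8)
The plan is to reduce everything to the normal form provided by Lemma~\ref{lem-norm-one}. Write $a_\sigma(i,j) = \mathds{1}(i\in I_\sigma)\mathds{1}(j=\sigma(i))$ and $a_\tau(i,j)=\mathds{1}(i\in I_\tau)\mathds{1}(j=\tau(i))$, where $\sigma\colon I_\sigma\to J_\sigma$ and $\tau\colon I_\tau\to J_\tau$ are bijections between subsets of $\N$. Once both operators are in this form, the two assertions become purely combinatorial statements about composing partial bijections.

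For the adjoint, I would simply transpose: $a_\sigma^*(i,j)=a_\sigma(j,i)=\mathds{1}(j\in I_\sigma)\mathds{1}(i=\sigma(j))=\mathds{1}(i\in J_\sigma)\mathds{1}(j=\sigma^{-1}(i))$. Thus $a_\sigma^*=a_{\sigma^{-1}}$ is again Boolean with at most one nonzero entry per row and column, so $a_\sigma^*\in S_{bool}(\ell^2)$ by Lemma~\ref{lem-norm-one}.

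For the product, I would compute the matrix entries directly. Since $a_\sigma(i,j)$ vanishes unless $i\in I_\sigma$ and $j=\sigma(i)$, the sum defining $(a_\sigma a_\tau)(i,k)=\sum_j a_\sigma(i,j)a_\tau(j,k)$ collapses to a single term, yielding
\[
(a_\sigma a_\tau)(i,k)=\mathds{1}(i\in I_\sigma)\,\mathds{1}(\sigma(i)\in I_\tau)\,\mathds{1}\big(k=\tau(\sigma(i))\big).
\]
In particular all entries lie in $\{0,1\}$, so $a_\sigma a_\tau$ is Boolean. Put $I:=\{i\in I_\sigma:\sigma(i)\in I_\tau\}=\sigma^{-1}(J_\sigma\cap I_\tau)$. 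If $I=\emptyset$, the displayed formula gives $a_\sigma a_\tau=0$. Otherwise $\rho:=\tau\circ\sigma|_{I}\colon I\to \tau(J_\sigma\cap I_\tau)$ is a bijection, being a composition of injections, and the formula reads $(a_\sigma a_\tau)(i,k)=\mathds{1}(i\in I)\mathds{1}(k=\rho(i))$, i.e. $a_\sigma a_\tau=a_\rho\in S_{bool}(\ell^2)$ by Lemma~\ref{lem-norm-one}.

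There is no genuine obstacle: the only subtlety worth flagging is that the collapse of the sum over $j$ is exactly what keeps the product matrix Boolean — a product of arbitrary Boolean matrices need not have $\{0,1\}$ entries, and it is precisely the at-most-one-entry-per-row-and-column property (equivalently, the unit-norm hypothesis, via Lemma~\ref{lem-norm-one}) that prevents coefficients larger than $1$ from appearing.
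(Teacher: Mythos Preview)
Your proof is correct and complete. The paper omits the proof of this lemma, remarking only that it is simple and that the unit-norm assumption is important; your argument via the normal form of Lemma~\ref{lem-norm-one} is precisely the intended one, and your final paragraph makes explicit exactly the point the paper flags.
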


\begin{proof}[Proof of Proposition \ref{prop-positive}]
By a standard truncation-approximation argument, we may assume that $\mathcal{F}$ is a family of unit norm Boolean matrices of a fixed finite size $d\times d$.  Then, by Lemma~\ref{lem-norm-one}, each Boolean matrix in $\mathcal{F}$ corresponds to a bijection $\sigma: I_\sigma \rightarrow J_\sigma$ with $I_\sigma, J_\sigma$ two subsets of $\{1, 2, \cdots, d\}$. Hence we can  write $\mathcal{F} = \{a_\sigma\}_\sigma$.  Denote  
\[
T_m: = \big\| \sum_{x\in \mathcal{F}} c_x \otimes   x^{\otimes m} \big\| =  \big\| \sum_{\sigma} c_\sigma \otimes  a_\sigma ^{\otimes m} \big\| . 
\]
Our goal is to prove the equality 
\[
T_m= T_1.
\] 
By the classical formula for spectral radius, we have  
\begin{align*}
T_m &  = \lim_{n\to\infty}\Big\{ \Tr   \Big[\Big( \sum_{\sigma} c_\sigma \otimes    a_\sigma ^{\otimes m} \Big)^*  \Big( \sum_{\sigma} c_\sigma \otimes    a_\sigma ^{\otimes m} \Big)\Big]^n \Big\}^{1/2n}
\\
 & = \lim_{n\to\infty}\Big\{   \sum_{\sigma_1, \cdots, \sigma_n,\atop \tau_1, \cdots, \tau_n} \Tr( c_{\sigma_1}^* c_{\tau_1} \cdots c_{\sigma_n}^* c_{\tau_n}) \cdot [\Tr (a_{\sigma_1}^* a_{\tau_1} \cdots a_{\sigma_n}^* a_{\tau_n})]^{ m} \Big\}^{1/2n}.
\end{align*}
By Lemma \ref{lem-closed}, for any $n\ge 1$ and any  $\sigma_1, \cdots, \sigma_n, \tau_1, \cdots, \tau_n$,  either $a_{\sigma_1}^* a_{\tau_1} \cdots a_{\sigma_n}^* a_{\tau_n} = 0$ or $a_{\sigma_1}^* a_{\tau_1} \cdots a_{\sigma_n}^* a_{\tau_n}  = a_\sigma$ for  a certain bijection $\sigma: I_\sigma \rightarrow J_\sigma$. Therefore, 
\[
\Tr(a_{\sigma_1}^* a_{\tau_1} \cdots a_{\sigma_n}^* a_{\tau_n})\in \{0, 1, \cdots, d\}. 
\] 
Under any one of the conditions (c1), (c2) and (c3), we have 
\[\Tr( c_{\sigma_1}^* c_{\tau_1} \cdots c_{\sigma_n}^* c_{\tau_n})\ge 0.
\] Consequently, using $\lim_{n\to\infty} d^{(m-1)/2n} = 1$, we obtain 
\begin{align*}
T_m &  \le  \limsup_{n\to\infty}     \Big\{    d^{m-1} \sum_{\sigma_1, \cdots, \sigma_n,\atop \tau_1, \cdots, \tau_n} \Tr( c_{\sigma_1}^* c_{\tau_1} \cdots c_{\sigma_n}^* c_{\tau_n}) \cdot \Tr (a_{\sigma_1}^* a_{\tau_1} \cdots a_{\sigma_n}^* a_{\tau_n}) \Big\}^{1/2n}
\\
&= \limsup_{n\to\infty}     \Big\{   \sum_{\sigma_1, \cdots, \sigma_n,\atop \tau_1, \cdots, \tau_n} \Tr( c_{\sigma_1}^* c_{\tau_1} \cdots c_{\sigma_n}^* c_{\tau_n}) \cdot \Tr (a_{\sigma_1}^* a_{\tau_1} \cdots a_{\sigma_n}^* a_{\tau_n}) \Big\}^{1/2n} = T_1.
\end{align*}
Conversely,  consider the diagonal subspace $
D_m: = \spann\{e_i^{\otimes m}: 1\le i \le d\} \subset   (\C^d)^{\otimes m}
$
and the corresponding orthogonal projection $P_m:(\C^d)^{\otimes m} \rightarrow D_m $. Let $U_m: \C^d \rightarrow D_m$ be the natural unitary operator sending each  $e_i$ to $e_i^{\otimes m}$.  Take any unit norm Boolean matrix $a_\sigma\in \mathcal{F}$ associated to a bijection $\sigma: I_\sigma \rightarrow J_\sigma$.  Using \eqref{M-sigma-sum}, we obtain 
\[
a_\sigma^{\otimes m} (e_i^{\otimes m}) =   (a_\sigma e_i )^{\otimes m } = (\mathds{1}(i \in J_\sigma)  e_{\sigma^{-1}(i)})^{\otimes m } = \mathds{1}(i \in J_\sigma) \cdot  e_{\sigma^{-1}(i)}^{\otimes m }, \quad i = 1, \cdots, d. 
\]
Hence
$
U_m^{-1}P_m (a_\sigma^{\otimes m }) P_m   U_m  =  a_\sigma$ and 
\[
\sum_{\sigma} c_\sigma \otimes  a_\sigma = \sum_{\sigma} c_\sigma \otimes  U_m^{-1}P_m (a_\sigma^{\otimes m }) P_m   U_m =  (Id \otimes U_m^{-1}P_m)  \Big(\sum_{\sigma} c_\sigma \otimes   a_\sigma^{\otimes m } \Big) ( Id\otimes P_m   U_m). 
\]
It follows immediately  that $T_m \ge T_1$. 
\end{proof}

\subsection{Examples and counter-examples}

\begin{example}\label{ex-disjoint}
Let $e_{i,j} \in B(\ell^2)$ denote the operator corresponding to the elementary  matrix whose all entries are zero except the $(i,j)$-entry equals $1$.  It is a standard fact that  
$
\{e_{i,j}: (i,j)\in \N^2\} \subset B(\ell^2)
$
 has SAP.
\end{example}

\begin{example}[{\it SAP is not preserved by direct sum}]\label{ex-overlap}
Given any two operators $x,y$, define  
$
x \oplus y=\Big[\begin{matrix}x&0\\ 0& y\end{matrix} \Big].
$  Consider  two families with SAP:  $\{x_1 = 1, x_2 =1, x_3 =0\} \subset \C$ and $\{y_1 = 1, y_2 = 0, y_3 = 1\}\subset \C$. Note that the  family 
\begin{align}\label{2-matrix}
\Big\{ z_1= x_1 \oplus y_1  = \Big[\begin{matrix} 1&0\\ 0&1 \end{matrix}\Big],  \, z_2 = x_2 \oplus y_2 =\Big[ \begin{matrix} 1&0\\ 0& 0\end{matrix} \Big], \, z_3 = x_3\oplus y_3 = \Big[\begin{matrix} 0&0\\ 0&1 \end{matrix}\Big]
\Big\}
\end{align}
does not have SAP, since $
0 = \| z_1 - z_2 - z_3\|  \ne \|z_1\otimes z_1 - z_2 \otimes z_2 - z_3 \otimes z_3\| = 1. $
\end{example}

\begin{example}[{\it SAP is not preserved by  spatial compression}]\label{ex-compression}
Consider the family 
\begin{align}\label{4-matrix}
\{X_1 = \diag (1,1,1,1), X_2 = \diag (1, 0, 1, 0), X_3 = \diag (0, 1, 1, 0)\}. 
\end{align}
It has SAP since for any $b_1, b_2, b_3$ in a $C^*$-algebra $B$, 
\[
\Big\| \sum_{i=1}^3 b_i \otimes X_i\Big\| =\Big \| \sum_{i=1}^3 b_i \otimes X_i \otimes X_i\Big\| = \max_{\varepsilon, \varepsilon'\in \{0, 1\}}\| b_1 + \varepsilon b_2  +\varepsilon' b_3\|. 
\]
Note that the non-SAP family \eqref{2-matrix} is a spatial compression of the  SAP family \eqref{4-matrix}. 
\end{example}

\begin{example}[{\it Regular representation system has SAP}]\label{ex-regular}
Let $G$ be a discrete group  and $\lambda_G: G\rightarrow B(\ell^2(G))$ be its left regular representation. We call $\{\lambda_G(g)\}_{g\in G}$ the regular representation system of $G$.   The standard Fell's absorption principle (see \cite[Prop. 8.1]{Pisier-Operator-space-book}) implies that $\{\lambda_G(g)\}_{ g\in G}$ has SAP.  Recall the definition \eqref{def-gamma-M} for the Hankel operators. One can check that $
\Gamma_g^G = \lambda_G(g) U_{flip}$ with $U_{flip}$ the unitary operator in $B(\ell^2(G))$ sending each $\delta_s$ to $\delta_{s^{-1}}$ for all $s\in G$. It follows that $\{\Gamma^G_g\}_{g \in G}$ has SAP.  
\end{example}

\begin{example}[{\it A non-SAP Hankel system inducing by a multicolored checkerboard}]\label{ex-3times3}
 The  multicolored checkerboard  in Figure~\ref{fig-3table} has the property that every color appears at most once in each column and each row  and it  corresponds to a coordinatewise injective (see Definition~\ref{def-SD} below) map 
\[
\Phi: \{1,2,3\}\times \{1,2,3\}\rightarrow  \{\text{red, orange, blue, purple, grey}\}.
\]  
\begin{figure}[h]
\begin{center}
\includegraphics[width=0.1\linewidth]{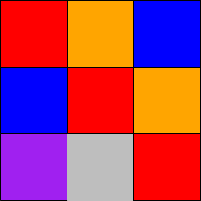}
\caption{A multicolored checkerboard.}\label{fig-3table}
\end{center}
\end{figure}

\noindent This colored checkerboard induces a  Hankel system in the $C^*$-algebra $M_3(\C)$ of  
 all $3\times 3$ complex matrices: 
\[
\{\Gamma_c^\Phi| c\in \{\text{red, orange, blue, purple, grey}\}\}\subset M_3(\C).
\]
For instance, $\Gamma_{\text{orange}}^\Phi  =  e_{1,2} + e_{2,3}$, $\Gamma_{\text{blue}}^\Phi = e_{1,3}+e_{2,1}$, etc.  Note that the coordinatewise injectivity of $\Phi$ implies that all $\Gamma_c^\Phi$ are of unit norm.   The self-tensorized system 
\[
\{\Gamma^\Phi_{c} \otimes \Gamma_c^\Phi\}_{c}\subset M_3(\C)\otimes M_3(\C) \simeq M_9(\C)
\] is induced by  the multicolored checkerboard in Figure~\ref{fig-9table}. 
\begin{figure}[h]
\begin{center}
\includegraphics[width=0.6\linewidth]{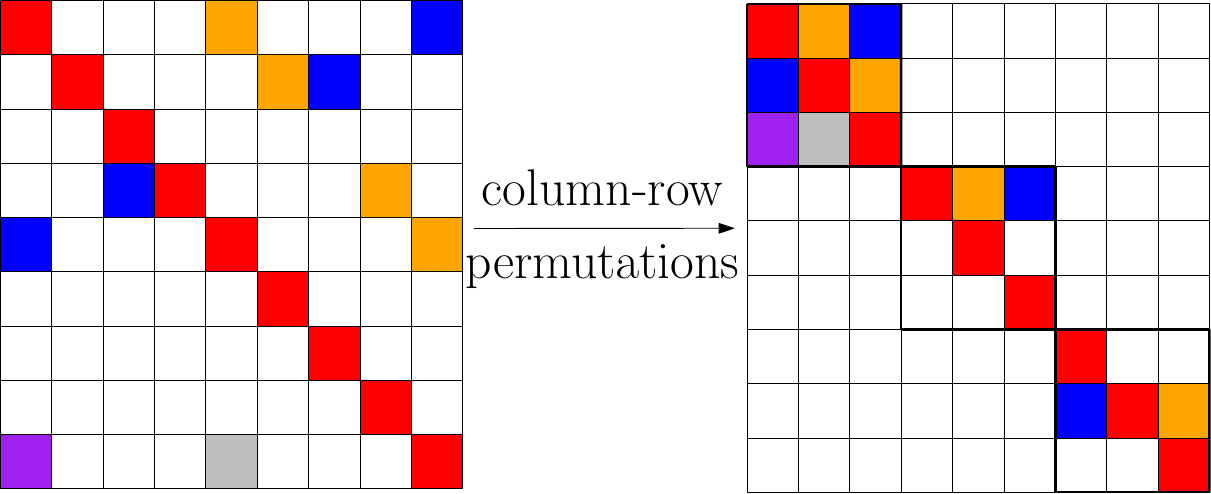}
\caption{The tensorized multicolored checkerboard.}\label{fig-9table}
\end{center}
\end{figure}
 To verify that $\{\Gamma_c^\Phi\}_c$ is non-SAP,  we consider the following substitution
\[
(\text{red, orange, blue, purple, grey}) \to (4,2,-1, 0, 0).
\] More precisely, set
 \[
A:=4\Gamma^\Phi_{\text{red}}+2\Gamma^\Phi_{\text{orange}}-\Gamma^\Phi_{\text{blue}}=\left[
\begin{array}{ccc}
 4 & 2 & -1 \\
 -1 & 4 & 2 \\
 0 & 0 & 4 \\
\end{array}
\right].
\]
Then the tensorized matrix is given by
\[
B:=4\Gamma^\Phi_{\text{red}}\otimes \Gamma^\Phi_{\text{red}}+2\Gamma^\Phi_{\text{orange}}\otimes\Gamma^\Phi_{\text{orange}}-\Gamma^\Phi_{\text{blue}}\otimes \Gamma^\Phi_{\text{blue}}\xrightarrow[\textit{permutations}]{\textit{column-row}}\left[
\begin{array}{ccc}
 A & 0 & 0 \\
 0 & A_1 & 0  \\
 0 & 0 & A_2 \\
\end{array}
\right],
\]
where 
\[
A_1=\left[
\begin{array}{ccc}
 4 & 2 & -1 \\
 0 & 4 & 0 \\
 0 & 0 & 4 \\
\end{array}
\right],\quad A_2=\left[
\begin{array}{ccc}
 4 & 0 & 0 \\
 -1 & 4 & 2 \\
 0 & 0 & 4 \\
\end{array}
\right].
\]
It can be checked that  $\|A\|=3\sqrt{3}$ and $\|A_1\|=\|A_2\|=\sqrt{\frac{1}{2} \left(\sqrt{345}+37\right)}>3\sqrt{3}$. Hence $\|B\|=\max\{\|A\|,\|A_1\|,\|A_2\|\}>\|A\|$. This implies that $\{\Gamma_c^\Phi\}_c$  does not have SAP.
\end{example}

\section{Hankel systems on monoids}\label{sec-real-hankel-monoid}
 All monoids below will be assumed to be cancellative.   For the reader's convenience, we recall the following definitions and give a new definition of {\it hereditary SAP monoids}: 
\begin{itemize}
\item Lunar monoids (Definition~\ref{def-AAM}):  a monoid $\MM$ is called a lunar monoid if it satisfies the lunar condition :
\[
\big(ax  = by, cx=dy, az=bw  \,\, \text{in $\MM$}\big)\Longrightarrow \big(cz=dw \,\, \text{in $\MM$}\big). 
\]
\item Hankel system on a monoid $\MM$: by the  Hankel system on  $\MM$, we mean the family  $\{\Gamma_t^\MM\}_{t\in \MM} \subset B(\ell^2(\MM))$ consisting of Hankel operators $\Gamma_t^\MM$ defined by 
\[
\Gamma_t^\MM(s_1, s_2)= \mathds{1}(s_1s_2 = t),  \quad s_1, s_2 \in \MM. 
\]
\item SAP monoids (Definition~\ref{def-SAP-monoid}): a monoid  $\MM$ is called a SAP monoid if its associated Hankel system $\{\Gamma_t^\MM\}_{t\in \MM} \subset B(\ell^2(\MM))$ has SAP. 
\item  Hereditary SAP monoids: we say that a monoid $\MM$ is a hereditary SAP monoid if for any  subsets $S_1,S_2 \subset \MM$, the compressed system 
\[
\{P_{S_1} \Gamma_t^\MM P_{S_2}\}_{t\in \MM}
\]
has SAP, where $P_{S_1}, P_{S_2}$ are othogonal projections from $\ell^2(\MM)$ onto $\ell^2(S_1)$ and $\ell^2(S_2)$ respectively (see Definition~\ref{def-hsap} for the definition of hereditary SAP of a general system).   In particular, any hereditary SAP monoid is a SAP monoid. 
\end{itemize}

One of our main results--Theorem \ref{thm-lunar-monoid}--follows from the following 
\begin{theorem}\label{thm-lunar-sap}
Any lunar monoid is a hereditary SAP monoid. 
\end{theorem}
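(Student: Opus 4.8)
The plan is to mimic, at the level of a general lunar monoid, the explicit block-diagonalization carried out for $(\N,+)$ in Section~\ref{sec-one-dim}, but now indexed by the coupled foliation $\{(\spadesuit_{[a,b]}, \clubsuit_{[a,b]})\}$ described in the Introduction. First I would fix subsets $S_1, S_2 \subset \MM$ and set $P_j = P_{S_j}$, writing $\Gamma_t := P_{S_1}\Gamma_t^\MM P_{S_2} \in B(\ell^2(S_2),\ell^2(S_1))$. The target is the hereditary SAP inequality, i.e. that the map sending the Hankel matrix $\sum_t c_t\otimes \Gamma_t$ to $\sum_t c_t\otimes \Gamma_t\otimes\overline{\Gamma_t}$ is a complete isometry; equivalently, I would produce partial isometries (or contractions) $U, V$ on $\ell^2(S_1^2)$ and $\ell^2(S_2^2)$, independent of $t$, together with a $C^*$-representation, realizing $\Gamma_t\otimes\Gamma_t = U(\text{something conjugate to }\Gamma_t)V$ simultaneously in $t$.

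The key steps, in order: (1) Identify $\ell^2(\MM)\otimes_2\ell^2(\MM) = \ell^2(\MM^2)$ and verify the two ``support'' reductions: $[\ell^2((\MM^2)_{\mathrm{left}})]^\perp \subset \bigcap_t \ker(\Gamma_t^\MM\otimes\Gamma_t^\MM)$ and $\overline{\mathrm{span}}\,\bigcup_t \mathrm{Im}(\Gamma_t^\MM\otimes\Gamma_t^\MM)\subset \ell^2((\MM^2)_{\mathrm{right}})$ — these are immediate from \eqref{def-gamma-M}, since $(\Gamma_t^\MM\otimes\Gamma_t^\MM)(e_{s_1}\otimes e_{s_1'})$ is nonzero only when $s_1 s_2 = t = s_1' s_2'$ forces $(s_1,s_1')$ into the left set and then lands in the right set. (2) Invoke the lunar condition to check that $\spadesuit_{[a,b]} := \{(x,y): ax=by\}$ depends only on the class $[a,b]$, giving the coupled orthogonal decompositions $\ell^2((\MM^2)_{\mathrm{left}}) = \bigoplus_{[a,b]}\ell^2(\spadesuit_{[a,b]})$ and $\ell^2((\MM^2)_{\mathrm{right}}) = \bigoplus_{[a,b]}\ell^2(\clubsuit_{[a,b]})$. (3) Show $(\Gamma_t^\MM\otimes\Gamma_t^\MM)(\ell^2(\spadesuit_{[a,b]}))\subset \ell^2(\clubsuit_{[a,b]})$: if $(x,y)\in\spadesuit_{[a,b]}$ and $\Gamma_t^\MM\otimes\Gamma_t^\MM$ sends $e_x\otimes e_y$ to $e_c\otimes e_d$ with $cx=t=dy$, then together with $ax=by$ the lunar condition (plus cancellativity) puts $(c,d)$ in $\clubsuit_{[a,b]}$. (4) For each class, use the ``one-dimensionality'' (injectivity of $\pi_1,\pi_2$ on each leaf, from lunar $+$ cancellative) to build contractions $R^{[a,b]}:\ell^2(\spadesuit_{[a,b]})\to\ell^2(\MM)$ and $S^{[a,b]}:\ell^2(\MM)\to\ell^2(\clubsuit_{[a,b]})$ — essentially the appropriate coordinate-projection unitaries onto their ranges, as in \eqref{def-U}, \eqref{def-WV} — and verify the per-leaf commuting square $(\Gamma_t^\MM\otimes\Gamma_t^\MM)|_{\ell^2(\spadesuit_{[a,b]})} = S^{[a,b]}\,\Gamma_t^\MM\,R^{[a,b]}$ simultaneously in $t$, exactly mirroring Lemmas~\ref{lem-diag} and~\ref{lem-offdiag}. (5) Assemble: $\Gamma_t^\MM\otimes\Gamma_t^\MM = 0\oplus\bigoplus_{[a,b]} S^{[a,b]}\Gamma_t^\MM R^{[a,b]}$, so the self-tensor map factors through the amplification $\bigoplus_{[a,b]}\mathcal{R}_U(\cdot)$ of a $C^*$-representation pre- and post-composed with fixed contractions, hence is completely contractive; the reverse direction follows by restricting to a single well-chosen leaf (e.g. the one containing the diagonal), recovering $\Gamma_t^\MM$ up to unitary conjugation, exactly as in the proof of Proposition~\ref{thm-doubling}. (6) Finally, pass to the compressed system: compressing by $P_{S_1}\otimes P_{S_1}$ on the left and $P_{S_2}\otimes P_{S_2}$ on the right respects the foliation decomposition (it just replaces each leaf $\spadesuit_{[a,b]}$ by $\spadesuit_{[a,b]}\cap(S_2\times S_2)$ and each $\clubsuit_{[a,b]}$ by $\clubsuit_{[a,b]}\cap(S_1\times S_1)$, and the one-dimensionality persists under passing to subsets), so the same argument yields hereditary SAP.

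The main obstacle I anticipate is Step~(4): constructing $R^{[a,b]}$ and $S^{[a,b]}$ so that the per-leaf square commutes for \emph{every} $t$ with the \emph{same} $R^{[a,b]}, S^{[a,b]}$, and controlling their norms. In the $(\N,+)$ case the leaves were literally the diagonals $\mathcal{H}_k$ and the intertwiners were the transparent shift-like maps $W_k, V_k$; in a general lunar monoid one must identify the correct ``coordinate'' along each one-dimensional leaf and check that the action of $\Gamma_t^\MM\otimes\Gamma_t^\MM$ in these coordinates is genuinely $\Gamma_t^\MM$ reindexed, with no $t$-dependent distortion. This is precisely where the lunar condition \eqref{def-lunar-intro} does the work — it is what guarantees that the map $(x,y)\mapsto(c,d)$ induced on leaves by "multiply the common value to reach $t$" is well-defined on classes and compatible across different $t$ — and verifying well-definedness and the norm bounds $\|R^{[a,b]}\|,\|S^{[a,b]}\|\le 1$ carefully (ruling out collisions that would inflate the norm) is the technical heart of the proof. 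A secondary subtlety is bookkeeping the non-Abelian case, where $(\MM^2)_{\mathrm{left}}$ and $(\MM^2)_{\mathrm{right}}$ genuinely differ from $\MM^2$, so one must be careful that the orthogonal complement of $\ell^2((\MM^2)_{\mathrm{left}})$ really is killed by all $\Gamma_t^\MM\otimes\Gamma_t^\MM$ and does not contribute to the norm.
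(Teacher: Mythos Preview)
Your proposal is correct and follows essentially the same route as the paper: the paper's one-line proof of Theorem~\ref{thm-lunar-sap} defers to Corollary~\ref{cor-comp}, whose content is exactly the ten-step block-diagonalization argument in \S\ref{pf-main} (your Steps (1)--(5)), and the passage to hereditary SAP is handled by noting that restrictions of lunar maps remain lunar (Example~\ref{ob-restriction}), which is equivalent to your Step~(6). The only organizational difference is that the paper first abstracts to general lunar maps $\Phi:\mathcal{A}\times\mathcal{X}\to\mathcal{L}$ (Theorem~\ref{thm-Phi}) and then specializes, whereas you work directly with the monoid and compress the foliation at the end; the intertwiners you call $R^{[a,b]}, S^{[a,b]}$ are the paper's $P^{[a,b]}, Q^{[a,b]}$ of Step~7, and the case analysis you anticipate in your ``main obstacle'' is precisely the three-case verification of Step~8.
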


The proof of Theorem~\ref{thm-lunar-sap} is postponed to \S \ref{sec-pf-thm-lunar-sap}.

 In general, it  seems to be  quite  non-trivial to determine whether a given monoid is a  SAP monoid or not. However, we have the following stability result.

\begin{proposition}\label{lem-monoid}
Cartesian product preserves the  class of SAP monoids.
\end{proposition}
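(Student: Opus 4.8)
The plan is to observe that, under the canonical identification of Hilbert spaces, the Hankel system on a Cartesian product of monoids is literally the minimal tensor product of the two constituent Hankel systems, and then to invoke Proposition~\ref{prop-tensor}.

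First I would record the elementary fact that if $\MM_1$ and $\MM_2$ are cancellative monoids then so is $\MM := \MM_1\times\MM_2$ (cancellativity is checked coordinatewise), so that the Hankel system $\{\Gamma_t^{\MM}\}_{t\in\MM}\subset B(\ell^2(\MM))$ is well-defined with every $\Gamma_t^{\MM}$ of unit norm. Next I would use the canonical unitary $\ell^2(\MM_1\times\MM_2)\cong \ell^2(\MM_1)\otimes_2\ell^2(\MM_2)$, $\delta_{(s_1,s_2)}\mapsto \delta_{s_1}\otimes\delta_{s_2}$, together with the entrywise definition \eqref{def-gamma-M}, to get, for $t=(t_1,t_2)$,
\[
\Gamma_{(t_1,t_2)}^{\MM_1\times\MM_2}\big((s_1,s_2),(s_1',s_2')\big) = \mathds{1}(s_1 s_1'=t_1)\,\mathds{1}(s_2 s_2'=t_2) = \Gamma_{t_1}^{\MM_1}(s_1,s_1')\,\Gamma_{t_2}^{\MM_2}(s_2,s_2'),
\]
that is, $\Gamma_{(t_1,t_2)}^{\MM_1\times\MM_2}=\Gamma_{t_1}^{\MM_1}\otimes\Gamma_{t_2}^{\MM_2}$ inside $B(\ell^2(\MM_1))\otimes_{min}B(\ell^2(\MM_2))\subset B(\ell^2(\MM_1)\otimes_2\ell^2(\MM_2))$. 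Hence the Hankel system on $\MM_1\times\MM_2$ is exactly $\mathcal{F}_1\otimes_{min}\mathcal{F}_2$ with $\mathcal{F}_i=\{\Gamma_t^{\MM_i}\}_{t\in\MM_i}$.

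Finally, since $\MM_1$ and $\MM_2$ are SAP monoids, $\mathcal{F}_1$ and $\mathcal{F}_2$ have SAP, so Proposition~\ref{prop-tensor} gives that $\mathcal{F}_1\otimes_{min}\mathcal{F}_2$ has SAP; therefore $\MM_1\times\MM_2$ is a SAP monoid, and by induction the same holds for any finite Cartesian product. There is no serious obstacle here: the only things to keep straight are the compatibility between the monoid-level Cartesian product and the operator-level minimal tensor product, and that cancellativity passes to products; the real content has already been isolated in Proposition~\ref{prop-tensor}. (One may note in passing that the corresponding statement for lunar monoids is equally immediate, since the lunar condition \eqref{def-lunar-intro} is a coordinatewise one.)
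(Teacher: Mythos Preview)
Your proof is correct and follows exactly the paper's approach: identify $\Gamma_{(t_1,t_2)}^{\MM_1\times\MM_2}=\Gamma_{t_1}^{\MM_1}\otimes\Gamma_{t_2}^{\MM_2}$ and invoke Proposition~\ref{prop-tensor}. You have simply supplied more detail (cancellativity of the product, the explicit entrywise computation) than the paper's two-line argument.
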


\begin{proof}
Observe that 
$
\Gamma_{(t_1, t_2)}^{\MM_1 \times \MM_2} = \Gamma_{t_1}^{\MM_1} \otimes \Gamma_{t_2}^{\MM_2}.
$
The result follows from Proposition~\ref{prop-tensor}. 
\end{proof}

\begin{remark}
In the sequel to this paper, we shall prove the following more involved fact: 
\begin{center} 
{\it 
free product preserves the class of SAP monoids. }
\end{center}
\end{remark}

In Problem~\ref{prob-sap-lunar}, we asked whether the class of SAP monoids coincides with that of lunar ones. A more tractable open problem is 

\begin{problem}\label{prob-hsap-lunar}
Does the class of hereditary SAP monoids coincides with that of lunar ones ?
\end{problem}

Recall that  the lunar condition of a monoid $\MM$ is a local condition: to check whether $\MM$ satisfies the lunar condition, it suffices to check the configurations of all $4\times 4$ blocks in the multiplication table of $\MM$.  Therefore, a positive answer to Problem~\ref{prob-hsap-lunar} would follow from a positive answer to Problem~\ref{prob-hsap} in \S \ref{sec-hank-monoid}.

\section{Hankel systems of lunar maps}\label{sec-hank-monoid}
As explained in the introduction of this paper, the definition of  Hankel systems determined by the multiplication tables of monoids can be extended to the ones determined by level sets of  abstract two-variable maps.   This section is devoted to such non-classical Hankel systems. 

By generalizing the definition of lunar monoids (Definition~\ref{def-AAM}), we introduce a notion of lunar maps (Definition~\ref{def-SD}). Then we prove that the  Hankel system associated to any lunar map has SAP (see  Theorem~\ref{thm-Phi}) and in fact has a stronger hereditary SAP (see Definition~\ref{def-hsap} and Corollary~\ref{cor-comp}).  

Given any lunar map, we are able to construct a coupled discrete foliation decomposition (see the equalities \eqref{AX-fol} below). And this coupled foliation structure allows us to obtain   simultaneous block-diagonalization of all the  tensorized operators of the associated Hankel system (see the discussion in the beginning of \S \ref{pf-main}).

\begin{definition}[Lunar maps]\label{def-SD}
Consider three sets $\mathcal{A}, \mathcal{X}, \mathcal{L}$. 
A two-variable map $\Phi: \mathcal{A}\times \mathcal{X}\rightarrow \mathcal{L}$ is called a {\it lunar map} if 
\begin{itemize}
\item[(i)]$\Phi$ is {\it coordinatewise injective}, that is,  the map $\Phi(a,\cdot)$ is injective on $\mathcal{X}$ for each $a\in \mathcal{A}$ and so is the map $\Phi(\cdot,x)$ on $\mathcal{A}$ for   each $x\in \mathcal{X}$.
\item[(ii)]$\Phi$ satisfies the {\it generalized lunar condition}:  
 for any $a,b,c,d\in \mathcal{A}$ and $ x, y, z, w\in \mathcal{X}$, the following implication  holds: 
\begin{align}\label{def-g-lunar-c}
\Big(\Phi(a,x) = \Phi(b,y), \Phi(c,x)=\Phi(d,y), \Phi(a,z)=\Phi(b,w)\Big)\Longrightarrow \Big(\Phi(c,z)=\Phi(d,w)\Big). 
\end{align}
\end{itemize}
\end{definition}

Given a  coordinatewise  injective  map  $\Phi: \mathcal{A}\times \mathcal{X} \rightarrow \mathcal{L}$, by the  Hankel system associated to  the map $\Phi$, we mean the family  $\{\Gamma_\ell^\Phi\}_{\ell\in \mathcal{L}}\subset B(\ell^2(\mathcal{X}), \ell^2(\mathcal{A}))$  consisting of operators $\Gamma_\ell^\Phi$ : 
\begin{align}\label{def-G-Phi}
\Gamma_\ell^\Phi(a, x)= \mathds{1}(\Phi(a, x)= \ell), \quad (a, x)\in \mathcal{A}\times \mathcal{X}. 
\end{align}
Note that   $\Phi$ is coordinatewise injective if and only if   the operators $\Gamma_\ell^\Phi \in  B(\ell^2(\mathcal{X}), \ell^2(\mathcal{A}))$ are  of  unit norm  $\| \Gamma_\ell^\Phi \|=1$ for all $\ell$ in the image of $\Phi$.

\begin{mainthm}\label{thm-Phi}
For any lunar map $\Phi: \mathcal{A}\times \mathcal{X}\rightarrow \mathcal{L}$, 
the Hankel system $\{\Gamma^\Phi_\ell \}_{\ell\in \mathcal{L}}$  has SAP. 
\end{mainthm}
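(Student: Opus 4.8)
The plan is to mirror exactly the structure of the proof of Proposition~\ref{thm-doubling} in \S\ref{sec-one-dim}, but now running it through the coupled foliation decomposition that the generalized lunar condition \eqref{def-g-lunar-c} makes available. First I would reduce, as usual, to the finitely-supported case: by a truncation–approximation argument (the analogue of Lemma~\ref{lem-Pr} and Remark~\ref{rem-Pr}, adapted to the bipartite setting $B(\ell^2(\mathcal X),\ell^2(\mathcal A))$) it suffices to prove the two-sided inequality \eqref{def-kappa} with $\kappa=1$ for sums $\sum_{\ell} b_\ell\otimes\Gamma_\ell^\Phi\otimes\overline{\Gamma_\ell^\Phi}$ over finite index sets, and in fact it is enough to exhibit a simultaneous factorization of the form \eqref{xy-ab} with contractive (indeed partially isometric) operators $U,V$ independent of $\ell$.

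\textbf{The coupled foliation.} The heart of the argument is to set up, for the map $\Phi$, the analogues of the sets $(\MM^2)_{\mathrm{left}}$ and $(\MM^2)_{\mathrm{right}}$ described in \S\ref{sec-intro}, namely
\[
(\mathcal X^2)_{\mathrm{left}}=\{(x,y):\exists\,(a,b)\ \text{with}\ \Phi(a,x)=\Phi(b,y)\},\quad
(\mathcal A^2)_{\mathrm{right}}=\{(a,b):\exists\,(x,y)\ \text{with}\ \Phi(a,x)=\Phi(b,y)\}.
\]
On $(\mathcal A^2)_{\mathrm{right}}$ one declares $(a,b)\sim(c,d)$ iff there is $(x,y)$ with $\Phi(a,x)=\Phi(b,y)$ and $\Phi(c,x)=\Phi(d,y)$; the generalized lunar condition is precisely what makes the leaf $\spadesuit_{[a,b]}=\{(x,y):\Phi(a,x)=\Phi(b,y)\}$ depend only on the class $[a,b]$, and coordinatewise injectivity makes all four coordinate projections restricted to the leaves $\spadesuit_{[a,b]}$ and $\clubsuit_{[a,b]}=\{(c,d)\sim(a,b)\}$ injective ("one-dimensionality"). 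I would then verify the two containments: (i) $\ell^2((\mathcal X^2)_{\mathrm{left}})^\perp$ inside $\mathcal X^2$ lies in $\bigcap_\ell\ker(\Gamma_\ell^\Phi\otimes\overline{\Gamma_\ell^\Phi})$, and (ii) $\overline{\mathrm{span}}\bigcup_\ell\mathrm{Im}(\Gamma_\ell^\Phi\otimes\overline{\Gamma_\ell^\Phi})\subset\ell^2((\mathcal A^2)_{\mathrm{right}})$; both follow by unwinding the definitions of the indicator matrices. Next, with the orthogonal decompositions $\ell^2((\mathcal X^2)_{\mathrm{left}})=\bigoplus_{[a,b]}\ell^2(\spadesuit_{[a,b]})$ and $\ell^2((\mathcal A^2)_{\mathrm{right}})=\bigoplus_{[a,b]}\ell^2(\clubsuit_{[a,b]})$, one checks $(\Gamma_\ell^\Phi\otimes\overline{\Gamma_\ell^\Phi})(\ell^2(\spadesuit_{[a,b]}))\subset\ell^2(\clubsuit_{[a,b]})$ for every $\ell$, again a bookkeeping check on where a basis vector $\delta_{(x,y)}$ goes.

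\textbf{Block diagonalization and conclusion.} For each equivalence class $[a,b]$, using the injectivity of $\pi_1|_{\spadesuit_{[a,b]}}$ and of $\pi_1|_{\clubsuit_{[a,b]}}$ (or $\pi_2$, whichever is convenient), I would build partial isometries $R^{[a,b]}\colon\ell^2(\spadesuit_{[a,b]})\to\ell^2(\mathcal X)$ and $S^{[a,b]}\colon\ell^2(\mathcal X)\to\ell^2(\clubsuit_{[a,b]})$ — concretely, $R^{[a,b]}$ sends $\delta_{(x,y)}$ to $\delta_x$ and $S^{[a,b]}$ sends a suitable basis vector back — so that the commuting square
\[
(\Gamma_\ell^\Phi\otimes\overline{\Gamma_\ell^\Phi})\big|_{\ell^2(\spadesuit_{[a,b]})}=S^{[a,b]}\,\Gamma_\ell^\Phi\,R^{[a,b]}
\]
holds simultaneously for all $\ell$; this is the generalized-lunar analogue of Lemmas~\ref{lem-diag}, \ref{lem-offdiag} and Proposition~\ref{prop-mul-repn}, and the lunar condition is exactly what guarantees $S^{[a,b]}\Gamma_\ell^\Phi R^{[a,b]}$ reproduces $\Gamma_\ell^\Phi\otimes\overline{\Gamma_\ell^\Phi}$ rather than merely intertwining it up to error. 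Assembling over all classes gives $\Gamma_\ell^\Phi\otimes\overline{\Gamma_\ell^\Phi}=\mathcal V\,\mathcal R(\Gamma_\ell^\Phi)\,\mathcal W$ with $\mathcal V=\bigoplus S^{[a,b]}$, $\mathcal W=\bigoplus R^{[a,b]}$ contractions and $\mathcal R$ a (block-amplified) $*$-representation of $B(\ell^2(\mathcal X),\ell^2(\mathcal A))$; feeding coefficients $b_\ell$ in an auxiliary $C^*$-algebra $B$ and invoking $\|\mathcal R\|_{cb}\le1$ gives the upper estimate in \eqref{def-kappa}, while restricting to one distinguished leaf (the "diagonal" one, where $\spadesuit_{[a,b]}$ is a copy of $\mathcal X$) recovers the full norm and gives the lower estimate. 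The main obstacle I anticipate is the bookkeeping in the bipartite, non-Abelian setting: making sure the leaves are genuinely well-defined under the equivalence relation (this is where \eqref{def-g-lunar-c} is used in full force, for all four of its variables), that the one-dimensionality holds for \emph{all} four projections simultaneously, and that the partial isometries $R^{[a,b]},S^{[a,b]}$ can be chosen coherently so the intertwining is exact and $\ell$-independent — the Abelian warm-up in \S\ref{sec-one-dim} hides all of this because there every leaf is a shifted diagonal.
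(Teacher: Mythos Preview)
Your proposal is correct and follows essentially the same route as the paper's proof: the same coupled foliation of $(\mathcal X^2)_{\mathrm{left}}$ and $(\mathcal A^2)_{\mathrm{right}}$, the same simultaneous block-diagonalization via partial isometries (the paper calls them $P^{[a,b]},Q^{[a,b]}$), and the same use of the diagonal leaf for the lower bound. One minor slip: your $S^{[a,b]}$ should have domain $\ell^2(\mathcal A)$, not $\ell^2(\mathcal X)$, since $\Gamma_\ell^\Phi$ lands in $\ell^2(\mathcal A)$; and the truncation--approximation reduction is not actually needed here because the SAP definition \eqref{def-kappa} already quantifies only over finitely supported families.
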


One may wonder whether the lunar condition on a two-variable map $\Phi$ is also necessary for the Hankel system $\{\Gamma_\ell^\Phi\}_\ell$ to have SAP. However, even for maps on finite sets of very small size, we are not able to answer this question.  
\begin{problem}\label{prob-hsap}
Consider any  the map $\Phi: \{1,2,3,4\} \times \{1,2,3,4\}\rightarrow \mathcal{L}$. Assume that the Hankel system of associated to  $\Phi$ has SAP. Does it follow that $\Phi$ is a lunar map ? 
\end{problem}

\subsection{Hereditary SAP}
In general, the SAP is not preserved by compression (see Example~\ref{ex-compression}). Therefore, we introduce a stronger notion of SAP in the following 
\begin{definition}[Hereditary SAP]\label{def-hsap}
A system $\{x_i\}_{i\in I} \subset B(\ell^2)$ is said to have hereditary SAP if for any  subsets $S_1,S_2 \subset \N$, the compressed system 
\[
\{P_{S_1} x_i P_{S_2}\}_{i\in I}
\]
has SAP. Here $P_{S_1}, P_{S_2}$ are othogonal projections onto $\ell^2(S_1)$ and $\ell^2(S_2)$ respectively.  More generally, 
the hereditary SAP for systems in $B(\ell^2(\mathcal{X}), \ell^2(\mathcal{A}))$ is defined in a similar way. 
\end{definition}

\begin{corollary}\label{cor-comp}
For any lunar map $\Phi: \mathcal{A}\times \mathcal{X}\rightarrow \mathcal{L}$, 
the Hankel system $\{\Gamma^\Phi_\ell \}_{\ell\in \mathcal{L}}$  has hereditary SAP. 
\end{corollary}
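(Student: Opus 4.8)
The plan is to reduce the hereditary SAP of the Hankel system $\{\Gamma^\Phi_\ell\}_{\ell\in\mathcal{L}}$ of a lunar map to the plain SAP statement of Theorem~\ref{thm-Phi}, by observing that the class of lunar maps is closed under passing to "restricted" maps. Concretely, given arbitrary subsets $S_1\subset\mathcal{A}$ and $S_2\subset\mathcal{X}$, the compressed operators $P_{S_1}\Gamma^\Phi_\ell P_{S_2}$ are precisely the operators $\Gamma^{\Psi}_\ell$ of the restricted map
\[
\Psi := \Phi|_{S_1\times S_2} : S_1\times S_2 \to \mathcal{L},
\]
viewed inside $B(\ell^2(S_2),\ell^2(S_1))$ (and then trivially amplified back into $B(\ell^2(\mathcal{X}),\ell^2(\mathcal{A}))$, which does not change any of the tensor-norm quantities appearing in the definition of SAP). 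So the first step is to check this identification: for $a\in S_1$, $x\in S_2$ one has $\big(P_{S_1}\Gamma^\Phi_\ell P_{S_2}\big)(a,x)=\mathds{1}(\Phi(a,x)=\ell)=\Gamma^{\Psi}_\ell(a,x)$, and all matrix entries with $a\notin S_1$ or $x\notin S_2$ vanish; hence $\{P_{S_1}\Gamma^\Phi_\ell P_{S_2}\}_\ell$ and $\{\Gamma^\Psi_\ell\}_\ell$ generate the same system up to the unitary identification $\ell^2(S_1)\oplus \ell^2(\mathcal{A}\setminus S_1)\cong\ell^2(\mathcal{A})$ and likewise on the $\mathcal{X}$ side, so they have the same $\kappa_{SA}$.

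The second, and essentially the only substantive, step is to verify that $\Psi=\Phi|_{S_1\times S_2}$ is again a lunar map in the sense of Definition~\ref{def-SD}. Coordinatewise injectivity is immediate: a restriction of a coordinatewise injective map to a product of subsets is coordinatewise injective. For the generalized lunar condition \eqref{def-g-lunar-c}, one takes $a,b,c,d\in S_1$ and $x,y,z,w\in S_2$ satisfying the three hypotheses $\Psi(a,x)=\Psi(b,y)$, $\Psi(c,x)=\Psi(d,y)$, $\Psi(a,z)=\Psi(b,w)$; since $\Psi$ agrees with $\Phi$ on $S_1\times S_2$, these are literally the hypotheses of \eqref{def-g-lunar-c} for $\Phi$, whose conclusion $\Phi(c,z)=\Phi(d,w)$ then holds, and again by agreement this reads $\Psi(c,z)=\Psi(d,w)$. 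Thus the lunar property is inherited verbatim — there is nothing to quantify over outside $S_1\times S_2$ in Definition~\ref{def-SD}, which is exactly the "local" character of the condition.

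Finally, applying Theorem~\ref{thm-Phi} to the lunar map $\Psi$ gives $\kappa_{SA}(\{\Gamma^\Psi_\ell\}_\ell)=1$, i.e.\ the compressed system $\{P_{S_1}\Gamma^\Phi_\ell P_{S_2}\}_\ell$ has SAP; since $S_1,S_2$ were arbitrary, $\{\Gamma^\Phi_\ell\}_\ell$ has hereditary SAP in the sense of Definition~\ref{def-hsap}. I do not anticipate a genuine obstacle here: the entire content is the observation that the lunar condition, being a first-order implication with all variables ranging over $\mathcal{A}$ and $\mathcal{X}$ only through the values of $\Phi$, survives restriction to any sub-rectangle; the only point requiring a line of care is the bookkeeping that compression of $\Gamma^\Phi_\ell$ to $\ell^2(S_1)$-by-$\ell^2(S_2)$ corresponds exactly to the Hankel system of the restricted map and that the ambient amplification is norm-neutral for all the minimal-tensor-product norms in \eqref{def-kappa}.
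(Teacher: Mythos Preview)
Your proposal is correct and follows essentially the same approach as the paper: the paper's proof simply invokes Example~\ref{ob-restriction} (restrictions of lunar maps are lunar) and then applies Theorem~\ref{thm-Phi} to the restricted map, which is exactly the two-step reduction you describe. Your additional bookkeeping (identifying $P_{S_1}\Gamma_\ell^\Phi P_{S_2}$ with $\Gamma_\ell^\Psi$ and noting norm-neutrality of the ambient amplification) just spells out what the paper leaves implicit.
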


As a consequence,  we prove that 
the regular representation system of any countable discrete group (see Example~\ref{ex-regular}) has hereditary SAP. We mention that, it   does not follow from the standard Fell's absorption principle.  In particular, it implies  \cite[Cor. 5.3]{Katsoulis:2023aa}.

\begin{corollary}\label{cor-gp-compression}
The regular representation system $\{ \lambda_G(g)\}_{g\in G} \subset B(\ell^2(G))$ of any countable discrete group $G$ has hereditary SAP. 
\end{corollary}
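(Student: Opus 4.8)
The plan is to realize the regular representation system as the Hankel system of an explicit lunar map and then quote Corollary~\ref{cor-comp} directly. Concretely, I would consider the two-variable map $\Phi\colon G\times G\to G$ defined by $\Phi(s,t)=st^{-1}$. Unwinding Definition~\ref{def-G-Phi}, the $(s,t)$-entry of $\Gamma_g^\Phi$ equals $\mathds{1}(st^{-1}=g)=\mathds{1}(s=gt)=\langle\lambda_G(g)\delta_t,\delta_s\rangle$, so as subsets of $B(\ell^2(G))$ one has the literal equality $\{\Gamma_g^\Phi\}_{g\in G}=\{\lambda_G(g)\}_{g\in G}$ (note $\Phi$ is onto, since $\Phi(g,e)=g$, so $\mathcal{L}=G$ and every $g$ occurs exactly once). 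Hence hereditary SAP of the $\Phi$-Hankel system \emph{is} hereditary SAP of the regular representation system --- there is no unitary conjugation to keep track of.

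Next I would check that $\Phi$ is a lunar map in the sense of Definition~\ref{def-SD}. Coordinatewise injectivity is immediate: for fixed $s$ the map $t\mapsto st^{-1}$ is a bijection of $G$, and for fixed $t$ the map $s\mapsto st^{-1}$ is a bijection of $G$. For the generalized lunar condition \eqref{def-g-lunar-c}, suppose $ax^{-1}=by^{-1}$, $cx^{-1}=dy^{-1}$ and $az^{-1}=bw^{-1}$ in $G$. Left-multiplying the first identity by $a^{-1}$ and right-multiplying by $y$ gives $x^{-1}y=a^{-1}b$; the same manipulation of the second gives $x^{-1}y=c^{-1}d$, and of the third gives $z^{-1}w=a^{-1}b$. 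Therefore $c^{-1}d=a^{-1}b=z^{-1}w$, and left-multiplying by $c$ and right-multiplying by $w^{-1}$ yields $cz^{-1}=dw^{-1}$, which is the required conclusion.

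With these two observations in hand, Corollary~\ref{cor-comp} applied to $\Phi$ gives that $\{\Gamma_g^\Phi\}_{g\in G}=\{\lambda_G(g)\}_{g\in G}$ has hereditary SAP, which is exactly the assertion. (Countability of $G$ enters only to keep us in the separable setting in which the earlier results are phrased; the argument is otherwise insensitive to the cardinality of $G$.) I expect no genuine obstacle here: the entire content is the short group-theoretic verification that $\Phi(s,t)=st^{-1}$ is lunar, together with the bookkeeping identification of the two systems. If one prefers to avoid introducing $\Phi$, an alternative route is to note that $G$ with its group multiplication is trivially group-embeddable, hence a lunar monoid, so Theorem~\ref{thm-lunar-sap} makes it a hereditary SAP monoid; one then transfers from $\{\Gamma_t^G\}$ to $\{\lambda_G(g)\}$ using the relation $\Gamma_g^G=\lambda_G(g)U_{flip}$ from Example~\ref{ex-regular}, checking that composition with the fixed ``flip'' unitary respects compressions and SAP. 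The first route is cleaner, since it produces the conclusion directly from Corollary~\ref{cor-comp} with nothing to transfer.
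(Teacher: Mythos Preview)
Your proposal is correct and follows essentially the same approach as the paper: define $\Phi(a,x)=ax^{-1}$, observe that $\Gamma_g^\Phi=\lambda_G(g)$, verify that $\Phi$ is a lunar map, and invoke Corollary~\ref{cor-comp}. You have simply written out in full the lunar-condition check that the paper leaves implicit (it is recorded separately as Example~\ref{ex-group}).
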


Recall that the big Hankel operators on the Hardy spaces over higher dimensional or infinite dimensional  torus $\T^d$, where $d$ is an integer $d\ge 2$ or $d = \infty$ and $\T^\infty$ means $\T^\N$ (all the notation is classical and will be recalled in \S \ref{sec-app} below): 
\[
\Gamma_\varphi^{\mathrm{big}}: H^2(\T^d) \xrightarrow{\,M_\varphi\,} L^2(\T^d) \xrightarrow[\textit{projection}]{\textit{orthogonal}}   L^2(\T^d) \ominus H^2(\T^d), 
\]
where $M_\varphi$ is initially densely defined by $M_\varphi(f)= \varphi f$  with $\varphi \in L^2(\T^d)$.  By the big Hankel system, we mean 
\begin{align}\label{def-big-hank}
\{\Gamma_{e^{i n\cdot \theta}}^{\mathrm{big}}\}_{n\in \Z^d\setminus \N^d}  \subset B\big(H^2(\T^d), L^2(\T^d) \ominus H^2(\T^d)\big), 
\end{align}
when $d = \infty$, the index set $\Z^d\setminus\N^d$ should be replaced by $\Z^{(\N)}\setminus \N^{(\N)}$. 

\begin{corollary}\label{cor-big-hankel}
For any integer $d \ge 2$ or $d = \infty$, the big Hankel system  \eqref{def-big-hank} has SAP. 
\end{corollary}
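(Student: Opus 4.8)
The plan is to realize the big Hankel system as a subfamily of a spatial compression of the regular representation system of an abelian group, and then to invoke Corollary~\ref{cor-gp-compression}. First I would pass to the Fourier side: identifying $L^2(\T^d)$ with $\ell^2(\Z^d)$ (and, when $d=\infty$, with $\ell^2(\Z^{(\N)})$), the subspace $H^2(\T^d)$ becomes $\ell^2(\N^d)$, its orthogonal complement $L^2(\T^d)\ominus H^2(\T^d)$ becomes $\ell^2(\Z^d\setminus\N^d)$, and multiplication by $e^{in\cdot\theta}$ becomes the translation operator $\lambda_{\Z^d}(n)\colon \delta_m\mapsto \delta_{m+n}$, i.e.\ the value at $n$ of the left regular representation of the countable discrete abelian group $\Z^d$. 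Composing with the relevant orthogonal projections then gives, for every $n\in\Z^d\setminus\N^d$,
\[
\Gamma^{\mathrm{big}}_{e^{in\cdot\theta}} \;=\; P_{S_1}\,\lambda_{\Z^d}(n)\,P_{S_2}, \qquad S_1 := \Z^d\setminus\N^d,\quad S_2 := \N^d,
\]
while $P_{S_1}\lambda_{\Z^d}(n)P_{S_2}=0$ whenever $n\in\N^d$. Hence the big Hankel system $\{\Gamma^{\mathrm{big}}_{e^{in\cdot\theta}}\}_{n\in\Z^d\setminus\N^d}$ is, up to adjoining zero operators, exactly the compressed system $\{P_{S_1}\lambda_{\Z^d}(n)P_{S_2}\}_{n\in\Z^d}$.

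Then I would apply Corollary~\ref{cor-gp-compression} to $G=\Z^d$ (resp.\ $G=\Z^{(\N)}$ when $d=\infty$): since the regular representation system has hereditary SAP, the compressed system $\{P_{S_1}\lambda_{\Z^d}(n)P_{S_2}\}_{n\in\Z^d}$ has SAP. To finish, I would note that SAP passes to subfamilies --- given the defining inequalities~\eqref{def-kappa} for the larger index set and any finitely supported family of coefficients on a subset, one simply extends the coefficients by zero --- so the big Hankel system has SAP as well. The conjugation $\overline{\,\cdot\,}$ appearing in Definition~\ref{def-self-tensor} causes no trouble here, being a mere relabelling for operators with $\{0,1\}$-valued matrices. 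I do not expect a genuine obstacle in this route, since the substantive work is already packaged in Corollary~\ref{cor-gp-compression}; the only points needing care are the bookkeeping of the Fourier identification and the remark that symbols lying in $\N^d$ contribute only zero operators.

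For completeness I would also record the direct route via Theorem~\ref{thm-Phi}: the same identification exhibits the big Hankel system as the Hankel system $\{\Gamma^\Phi_\ell\}_{\ell\in\mathcal{L}}$ of the two-variable map $\Phi\colon (\Z^d\setminus\N^d)\times\N^d\to\Z^d\setminus\N^d$, $\Phi(m,n):=m-n$ (the image is visibly $\Z^d\setminus\N^d$). Coordinatewise injectivity is immediate because each partial map is a translation, and the generalized lunar condition~\eqref{def-g-lunar-c} reduces to the elementary fact that $m_1-n_1=m_2-n_2$, $m_3-n_1=m_4-n_2$ and $m_1-n_3=m_2-n_4$ force $m_1-m_2=m_3-m_4=n_3-n_4$, whence $m_3-n_3=m_4-n_4$. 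Thus $\Phi$ is a lunar map, Theorem~\ref{thm-Phi} applies, and Corollary~\ref{cor-comp} even upgrades the conclusion to hereditary SAP.
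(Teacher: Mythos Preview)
Your proposal is correct and follows essentially the same approach as the paper: identify the big Hankel system, via the Fourier transform, with a spatial compression of the regular representation of $\Z^d$ (resp.\ $\Z^{(\N)}$) and invoke Corollary~\ref{cor-gp-compression}. You are in fact more careful than the paper in noting that symbols in $\N^d$ contribute only zero operators and that SAP passes to subfamilies, and your alternative direct verification via the lunar map $\Phi(m,n)=m-n$ is exactly what underlies the proof of Corollary~\ref{cor-gp-compression} itself.
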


\subsection{Constructions of lunar maps}\label{sec-lunar-map}  
Here we  give a convenient reformulation of the generalized lunar condition.  

For any pair $(a, b)\in \mathcal{A}^2$, define the solution set of the equation $\Phi(a,x)= \Phi(b,y)$ by 
\begin{align}\label{def-sol-phi}
\Sol_\Phi(a,b):= \{(x,y)\in \mathcal{X}^2| \Phi(a,x)= \Phi(b,y)\}.
\end{align}

\begin{lemma}\label{lem-lunar-eq}
A map $\Phi$ satisfies the generalized lunar condition \eqref{def-g-lunar-c} if and only if for any $a, b, c, d\in \mathcal{A}$, either $\Sol_\Phi(a,b)= \Sol_\Phi(c
,d)$ or $\Sol_\Phi(a
,b)\cap \Sol_\Phi(c
,d)  = \emptyset$. 
\end{lemma}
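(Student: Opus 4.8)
The plan is to prove Lemma~\ref{lem-lunar-eq} by unwinding the two conditions symmetrically. First I would observe that the generalized lunar condition \eqref{def-g-lunar-c} is itself not obviously symmetric in the pairs $(a,b)$ and $(c,d)$, so the natural first step is to record that it is equivalent to its ``mirror'' version: if $\Phi(a,x)=\Phi(b,y)$, $\Phi(c,x)=\Phi(d,y)$ and $\Phi(c,z)=\Phi(d,w)$ then $\Phi(a,z)=\Phi(b,w)$. This follows because the hypotheses ``$\Phi(a,x)=\Phi(b,y)$ and $\Phi(c,x)=\Phi(d,y)$'' are themselves symmetric in the two pairs, and the conclusion of one direction supplies the third hypothesis needed to run the other. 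Phrased in terms of solution sets: \eqref{def-g-lunar-c} says precisely that whenever $\Sol_\Phi(a,b)$ and $\Sol_\Phi(c,d)$ share one common point $(x,y)$, then every point $(z,w)\in\Sol_\Phi(a,b)$ also lies in $\Sol_\Phi(c,d)$, i.e.\ $\Sol_\Phi(a,b)\subset\Sol_\Phi(c,d)$.

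Next I would run this inclusion both ways. Assume $\Phi$ satisfies \eqref{def-g-lunar-c} and suppose $\Sol_\Phi(a,b)\cap\Sol_\Phi(c,d)\ne\emptyset$, say $(x,y)$ lies in both. Applying the observation above gives $\Sol_\Phi(a,b)\subset\Sol_\Phi(c,d)$; applying it with the roles of $(a,b)$ and $(c,d)$ interchanged — legitimate because the common point $(x,y)$ is symmetric — gives $\Sol_\Phi(c,d)\subset\Sol_\Phi(a,b)$. Hence $\Sol_\Phi(a,b)=\Sol_\Phi(c,d)$. This establishes the dichotomy: the solution sets are either equal or disjoint.

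For the converse, assume the dichotomy holds and verify \eqref{def-g-lunar-c} directly. Given $a,b,c,d\in\mathcal{A}$ and $x,y,z,w\in\mathcal{X}$ with $\Phi(a,x)=\Phi(b,y)$, $\Phi(c,x)=\Phi(d,y)$, $\Phi(a,z)=\Phi(b,w)$: the first equality says $(x,y)\in\Sol_\Phi(a,b)$, the second says $(x,y)\in\Sol_\Phi(c,d)$, and the third says $(z,w)\in\Sol_\Phi(a,b)$. Since $(x,y)$ witnesses $\Sol_\Phi(a,b)\cap\Sol_\Phi(c,d)\ne\emptyset$, the dichotomy forces $\Sol_\Phi(a,b)=\Sol_\Phi(c,d)$, so $(z,w)\in\Sol_\Phi(c,d)$, i.e.\ $\Phi(c,z)=\Phi(d,w)$, which is the desired conclusion.

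I do not anticipate a serious obstacle here; this is essentially the standard argument that a reflexive-and-``partially transitive'' relation partitions into blocks. The one point that needs a little care — and which I would state explicitly rather than gloss over — is the symmetry step: checking that \eqref{def-g-lunar-c} really does yield its mirror form, so that the inclusion $\Sol_\Phi(a,b)\subset\Sol_\Phi(c,d)$ can be reversed. Everything else is bookkeeping with the definition \eqref{def-sol-phi} of $\Sol_\Phi$. Note that coordinatewise injectivity of $\Phi$ is not needed for this lemma; only the lunar implication \eqref{def-g-lunar-c} is used.
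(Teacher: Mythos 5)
Your argument is correct: rewriting \eqref{def-g-lunar-c} as ``a common point of $\Sol_\Phi(a,b)$ and $\Sol_\Phi(c,d)$ forces $\Sol_\Phi(a,b)\subset\Sol_\Phi(c,d)$'', reversing the inclusion by the symmetry of the hypotheses in $(a,b)$ and $(c,d)$ (which is immediate since the condition is universally quantified, so one may simply rename the pairs), and checking the converse by direct substitution is exactly the ``direct'' verification the paper omits. Your remark that coordinatewise injectivity plays no role here is also accurate.
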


 The proof of Lemma~\ref{lem-lunar-eq} is direct and is omitted.  This reformulation provides an  algorithm  for checking the generalized lunar condition.

 We now turn to basic constructions of lunar maps.

\begin{example}[Rational maps]\label{ex-polynomial}
Given $a,b, m, n \in  \Z^*$, define
$
\Phi: \N^*\times \N^* \rightarrow \Q$ by 
 \[
\Phi(x,y)= a x^m + b y^n.
\] Then $\Phi$ is a lunar map. 
\end{example}

\begin{example}[Tensor products]
If two maps $\Phi_i: \mathcal{A}_i \times \mathcal{X}_i\rightarrow \mathcal{L}_i$ ($i=1,2$) are lunar, then so is  their tensor product $\Phi_1\otimes \Phi_2$ given by 
\[
(\Phi_1\otimes \Phi_2)((a_1, a_2), (x_1, x_2)) : = (\Phi_1(a_1, x_1), \Phi_2(a_2, x_2)). 
\]
\end{example}

\begin{example}[Refinement of level sets]
Given two maps defined on the same product space $\Phi_i: \mathcal{A} \times \mathcal{X} \rightarrow \mathcal{L}_i (i = 1, 2)$, define $\Phi: \mathcal{A}\times \mathcal{X} \rightarrow \mathcal{L}_1\times \mathcal{L}_2$ by $\Phi(a,x)= (\Phi_1(a,x), \Phi_2(a,x))$.   If $\Phi_1$ and $\Phi_2$ are lunar maps, then so is $\Phi$. 

Note that the level sets of $\Phi$ are refinement of level sets of both $\Phi_1$ and $\Phi_2$ and the Hankel system $\{\Gamma_{(\ell_1, \ell_2)}^\Phi\}_{(\ell_1, \ell_2)}$ is obtained as  Hadamard-Schur products of operators from the two systems $\{\Gamma_{\ell_1}^{\Phi_1}\}_{\ell_1}$ and $\{\Gamma_{\ell_2}^{\Phi_2}\}_{\ell_2}$:
\[
\Gamma_{(\ell_1, \ell_2)}^\Phi  (a,x)= \Gamma_{\ell_1}^{\Phi_1}(a,x) \Gamma_{\ell_2}^{\Phi_2}(a,x). 
\]

\end{example}

\begin{example}[Restrictions]\label{ob-restriction} 
Let  $\Phi: \mathcal{A}\times \mathcal{X}\rightarrow \mathcal{L}$ be a lunar map. For any subsets  $\mathcal{A}_1\subset \mathcal{A}$ and $\mathcal{X}_1\subset \mathcal{X}$, the restriction  $\Phi|_{\mathcal{A}_1\times \mathcal{X}_1}:  \mathcal{A}_1 \times \mathcal{X}_1 \rightarrow \mathcal{L}$ is again a lunar map. 
\end{example}

\begin{example}[Transposition]
Given a  map $\Phi: \mathcal{A}\times \mathcal{X}\rightarrow \mathcal{L}$, its transposition  $\Phi^t$ is defined by $\Phi^t(x,a)= \Phi(a,x)$. 
It can be checked that $\Phi$ is a lunar map if and only if so is $\Phi^t$. 
\end{example}

\begin{example}[Group operations]\label{ex-group}
For any group  $G$, let $\Phi_1, \Phi_2: G\times G \rightarrow G$ be defined by  
\begin{align}\label{def-Phi-12}
\Phi_1(g_1, g_2)= g_1 g_2 \an \Phi_2(g_1, g_2)= g_1g_2^{-1}, \quad \forall g_1, g_2 \in G. 
\end{align}
Then  both $\Phi_1$ and $\Phi_2$ are lunar maps.
\end{example}

\begin{example}[Restriction of group operations]
By the constructions in  Examples \ref{ob-restriction} and \ref{ex-group}, for any $S_1, S_2 \subset G$, the corresponding  restricted maps $\Phi_1|_{S_1\times S_2},\Phi_2|_{S_1\times S_2}: S_1 \times S_2 \rightarrow G$ of $\Phi_1, \Phi_2$ defined in \eqref{def-Phi-12}  are lunar maps.
\end{example}

\begin{example}[Lunar monoids]\label{ex-lunar-monoid}
Let $\MM$ be a monoid and define $\Phi_\MM: \MM\times \MM\rightarrow \MM$ by 
\begin{align}\label{def-phi-M}
\Phi_\MM(x,y) = xy.
\end{align}
 It can be  checked directly that $\Phi_\MM$ is a lunar map if and only if $\MM$ is a lunar monoid in the sense of Definition~\ref{def-AAM}. 
\end{example}

\begin{example}[Restriction of monoid operations]
Let $\MM$ be a lunar monoid and let $S_1, S_2\subset \MM$. Then, by the constructions in  Examples \ref{ob-restriction} and \ref{ex-lunar-monoid},  the restriction onto the subset $S_1\times S_2$ of the map $\Phi_\MM$ defined in \eqref{def-phi-M} is a lunar map.  In particular, the class of lunar monoids is closed under the operation of  taking the submonoids. 
\end{example}

\begin{example}[Group-embeddable monoids]
By the constructions in Examples \ref{ob-restriction}, \ref{ex-group} and \ref{ex-lunar-monoid}, any  group-embeddable monoid is a lunar monoid in the sense of  Definition~\ref{def-AAM}.  Therefore, any cancellative Abelian monoid is a  lunar monoid, since it can be embedded into its Grothendieck group (see, e.g., \cite[Section 2.A, page 52]{Bruns-Polytopes-K-theory}).
\end{example}

For non-lunar maps, we have 
\begin{example}[Non-lunar polynomials]
The polynomial  $P(x,y)= x^2 + y^2 +xy$ defines a non-lunar map on $\N^*\times \N^*$. 
\end{example}

Therefore, it is natural to consider 
\begin{problem}\label{prob-lunar-poly}
Determine all integer-coefficient polynomials $P\in \Z[x, y]$ such that their restrictions on  $\N\times \N$ (or $\Z\times \Z$) are lunar maps. 
\end{problem}

\subsection{Proof of Theorem \ref{thm-Phi}}\label{pf-main}

Fix a lunar map  $\Phi: \mathcal{A}\times \mathcal{X} \rightarrow \mathcal{L}$ in the sense of  Definition~\ref{def-SD}. 

The proof is  divided into the following ten steps.

{\flushleft \it Step 1.  An equivalence relation on $\mathcal{A}^2$.}  Recall the notation introduced in \eqref{def-sol-phi}. The map $\Phi$ induces  an equivalence relation on $\mathcal{A}^2$  given  by 
\begin{align}\label{def-rel-Phi}
(a, b) \sim_\Phi (c, d) \Longleftrightarrow \Sol_\Phi(a, b)    =  \Sol_\Phi(c, d). 
\end{align}
Let $[\mathcal{A}^2]  = \mathcal{A}^2/\!{\sim_\Phi}$ be the corresponding quotient space whose general elements  will be denoted as $[a, b]$. Note that all pairs $(a, b)$ with $\Sol_\Phi(a,b)= \emptyset$ are equivalent under the relation $\sim_\Phi$.  The corresponding equivalent class will be denoted by $
[\star]:=\big\{(a,b)\in \mathcal{A}^2| \Sol_\Phi(a,b)= \emptyset\big\}$. 
 Set \[
[\mathcal{A}^2]^\times = [\mathcal{A}^2] \setminus \{[\star]\}.
\] By Lemma \ref{lem-lunar-eq},   if $\Sol_\Phi(a,b) \ne \emptyset$, then $(a, b) \sim_\Phi (c, d)$ if and only if 
$
\Sol_\Phi(a,b)\cap \Sol_\Phi(c,d)\ne \emptyset$. 

{\flushleft\it Step 2. Coupled foliation decompositions.} Define 
\begin{align*}
(\mathcal{X}^2)_{\text{left}}:&  = \{(x,y)\in \mathcal{X}^2\big| \exists (a,b)\, s.t.\, \Phi(a,x)= \Phi(b,y)\}. 
\\
(\mathcal{A}^2)_{\text{right}}: & = \{(a,b) \in \mathcal{A}^2\big| \exists (x,y)\, s.t.\, \Phi(a,x)= \Phi(b,y)\}.
\end{align*}
Then we have the following coupled foliation  decompositions: 
\begin{align}\label{AX-fol}
\begin{split}
(\mathcal{X}^2)_{\text{left}} & = \bigsqcup_{[a,b]\in [\mathcal{A}^2]^\times} \Sol_\Phi(a,b), 
\\
 (\mathcal{A}^2)_{\text{right}} &= \bigsqcup_{[a,b] \in [\mathcal{A}^2]^\times} [a,b] = \bigsqcup_{[a,b]\in [\mathcal{A}^2]^\times} \big\{(c,d)\in \mathcal{A}^2\big| (c,d)\sim_\Phi (a,b)\big\}. 
\end{split}
\end{align}
 The two  decompositions \eqref{AX-fol} are coupled in the sense that their components are in natural  one-to-one correspondance. This correspondance will be useful later.

{\flushleft\it Step 3. Coupled orthogonal decompositions.} The coupled decompositions in \eqref{AX-fol} induce coupled  orthogonal decompositions of $\ell^2(\mathcal{X})\otimes_2\ell^2(\mathcal{X})$ and $\ell^2(\mathcal{A})\otimes_2\ell^2(\mathcal{A})$ as follows. 

For any equivalence class $[a, b]\in [\mathcal{A}^2]^\times$, define  a closed subspace of $\ell^2(\mathcal{X})\otimes_2 \ell^2(\mathcal{X})
$ by 
\begin{align*}
\mathcal{H}_\Phi([a,b]):=\overline{\spann}\{e_{x} \otimes e_{y}| (x, y) \in \Sol_\Phi(a,b)\} \subset \ell^2(\mathcal{X})\otimes_2 \ell^2(\mathcal{X}).
\end{align*}
Clearly,  if $[a, b] \ne [c,d]$ in $[\mathcal{A}^2]^\times$, then
$
\mathcal{H}_\Phi([a,b]) \perp \mathcal{H}_\Phi([c,d]).
$
Therefore, 
\begin{align*}
\mathcal{H}_\Phi: =  \ell^2((\mathcal{X}^2)_{\text{left}}) =  \bigoplus_{[a,b]\in [\mathcal{A}^2]^\times} \mathcal{H}_\Phi([a,b])   \subset \ell^2(\mathcal{X})\otimes_2 \ell^2(\mathcal{X}).
\end{align*}
Here $\bigoplus_{[a,b]\in [\mathcal{A}^2]^\times} \mathcal{H}_\Phi([a,b])$ denotes the closed subspace  of $\ell^2(\mathcal{X})\otimes_2 \ell^2(\mathcal{X})$ given by 
\[
\bigoplus_{[a,b]\in [\mathcal{A}^2]^\times} \mathcal{H}_\Phi([a,b])   = \overline{\spann} \Big\{ \bigcup_{[a,b]\in  [\mathcal{A}^2]^\times} \mathcal{H}_\Phi([a,b])\Big\}. 
\]
Denoting by $\mathcal{H}_\Phi^{\perp}$  the orthogonal complement of the subspace $\mathcal{H}_\Phi$ in $\ell^2(\mathcal{X})\otimes_2 \ell^2(\mathcal{X})$,  we obtain  the orthogonal decomposition:
\begin{align}\label{X-decomp}
\ell^2(\mathcal{X})\otimes_2 \ell^2(\mathcal{X}) = \mathcal{H}_\Phi^\perp \oplus \mathcal{H}_\Phi =\mathcal{H}_\Phi^\perp \oplus \bigoplus_{[a,b]\in [\mathcal{A}^2]^\times} \mathcal{H}_\Phi([a,b]). 
\end{align}
 
Similarly,  for any $[a, b]\in [\mathcal{A}^2]^\times$, define  a closed subspace  of $\ell^2(\mathcal{A})\otimes_2 \ell^2(\mathcal{A})$  by 
\begin{align}\label{def-K-Phi}
\mathcal{K}_\Phi([a,b]):  = \overline{\spann}\{e_c\otimes e_d| (c,d) \sim_\Phi (a,b) \} \subset \ell^2(\mathcal{A})\otimes_2\ell^2(\mathcal{A}).
\end{align}
Then 
\begin{align*}
\mathcal{K}_\Phi: = \ell^2((\mathcal{A}^2)_{\text{right}}) =   \bigoplus_{[a,b]\in [\mathcal{A}^2]^\times} \mathcal{K}_\Phi([a,b]) = \overline{\spann} \Big\{ \bigcup_{[a,b]\in  [\mathcal{A}^2]^\times} \mathcal{K}_\Phi([a,b])\Big\} \subset \ell^2(\mathcal{A})\otimes_2 \ell^2(\mathcal{A})
\end{align*}
and  we  obtain  the orthogonal decomposition:
\begin{align}\label{A-decomp}
\ell^2(\mathcal{A})\otimes_2 \ell^2(\mathcal{A})  = \mathcal{K}_\Phi^\perp \oplus \mathcal{K}_\Phi =\mathcal{K}_\Phi^\perp \oplus \bigoplus_{[a,b]\in [\mathcal{A}^2]^\times} \mathcal{K}_\Phi([a,b]) . 
\end{align}

{\flushleft \it Step 4. The space $\mathcal{H}_\Phi^\perp$ is in  the simultaneous kernel of all $\Gamma_\ell^\Phi\otimes \Gamma_\ell^\Phi$.}
 We claim that 
\begin{align}\label{simul-kernel}
\mathcal{H}_\Phi^\perp   \subset  \bigcap_{\ell\in \mathcal{L}}\ker(\Gamma_\ell^\Phi\otimes \Gamma_\ell^\Phi). 
\end{align}

Indeed, note that 
\[
\mathcal{H}_\Phi^\perp = \overline{\spann}\Big\{e_x\otimes e_y\Big| (x,y) \in   \mathcal{X}^2\setminus \bigcup_{[a,b]\in [\mathcal{A}^2]} \Sol_\Phi(a,b)  \Big\}.
\]
Fix any pair $(x,y) \in   \mathcal{X}^2\setminus \bigcup_{[a,b]\in [\mathcal{A}^2]} \Sol_\Phi(a,b)$.  Assume by contradiction that there exists $\ell\in \mathcal{L}$ such that $(\Gamma_{\ell}^\Phi \otimes \Gamma_{\ell}^\Phi)(e_x \otimes e_y) \ne 0$. Then by the definition \eqref{def-G-Phi} of $\Gamma_\ell^\Phi$, there exists $(a, b)\in \mathcal{A}^2$, such that $\Phi(a,x)= \Phi(b,y)$ and $(\Gamma_\ell^\Phi \otimes \Gamma_\ell^\Phi)(e_x \otimes e_y) = e_a \otimes e_b$. But this implies that $(x,y)\in \Sol_\Phi(a,b)$ and we obtain a contradiction. 

{\flushleft \it Step 5. Simultaneous block-diagonalization  of all $\Gamma_\ell^\Phi \otimes \Gamma_\ell^\Phi$.} We claim that, with respect to the orthogonal decompositions \eqref{X-decomp} and \eqref{A-decomp}, all operators $\Gamma_\ell^\Phi\otimes \Gamma_\ell^\Phi$ for $\ell \in \mathcal{L}$ are simultaneously block-diagonalized. In notation,  we write this simultaneous block-diagonalization as 
\begin{align}\label{simul-diag}
\Gamma_\ell^\Phi\otimes \Gamma_\ell^\Phi = \Big(\mathcal{H}_\Phi^\perp \xrightarrow{0} \mathcal{K}_\Phi^\perp \Big)\oplus \bigoplus_{[a,b]\in [\mathcal{A}^2]^\times} \Big( \mathcal{H}_\Phi([a,b])\xrightarrow{\Gamma_\ell^\Phi\otimes \Gamma_\ell^\Phi} \mathcal{K}_\Phi([a,b])\Big).
\end{align}
Here by simultaneous block-diagonalization, we mean that the orthogonal decompositions \eqref{X-decomp} and \eqref{A-decomp} are indepedent of $\ell \in \mathcal{L}$.

 Indeed,  by \eqref{simul-kernel},  we have 
\[
(\Gamma_\ell^\Phi\otimes \Gamma_\ell^\Phi)|_{\mathcal{H}_\Phi^\perp} \equiv 0 \quad \text{for all $\ell \in \mathcal{L}$}. 
\]
It remains to show that for any $[a,b]\in [\mathcal{A}^2]^\times$,
\begin{align}\label{H-Phi-K}
(\Gamma_\ell^\Phi\otimes \Gamma_\ell^\Phi) (\mathcal{H}_\Phi([a,b])) \subset \mathcal{K}_\Phi([a,b]) \quad \text{ for all $\ell \in \mathcal{L}$.}
\end{align}
Equivalently, we need to show that for any $(x,y)\in \Sol_\Phi(a,b)$, 
\[
(\Gamma_\ell^\Phi\otimes \Gamma_\ell^\Phi) (e_x \otimes e_y) \in  \mathcal{K}_\Phi([a,b]) \quad \text{ for all $\ell \in \mathcal{L}$.}
\]
But by the definition \eqref{def-G-Phi} of $\Gamma_\ell^\Phi$, either $(\Gamma_\ell^\Phi \otimes \Gamma_\ell^\Phi)(e_x\otimes e_y) = 0$ (the zero-vector $0$ clearly belongs to $\mathcal{K}_\Phi([a,b])$) or there exists $(c,d)\in \mathcal{A}^2$ such that $\Phi(c,x)= \Phi(d,y)=\ell$ and 
\[
(\Gamma_\ell^\Phi \otimes \Gamma_\ell^\Phi)(e_x \otimes e_y) = e_c\otimes e_d. 
\]
Now the condition $\Phi(c,x)= \Phi(d,y) = \ell$ implies that $(x,y) \in \Sol_\Phi(c,d)$ and hence $(x,y)\in \Sol_\Phi(a,b) \cap \Sol_\Phi(c,d)$. Therefore, $\Sol_\Phi(a,b) \cap \Sol_\Phi(c,d) \neq \emptyset$.   By assumption,  $\Phi$ satisfies the generalized lunar condition, hence by  Lemma \ref{lem-lunar-eq},  $\Sol_\Phi(a,b) = \Sol_\Phi(c,d)$ and then, by the definition  \eqref{def-rel-Phi} of the equivalence relation $\sim_\Phi$, we  obtain  $(a,b) \sim_\Phi (c,d)$. Consequently, by the definition \eqref{def-K-Phi}, we have  $e_c\otimes e_d \in \mathcal{K}_\Phi([a,b])$. That is, 
\[
(\Gamma_\ell^\Phi \otimes \Gamma_\ell^\Phi)(e_x \otimes e_y) =e_c\otimes e_d \in \mathcal{K}_\Phi([a,b]).
\] 
This completes the proof of the desired relation \eqref{H-Phi-K}. 

{\flushleft\it Step 6. Restriction of $\Gamma_\ell^\Phi\otimes \Gamma_\ell^\Phi$ onto the diagonal subspace.} Define the diagonal subspace of $\ell^2(\mathcal{X}) \otimes_2 \ell^2(\mathcal{X})$ and that of $\ell^2(\mathcal{A})\otimes_2 \ell^2(\mathcal{A})$ as 
\begin{align*}
\mathcal{H}^{\diag}:& = \overline{\spann}\{e_x\otimes e_x| x\in \mathcal{X}\} \subset \ell^2(\mathcal{X})\otimes_2 \ell^2(\mathcal{X});
\\
\mathcal{K}^{\diag}:& = \overline{\spann}\{e_a\otimes e_a| a\in \mathcal{A}\} \subset \ell^2(\mathcal{A})\otimes_2 \ell^2(\mathcal{A}).
\end{align*}
Let $U: \mathcal{H}^\diag \rightarrow \ell^2(\mathcal{X})$ be the unitary operator such that   $U (e_x\otimes e_x) = e_x$ for all $x\in \mathcal{X}$. Similarly, define a unitary operator $V: \mathcal{K}^\diag \rightarrow \ell^2(\mathcal{A})$ such that $V(e_a\otimes e_a) = e_a$ for all $a\in \mathcal{A}$.  Then we have the following simultaneous commutative diagram for all $\ell\in \mathcal{L}$ (here by simultaneous commutative diagram, we mean that the interwining operators $U, V$ are independent of $\ell$): 
\begin{equation}\label{CD-diag-subspace}
\begin{tikzcd}
\mathcal{H}^\diag \arrow{r}{ \Gamma_\ell^\Phi\otimes \Gamma_\ell^\Phi} \arrow[d, "\simeq", "U" swap]& \mathcal{K}^\diag \arrow[d, "V", "\simeq" swap]
\\
\ell^2(\mathcal{X}) \arrow{r}{\Gamma_\ell^\Phi} & \ell^2(\mathcal{A})
\end{tikzcd}.
\end{equation}

Indeed, if $\Gamma_\ell^\Phi (e_x) =0$, then 
$
V(\Gamma_\ell^\Phi\otimes \Gamma_\ell^\Phi) (e_x\otimes e_x) = 0 = (\Gamma_\ell^\Phi U )(e_x\otimes e_x). 
$
If $\Gamma_\ell^\Phi(e_x) \ne 0$, then there exists a unique $a\in \mathcal{A}$ such that $\Phi(a, x)= \ell, \Gamma_\ell^\Phi (e_x)= e_a$ and 
\[
[V (\Gamma_\ell^\Phi \otimes \Gamma_\ell^\Phi)](e_x\otimes e_x) = V(e_a \otimes e_a) = e_a =  \Gamma_\ell^\Phi (e_x)=  (\Gamma_\ell^\Phi U) (e_x \otimes e_x). 
\]
Thus the commutative diagram \eqref{CD-diag-subspace}  follows. 

\begin{remark}\label{rem-aa}
Note that the coordinatewise injectivity of $\Phi$ implies   $\Sol_\Phi(a,a) = \{(x,x)| x\in \mathcal{X}\}$ for any $a\in \mathcal{A}$ and $\Sol_\Phi(a, b) \cap \Sol_\Phi(a,a)  = \emptyset$ for any distinct $a, b\in \mathcal{A}$. It follows that 
\[
\text{$
\mathcal{H}^\diag= \mathcal{H}_\Phi([a,a])
$ and $\mathcal{K}^\diag = \mathcal{K}_\Phi([a,a])$. }
\]
\end{remark}

{\flushleft\it Step 7. Definitions of interwining operators for $\mathcal{H}_\Phi([a,b])$ and $\mathcal{K}_\Phi([a,b])$.} 
For any $[a,b]\in [\mathcal{A}^2]^\times$, define a bounded linear operator $P^{[a,b]}:  \mathcal{H}_\Phi([a,b])  \rightarrow \ell^2(\mathcal{X})$ such that 
\[
P^{[a,b]} (e_x\otimes e_y) =  e_x \quad \text{for all $(x,y)\in \Sol_\Phi(a,b)$}. 
\]
And  define  also a bounded linear operator $Q^{[a,b]}: \ell^2(\mathcal{A}) \rightarrow \mathcal{K}_\Phi([a,b])$ such that 
\begin{align}\label{def-Qab}
Q^{[a,b]} (e_c) = \left\{ 
\begin{array}{ll}
  e_c \otimes e_d, & \text{if there exists $d\in \mathcal{A}$ such that $(a,b) \sim_\Phi (c,d)$}
\\
0,& \text{otherwise}
\end{array}
\right..
\end{align}
We need to check that the above  $Q^{[a,b]}(e_c)$ is well-defined. Indeed, assume that $d_1, d_2 \in \mathcal{A}$ such that $(a,b)\sim_\Phi(c,d_1) \sim_\Phi(c,d_2)$. Since $[a, b]\in [\mathcal{A}^2]^\times$, we have $ \Sol_\Phi(c,d_1) = \Sol_\Phi(c,d_2) = \Sol_\Phi(a,b) \ne \emptyset$. Take any $(x_0, y_0) \in \Sol_\Phi(c,d_1) = \Sol_\Phi(c,d_2) = \Sol_\Phi(a,b)$, then $
\Phi(c, x_0)= \Phi(d_1, y_0)$ and $\Phi(c, x_0) = \Phi(d_2, y_0)$. Hence $\Phi(d_1, y_0) = \Phi(d_2, y_0)$. Consequently, by the coordinatewise injectivity assumption  on $\Phi$, we must have $d_1 = d_2$.  

Clearly, both operators $P^{[a,b]}$ and $Q^{[a,b]}$ are partial isometries (unitary operators are also considered as partial isometries). In particular, their  operator norms satisfy
\begin{align}\label{PQ-norm1}
\| P^{[a,b]}\| \le 1 \an \| Q^{[a,b]}\| \le 1. 
\end{align}

{\flushleft \it Step 8. Restriction of $\Gamma_\ell^\Phi \otimes \Gamma_\ell^\Phi$ onto $\mathcal{H}_\Phi([a,b])$.} We claim that, for any $[a,b]\in [\mathcal{A}^2]^\times$,  simultaneously for all $\ell\in \mathcal{L}$, the following diagram commutes (here again, we emphasize that, the interwining operators $P^{[a,b]}$ and $Q^{[a,b]}$ are both independent of $\ell\in \mathcal{L}$): 
\begin{equation}\label{CD-Hab}
\begin{tikzcd}
\mathcal{H}_\Phi([a,b]) \arrow{r}{  \Gamma_\ell^\Phi\otimes \Gamma_\ell^\Phi} \arrow[d, "P^{[a,b]}" swap]& \mathcal{K}_\Phi([a,b])
\\
\ell^2(\mathcal{X}) \arrow{r}{\Gamma_\ell^\Phi} & \ell^2(\mathcal{A})  \arrow[u, "Q^{[a,b]}" swap]
\end{tikzcd}.
\end{equation}

Indeed, take any $\ell\in \mathcal{L}$. Then for any $(x,y) \in \Sol_\Phi(a,b)$,  one  and  only one of the following  three cases happens:
\begin{itemize}
\item[(i)]  There does not exist $c\in \mathcal{A}$ such that $\Phi(c,x)= \ell$. In this case,  $\Gamma_\ell^\Phi(e_x)= 0$. Hence,  on the one hand,  
\[
(\Gamma_\ell^\Phi \otimes \Gamma_\ell^\Phi)(e_x\otimes e_y)  = (\Gamma_\ell^\Phi  e_x)\otimes (\Gamma_\ell^\Phi e_y) =  0
\] and on the other hand, 
\[
Q^{[a,b]} \Gamma_\ell^\Phi P^{[a,b]} (e_x\otimes e_y) =  Q^{[a,b]} \Gamma_\ell^\Phi (e_x) = 0.
\]
\item[(ii)] There exists $c \in \mathcal{A}$ with $\Phi(c,x) = \ell$ but there does not exist $d \in \mathcal{A}$ with $\Phi(d,y)= \ell$. In this case, $\Gamma_\ell^\Phi (e_x)= e_c$ and $\Gamma_\ell^\Phi(e_y) = 0$.  Moreover, there is no element $d\in \mathcal{A}$ such that $(a, b)\sim_\Phi (c,d)$. Hence by the definition \eqref{def-Qab} of $Q^{[a,b]}$, we must have $Q^{[a,b]}(e_c) = 0$. Therefore, on the one hand, 
\[
(\Gamma_\ell^\Phi \otimes \Gamma_\ell^\Phi)(e_x\otimes e_y)  = (\Gamma_\ell^\Phi  e_x)\otimes (\Gamma_\ell^\Phi e_y) =  0
\] and on the other hand, 
\[
Q^{[a,b]} \Gamma_\ell^\Phi P^{[a,b]} (e_x\otimes e_y) =  Q^{[a,b]} \Gamma_\ell^\Phi (e_x) = Q^{[a,b]}(e_c) =  0.
\]
\item[(iii)] There exist $c,d\in \mathcal{A}$ such that $\Phi(c,x) = \Phi(d,y) = \ell$ (in particular, it implies that $(a,b) \sim_\Phi (c,d)$). In this case, $\Gamma_\ell^\Phi(e_x) = e_c, \Gamma_\ell^\Phi(e_y) = e_d$ and $Q^{[a,b]}(e_c)= e_c \otimes e_d$. Consequently, on the one hand, 
\[
(\Gamma_\ell^\Phi \otimes \Gamma_\ell^\Phi)(e_x\otimes e_y)  = (\Gamma_\ell^\Phi  e_x)\otimes (\Gamma_\ell^\Phi e_y) =  e_c \otimes e_d
\]
and on the other hand, 
\[
Q^{[a,b]} \Gamma_\ell^\Phi P^{[a,b]} (e_x\otimes e_y) =  Q^{[a,b]} \Gamma_\ell^\Phi (e_x) = Q^{[a,b]}(e_c) =  e_c\otimes e_d.
\]
\end{itemize}
Clearly, in all these three cases, $(\Gamma_\ell^\Phi \otimes \Gamma_\ell^\Phi)(e_x\otimes e_y) = Q^{[a,b]} \Gamma_\ell^\Phi P^{[a,b]} (e_x\otimes e_y)$. Since $\ell \in \mathcal{L}$ and $(x,y)\in \Sol_\Phi(a,b)$ are chosen arbitrarily, we prove that the diagram \eqref{CD-Hab} commutes.  

{\flushleft \it Step 9. The proof of the inequality $\| \sum_{\ell} a_\ell \otimes \Gamma_\ell^\Phi\| \le \| \sum_{\ell} a_\ell \otimes \Gamma_\ell^\Phi \otimes \Gamma_\ell^\Phi\|$.} Take any finitely supported sequence $(a_\ell)_{\ell\in \mathcal{L}}$ in an arbitrary unital $C^*$-algebra $B$ with unit $1_B$.  By the simultaneous commutative diagram \eqref{CD-diag-subspace}, we have 
\begin{align*}
\sum_\ell a_\ell \otimes \Gamma_\ell^\Phi &= \sum_\ell a_\ell \otimes  V [(\Gamma_\ell^\Phi \otimes \Gamma_\ell^\Phi)|_{\mathcal{H}^\diag} ]U^{-1}  
\\
& = (1_B \otimes V) \Big( \sum_\ell a_\ell \otimes (\Gamma_\ell^\Phi \otimes \Gamma_\ell^\Phi)|_{\mathcal{H}^\diag}\Big) (1_B\otimes U^{-1}). 
\end{align*}
Consequently, 
\begin{align*}
\Big\|\sum_\ell a_\ell \otimes \Gamma_\ell^\Phi\Big\| = \Big\| \sum_\ell a_\ell \otimes (\Gamma_\ell^\Phi \otimes \Gamma_\ell^\Phi)|_{\mathcal{H}^\diag}\Big\| \le  \Big\| \sum_\ell a_\ell \otimes \Gamma_\ell^\Phi \otimes \Gamma_\ell^\Phi\Big\|.
\end{align*}

{\flushleft \it Step 10. The proof of the converse inequality $\| \sum_{\ell} a_\ell \otimes \Gamma_\ell^\Phi \otimes \Gamma_\ell^\Phi\| \le \| \sum_{\ell} a_\ell \otimes \Gamma_\ell^\Phi\| $.}
Take any finitely supported sequence $(a_\ell)_{\ell\in \mathcal{L}}$ in an arbitrary unital  $C^*$-algebra $B$ with unit $1_B$. The simultaneous block-diagonalization  \eqref{simul-diag}  together with the commutative diagram \eqref{CD-Hab} and the contractivity of all the interwining operators $P^{[a,b]}, Q^{[a,b]}$  in \eqref{PQ-norm1} imply that 
\begin{align*}
\Big\|\sum_{\ell} a_\ell \otimes \Gamma_\ell^\Phi \otimes \Gamma_\ell^\Phi \Big\| & = \sup_{[a,b]\in [\mathcal{A}^2]^\times}  \Big\|  \sum_\ell a_\ell \otimes   \Big( \mathcal{H}_\Phi([a,b])\xrightarrow{\Gamma_\ell^\Phi\otimes \Gamma_\ell^\Phi} \mathcal{K}_\Phi([a,b])\Big) \Big\|
\\
& = \sup_{[a,b]\in [\mathcal{A}^2]^\times}  \Big\|  \sum_\ell a_\ell \otimes   (Q^{[a,b]} \Gamma_\ell^\Phi P^{[a,b]}) \Big\|
\\
& = \sup_{[a,b]\in [\mathcal{A}^2]^\times}  \Big\|  (1_B \otimes Q^{[a,b]})\Big( \sum_\ell a_\ell \otimes  \Gamma_\ell^\Phi  \Big) (1_B\otimes P^{[a,b]})\Big\|
\\
&\le\Big\|\sum_\ell a_\ell \otimes \Gamma_\ell^\Phi\Big\| .
\end{align*}
This completes the proof of the converse inequality. 

\medskip

We thus complete the whole proof of Theorem \ref{thm-Phi}.

\subsection{Proof of Corollary \ref{cor-comp}}
Corollary \ref{cor-comp} follows immediately from Theorem~\ref{thm-Phi} by  using  Example~\ref{ob-restriction}: the lunar map is preserved by restriction.  

\subsection{Proof of Corollary \ref{cor-gp-compression}}
Corollary \ref{cor-gp-compression} follows from Corollary \ref{cor-comp}. Indeed,  for any group $G$, the map $\Phi: G\times G \rightarrow G$ defined by $\Phi(a,x) = ax^{-1}$  is a lunar map. Moreover, the corresponding operator $\Gamma_g^\Phi$ coincides with $\lambda_G(g)$ for any $g\in G$.

\subsection{Proof of Corollary \ref{cor-big-hankel}}
For any integer $d\ge 2$,  by using the standard orthonormal basis $\{e^{i n \cdot \theta}\}_{n\in \Z^d}$ of $L^2(\T^d)$, one can easily show that the big Hankel system \eqref{def-big-hank} is unitarily equivalent to the compression of the regular representation of the group $\Z^d$. When $d = \infty$, we only need to replace  $T^d$ by $\T^\N$ and replace $\Z^d$ by  the countable group $\Z^{(\N)}$.   Hence Corollary \ref{cor-big-hankel} follows from Corollary \ref{cor-gp-compression}.

\subsection{Proof of Theorem \ref{thm-lunar-sap}}\label{sec-pf-thm-lunar-sap}
By the construction of lunar maps via  lunar monoids (see Example~\ref{ex-lunar-monoid}), Theorem~\ref{thm-lunar-sap} follows immediately from Corollary~\ref{cor-comp}.

\section{Applications}\label{sec-app}

\subsection{Notation and preliminaries}\label{sec-notation}
\subsubsection{Notation}
  Let $\D = \{z\in \C: |z|<1\}$ be the unit disk endowed  with the normalized area measure $dA(z)= \frac{dxdy}{\pi}$ and  $\T=\partial \D$ be the unit circle endowed with the normalized Haar measure $dm$.  Recall that all the classical function spaces such as $L^p(\T, m),  H^p(\T),  \BMOA(\T)$   on $\T$ are simply denoted by: 
\[
 L^p  = L^p(\T, m),  \quad H^p = H^p(\T),  \quad \BMOA= \BMOA(\T). 
\]

Given any pair of Banach spaces $X, Y \subset H^1$ and a complex sequence $m = (m_n)_{n=0}^\infty$, the associated Fourier multiplier  $T_m$ (a priori densely defined) is the linear map:
\[
X \ni \sum_{n=0}^\infty \widehat{f}(n) e^{in\theta}  = f  \xrightarrow{\quad T_m\quad} T_m f =  f *\varphi_m =  \sum_{n  =0}^\infty m_n  \widehat{f}(n) e^{in\theta} \in Y,
\]
 here  the  associated formal Fourier series $\varphi_m = \sum_{n\ge 0} m_n e^{in\theta}$ is called the symbol of $T_m$ and will always be identified with $T_m$.   The space of all  bounded Fourier multipliers from $X$ to $Y$   is a Banach space with respect to the natural norm and will be denoted by 
\[(X, Y).\] 
If $X$ and $Y$ are both endowed with an \oss, then the space  of all  completely  bounded (abbrev. {\it cb}) Fourier multipliers from $X$ to $Y$, denoted by 
\[
(X,Y)_{cb},
\]
is equipped with a natural \oss  through isometries
\[
M_d((X, Y)_{cb}) = (X, M_d(Y))_{cb} \quad  \text{for all $d \ge 1$}.
\] Here $M_d(Z)$ denotes the space of $d\times d$ matrices with entries in $Z$.

\medskip
We shall also use the following notation:
\begin{itemize}
\item $M_m = M_m(\C)$ for the set of complex matrices of size $m\times m$; 
\item  $\ell^2_n = \ell^2(\{1, 2, \cdots, n\})$; 
\item   $H_0^p = \{f\in H^p| \widehat{f}(0)=0\}$; 
\item  $\overline{H^p}=\{\bar{f}| f\in H_p\}$ and similar notation like $\overline{H_0^\infty}$;
\item $S_p$  denotes the $p$-Schatten-von-Neumann  class of operators on $\ell^2$ for $1\le p<\infty$ and $S_\infty$ denotes the class of compact operators on $\ell^2$; in the case of $n\times n$ matrices,  the corresponding spaces will be denoted by $S_p^n$. In particular, $S_\infty^n = M_n$;  
\item $H^1(S_1)$ denotes the  space of $S_1$-vector-valued $H^1$-functions and similar notation like  $L^\infty(B(\ell^2))$,  $H^p(S_p)$ etc;
\item The discrete non-commutative  vector-valued $L_p$-spaces (introduced by Pisier in \cite[Chapter 1]{Pisier-Non-Commutative-vector}) are denoted by $S_p[E]$ (or $S_p^n[E]$), where $E$ is some operator space; recall Pisier's descriptions (\cite[Prop. 2.3]{Pisier-Non-Commutative-vector}) on the complete bounded norms: let $u: E\rightarrow F$ be a map between two operator spaces,  then, for any $1\le p\le\infty$, 
\begin{align}\label{cb-Sp}
\|u\|_{cb}= \| Id\otimes u: S_p[E] \rightarrow S_p[F]\| = \sup_{n\ge 1}\| Id\otimes u: S_p^n [E] \rightarrow S_p^n[F]\|.
\end{align}
\item For any two operator spaces $E, F$, we denote by $CB(E, F)$ the space of completely bounded operators from $E$ to $F$.
\item For any  $C^*$-algebra $A$, denoted by $\overline{A}$ its complex conjugate. Under the non-canonical identification $\ell^2 \simeq  \overline{\ell^2}$, we have $\overline{B(\ell^2)} \simeq B(\ell^2)$. In particular, if $a = [a_{ij}]_{i,j\in\N}\in B(\ell^2)$,  we define  $\overline{a}= [\bar{a}_{ij}]_{i,j\in\N} \in B(\ell^2)$. See  \cite[Section 2.3]{newbook-pisier}.
\end{itemize}

\subsubsection{Nehari-Sarason-Page Theorem}\label{sec-nsp} 
Let $\hank(\overline{H^2}, H^2) \subset B(\overline{H^2}, H^2)$ denote the set of all  bounded Hankel operators obtained by 
\[
\Gamma_\varphi: \overline{H^2} \xrightarrow{\,M_\varphi\,} L^2 \xrightarrow{\,\mathcal{R}_{+}\,} H^2, 
\]
where $M_\varphi$ is initially densely defined by $M_\varphi(f)= \varphi f$  with $\varphi \in L^2$ and $\mathcal{R}_{+}$ is the classical Riesz projection. 
With respect to the orthonormal basis $\{e^{-in\theta}\}_{n \ge 0}$ and $\{ e^{in\theta}\}_{n \ge 0}$  for $\overline{H^2}$ and $H^2$ respectively, any operator in $\hank(\overline{H^2}, H^2)$ has a matrix representation (here we slightly abuse the notation $\Gamma$): 
\[
\Gamma_a = [a_{i+j}]_{i,j\ge 0}, \quad \text{where $a= (a_i)_{i\ge 0} \in \C^\N$ is a complex sequence.}
\]  
 In particular,  we use identification: 
$
\Gamma_\varphi =  \Gamma_{(\widehat{\varphi}(n))_{n\ge 0}} = [\widehat{\varphi}(i+j)]_{i,j\ge 0}.$
Recall that we denote by  $\hank(\ell^2)$ the collection  of all bounded $\Gamma_a \in B(\ell^2)$.

The Nehari-Sarason-Page Theorem \cite{Nehari-Annofmath, Sarason-Nehari-vectorial, Page-Nehari-vectorial} asserts that, by equipping the space $L^\infty/\overline{H_0^\infty}$  with the quotient \oss,  the map 
$
L^\infty/\overline{H_0^\infty} \ni \varphi \mapsto \Gamma_\varphi\in \hank(\overline{H^2}, H^2)
$
is a complete isometric isomorphism. 
Throughout the whole paper, we will use the following equivalent norm on  $\BMOA$:
\begin{align}\label{bmoa-Linf}
\| \varphi\|_{\BMOA}:= \|\varphi\|_{L^\infty/\overline{H_0^\infty}}.
\end{align}
This choice of the equivalent  norm on $\BMOA$ will  allow us to compute the precise cb-norms (not just the equivalent ones) of multipliers in various situations. 

Thus,   by equipping   $\BMOA$ with the \oss induced by  $L^\infty/\overline{H_0^\infty}$, we have complete isometric identifications: 
\begin{align}\label{PSN-bis}
\begin{array}{ccccc}
\BMOA  \simeq L^\infty/\overline{H_0^\infty}&  \xrightarrow{\simeq}& \hank(\overline{H^2}, H^2) & \xrightarrow{\simeq}&  \hank(\ell^2)
\vspace{3mm}
\\
\varphi & \mapsto & \Gamma_\varphi & \mapsto & [\widehat{\varphi}(i+j)]_{i,j\ge 0}. 
\end{array}
\end{align}

\subsection{The multipliers in $(H^s, H^r)_{cb}$}
Recall the definition of $\BMOA^{(p)}$ ($1\le p \le  \infty$) introduced in \eqref{def-bmoap} and \eqref{def-bmoa-inf}.

\begin{theorem}\label{cor-interpolation}
For any $q\in [2, \infty]$, 
 \[
\BMOA^{(q')} \xhookrightarrow[\textit{inclusion}]{\textit{contractive}} (H^1, H^q)_{cb} = (H^{q'}, \BMOA)_{cb} \quad \text{with $\frac{1}{q}  + \frac{1}{q'}  = 1$}.
\]
More generally, for any $1\le r\le 2 \le s<\infty$, 
\begin{align}\label{gen-rs}
\BMOA^{(u)} \xhookrightarrow[\textit{inclusion}]{\textit{bounded}} (H^r, H^s)_{cb} \quad \text{with $\frac{1}{u} = \frac{1}{r} - \frac{1}{s}$}. 
\end{align}
\end{theorem}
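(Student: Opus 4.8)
The plan is to first establish the two endpoint identifications and then interpolate. I would begin with the case $q=\infty$: the claim $\mathrm{BMOA}^{(\infty)} \hookrightarrow (H^1,\mathrm{BMOA})_{cb}=\mathrm{BMOA}$ and, more to the point, the complete isometry $(H^1,\mathrm{BMOA})_{cb}=\mathrm{BMOA}$. This should come directly from the Nehari--Sarason--Page identification \eqref{PSN-bis}: a Fourier multiplier $\varphi$ acting $H^1\to\mathrm{BMOA}=L^\infty/\overline{H_0^\infty}\simeq\hank(\ell^2)$ is, after unwinding the duality $H^1{}^*=\mathrm{BMOA}$ and the Hankel picture, exactly the operator $[\widehat\varphi(i+j)]_{i,j}$, and its cb-norm equals $\|\varphi\|_{\mathrm{BMOA}}$. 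The inclusion $\mathrm{BMOA}^{(\infty)}\subset\mathrm{BMOA}$ is then just the trivial estimate $\sup_n|\widehat\varphi(n)|\le\|\varphi\|_{\mathrm{BMOA}}$ coming from \eqref{def-bmoa-inf} versus the Fefferman-type description. The other endpoint is $q=2$: I would prove the complete isometry $(H^1,H^2)_{cb}=(H^2,\mathrm{BMOA})_{cb}=\mathrm{BMOA}^{(2)}$, which is the content of the already-cited Proposition~\ref{thm-hardy}; here the self-absorption property of the Hankel system on $\N$ (Corollary~\ref{cor-simple-n}) is what lets one pass between the "symbol acts on $H^1$" and the "squared symbol acts as a Hankel operator" descriptions, identifying the cb-norm with $\|\,\sum|\widehat\varphi(n)|^2 e^{in\theta}\|_{\mathrm{BMOA}}^{1/2}$.

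Next I would record the two structural facts about the scale $\{\mathrm{BMOA}^{(p)}\}$ that are proved in Lemmas~\ref{prop-bmoap} and \ref{prop-scale} of the paper: each $\mathrm{BMOA}^{(p)}$ is a Banach space, and the family forms a complex interpolation scale, so that for $1/u=(1-\theta)/p_0+\theta/p_1$ one has $[\mathrm{BMOA}^{(p_0)},\mathrm{BMOA}^{(p_1)}]_\theta=\mathrm{BMOA}^{(u)}$ with equal norms. In parallel, the target spaces interpolate: for $1\le r\le 2\le s<\infty$ write $1/r=(1-\theta)+\theta/2$ and $1/s=\theta/2$ for a suitable $\theta\in[0,1]$ — equivalently $\theta = 2/s$ and one checks $1-\theta+\theta/2 = 1-1/s = 1/r$ forces $r=s/(s-1)=s'$, so I should instead interpolate the two families $(H^1,H^q)_{cb}$ and $(H^{q'},\mathrm{BMOA})_{cb}$ at the endpoints to hit a general $(H^r,H^s)_{cb}$. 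Concretely: the pair $(H^1,H^2)$ and $(H^2,H^2)$-type endpoints, or rather the endpoint complete contractions $\mathrm{BMOA}^{(\infty)}\hookrightarrow (H^1,\mathrm{BMOA})_{cb}$ and $\mathrm{BMOA}^{(2)}\hookrightarrow (H^1,H^2)_{cb}$, interpolate via complex interpolation of operator spaces (using that $(H^p,H^q)_{cb}=CB(H^p,H^q)$ behaves functorially under $[\cdot,\cdot]_\theta$ in both variables, by Pisier's interpolation theory for cb-maps) to give $\mathrm{BMOA}^{(u)}\hookrightarrow (H^r,H^s)_{cb}$ with $1/u=1/r-1/s$.

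The main obstacle I expect is the bookkeeping in the interpolation step: I must make sure the complex interpolation functor is applied to the right pair of operator-space-valued multiplier spaces so that the parameter $\theta$ produces simultaneously the index $u$ on the $\mathrm{BMOA}^{(\cdot)}$ side (via Lemma~\ref{prop-scale}) and the pair $(r,s)$ with $1/r-1/s = 1/u$ on the target side (via $[H^{p_0},H^{p_1}]_\theta=H^{p_\theta}$ and the analogous statement for the quotient space $\mathrm{BMOA}$, together with the stability of $CB(\cdot,\cdot)$ under interpolation in each variable). One has to be careful that the endpoint inclusions are \emph{completely} contractive, not merely contractive, since interpolation of cb-norms requires cb-control at the endpoints; this is exactly why the endpoint identities are stated in their complete isometric forms in Proposition~\ref{thm-hardy}. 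Once the endpoint cb-estimates and the two interpolation scales are in hand, the general inequality \eqref{gen-rs} follows by the standard three-lines / bilinear interpolation argument with no further input.
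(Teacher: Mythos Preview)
There is a concrete error in your endpoint setup and a genuine gap in the interpolation argument.

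\textbf{Wrong endpoint space.} At $q=\infty$ you have $q'=1$, so the source space must be $\BMOA^{(1)}$, not $\BMOA^{(\infty)}$. Your sentence ``the inclusion $\BMOA^{(\infty)}\subset\BMOA$ is then just the trivial estimate $\sup_n|\widehat\varphi(n)|\le\|\varphi\|_{\BMOA}$'' is self-contradictory: that inequality says $\|\varphi\|_{\BMOA^{(\infty)}}\le\|\varphi\|_{\BMOA}$, i.e.\ $\BMOA\hookrightarrow\BMOA^{(\infty)}$, the reverse of what you need. The space $\BMOA^{(\infty)}$ is huge (only bounded Fourier coefficients) and certainly does not embed into $(H^1,\BMOA)_{cb}$. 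The paper instead uses the contractive inclusion $\BMOA^{(1)}\to\BMOA=(H^1,L^\infty)_{cb}$ as one endpoint and $\BMOA^{(2)}=(H^1,L^2)_{cb}$ as the other, then interpolates to obtain $\BMOA^{(q')}\hookrightarrow(H^1,H^q)_{cb}$ for $q\in[2,\infty]$.

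\textbf{Interpolation of Hardy spaces as operator spaces.} You invoke ``$[H^{p_0},H^{p_1}]_\theta=H^{p_\theta}$'' and ``stability of $CB(\cdot,\cdot)$ under interpolation'' as if routine, but this is precisely the delicate point and is why \eqref{gen-rs} is stated as a \emph{bounded} (not contractive) inclusion. The paper needs the non-trivial fact that $(H^1,H^r)_\theta\simeq H^{r_\theta}$ holds as a complete \emph{isomorphism}, which it deduces from Xu's theorem on interpolation of vector-valued Hardy spaces $H^p(S_p)$ combined with Pisier's identification $S_p[H^p]=H^p(S_p)$; the dual form $(\BMOA,H^{r'})_\theta\simeq H^{r_\theta'}$ requires a separate duality argument. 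Without this input, interpolating the source and target independently does not produce a bounded map on the interpolated multiplier space. Moreover, the paper's scheme is a two-stage interpolation (first obtain \eqref{1-to-p}, then interpolate that with $\BMOA^{(2)}=(H^2,\BMOA)_{cb}$), which only covers a triangular region of $(1/r,1/s)$; a final interpolation with the trivial identity $(H^2,H^2)_{cb}=\BMOA^{(\infty)}$ is needed to reach all pairs $1\le r\le 2\le s<\infty$. Your proposal does not account for either of these steps.
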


\begin{remark}
The inclusion \eqref{gen-rs} can not be generalized to the case $s= \infty$ since
\[
\BMOA^{(2)} \not\subset H^2 = (H^2, H^\infty)_{cb} = (H^2, H^\infty). 
\]
\end{remark}

\subsubsection{The end point situations: $(H^1, \BMOA)_{cb}$ and  $(H^1, H^2)_{cb}$}
Using the self-absorption property introduced in Definition~\ref{def-self-tensor} and   Proposition~\ref{thm-doubling},  we  may fix an   \oss on  $\BMOA^{(2)}$ by requiring (for any integer $d \ge 1$)
\begin{align}\label{BMOA-os}
\Big\|\sum_{n=0}^\infty a_n \otimes e^{i n \theta}\Big\|_{M_d\otimes_{min} \BMOA^{(2)}}: =  \Big\| \sum_{n=0}^\infty a_n \otimes \overline{a}_n e^{i n \theta}\Big\|_{L^\infty(\T; \,M_d \otimes \overline{M}_d)/\overline{H_0^\infty}(\T;\, M_d\otimes \overline{M}_d)}^{1/2},
\end{align}
where $(a_n)_{n\ge 0}$ is any finitely supported sequence in $M_d$.

\begin{proposition}\label{thm-hardy}
We have  the following  complete isometric equalities: 
\[
(H^1, \BMOA)_{cb} = \BMOA \an (H^1, H^2)_{cb}  = (H^2, \BMOA)_{cb}= \BMOA^{(2)}. 
\]
\end{proposition}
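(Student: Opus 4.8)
The plan is to translate all three equalities into statements about bounded Hankel operators via the Nehari--Sarason--Page theorem \eqref{PSN-bis}, and then to read off the two $\BMOA^{(2)}$ identities from the self--absorption of the system $\{\Gamma_n\}_{n\in\N}$ (Corollary~\ref{cor-simple-n}, equivalently Proposition~\ref{thm-doubling}); the identity involving $\BMOA$ alone is essentially classical. Throughout I identify $\BMOA=L^\infty/\overline{H_0^\infty}$ completely isometrically with $\hank(\ell^2)\subset B(\ell^2)$ so that $e^{in\theta}\leftrightarrow\Gamma_n$, I use that each multiplier in sight sends $e^{in\theta}$ to $\widehat\varphi(n)e^{in\theta}$, that $M_d((X,Y)_{cb})=(X,M_d(Y))_{cb}$, and the standard operator space duals $(H^1)^*=\BMOA$ and $(H^2)^*=\overline{H^2}$ (the latter conjugate--linearly completely isometric to $H^2$); I also use that $H^2\subset L^2$ carries the Hilbertian self--dual \oss, so $H^2$ is completely isometric to Pisier's $OH$.

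The core is the identity $(H^2,\BMOA)_{cb}=\BMOA^{(2)}$. Since $H^2\cong OH$ with orthonormal basis $(e^{in\theta})_{n\ge0}$, Pisier's computation of completely bounded maps out of $OH$ --- which is exactly the defining relation \eqref{def-OH} read for the transpose --- gives, for a finitely supported family $(a_n)$ in $M_d$ and $\Phi=\sum_n a_n e^{in\theta}$,
\[
\|T_\Phi\|_{cb(H^2,\,M_d(\BMOA))}=\Big\|\sum_n (a_n e^{in\theta})\otimes\overline{(a_n e^{in\theta})}\Big\|_{M_d(\BMOA)\otimes_{min}\overline{M_d(\BMOA)}}^{1/2}.
\]
Transporting the right--hand side through Nehari--Sarason--Page replaces $e^{in\theta}\otimes\overline{e^{in\theta}}$ by $\Gamma_n\otimes\overline{\Gamma_n}=\Gamma_n\otimes\Gamma_n$ (the $\Gamma_n$ being real matrices), so it equals $\big\|\sum_n (a_n\otimes\overline{a_n})\otimes\Gamma_n\otimes\Gamma_n\big\|^{1/2}$; applying Proposition~\ref{thm-doubling} with coefficients $a_n\otimes\overline{a_n}$ in the $C^*$--algebra $M_d\otimes_{min}\overline{M_d}$ collapses $\Gamma_n\otimes\Gamma_n$ to $\Gamma_n$, and Nehari--Sarason--Page again turns this into $\big\|\sum_n (a_n\otimes\overline{a_n})\,e^{in\theta}\big\|_{L^\infty(M_d\otimes\overline{M_d})/\overline{H_0^\infty}(M_d\otimes\overline{M_d})}^{1/2}$. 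This is precisely the right--hand side of the definition \eqref{BMOA-os} of the \oss on $\BMOA^{(2)}$, so $\varphi\mapsto T_\varphi$ is a complete isometry of $\BMOA^{(2)}$ onto $(H^2,\BMOA)_{cb}$; surjectivity is automatic since finiteness of the cb--norm forces $(|\widehat\varphi(n)|^2)_n$ to generate a bounded Hankel operator, i.e. $\varphi\in\BMOA^{(2)}$.

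Next, $(H^1,H^2)_{cb}=\BMOA^{(2)}$ follows by operator space duality: for a Fourier multiplier $T_\varphi\colon H^1\to H^2$ one has $\|T_\varphi\|_{cb(H^1,H^2)}=\|T_\varphi^*\|_{cb((H^2)^*,(H^1)^*)}$, and under the identifications $(H^2)^*\cong H^2$ and $(H^1)^*=\BMOA$ a direct evaluation of the pairings shows that $T_\varphi^*$ is again the Fourier multiplier $T_\varphi\colon H^2\to\BMOA$ (both are ``$e^{in\theta}\mapsto\widehat\varphi(n)e^{in\theta}$'' through the relevant dualities); running the computation with $M_d$--valued symbols gives the matrix levels, so $\|T_\varphi\|_{cb(H^1,H^2)}=\|T_\varphi\|_{cb(H^2,\BMOA)}=\|\varphi\|_{\BMOA^{(2)}}$ completely isometrically. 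As for $(H^1,\BMOA)_{cb}=\BMOA$: the contractive inclusion $\BMOA\hookrightarrow(H^1,\BMOA)_{cb}$ holds because, for $\varphi\in\BMOA$ and $f\in H^1$, the multiplier is the circular convolution $T_\varphi f=f*\varphi=\int_\T f(\bar\zeta)\,(\rho_\zeta\varphi)\,dm(\zeta)$, a Bochner average of the rotates $\rho_\zeta\varphi$, and each rotation $\rho_\zeta$ is a complete isometry of $L^\infty/\overline{H_0^\infty}$; amplifying by $M_d$ gives $\|T_\varphi\|_{cb(H^1,\BMOA)}\le\|\varphi\|_{\BMOA}$.

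For the reverse inclusion I would use $CB(H^1,(H^1)^*)=(H^1\,\widehat{\otimes}\,H^1)^*$ (operator space projective tensor product): the Fourier multiplier $T_\varphi$ corresponds to the \emph{diagonal} functional $\sum_n\widehat\varphi(n)\,(e^{in\theta})^{*}\otimes(e^{in\theta})^{*}$, which is the image of $\varphi\in\BMOA=(H^1)^*$ under the adjoint of the Hadamard map $m\colon H^1\,\widehat{\otimes}\,H^1\to H^1$, $f\otimes g\mapsto f\odot g$; the integral formula $f\odot g(z)=\int_\T f(z\bar\zeta)g(\zeta)\,dm(\zeta)$ shows $m$ is completely contractive, and once one knows $m$ is a complete metric surjection onto $H^1$ its adjoint is a complete isometry and $\|\varphi\|_{\BMOA}=\|T_\varphi\|_{cb(H^1,\BMOA)}$. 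The main obstacle, where the real care is needed, is the faithful transport of operator space structures --- reconciling the concrete \oss of $H^1$, of $H^2=OH$ and of $\BMOA=L^\infty/\overline{H_0^\infty}$, together with their matrix amplifications and duals, with the $\ell^2\otimes_2\ell^2$ picture supplied by Proposition~\ref{thm-doubling}, so that the self--absorption of $\{\Gamma_n\}$ genuinely computes the completely bounded multiplier norms; the secondary technical point is establishing that the Hadamard map $m$ above is a complete metric surjection.
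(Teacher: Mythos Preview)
Your treatment of $(H^2,\BMOA)_{cb}=\BMOA^{(2)}$ and of $(H^1,H^2)_{cb}=(H^2,\BMOA)_{cb}$ matches the paper's route: the $OH$ formula for cb maps out of $H^2$, Nehari--Sarason--Page, and the self-absorption of $\{\Gamma_n\}$ (Proposition~\ref{thm-doubling}) to collapse $\Gamma_n\otimes\Gamma_n$ to $\Gamma_n$. The paper is more careful about the passage from finitely supported symbols to arbitrary ones via Poisson dilation (Lemmas~\ref{lem-dilation} and \ref{lem-dilation-bis}), which you gloss over, but this is a minor technicality.

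The real gap is the inequality $\|\varphi\|_{\BMOA}\le\|T_\varphi\|_{cb(H^1,\BMOA)}$. You reduce it to the Hadamard map $m:H^1\,\widehat{\otimes}\,H^1\to H^1$ being a complete metric surjection, but this is \emph{equivalent} to the goal: $m$ is a complete metric surjection iff its adjoint $m^*:\BMOA\to CB(H^1,\BMOA)$, $\varphi\mapsto T_\varphi$, is a complete isometry. There is no shortcut here; note that the Banach-space analogue actually fails, since $(H^1,\BMOA)$ is the Bloch space $\mathcal{B}\supsetneq\BMOA$, so $m:H^1\otimes_\gamma H^1\to H^1$ is \emph{not} a metric surjection and any proof of the operator-space statement must genuinely exploit the cb structure. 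The paper's way out is to use SAP a second time: after Poisson dilation, $T_{m(r)}$ is realised through the embedding \eqref{tensor-embedding} as the tensor $\sum_n e^{in\theta}\otimes m_n(r)\otimes e^{in\theta}\in\BMOA\otimes_{min}M_d(\BMOA)$, which via \eqref{PSN-bis} becomes $\sum_n\Gamma_n\otimes m_n(r)\otimes\Gamma_n$, and Proposition~\ref{thm-doubling} collapses it to $\sum_n m_n(r)\otimes\Gamma_n$, yielding $\|T_{m(r)}\|_{cb}=\|\sum_n m_n(r)e^{in\theta}\|_{M_d(\BMOA)}$ in one stroke. In short, self-absorption is the missing ingredient for this equality as well, not only for the $\BMOA^{(2)}$ one.
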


\medskip

{\flushleft \bf (i) Characterization of $(H^1, \BMOA)_{cb}$.} We proceed to the proof of the complete isometric equality
$
(H^1, \BMOA)_{cb} = \BMOA
$ in Proposition~\ref{thm-hardy}. 
Namely, we shall prove 
\begin{align}\label{eq-HBMOA-cb}
(H^1, M_d(\BMOA))_{cb} \xrightarrow[\simeq]{\textit{isometric}} M_d(\BMOA) \quad \text{for all $d\ge 1$}.
\end{align}

Using  the complete isometric isomorphism 
$
(H^1)^* = \BMOA
$ and the definining property of \oss on operator space dual (see \cite[Section 2.3]{Pisier-Operator-space-book}),  we have
\begin{align}\label{tensor-embedding}
\BMOA\otimes_{min} M_d(\BMOA)  \xhookrightarrow[\text{isometric}]{\text{complete}} CB(H^1, M_d(\BMOA)).
\end{align}
We shall use the following  Lemma~\ref{lem-dilation} to reduce the computation of the cb-norm of any $T_m \in (H^1, M_d(\BMOA))_{cb} \subset CB(H^1, M_d(\BMOA))$ to the computation  of the cb-norms of associated Fourier multipliers lying in the image of the embedding \eqref{tensor-embedding}.

Given any sequence $m = (m_n)_{n=0}^\infty$ in $M_d$ and any $0< r<1$, let $m(r) = (m_n(r))_{n=0}^\infty$ denote the dilated sequence
\begin{align}\label{def-mr}
m_n(r): = r^n m_n, \quad n \in\N. 
\end{align}

\begin{lemma}\label{lem-dilation}
For any multiplier $T_m: H^1\rightarrow M_d(\BMOA)$, 
\[
\| T_m\|_{(H^1, M_d(\BMOA))_{cb}} = \sup_{0<r<1}   \| T_{m(r)}\|_{(H^1, M_d(\BMOA))_{cb}}.
\]
\end{lemma}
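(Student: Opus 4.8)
The plan is to factor the dilated multiplier through Poisson convolution and to use that Poisson convolution is completely contractive on every space in sight. Write $P_r$ for the operator of convolution with the Poisson kernel, so that $\widehat{P_r f}(n) = r^{|n|}\widehat f(n)$ for all $n\in\Z$. Since $P_r$ is convolution against a probability measure on $\T$, it is an average of the rotation operators, each of which is a complete isometry; hence $P_r$ is a complete contraction on $L^1(\T;M_N)$ for every $N$, and therefore a complete contraction on $H^1$. For the same reason $P_r$ is a complete contraction on $L^\infty(\T;M_d)$, and since it multiplies the $n$-th Fourier coefficient by $r^{|n|}$ it maps $\overline{H_0^\infty}(\T;M_d)$ into itself, so it descends to a complete contraction on $M_d(\BMOA) = M_d(L^\infty/\overline{H_0^\infty})$. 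Reading off Fourier coefficients gives the identity $T_{m(r)} = T_m\circ P_r$ on $H^1$, whence
\[
\| T_{m(r)}\|_{(H^1, M_d(\BMOA))_{cb}} = \| T_m\circ P_r\|_{(H^1, M_d(\BMOA))_{cb}} \le \| T_m\|_{(H^1, M_d(\BMOA))_{cb}},
\]
and taking the supremum over $0<r<1$ yields one of the two inequalities.

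For the reverse inequality I would argue by approximation. Put $C := \sup_{0<r<1}\|T_{m(r)}\|_{(H^1, M_d(\BMOA))_{cb}}$, which we may assume finite. Fix $N\ge 1$ and a matrix $[f_{ij}]\in M_N(H^1)$ with trigonometric-polynomial entries of degree at most $d_0$. Then $(Id_{M_N}\otimes T_{m(r)})[f_{ij}] = [T_{m(r)}f_{ij}]$ is a fixed finite $M_d$-coefficient trigonometric polynomial whose coefficients, being multiplied by $r^n$ with $0\le n\le d_0$, converge to those of $[T_m f_{ij}]$ as $r\to 1^-$, so $[T_{m(r)}f_{ij}]\to[T_m f_{ij}]$ in $M_N(M_d(\BMOA))$. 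Consequently
\[
\big\|(Id_{M_N}\otimes T_m)[f_{ij}]\big\| = \lim_{r\to 1^-}\big\|[T_{m(r)}f_{ij}]\big\| \le C\,\big\|[f_{ij}]\big\|_{M_N(H^1)},
\]
and since matrices of trigonometric polynomials are dense in $M_N(H^1)$, this gives $\|Id_{M_N}\otimes T_m\|\le C$ for every $N$, i.e. $\|T_m\|_{(H^1, M_d(\BMOA))_{cb}}\le C$. Combining the two bounds proves the lemma; the argument works verbatim in $[0,\infty]$, so no a priori boundedness of $T_m$ is needed.

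The only point requiring care is the complete contractivity of $P_r$ on the quotient operator space $M_d(\BMOA)$, and in particular that Poisson dilation on $L^\infty(\T;M_d)$ preserves $\overline{H_0^\infty}(\T;M_d)$ so that one may pass to the quotient; this is immediate from $\widehat{P_r g}(n)=r^{|n|}\widehat g(n)$. The remaining ingredients — the identity $T_{m(r)}=T_m\circ P_r$ and the density/continuity argument — are routine, so I do not expect a genuine obstacle; this lemma serves as a bookkeeping device that, since $r^n\,m_n$ is absolutely summable in $M_d$ once $(m_n)$ is bounded, reduces the computation of cb-norms to multipliers with summable coefficients, where Proposition~\ref{thm-doubling} and the self-absorption machinery apply directly.
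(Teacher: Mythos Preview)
Your proof is correct and follows essentially the same approach as the paper: both rely on Poisson convolution to reduce to the dilated multipliers $T_{m(r)}$. The paper packages the argument via the $S_1$-picture of the cb-norm (using $S_1[H^1]=H^1(S_1)$ and the density of $\{P_r*g:\|g\|_{H^1(S_1)}<1\}$ in the unit ball), whereas you split the two inequalities, using the factorization $T_{m(r)}=T_m\circ P_r$ for one and density of trigonometric polynomials in $M_N(H^1)$ for the other; the substance is the same, and your aside on $P_r$ acting completely contractively on $M_d(\BMOA)$ is correct but unused.
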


\begin{proof}
By \eqref{cb-Sp} and the non-commutative Fubini theorem (see \cite[Prop. 2.1]{Pisier-Non-Commutative-vector})
\begin{align}\label{nc-fubini}
S_p[H^p]   \xlongequal{\textit{isometric}}  H^p(S_p), \quad 1\le p <\infty, 
\end{align}
we have 
\begin{align}\label{Tm-cb-S1}
\begin{split}
\| T_m\|_{(H^1, M_d (\BMOA))_{cb}} &=   \| Id\otimes T_m: S_1[H^1] \rightarrow S_1[M_d(\BMOA)] \|
\\
& = \sup \Big\{ \Big\| \sum_{n=0}^\infty m_n \otimes \widehat{f}(n) e^{in\theta}\Big \|_{S_1[M_d(\BMOA)]} \Big| \| f\|_{H^1(S_1)} < 1 \Big\}.
\end{split}
\end{align}
Note that for any $g\in H^1(S_1)$ with $\| g\|_{H^1(S_1)} < 1$,  
\[
\lim_{r\to 1^{-}} \Big\| \sum_{n=0}^\infty r^n \widehat{g}(n) e^{i n \theta} - \sum_{n=0}^\infty \widehat{g}(n) e^{i n \theta}\Big\|_{H^1(S_1)} =0.
\]
 Hence the following subset is dense in the open unit  ball of $H^1(S_1)$: 
\[
\bigcup_{0<r <1}\Big\{ P_r *g \Big| \| g\|_{H^1(S_1)}<1 \Big\}  \stackrel{\text{dense}}{\subset} \Big\{ f \Big| \| f\|_{H^1(S_1)}<1 \Big\},
\]
where $P_r*g$ is the vector-valued version of  the standard  Poisson convolution $P_r: L^1\rightarrow L^1$  on $\T$ given by 
\begin{align}\label{def-Poisson}
(P_r * h) (e^{i\theta})  =  \sum_{n\in \Z} r^{|n|}  \widehat{h}(n) e^{in \theta}.
\end{align}
Therefore, the equality \eqref{Tm-cb-S1}  implies 
\begin{align*}
\| T_m\|_{(H^1, M_d(\BMOA))_{cb}} 
 =    \sup_{0<r<1 \an \| g \|_{H^1(S_1)} < 1}  \Big\| \sum_{n=0}^\infty m_n \otimes  r^n \widehat{g}(n) e^{in\theta}\Big \|_{S_1[M_d(\BMOA)]}.
\end{align*}
For any $0<r<1$, the equality \eqref{Tm-cb-S1} applied to $m(r)$ implies  that 
\[
  \sup_{\| g\|_{H^1(S_1)} < 1} \Big\| \sum_{n=0}^\infty m_n \otimes r^n \widehat{g}(n) e^{in\theta}\Big \|_{S_1[M_d(\BMOA)]}    =  \| T_{m(r)}\|_{(H^1, M_d(\BMOA))_{cb}}.
\]
We thus complete the whole proof.  
\end{proof}

\begin{proof}[Proof of the isometric equality \eqref{eq-HBMOA-cb}]
Clearly, if $T_m\in (H^1, M_d(\BMOA))_{cb}$, then the sequence $m = (m_n)_{n\in \N}$ is bounded and hence
\begin{align}\label{dial-sum}
\sum_{n= 0}^\infty \| m_n(r)\|<\infty \quad \text{for all $0<r<1$}.
\end{align}
Thus  $T_{m(r)}$ coincides with the image of following tensor under the embedding \eqref{tensor-embedding}: 
\[
\sum_{n=0}^\infty     e^{in \theta} \otimes  m_n(r) \otimes e^{in \theta} \in \BMOA\otimes_{min} M_d(\BMOA), 
\]
where $e^{in\theta}$ denotes the function $e^{i\theta}\mapsto e^{in\theta}$. It follows that
\begin{align*}
\| T_{m(r)}\|_{(H^1, \BMOA)_{cb}} &= \Big\| \sum_{n=0}^\infty   e^{in \theta} \otimes m_n(r)  \otimes e^{in\theta}\Big\|_{ \BMOA\otimes_{min}M_d(\BMOA)}  &  \text{(by \eqref{tensor-embedding})}
\\
&  =  \Big\| \sum_{n=0}^\infty  \Gamma_n \otimes  m_n(r) \otimes \Gamma_n \Big\|_{ \hank(\ell^2)\otimes_{min} M_d(\hank(\ell^2))}    &  \text{(by \eqref{PSN-bis})}
\\
& = \Big\| \sum_{n=0}^\infty  m_n(r)    \otimes  \Gamma_n \Big\|_{ M_d(\hank(\ell^2))} &  \text{(by Proposition \ref{thm-doubling})}
\\
&  = \Big\| \sum_{n=0}^\infty   m_n(r) \otimes e^{in\theta} \Big\|_{M_d(\BMOA)} &  \text{(again by \eqref{PSN-bis})}.
 \end{align*}
Now by Lemma \ref{lem-Pr} and Lemma \ref{lem-dilation}, 
\[
\| T_m\|_{(H^1, M_d(\BMOA))_{cb}} = \Big\| \sum_{n=0}^\infty  m_n\otimes  e^{in\theta} \Big\|_{M_d(\BMOA)}.
\]
Conversely, if $\sum_{n=0}^\infty  m_n\otimes  e^{in\theta}\in M_d(\BMOA)$, then  $(m_n)_{n=0}^\infty$ is also bounded. We may repeat the above argument and obtain the desired isometric equality \eqref{eq-HBMOA-cb}. 
\end{proof}

\medskip

{\flushleft \bf (ii) Characterization of $(H^1, H^2)_{cb}$.} 
We now turn to the proof of the  second  complete isometric equality of Proposition~\ref{thm-hardy}:
\[
(H^1, H^2)_{cb} = (H^2, \BMOA)_{cb} = \BMOA^{(2)},
\]  where the \oss on $H^2$ is Pisier's operator Hilbert space determined by \eqref{def-OH}.  In other words, we shall prove the following isometric equalities: 
\begin{align}\label{12-cb-eq}
(H^1, M_d(H^2))_{cb} = (H^2, M_d(\BMOA))_{cb} = M_d(\BMOA^{(2)}) \quad \text{for all $d\ge 1$}, 
\end{align}
where the norm on $M_d(\BMOA^{(2)})$ is determined by \eqref{BMOA-os}.

\begin{lemma}\label{lem-dilation-bis}
For any multiplier $T_m:  H^1\rightarrow M_d(H^2)$, 
\[
\| T_m\|_{(H^1, M_d(H^2))_{cb}} = \sup_{0<r<1}   \| T_{m(r)}\|_{(H^1, M_d(H^2))_{cb}}.
\]
\end{lemma}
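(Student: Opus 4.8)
The plan is to mirror, almost verbatim, the proof of Lemma~\ref{lem-dilation}, with the target operator space $M_d(\BMOA)$ replaced by $M_d(H^2)$ (equipped with the matricial structure inherited from Pisier's $OH$ via \eqref{def-OH}). The only ingredients needed are: the identity \eqref{cb-Sp} with $p=1$, expressing $\|T_m\|_{(H^1,M_d(H^2))_{cb}}$ as the norm of $Id\otimes T_m\colon S_1[H^1]\to S_1[M_d(H^2)]$; the non-commutative Fubini isometry \eqref{nc-fubini}, $S_1[H^1]=H^1(S_1)$; and the elementary observation that the Poisson convolution implements the dilation, namely $T_{m(r)}=T_m\circ(g\mapsto P_r*g)$ as densely defined multipliers, because $g\mapsto P_r*g$ multiplies the $n$-th Fourier coefficient by $r^n$ (see \eqref{def-Poisson}) and $T_m$ multiplies it by $m_n$, so the composition multiplies it by $r^n m_n=m_n(r)$ (see \eqref{def-mr}).

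First I would record, exactly as in \eqref{Tm-cb-S1}, the formula
\[
\|T_m\|_{(H^1,M_d(H^2))_{cb}}=\sup\Big\{\Big\|\sum_{n=0}^\infty m_n\otimes\widehat f(n)e^{in\theta}\Big\|_{S_1[M_d(H^2)]}\ \Big|\ \|f\|_{H^1(S_1)}<1\Big\}.
\]
For the inequality $\sup_{0<r<1}\|T_{m(r)}\|_{(H^1,M_d(H^2))_{cb}}\le\|T_m\|_{(H^1,M_d(H^2))_{cb}}$ I would note that by the vector-valued Young inequality $G\mapsto P_r*G$ is a contraction on $H^1(S_1)=S_1[H^1]$, hence, by \eqref{cb-Sp}, $g\mapsto P_r*g$ is completely contractive on $H^1$; composing with $T_m$ and using $T_{m(r)}=T_m\circ(g\mapsto P_r*g)$ gives $\|T_{m(r)}\|_{(H^1,M_d(H^2))_{cb}}\le\|T_m\|_{(H^1,M_d(H^2))_{cb}}$ for every $r\in(0,1)$.

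For the reverse inequality I would use that $\|P_r*g-g\|_{H^1(S_1)}\to 0$ as $r\to 1^-$ for each $g\in H^1(S_1)$, so that $\bigcup_{0<r<1}\{P_r*g:\|g\|_{H^1(S_1)}<1\}$ is dense in the open unit ball of $H^1(S_1)$. Restricting the supremum in the displayed formula to this dense family and using $(Id\otimes T_m)(P_r*g)=(Id\otimes T_{m(r)})(g)$ yields
\[
\|T_m\|_{(H^1,M_d(H^2))_{cb}}=\sup_{0<r<1}\ \sup_{\|g\|_{H^1(S_1)}<1}\big\|(Id\otimes T_{m(r)})(g)\big\|_{S_1[M_d(H^2)]}=\sup_{0<r<1}\|T_{m(r)}\|_{(H^1,M_d(H^2))_{cb}},
\]
which is the desired equality (understood as an identity in $[0,\infty]$; exactly as in the proof of \eqref{eq-HBMOA-cb} one may assume at the outset that $m=(m_n)$ is a bounded sequence, since otherwise both sides are $+\infty$). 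I do not expect any genuine obstacle: the one point deserving attention is the compatibility of the vector-valued spaces $S_1[H^1]=H^1(S_1)$ and $S_1[M_d(H^2)]$ with the operator-space structures in play, which is standard (\cite[Chapter 1]{Pisier-Non-Commutative-vector}), and the argument is otherwise identical to that of Lemma~\ref{lem-dilation}.
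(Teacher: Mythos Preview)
Your proposal is correct and takes essentially the same approach as the paper, which simply states that the proof follows almost verbatim that of Lemma~\ref{lem-dilation}. Your added remark about the easy direction via the complete contractivity of $g\mapsto P_r*g$ on $H^1$ is a harmless elaboration of what is implicit in the supremum formula.
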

\begin{proof}
The proof  follows almost verbatim that of Lemma \ref{lem-dilation}.
\end{proof}

\begin{proof}[Proof of the isometric equality \eqref{12-cb-eq}]
 Fix any integer $d\ge 1$. 
By the duality $(H^1)^* = \BMOA$, the  equality $(H^1, M_d(H^2))_{cb} = (H^2, M_d(\BMOA))_{cb}$ holds isometrically. Hence it suffices to prove the isometric equality $(H^1, M_d(H^2))_{cb} = M_d(\BMOA^{(2)})$.    Following along the same lines of the proof of the equality \eqref{eq-HBMOA-cb}, by Proposition \ref{thm-doubling},  we only need to prove that for any bounded  sequence $m=(m_n)_{n=0}^\infty$  in $M_d$ and any $0<r<1$,   
\[
\| T_{m(r)}\|_{(H^1, M_d(H^2))_{cb}} =  \Big\| \sum_{n=0}^\infty m_n(r) \otimes \overline{m_n(r)}\otimes e^{in\theta} \Big\|_{M_d \otimes_{min}  \overline{M}_d \otimes_{min}\BMOA}^{1/2}.
\]
Now fix any bounded  sequence $m=(m_n)_{n=0}^\infty$  in $M_d$ and any $0<r<1$. 
The condition \eqref{dial-sum} implies  $T_{m(r)} \in (H^1, M_d(H^2))_{cb}$. It also implies that,   under the complete isometric embedding 
\begin{align}\label{tensor-H1-H2}
 \BMOA\otimes_{min} M_d(H^2) = (H^1)^* \otimes_{min} M_d(H^2) \xhookrightarrow[\textit{embedding}]{\textit{complete isometric}} CB(H^1, M_d(H^2)),
\end{align}
the multiplier $T_{m(r)}$ corresponds to the tensor 
\[
\sum_{n=0}^\infty e^{in\theta}\otimes m_n(r)  \otimes  e^{in\theta}\in \BMOA\otimes_{min} M_d \otimes_{min} H^2  = \BMOA\otimes_{min} M_d ( H^2).
\]
Therefore, we have 
\begin{align*}
 & \| T_{m(r)}\|_{(H^1, M_d(H^2))_{cb}}  
 =  \Big\| \sum_{n=0}^\infty  e^{in\theta}\otimes m_n(r) \otimes e^{in\theta} \Big\|_{\BMOA \otimes_{min}M_d(H^2)}  &  (\text{by \eqref{tensor-H1-H2}})
\\
& = \Big\| \sum_{n=0}^\infty \Gamma_n \otimes   m_n(r) \otimes e^{in\theta} \Big\|_{\hank(\ell^2) \otimes_{min} M_d(H^2)}  & \text{(by \eqref{PSN-bis})}
\\
& =  \Big\| \sum_{n=0}^\infty  m_n(r) \otimes \overline{m_n(r)} \otimes \Gamma_n \otimes \Gamma_n \Big\|_{M_d \otimes_{min} \overline{M}_d \otimes_{min}\hank(\ell^2) \otimes_{min}\hank(\ell^2)}^{1/2}  & \text{(by \eqref{def-OH})}
\\
&= \Big\| \sum_{n=0}^\infty   m_n(r) \otimes \overline{m_n(r)} \otimes \Gamma_n  \Big\|_{M_d\otimes_{min} \overline{M}_d\otimes_{min}\hank(\ell^2)}^{1/2}  &   \text{(by Proposition \ref{thm-doubling})}
\\
&  = \Big\| \sum_{n=0}^\infty  m_n(r) \otimes \overline{m_n(r)} \otimes  e^{in\theta} \Big\|_{M_d \otimes_{min} \overline{M}_d\otimes_{min}\BMOA}^{1/2}  &  \text{(again by \eqref{PSN-bis})}.
\end{align*}
This completes the whole proof. 
\end{proof}

\subsubsection{General situations}

For using the complex interpolation method, we  now  prove in Lemmas \ref{prop-bmoap} and \ref{prop-scale} that,  the quantity $\|\cdot\|_{\BMOA^{(p)}}$ given in \eqref{def-bmoap} defines a norm on $\BMOA^{(p)}$ for any $1\le p < \infty$ and the family $\{\BMOA^{(p)}: 1\le p\le \infty\}$ forms a  complex interpolation scale.

\begin{lemma}\label{prop-bmoap}
Let $1\le  p< \infty$ and $q = p/(p-1) \in (1,\infty]$.  Then  for any $\varphi\in \BMOA^{(p)}$ and $\psi\in \BMOA^{(q)}$, we have 
$
\| \varphi*\psi\|_{\BMOA} \le \| \varphi\|_{\BMOA^{(p)}}  \|\psi\|_{\BMOA^{(q)}}.  
$
Moreover, 
\[
\| \varphi\|_{\BMOA^{(p)}} = \sup \big\{\| \varphi*\psi\|_{\BMOA}:  \| \psi\|_{\BMOA^{(q)}}\le 1\big\}. 
\]
In particular, $\| \cdot \|_{\BMOA^{(p)}}$ defines a norm on $\BMOA^{(p)}$. 
\end{lemma}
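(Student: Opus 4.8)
The plan is to convert the definition \eqref{def-bmoap} of $\|\cdot\|_{\BMOA^{(p)}}$ into a statement about $\BMOA$-norms and then run a duality argument inside $\BMOA = L^\infty/\overline{H_0^\infty}$. First I would unwind the definition: for $\varphi = \sum_{n\ge 0}\widehat\varphi(n)e^{in\theta}$, put $\Phi_p := \sum_{n\ge 0}|\widehat\varphi(n)|^p e^{in\theta}$, so that $\|\varphi\|_{\BMOA^{(p)}} = \|\Phi_p\|_{\BMOA}^{1/p}$; similarly $\Psi_q := \sum_{n\ge 0}|\widehat\psi(n)|^q e^{in\theta}$. The elementary pointwise (coefficientwise) identity to exploit is H\"older's inequality on each Fourier mode: $|\widehat{\varphi*\psi}(n)| = |\widehat\varphi(n)\widehat\psi(n)| = |\widehat\varphi(n)|\,|\widehat\psi(n)|$, and more generally for a third function $\psi$ one wants to pair $\varphi$ against it. The cleanest route to the product inequality $\|\varphi*\psi\|_{\BMOA}\le \|\varphi\|_{\BMOA^{(p)}}\|\psi\|_{\BMOA^{(q)}}$ is via the Nehari--Sarason--Page identification \eqref{PSN-bis}: $\|\varphi*\psi\|_{\BMOA} = \|[\widehat\varphi(i+j)\widehat\psi(i+j)]_{i,j\ge 0}\|_{B(\ell^2)}$, and this Hankel matrix is the Schur (Hadamard) product of the Hankel matrices $[\widehat\varphi(i+j)]$ and $[\widehat\psi(i+j)]$. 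I would then use the classical fact that Schur multiplication by a Hankel matrix of the form $[a_{i+j}]$ has Schur-multiplier norm controlled by a $\BMOA$-type quantity — but to stay self-contained, the better move is the factorization $\widehat\varphi(n)\widehat\psi(n) = \bigl(|\widehat\varphi(n)|^p\bigr)^{1/p}\bigl(|\widehat\psi(n)|^q\bigr)^{1/q}\cdot(\text{unimodular})$, and then apply the operator-valued Cauchy--Schwarz / three-lines-type bound: writing $u_n = |\widehat\varphi(n)|^{p/2}$, $v_n = |\widehat\psi(n)|^{q/2}\cdot\omega_n$ with $|\omega_n|=1$ chosen so $u_nv_n$ has the right phase is too crude for general $p,q$; instead I would interpolate. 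Precisely, the two endpoint Schur-multiplier bounds $\|[\widehat\varphi(i+j)]\|_{\text{Schur mult }S_\infty\to S_\infty}\le\|\varphi\|_{\BMOA^{(1)}}$-type statements combine by complex interpolation of Schur multipliers.

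Concretely I would argue as follows. Fix $\varphi\in\BMOA^{(p)}$ and let $\psi$ range over $\BMOA^{(q)}$. By \eqref{PSN-bis}, $\|\varphi*\psi\|_{\BMOA}=\sup\{|\langle \Gamma_{\varphi*\psi}\xi,\eta\rangle| : \|\xi\|_{\ell^2}=\|\eta\|_{\ell^2}=1\}$. Expanding, $\langle\Gamma_{\varphi*\psi}\xi,\eta\rangle = \sum_n \widehat\varphi(n)\widehat\psi(n)\bigl(\sum_{i+j=n}\xi_j\bar\eta_i\bigr)$. Set $c_n := \sum_{i+j=n}\xi_j\bar\eta_i$; then $(c_n)_n$ satisfies $\|\sum_n c_n e^{in\theta}\|_{H^1}\le 1$ (it is the product of the $H^2$ functions $\sum\xi_j e^{ij\theta}$ and $\overline{\sum\eta_i e^{-i\cdot}}$-type — one checks $\sum c_n e^{in\theta}$ factors as a product of two unit-norm $H^2$ functions, hence lies in the unit ball of $H^1$). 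So $\|\varphi*\psi\|_{\BMOA} = \sup\{|\sum_n \widehat\varphi(n)\widehat\psi(n) c_n| : \|\sum c_n e^{in\theta}\|_{H^1}\le 1\}$, i.e. $\BMOA^{(p)}$ pairs with the coefficient-multiplier action on $H^1$. Now H\"older in the form $|\sum_n \widehat\varphi(n)\widehat\psi(n)c_n|$: I apply the sharp $H^1$--$\BMOA$ duality ``$\sum a_n b_n$ with $\|\sum a_n e^{in\theta}\|_{H^1}$'' — more efficiently, recognize that $\sup_{\|\psi\|_{\BMOA^{(q)}}\le 1}\|\varphi*\psi\|_{\BMOA}$ is exactly the norm of $\varphi$ acting as a multiplier $H^1\to (\BMOA^{(q)})^*$ and identify the latter. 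The slickest path: prove directly that $\|\varphi\|_{\BMOA^{(p)}} = \sup\{\|\varphi*\psi\|_{\BMOA} : \|\psi\|_{\BMOA^{(q)}}\le 1\}$ by testing with $\psi$ whose coefficients are $\widehat\psi(n) = |\widehat\varphi(n)|^{p/q}\,\overline{\operatorname{sgn}\widehat\varphi(n)}$ (so that $\widehat\varphi(n)\widehat\psi(n) = |\widehat\varphi(n)|^{1+p/q} = |\widehat\varphi(n)|^p$), which gives $\varphi*\psi = \Phi_p$, and then $\|\psi\|_{\BMOA^{(q)}} = \|\sum |\widehat\varphi(n)|^{(p/q)q}e^{in\theta}\|_{\BMOA}^{1/q} = \|\Phi_p\|_{\BMOA}^{1/q} = \|\varphi\|_{\BMOA^{(p)}}^{p/q}$, while $\|\varphi*\psi\|_{\BMOA} = \|\Phi_p\|_{\BMOA} = \|\varphi\|_{\BMOA^{(p)}}^{p}$; since $p - p/q = 1$ this yields the lower bound $\|\varphi*\psi\|_{\BMOA}/\|\psi\|_{\BMOA^{(q)}} = \|\varphi\|_{\BMOA^{(p)}}$, hence ``$\ge$''. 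For the reverse (which is also the product inequality), I use Fefferman's theorem on $\BMOA$ functions with nonnegative coefficients (quoted in the excerpt after \eqref{def-bmoap}): both $\Phi_p, \Psi_q$ have the explicit block-$\ell^2$ description, and for general $\varphi,\psi$ one dominates $|\widehat{\varphi*\psi}(n)|$ blockwise by $(\text{block-}\ell^2\text{ of }|\widehat\varphi|^p)^{1/p}(\text{block-}\ell^2\text{ of }|\widehat\psi|^q)^{1/q}$ via H\"older on each dyadic block $\{kn\le j<(k+1)n\}$ and then Cauchy--Schwarz in $k$, landing back in the Fefferman functional for $\varphi*\psi$.

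The remaining assertions are then formal: the product inequality with the supremum formula shows $\|\cdot\|_{\BMOA^{(p)}}$ is the norm of a dual pairing, hence subadditive and homogeneous, so it is a genuine norm (positivity and $\|\varphi\|=0\Rightarrow\varphi=0$ being immediate from $\widehat\varphi(0)$ appearing in the Fefferman expression, or from $\Phi_p\in\BMOA\subset L^1$).

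\textbf{Main obstacle.} The delicate point is the product inequality $\|\varphi*\psi\|_{\BMOA}\le\|\varphi\|_{\BMOA^{(p)}}\|\psi\|_{\BMOA^{(q)}}$ for \emph{general} (not sign-definite) coefficients and \emph{general} conjugate exponents $p,q$: one must pass from the pointwise H\"older bound on Fourier coefficients to a genuine $\BMOA$-norm bound, and $\BMOA$ is not a lattice, so the inequality ``$|\widehat{f}(n)|\le |\widehat g(n)|$ for all $n$ does not imply $\|f\|_{\BMOA}\le\|g\|_{\BMOA}$'' in general. The fix is precisely Fefferman's characterization of $\BMOA$-with-nonnegative-coefficients via the block-$\ell^2$ quantity displayed in the excerpt, which \emph{is} monotone under coefficientwise domination and under which H\"older-per-block plus Cauchy--Schwarz-in-blocks closes the estimate; I would make sure the dyadic blocking scales $\{kn\le j<(k+1)n\}$ match up across the three functions $\varphi$, $\psi$, $\varphi*\psi$ so that the block-H\"older step is literally pointwise-in-$(n,k)$.
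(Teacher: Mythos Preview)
Your extremal test function $\widehat\psi(n)=\overline{\operatorname{sgn}\widehat\varphi(n)}\,|\widehat\varphi(n)|^{p-1}$ for the lower bound in the sup formula is exactly right and matches the paper. The gap is in the upper bound $\|\varphi*\psi\|_{\BMOA}\le\|\varphi\|_{\BMOA^{(p)}}\|\psi\|_{\BMOA^{(q)}}$: your final strategy via Fefferman's block-$\ell^2$ description only yields the inequality up to a universal constant, because that description is an \emph{equivalent} norm ($\approx$), not an equality. Since the lemma asserts the sharp constant $1$ (and the ``Moreover'' clause asserts an exact equality, which the interpolation lemma that follows uses with exact constants), this is a genuine shortfall. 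Your earlier worry that $\BMOA$ is not a lattice is well-placed, but Fefferman's characterization is the wrong remedy.

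The paper stays with the Nehari picture you touched on and then abandoned. Via \eqref{PSN-bis} the inequality reads $\|[a_{i+j}b_{i+j}]\|\le\|[|a_{i+j}|^p]\|^{1/p}\|[|b_{i+j}|^q]\|^{1/q}$ in $B(\ell^2)$. First reduce to $a_n,b_n\ge 0$ using the elementary fact $\|[c_{i+j}]\|\le\|[|c_{i+j}|]\|$ (entrywise absolute value never decreases the operator norm: bound $|\langle Cx,y\rangle|\le\langle|C|\,|x|,|y|\rangle$). Then $[a_{i+j}b_{i+j}]$ is symmetric with nonnegative entries, so its norm is $\sup\sum_{i,j}a_{i+j}b_{i+j}v_iv_j$ over unit vectors $v$ with $v_i\ge 0$; apply H\"older's inequality to this sum with the measure $v_iv_j$ on $\N^2$ to get
\[
\sum_{i,j}a_{i+j}b_{i+j}v_iv_j\le\Big(\sum_{i,j}a_{i+j}^p v_iv_j\Big)^{1/p}\Big(\sum_{i,j}b_{i+j}^q v_iv_j\Big)^{1/q}\le\|[a_{i+j}^p]\|^{1/p}\|[b_{i+j}^q]\|^{1/q}.
\]
This gives the sharp constant directly, with no appeal to Fefferman or to Schur-multiplier interpolation.
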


\begin{proof}
We give the proof for  $1< p<\infty$,  the proof for $p = 1$ is similar.  It suffices to show that for any sequences $(a_n)_{n=0}^\infty, (b_n)_{n=0}^\infty$ of complex numbers, 
\begin{align}\label{bmo-holder}
\Big\|\sum_{n=0}^\infty a_n b_n e^{in\theta}\Big\|_{\BMOA} \le  \Big\|\sum_{n=0}^\infty |a_n|^p e^{in\theta}\Big\|_{\BMOA}^{1/p} \cdot \Big\|\sum_{n=0}^\infty |b_n|^q e^{in\theta}\Big\|_{\BMOA}^{1/q}
\end{align}
and 
\begin{align}\label{sup-norm}
\Big\|\sum_{n=0}^\infty |a_n|^p e^{in\theta}\Big\|_{\BMOA}^{1/p} = \sup\Big\{  \Big \|\sum_{n=0}^\infty a_n b_n e^{in\theta}\Big\|_{\BMOA} :   \Big\|\sum_{n=0}^\infty |b_n|^q e^{in\theta}\Big\|_{\BMOA} \le 1 \Big\}. 
\end{align}
By Nehari's Theorem, the inequality \eqref{bmo-holder} is equivalent to 
\[
\| [a_{i+j}b_{i+j}]\|\le  \| [|a_{i+j}|^p]\|^{1/p}  \cdot \| [|b_{i+j}|^q]\|^{1/q},
\]
where $[c_{i+j}]  = [c_{i+j}]_{i,j \ge 0}$ denotes the infinite Hankel matrix  and  $\|\cdot\|$ denotes the operator norm of $B(\ell^2)$. Since $\| [a_{i+j} b_{i+j}]\| \le \| [|a_{i+j}b_{i+j}|]\|$, we may assume $a_n \ge 0$ and $b_n\ge 0$ for all $n\ge 0$. Now for non-negative coefficients,  the operator $[a_{i+j}b_{i+j}]$ is  self-adjoint and  
\[
\| [a_{i+j}b_{i+j}]\| =  \sup \sum_{i,j\ge 0} a_{i+j} b_{i+j} v_i v_j,
\]
where  the supremum runs over the set of all vectors $v \in \ell^2$ with  non-negative coefficients $v_n\ge 0$ and $\| v\|_{\ell^2} \le 1$.  But for any such vector $v\in \ell^2$,  by  H\"older's inequality,  
\[
\sum_{i,j\ge 0} a_{i+j} b_{i+j} v_i v_j \le  \Big(\sum_{i,j\ge 0} a_{i+j}^p   v_i v_j\Big)^{1/p}  \Big(\sum_{i,j\ge 0}  b_{i+j}^q v_i v_j \Big)^{1/q}
 \le \| [a_{i+j}^p]\|^{1/p} \| [b_{i+j}^q]\|^{1/q}. 
\]
This completes the proof of the  inequality \eqref{bmo-holder}.  

Finally, by considering 
$
b_n =  \frac{\bar{a}_n}{|a_n|} | a_n|^{p-1} \mathds{1}(a_n\ne 0),
$ we obtain the  equality  \eqref{sup-norm}
\end{proof}

\begin{lemma}\label{prop-scale}
Let $1\le p_0, p_1\le  \infty$ and $\alpha \in (0,1)$. Set $1/p_\alpha = (1-\alpha)/p_0 + \alpha/p_1$,  then 
\[
(\BMOA^{(p_0)}, \BMOA^{(p_1)})_\alpha = \BMOA^{(p_\alpha)} \quad \text{with equal norms.}
\]
\end{lemma}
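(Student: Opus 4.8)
The plan is to identify $\{\BMOA^{(p)}\}_{1\le p\le\infty}$ with the scale of $p$-convexifications of one Banach sequence lattice and then combine Calder\'on's one-sided interpolation inequality with an explicit analytic family. Set $Y:=\BMOA^{(1)}$, regarded via $a=(a_n)_{n\ge0}\mapsto\sum_n a_n e^{in\theta}$ as a space of scalar sequences with $\|a\|_Y=\|\sum_n|a_n|e^{in\theta}\|_{\BMOA}$. First I would record that $Y$ is a Banach sequence lattice with the Fatou property: the lattice property ($0\le a\le b$ coordinatewise $\Rightarrow\|a\|_Y\le\|b\|_Y$) follows from the identifications \eqref{bmoa-Linf}--\eqref{PSN-bis} of $\|\cdot\|_{\BMOA}$ with a Hankel-operator norm together with the elementary fact that $0\le A\le B$ entrywise forces $\|A\|_{B(\ell^2)}\le\|B\|_{B(\ell^2)}$; that $\|\cdot\|_Y$ is a norm is the case $p=1$ of Lemma~\ref{prop-bmoap}; and the Fatou property follows from $\|a\|_Y=\sup_{M}\|[\,|a_{i+j}|\,]_{0\le i,j\le M}\|$. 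By \eqref{def-bmoap}, for $1\le p<\infty$ one has $\BMOA^{(p)}=Y^{(p)}:=\{a:\ (|a_n|^p)_n\in Y\}$ with $\|a\|_{Y^{(p)}}=\|(|a_n|^p)_n\|_Y^{1/p}$, while $\BMOA^{(\infty)}=\ell^\infty$; throughout, the degenerate case $p_0=p_1$ and the endpoints $p_i=\infty$ are handled by the same formulas, reading $|t|^{0}$ as $\mathbf 1(t\ne0)$.

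Second I would carry out the elementary Calder\'on-product computation: for all $1\le p_0,p_1\le\infty$ and $\alpha\in(0,1)$ with $1/p_\alpha=(1-\alpha)/p_0+\alpha/p_1$,
\[
(\BMOA^{(p_0)})^{1-\alpha}(\BMOA^{(p_1)})^{\alpha}=\BMOA^{(p_\alpha)}\quad\text{isometrically,}
\]
where the left-hand space carries the norm $\inf\|x\|_{\BMOA^{(p_0)}}^{1-\alpha}\|y\|_{\BMOA^{(p_1)}}^{\alpha}$, the infimum running over all $x,y$ with $|a_n|\le|x_n|^{1-\alpha}|y_n|^{\alpha}$ for every $n$. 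For ``$\hookrightarrow$'' with norm $\le1$ take $|x_n|=|a_n|^{p_\alpha/p_0}$ and $|y_n|=|a_n|^{p_\alpha/p_1}$, so that $|x_n|^{1-\alpha}|y_n|^{\alpha}=|a_n|$ and $\|x\|_{\BMOA^{(p_0)}}^{1-\alpha}\|y\|_{\BMOA^{(p_1)}}^{\alpha}=\|(|a_n|^{p_\alpha})_n\|_Y^{(1-\alpha)/p_0+\alpha/p_1}=\|a\|_{\BMOA^{(p_\alpha)}}$. For the reverse, if $|a_n|\le|x_n|^{1-\alpha}|y_n|^{\alpha}$ then $|a_n|^{p_\alpha}\le(|x_n|^{p_0})^{\mu}(|y_n|^{p_1})^{\nu}$ with $\mu=p_\alpha(1-\alpha)/p_0$, $\nu=p_\alpha\alpha/p_1$, $\mu+\nu=1$, and after normalization the coordinatewise weighted AM--GM inequality $u^{\mu}v^{\nu}\le\mu u+\nu v$ together with the lattice property of $Y$ gives $\|(|a_n|^{p_\alpha})_n\|_Y\le(\|x\|_{\BMOA^{(p_0)}}^{1-\alpha}\|y\|_{\BMOA^{(p_1)}}^{\alpha})^{p_\alpha}$. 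Combining this identity with Calder\'on's fundamental inequality $[A_0,A_1]_\theta\hookrightarrow A_0^{1-\theta}A_1^{\theta}$ (valid contractively for every couple of Banach lattices, with no extra hypothesis) yields at once the upper inclusion $[\BMOA^{(p_0)},\BMOA^{(p_1)}]_\alpha\hookrightarrow\BMOA^{(p_\alpha)}$ contractively.

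For the reverse inclusion I would exhibit an explicit analytic family. For a trigonometric polynomial $\varphi=\sum_{n=0}^{N}\widehat\varphi(n)e^{in\theta}$, normalized so that $\|\varphi\|_{\BMOA^{(p_\alpha)}}=1$, set on the strip $\overline{S}=\{0\le\mathrm{Re}\,z\le1\}$
\[
F(z)=\sum_{n=0}^{N}|\widehat\varphi(n)|^{\,p_\alpha\left(\frac{1-z}{p_0}+\frac{z}{p_1}\right)}\,\frac{\widehat\varphi(n)}{|\widehat\varphi(n)|}\,e^{in\theta}
\]
(the $n$-th term being $0$ when $\widehat\varphi(n)=0$). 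This is a finite sum of entire $\C\cdot e^{in\theta}$-valued functions, hence an entire $(\BMOA^{(p_0)}\cap\BMOA^{(p_1)})$-valued function of $z$, bounded on $\overline{S}$, with $F(\alpha)=\varphi$ since $p_\alpha(\frac{1-\alpha}{p_0}+\frac{\alpha}{p_1})=1$; and by \eqref{def-bmoap}, $\sup_t\|F(it)\|_{\BMOA^{(p_0)}}=\|(|\widehat\varphi(n)|^{p_\alpha})_n\|_Y^{1/p_0}=\|\varphi\|_{\BMOA^{(p_\alpha)}}^{p_\alpha/p_0}=1$ and likewise $\sup_t\|F(1+it)\|_{\BMOA^{(p_1)}}=1$. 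Hence $\|\varphi\|_{[\BMOA^{(p_0)},\BMOA^{(p_1)}]_\alpha}\le1=\|\varphi\|_{\BMOA^{(p_\alpha)}}$, and by homogeneity the same holds for every trigonometric polynomial. To pass to a general $\varphi\in\BMOA^{(p_\alpha)}$ I would apply this to the truncations $\varphi^{(M)}=\sum_{n\le M}\widehat\varphi(n)e^{in\theta}$, which satisfy $\|\varphi^{(M)}\|_{\BMOA^{(p_\alpha)}}\le\|\varphi\|_{\BMOA^{(p_\alpha)}}$ by the lattice property of $Y$, and let $M\to\infty$: since $\varphi^{(M)}\to\varphi$ coordinatewise and the complex interpolation space of two Banach sequence lattices with the Fatou property again has the Fatou property (its norm being lower semicontinuous under coordinatewise convergence), this forces $\varphi\in[\BMOA^{(p_0)},\BMOA^{(p_1)}]_\alpha$ with $\|\varphi\|\le\liminf_M\|\varphi^{(M)}\|\le\|\varphi\|_{\BMOA^{(p_\alpha)}}$.

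The only delicate point is this last extension: because $\BMOA=L^\infty/\overline{H_0^\infty}$ is non-separable, trigonometric polynomials are \emph{not} dense in $\BMOA^{(p)}$ and the truncations $\varphi^{(M)}$ need not converge to $\varphi$ in norm, so one cannot simply pass to the limit inside the interpolation norm. The clean remedy is to establish (or quote) that the complex method preserves the Fatou property on couples of Banach sequence lattices and gives a norm that is lower semicontinuous for coordinatewise convergence; this can be extracted from Lemma~\ref{prop-bmoap}, which already presents $\BMOA^{(p)}$ as a K\"othe-type dual of $\BMOA^{(q)}$ and hence as a space with the Fatou property. Alternatively one can bypass the issue entirely by computing $[\BMOA^{(p_0)},\BMOA^{(p_1)}]_\alpha$ through the equivalent ``lower'' complex interpolation method together with the duality of Lemma~\ref{prop-bmoap}.
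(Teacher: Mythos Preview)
Your argument is correct and overlaps with the paper's in one direction but diverges in the other. For the inclusion $\BMOA^{(p_\alpha)}\hookrightarrow(\BMOA^{(p_0)},\BMOA^{(p_1)})_\alpha$ you and the paper do the same thing: write down the standard analytic family $m_n(z)=\dfrac{m_n}{|m_n|}\,|m_n|^{p_\alpha((1-z)/p_0+z/p_1)}$. For the converse inclusion, however, the paper does \emph{not} pass through the Calder\'on product. Instead it uses Lemma~\ref{prop-bmoap} as a duality device: given an $\mathcal F$-function $z\mapsto\sum_n m_n(z)e^{in\theta}$ realising the interpolation norm, it pairs it with a second analytic family $z\mapsto\sum_n a_n(z)e^{in\theta}$ built from an arbitrary test sequence in $\BMOA^{(q_\alpha)}$ (conjugate exponent), applies the H\"older-type bound \eqref{bmo-holder} on $\partial S$, runs a three-lines argument in $\BMOA$, and finally invokes the sup-formula \eqref{sup-norm}. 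Your route via $(\BMOA^{(p_0)})^{1-\alpha}(\BMOA^{(p_1)})^{\alpha}=\BMOA^{(p_\alpha)}$ together with Calder\'on's one-sided inequality $[A_0,A_1]_\theta\hookrightarrow A_0^{1-\theta}A_1^{\theta}$ is more structural and immediately exhibits the lattice mechanism behind the result; the paper's route is more hands-on and has the advantage of never invoking any abstract interpolation-of-lattices machinery, only the concrete inequalities already proved in Lemma~\ref{prop-bmoap}.

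One comment on the point you flag as delicate. Your worry about passing from trigonometric polynomials to general $\varphi$ in the inclusion $\BMOA^{(p_\alpha)}\hookrightarrow[\BMOA^{(p_0)},\BMOA^{(p_1)}]_\alpha$ is legitimate, and your proposed remedy via the Fatou property of the interpolation space is the right instinct, but be careful: for a couple of non-order-continuous lattices it is \emph{not} automatic that $[A_0,A_1]_\theta$ inherits the Fatou property (this is where the distinction between the lower and upper complex methods, or equivalently the Gagliardo completion, enters). The paper sidesteps this by writing the analytic family directly for an arbitrary sequence $(m_n)$ rather than truncating; strictly speaking that, too, leaves the boundary-continuity requirement of Calder\'on's $\mathcal F$-space unchecked, so neither proof is fully explicit on this point. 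Your second suggested remedy---using the duality description of $\BMOA^{(p)}$ from Lemma~\ref{prop-bmoap}---is exactly the tool the paper exploits (for the other inclusion), and would close the gap cleanly here as well.
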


\begin{proof}
Assume that $p_0 \ne  p_1$ and hence $p_\alpha \in (1, \infty)$.  Let $S = \{ z\in \C| 0< \Re(z)<1\}$. 
Assume that $\| \sum_{n\ge 0} |m_n|^{p_\alpha} e^{in\theta}\|_{\BMOA} < 1$. For any $z \in \overline{S}$, set
\[
m_n(z) =  \frac{m_n}{|m_n|} |m_n|^{p_\alpha (1-z)/p_0  + p_\alpha z /p_1} \mathds{1}(m_n \ne 0), \quad n \ge 0. 
\]
Then $m_n(\alpha)= m_n$. Clearly, for any $t\in \R$, we have 
\[
\Big\| \sum_{n\ge 0} m_n(it) e^{in \theta}\Big\|_{\BMOA^{(p_0)}} <1 \an  \Big \| \sum_{n\ge 0}m_n(1+it) e^{in \theta}\Big\|_{\BMOA^{(p_1)}} < 1. 
\]
The contractive embedding $\BMOA^{(p_\alpha)} \subset (\BMOA^{(p_0)}, \BMOA^{(p_1)})_\alpha$ then  follows.

 Conversely, assume that 
$
\| \sum_{n\ge 0} m_n e^{in\theta} \|_{(\BMOA^{(p_0)}, \BMOA^{(p_1)})_\alpha} < 1.
$
Then there exist functions $m_n: \overline{S}\rightarrow \C$, analytic on $S$ with $m_n(\alpha) = m_n$, and 
\[
\sup_{t\in \R} \Big\|\sum_{n\ge 0} |m_n(it)|^{p_0} e^{in\theta}\Big\|_{\BMOA}<1 \an   \sup_{t\in \R}\Big\|\sum_{n\ge 0} |m_n(1+it)|^{p_1} e^{in\theta} \Big\|_{\BMOA}<1. 
\] 
Now take any sequence $(a_n)_{n\ge 0}$ in $\C$ with  $\| \sum_{n\ge 0} |a_n|^{q_\alpha} e^{in\theta}\|_{\BMOA} < 1$ and $q_\alpha= p_\alpha/(p_\alpha-1)$. By the previous argument, we can construct functions $a_n: \overline{S}\rightarrow \C$, analytic on $S$ with $a_n(\alpha)= a_n$, and 
\[
\sup_{t\in \R} \Big\|\sum_{n\ge 0} |a_n(it)|^{q_0} e^{in\theta}\Big\|_{\BMOA}<1 \an   \sup_{t\in \R}\Big\|\sum_{n\ge 0} |a_n(1+it)|^{q_1} e^{in\theta} \Big\|_{\BMOA}<1,
\] 
where $q_0= p_0/(p_0-1), q_1 = p_1/(p_1-1)$. Therefore, by the inequality \eqref{bmo-holder}, 
\[
\sup_{z\in \partial S}\Big\|\sum_{n=0}^\infty a_n (z) m_n(z) e^{in\theta}\Big\|_{\BMOA} <1. 
\]
It follows immediately that 
\[
\Big\|\sum_{n=0}^\infty a_n m_n e^{in\theta}\Big\|_{\BMOA} <1. 
\]
Then, by applying the equality \eqref{sup-norm}, we obtain $\|\sum_{n=0}^\infty m_n e^{in\theta}\|_{\BMOA^{(p_\alpha)}} <1$. The desired inverse contractive embedding $ (\BMOA^{(p_0)}, \BMOA^{(p_1)})_\alpha \subset \BMOA^{(p_\alpha)}$ now follows.
\end{proof}

\begin{proof}[Proof of Theorem \ref{cor-interpolation}]
Observe that, for any $1\le r, s\le\infty$, the equality 
\[
(H^r, H^s)_{cb} = (H^r, L^s)_{cb}
\]
 holds  completely isometrically.   By Proposition~\ref{thm-hardy},  $\BMOA^{(2)}   = (H^1, H^2)_{cb}  = (H^1, L^2)_{cb}$  with equal norms (in fact, the equalities hold completely isometrically). On the other hand,  
\[
\BMOA^{(1)} \xrightarrow{\textit{contractive}} \BMOA  = (H^1, L^\infty)_{cb}. 
\]
Thus,   by the standard complex interpolation method, we obtain  that 
\[
(\BMOA^{(1)}, \BMOA^{(2)})_\theta \xrightarrow{\textit{contractive}}  (H^1, (L^\infty, L^2)_\theta)_{cb} \quad \text{for all $\theta \in [0,1]$}. 
\]
Therefore, by Lemma \ref{prop-scale}, for any $2\le  q \le \infty$, we have 
\begin{align}\label{1-to-p}
\BMOA^{(q')} \xrightarrow{\textit{contractive}} (H^1, L^q)_{cb} = (H^1, H^q)_{cb}  \text{\,\,\,with\,\,\,} \frac{1}{q}+\frac{1}{q'}= 1. 
\end{align}
This completes the first assertion of Theorem \ref{cor-interpolation}. 

Now  we proceed to the proof of the bounded inclusion \eqref{gen-rs}. By  interpolating \eqref{1-to-p} with $\BMOA^{(2)} = (H^2, \BMOA)_{cb}$,  we obtain 
\begin{align}\label{inter-c-b}
(\BMOA^{(2)}, \BMOA^{(q')})_\theta \xrightarrow{\textit{contractive}}  \big((H^2, H^1)_\theta, (\BMOA, H^q)_\theta\big)_{cb}.
\end{align}

{\flushleft\bf Observation: } for any $1< r < \infty$ and any $\theta\in (0, 1)$, we have 
\begin{align}\label{c-iso}
H^{r_\theta} \xrightarrow[\simeq]{\textit{complete isomorphism}}  (H^1, H^r)_\theta
\end{align}
and its dual form 
 \begin{align}\label{dual-c-iso}
H^{r_\theta'} \xrightarrow[\simeq]{\textit{complete isomorphism}} (\BMOA, H^{r'})_\theta,
\end{align}
where \[
r' = \frac{r}{r-1}, \quad    \frac{1}{r_\theta} = \frac{1-\theta}{1} + \frac{\theta}{r}  \an   r_\theta' =\frac{r_\theta}{r_\theta-1}.
\]

Lemma \ref{prop-scale} combined with  \eqref{inter-c-b}, \eqref{c-iso} and  \eqref{dual-c-iso} implies 
\[
\BMOA^{(p_\theta)}  \xhookrightarrow[\textit{inclusion}]{\textit{bounded}}  (H^{r_\theta}, H^{s_\theta})_{cb}, 
\]
where $\theta \in (0,1)$ and 
\begin{align}\label{exp-inter}
\frac{1}{p_\theta} = \frac{1-\theta}{2} + \frac{\theta}{q'}, \quad \frac{1}{r_\theta} = \frac{1-\theta}{2} + \frac{\theta}{1}, \quad \frac{1}{s_\theta} = \frac{1- \theta}{\infty} +\frac{\theta}{q}
\end{align}
with $2\le q\le \infty$ and $q'  = q/(q-1) \in [1, 2]$. 
\begin{figure}[h]
\begin{center}
\includegraphics[width=0.28\linewidth]{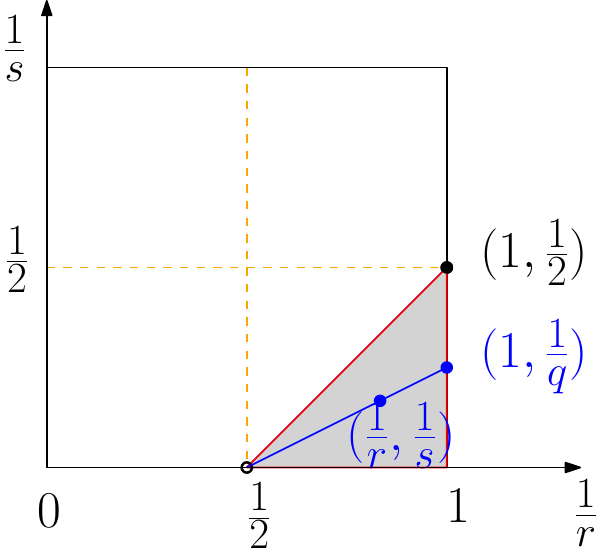}
\caption{$(\frac{1}{r}, \frac{1}{s}) = (1- \theta) (\frac{1}{2}, 0) + \theta (1, \frac{1}{q})$ with $q \in [2, \infty],\theta \in (0,1)$.}\label{fig-interpolation}
\end{center}
\end{figure}

\noindent Now from Figure \ref{fig-interpolation}, it is clear that for any pair of exponents $(r, s)$ satisfying
\begin{align}\label{rs-low}
\frac{1}{r} \ge \frac{1}{s}+ \frac{1}{2} \an  1< r <  2< s<\infty, 
\end{align}   
we can find 
\[
\theta \in (0,1), \, r_\theta = r, \, s_\theta= s, \, \frac{1}{p_\theta} = \frac{1}{r}-\frac{1}{s}\an q\in [2, \infty], 
\]
satisfying  the equation system \eqref{exp-inter}. The desired bounded inclusion \eqref{gen-rs} follows for any pair of exponents $(r, s)$ satisfying \eqref{rs-low}.  

Note also that for $r=1$ and any $2\le s\le\infty$, the bounded inclusion  \eqref{gen-rs} holds.  Finally, by interpolating the above result with the  trivial isometric equality 
$
(H^2, H^2)_{cb}  = \BMOA^{(\infty)},
$
we extend the bounded inclusion \eqref{gen-rs} to all pairs $(r, s)$ with $1\le r\le 2 \le s<\infty$.

\begin{proof}[Proof of the Observation] Indeed, by \cite[Thm. 4.3]{Xu-studia1990} (see also  \cite[Remark 5.3]{Xu-Blasco-interpolation1991}), 
\begin{align}\label{Xu-id}
(H^1(S_1), H^r(S_r))_\theta \xlongequal[\textit{norms}]{\textit{equivalent}} H^{r_\theta}(S_{r_\theta}).
\end{align}
  On the other hand,   by \cite[Cor. 1.4, formula (1.5)]{Pisier-Non-Commutative-vector}  and the equality \eqref{nc-fubini}, we have 
\begin{align}\label{pisier-id}
(H^1(S_1), H^r(S_r))_\theta = (S_1[H^1], S_r[H^r])_\theta = S_{r_\theta} [(H^1, H^r)_\theta] \quad \text{with equal norms}. 
\end{align}
The relations \eqref{Xu-id} and \eqref{pisier-id} together imply 
\[
H^{r_\theta}(S_{r_\theta}) = S_{r_\theta}[H^{r_\theta}] = S_{r_\theta} [(H^1, H^r)_\theta] \quad \text{with equivalent norms}.
\]
Consequently, by \eqref{cb-Sp}, we obtain  \eqref{c-iso}.

For the dual form \eqref{dual-c-iso}, since  $H^r(S_r)  = S_r[H^r]$ is reflexive for any $1<r<\infty$, we may apply the  duality theory for complex interpolation \cite[Thm. 8.37]{Pisier-martingale-in-Banach2016} to obtain the dual of $(S_1^n[H^1], S_r^n[H^r])_\theta$. Thus, by \cite[Cor. 1.4, formula (1.5)]{Pisier-Non-Commutative-vector}, for any $n\ge 1$, 
\[
S_{r_\theta'}^n[(\BMOA, H^{r'})_\theta] = (S_\infty^n[\BMOA], S_{r'}^n[H^{r'}])_\theta  =  \Big((S_1^n[H^1], S_r^n[H^r])_\theta\Big)^* \,\, \text{with equal norms.}
\]
Then by  \eqref{Xu-id} and \eqref{pisier-id} and the classical result on the dual of $H^p(S_p)$, we have 
\[
S_{r_\theta'}^n[(\BMOA, H^{r'})_\theta]  \xrightarrow[\simeq]{\textit{isomorphism}}   \Big(H^{r_\theta}(S_{r_\theta}^n) \Big)^*  \xrightarrow[\simeq]{\textit{isomorphism}}   H^{r_\theta'} (S_{r_\theta'}^n)  = S_{r_\theta'}^n[H^{r_\theta'}], 
\]
where the  norm equivalence constants in the above isomorphisms are uniformly bounded.  Therefore,  by \eqref{cb-Sp}, we complete the proof of the complete isomorphism \eqref{dual-c-iso}. 
\end{proof}

This completes the whole proof of the theorem. 
\end{proof}

\subsection{Bounded Hankel operators induce cb-Fourier multipliers}
The second complete isometric equality in Proposition~\ref{thm-hardy} has a higher dimensional analogue. Here we only state the isometric counterpart, the completely isometric counterpart requires a definition of the \oss on the corresponding space and will not be treated here.

For any  scalar function $\varphi$ in $H^1(\T^d)$, set 
\[
\varphi^\dag(z_1, \cdots, z_d):= \overline{ \varphi(z_1, \cdots, z_d)}.
\]

\begin{corollary}\label{cor-higher-2-inf}
A function $\varphi \in H^1(\T^d)$ induces a completely bounded Fourier multiplier in $(H^2(\T^d), \hank(\overline{H^2(\T^d)}, H^2(\T^d)))_{cb}$ if and only if $\Gamma_{\varphi*\varphi^\dag} \in \hank(\overline{H^2(\T^d)}, H^2(\T^d))$. Moreover, 
\[
\| \varphi\|_{(H^2(\T^d), \hank(\overline{H^2(\T^d)}, H^2(\T^d)))_{cb}} =  \| \Gamma_{\varphi*\varphi^\dag}\|_{\hank(\overline{H^2(\T^d)}, H^2(\T^d))}^{1/2}. 
\]
\end{corollary}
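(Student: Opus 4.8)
The plan is to transplant the proof of the isometric equality \eqref{12-cb-eq} of Proposition~\ref{thm-hardy} --- the case $d=1$ --- to the polytorus, with the cancellative Abelian monoid $\N^d$ in the role of $\N$, and phrased so as never to invoke a Nehari--Sarason--Page theorem on $\T^d$. The starting point is the tautological complete isometry
\[
\hank(\overline{H^2(\T^d)},H^2(\T^d))\;\cong\;\hank(\ell^2(\N^d)),
\]
where the right-hand side denotes the operator space of bounded operators $[a_{s_1+s_2}]_{s_1,s_2\in\N^d}=\sum_{n\in\N^d}a_n\,\Gamma_n^{\N^d}$ on $\ell^2(\N^d)$, with $\Gamma_n^{\N^d}$ the Hankel operator of $\N^d$ as in \eqref{def-gamma-M}: this is merely the matrix representation in the orthonormal bases $\{e^{-in\cdot\theta}\}$ and $\{e^{in\cdot\theta}\}$ of $\overline{H^2(\T^d)}$ and $H^2(\T^d)$, and it needs no Nehari theorem precisely because we never claim this operator space to be a function space. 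Under it, the Fourier multiplier reads $T_\varphi=\sum_{n\in\N^d}\overline{e^{in\cdot\theta}}\otimes\widehat\varphi(n)\,\Gamma_n^{\N^d}$, one has $\widehat{\varphi*\varphi^\dag}(n)=|\widehat\varphi(n)|^2$, and $\Gamma_{\varphi*\varphi^\dag}=\sum_{n\in\N^d}|\widehat\varphi(n)|^2\,\Gamma_n^{\N^d}$.

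Next I would reduce to symbols with summable Fourier coefficients, exactly along the lines of Lemma~\ref{lem-dilation-bis}. If $T_\varphi$ is bounded then $|\widehat\varphi(n)|=\|T_\varphi(e^{in\cdot\theta})\|_{\hank}\le\|T_\varphi\|$, so $\widehat\varphi$ is bounded; for $0<r<1$ set $\widehat{\varphi_r}(n)=r^{|n|}\widehat\varphi(n)$, so $\sum_n r^{|n|}|\widehat\varphi(n)|<\infty$. One checks $T_{\varphi_r}=R_r\circ T_\varphi$, where $R_r$ is the restriction to $\hank(\ell^2(\N^d))$ of the map $T\mapsto D_rTD_r$ on $B(\ell^2(\N^d))$ with $D_r=\diag(r^{|s|})_{s\in\N^d}$; as $\|D_r\|\le1$, left and right multiplication by $D_r$ are complete contractions, hence $\|R_r\|_{cb}\le1$ and $\|T_{\varphi_r}\|_{cb}\le\|T_\varphi\|_{cb}$. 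Conversely, approximating elements of $M_k(H^2(\T^d))$ by matrix-coefficient analytic polynomials and letting $r\to1$ gives $\|T_\varphi\|_{cb}\le\sup_{0<r<1}\|T_{\varphi_r}\|_{cb}$. Thus $\|T_\varphi\|_{(H^2(\T^d),\hank)_{cb}}=\sup_{0<r<1}\|T_{\varphi_r}\|_{(H^2(\T^d),\hank)_{cb}}$, with both sides allowed to be $+\infty$.

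For fixed $r$, summability of the coefficients places $T_{\varphi_r}$ in the range of the complete isometric embedding $(H^2(\T^d))^\ast\otimes_{min}\hank(\ell^2(\N^d))\hookrightarrow CB(H^2(\T^d),\hank(\ell^2(\N^d)))$ as the norm-convergent tensor $\sum_n\overline{e^{in\cdot\theta}}\otimes r^{|n|}\widehat\varphi(n)\,\Gamma_n^{\N^d}$. Since $(H^2(\T^d))^\ast$ carries Pisier's operator Hilbert space structure (the dual of $OH$ being $OH$) and $\{\overline{e^{in\cdot\theta}}\}_{n\in\N^d}$ is orthonormal, the defining property \eqref{def-OH} of $OH$ yields, after using injectivity of $\otimes_{min}$ to pass from $\hank(\ell^2(\N^d))$ to $B(\ell^2(\N^d))$,
\[
\|T_{\varphi_r}\|_{cb}^2=\Big\|\sum_{n\in\N^d}r^{2|n|}|\widehat\varphi(n)|^2\,\Gamma_n^{\N^d}\otimes\overline{\Gamma_n^{\N^d}}\Big\|,
\]
and $\overline{\Gamma_n^{\N^d}}=\Gamma_n^{\N^d}$ as $\Gamma_n^{\N^d}$ has $\{0,1\}$-entries. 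Now $\N^d$ is a cancellative Abelian monoid, so by Corollary~\ref{cor-abelian-sap} its Hankel system has SAP; applying \eqref{def-kappa} with $\kappa=1$ and scalar coefficients $b_n=r^{2|n|}|\widehat\varphi(n)|^2$ collapses the self-tensor and gives $\|T_{\varphi_r}\|_{cb}^2=\big\|\sum_{n\in\N^d}r^{2|n|}|\widehat\varphi(n)|^2\,\Gamma_n^{\N^d}\big\|$. Finally the $\N^d$-analogue of Lemma~\ref{lem-Pr} (same proof, using $\|\diag(\rho^{|s|})\|\le1$ and finite truncation) gives $\sup_{0<r<1}\big\|\sum_n r^{2|n|}|\widehat\varphi(n)|^2\Gamma_n^{\N^d}\big\|=\big\|\sum_n|\widehat\varphi(n)|^2\Gamma_n^{\N^d}\big\|=\|\Gamma_{\varphi*\varphi^\dag}\|\in[0,\infty]$; combining this with the dilation reduction yields $\|T_\varphi\|_{(H^2(\T^d),\hank)_{cb}}=\|\Gamma_{\varphi*\varphi^\dag}\|^{1/2}$, which is finite precisely when $\Gamma_{\varphi*\varphi^\dag}\in\hank(\overline{H^2(\T^d)},H^2(\T^d))$.

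The only genuinely new ingredient is the SAP of the Hankel system on $\N^d$ (Corollary~\ref{cor-abelian-sap}); granting it, the rest is operator-space bookkeeping. I expect the point that needs the most care to be the dilation reduction $\|T_\varphi\|_{cb}=\sup_r\|T_{\varphi_r}\|_{cb}$ in this $OH$-target, vector-valued setting, together with the verification that the operator-space structure of $\hank(\overline{H^2(\T^d)},H^2(\T^d))$ really coincides with that of $\hank(\ell^2(\N^d))$ --- it is exactly by keeping $\Gamma_{\varphi*\varphi^\dag}$ itself, rather than a would-be $\BMOA(\T^d)$-type norm, that the argument sidesteps the open problem of a polytorus Nehari theorem.
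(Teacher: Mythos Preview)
Your proposal is correct and follows essentially the same route the paper indicates: it transplants the proof of the isometric equality \eqref{12-cb-eq} to $\N^d$, working directly in the $(H^2(\T^d),\hank)_{cb}$ direction with the embedding $(H^2(\T^d))^*\otimes_{min}\hank\hookrightarrow CB(H^2(\T^d),\hank)$, the $OH$ structure, the dilation reduction, and the SAP of $\N^d$ from Corollary~\ref{cor-abelian-sap} in place of Proposition~\ref{thm-doubling}. You have also correctly isolated the reason the argument goes through without a polytorus Nehari theorem---the target space is $\hank(\ell^2(\N^d))$ itself rather than a $\BMOA$-type quotient.
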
 
\begin{proof}
The proof  follows almost verbatim that of the second isometric equality  in Proposition~\ref{thm-hardy}, where the SAP of $\N$ is replaced by the SAP of $\N^d$. 
\end{proof}

\begin{corollary}\label{cor-S4-hankel}
Assume that   $\varphi\in H^1(\T^d)$ with $\Gamma_{\varphi*\varphi^\dag}\in \hank(\overline{H^2(\T^d)}, H^2(\T^d))$. Then for any sequence $(f_k)_{k\ge 1}$ in $H^2(\T^d)$, we have 
\begin{align*}
\big\|\sum_{k=1}^\infty (\Gamma_{f_k*\varphi})^*  \Gamma_{f_k*\varphi} \big\|_{B(\overline{H^2(\T^d)})}^{1/2} \le  \|\Gamma_{\varphi*\varphi^\dag} \|^{1/2}_{\hank(\overline{H^2(\T^d)}, H^2(\T^d))} \big(\sum_{k,l=1}^\infty |\langle f_k, f_l\rangle|^2\big)^{1/4}.   
\end{align*}
In particular, if $(f_k)_{k\ge 1}$ are mutually orthogonal, then 
\[
\big\|\sum_{k=1}^\infty (\Gamma_{f_k*\varphi})^*  \Gamma_{f_k*\varphi} \big\|_{B(\overline{H^2(\T^d)})}^{1/2} \le  \|\Gamma_{\varphi*\varphi^\dag} \|^{1/2}_{\hank(\overline{H^2(\T^d)}, H^2(\T^d))} \big(\sum_{k=1}^\infty \| f_k\|_{H^2(\T^d)}^4\big)^{1/4}.   
\]
\end{corollary}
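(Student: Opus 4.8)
The plan is to read off the inequality from Corollary~\ref{cor-higher-2-inf} by testing the complete boundedness of the associated Fourier multiplier against a column of matrix units. First I would record that, by Corollary~\ref{cor-higher-2-inf}, the hypothesis $\Gamma_{\varphi*\varphi^\dag}\in\hank(\overline{H^2(\T^d)},H^2(\T^d))$ says precisely that the Fourier multiplier with symbol $\varphi$ defines a completely bounded map
\[
T_\varphi:H^2(\T^d)\longrightarrow \hank(\overline{H^2(\T^d)},H^2(\T^d)),\qquad T_\varphi(f)=\Gamma_{f*\varphi},
\]
with $\|T_\varphi\|_{cb}=\|\Gamma_{\varphi*\varphi^\dag}\|_{\hank(\overline{H^2(\T^d)},H^2(\T^d))}^{1/2}$; here $H^2(\T^d)$ carries Pisier's operator Hilbert space structure (so that \eqref{def-OH} is available) and $\hank(\overline{H^2(\T^d)},H^2(\T^d))$ carries the operator space structure it inherits as a subspace of $B(\overline{H^2(\T^d)},H^2(\T^d))$. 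After a monotone-limit argument it suffices to prove the bound for a finite family $f_1,\dots,f_N$, as the partial sums $\sum_{k\le N}(\Gamma_{f_k*\varphi})^*\Gamma_{f_k*\varphi}$ form an increasing sequence of positive operators.

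Next I would apply the elementary inequality $\|Id_{M_N}\otimes T_\varphi\|\le\|T_\varphi\|_{cb}$, for $Id_{M_N}\otimes T_\varphi$ viewed as a map $M_N\otimes_{min}H^2(\T^d)\to M_N\otimes_{min}\hank(\overline{H^2(\T^d)},H^2(\T^d))$, to the element $\sum_{k=1}^N e_{k,1}\otimes f_k$. For the target side, the standard column identity $\|\sum_k e_{k,1}\otimes x_k\|^2=\|\sum_k x_k^*x_k\|$ gives
\begin{align*}
\Big\|\sum_{k=1}^N(\Gamma_{f_k*\varphi})^*\Gamma_{f_k*\varphi}\Big\|_{B(\overline{H^2(\T^d)})}^{1/2}
&=\Big\|\sum_{k=1}^N e_{k,1}\otimes T_\varphi(f_k)\Big\|_{M_N\otimes_{min}\hank(\overline{H^2(\T^d)},H^2(\T^d))}\\
&\le\|T_\varphi\|_{cb}\,\Big\|\sum_{k=1}^N e_{k,1}\otimes f_k\Big\|_{M_N\otimes_{min}H^2(\T^d)}.
\end{align*}
For the last factor I would invoke the defining property \eqref{def-OH} of $OH$: expanding the $f_k$ over an orthonormal basis of their linear span and applying Parseval gives
\[
\Big\|\sum_{k=1}^N e_{k,1}\otimes f_k\Big\|_{M_N\otimes_{min}H^2(\T^d)}=\Big\|\sum_{k,l=1}^N\langle f_k,f_l\rangle\,e_{k,1}\otimes\overline{e_{l,1}}\Big\|_{M_N\otimes_{min}\overline{M_N}}^{1/2}=\Big(\sum_{k,l=1}^N|\langle f_k,f_l\rangle|^2\Big)^{1/4},
\]
the last equality because the matrix $\sum_{k,l}\langle f_k,f_l\rangle\,e_{k,1}\otimes\overline{e_{l,1}}$ has a single nonzero column. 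Combining these displays and substituting $\|T_\varphi\|_{cb}=\|\Gamma_{\varphi*\varphi^\dag}\|^{1/2}$ yields the first asserted inequality, and specializing to $\langle f_k,f_l\rangle=\delta_{k,l}\|f_k\|_{H^2(\T^d)}^2$ gives the orthogonal case.

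I expect the argument to be essentially mechanical once Corollary~\ref{cor-higher-2-inf} is available; the points requiring attention are the bookkeeping of operator space structures (that the domain $H^2(\T^d)$ is $OH$, so that \eqref{def-OH} applies, and that $\hank(\overline{H^2(\T^d)},H^2(\T^d))$ embeds completely isometrically into $B(\overline{H^2(\T^d)},H^2(\T^d))$, which makes the column identity legitimate) and the $OH$-computation of $\|\sum_k e_{k,1}\otimes f_k\|$. I do not anticipate a genuine obstacle here: the analytic content has already been packaged into Corollary~\ref{cor-higher-2-inf}, hence ultimately into the SAP of the monoid $\N^d$.
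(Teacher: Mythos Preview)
Your proof is correct and follows the same overall strategy as the paper: invoke Corollary~\ref{cor-higher-2-inf} to get $\|T_\varphi\|_{cb}=\|\Gamma_{\varphi*\varphi^\dag}\|^{1/2}$, then apply $Id\otimes T_\varphi$ to the column $\sum_k e_{k,1}\otimes f_k$ and identify the target norm via the column identity. The one point of departure is the computation of the source norm $\|\sum_k e_{k,1}\otimes f_k\|_{M_N\otimes_{min}H^2(\T^d)}$: the paper passes through the Haagerup tensor product and the interpolation identity $C\otimes_h OH\simeq (C\otimes_h R,\,C\otimes_h C)_{1/2}\simeq S_4$, whereas you read it off directly from the defining property \eqref{def-OH} of $OH$. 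Your route is more elementary and self-contained; the paper's route has the advantage of explaining structurally why the exponent $1/4$ (i.e.\ the $S_4$-norm) appears.
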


\begin{proof}
 Take any  $\varphi\in H^1(\T^d)$ with $\Gamma_{\varphi*\varphi^\dag}\in \hank(\overline{H^2(\T^d)}, H^2(\T^d))$.
By Corollary~\ref{cor-S4-hankel}, the Fourier multiplier $T_\varphi$ belongs to $(H^2(\T^d), \hank(\overline{H^2(\T^d)}, H^2(\T^d)))_{cb}$ and we have 
\[
\big\|  C\otimes_{min} H^2(\T^d)    \xrightarrow{Id \otimes T_\varphi} C\otimes_{min} \hank(\overline{H^2(\T^d)}, H^2(\T^d))\big \| \le  \|\Gamma_{\varphi*\varphi^\dag} \|_{\hank(\overline{H^2(\T^d)}, H^2(\T^d))}^{1/2}.
\]
Recall that for any operator space $E$, the  tensor product $C\otimes_{min} E$ coincides with the Haagerup tensor product $C\otimes_h E$.  Since $H^2(\T^d)$ is equipped with $OH$ structure, then by applying the complex interpolation (see  \cite[Thm. 5.22 and Cor. 7.11]{Pisier-Operator-space-book}), we have 
\[
C\otimes_h H^2(\T^d) \xrightarrow[\textit{isometric}]{\textit{complete}} C\otimes_h (R, C)_{\frac{1}{2}}  \xrightarrow[\textit{isometric}]{\textit{complete}}(C\otimes_h R, C\otimes_h C)_{\frac{1}{2}}. 
\]
In the setting of Banach spaces, we have the isometric isomorphism 
\[
(C\otimes_h R, C\otimes_h C)_{\frac{1}{2}} \xrightarrow[\simeq]{\textit{isometric}} S_4. 
\]
The above isometric isomorphism leads to the equality: 
\[
\|(f_1, f_2, \cdots, f_n, \cdots)^T\|_{C\otimes_{min} H^2(\T^d)} = \big(\sum_{k,l=1} | \langle f_k, f_l\rangle|^2 \big)^{1/4}
\]
and we  complete the whole proof. 
\end{proof}

\subsection{Fourier-Schur multiplier inequalities with critical exponent $4/3$}

\begin{corollary}\label{cor-FS-mul}
Let $\varphi \in \BMOA^{(2)}$. Then for any  $\ell^2$-column-vector-valued  $f\in H^1(\ell^2)$, 
\[
\Big\| \Big(\int_\T \big[(f*\varphi)  (f*\varphi)^*\big] dm \Big)^{1/2} \Big\|_{S_{4/3}} \le   \| \varphi \|_{\BMOA^{(2)}} \cdot \| f\|_{H^1(\ell^2)}. 
\]
\end{corollary}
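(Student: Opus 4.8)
The plan is to dualise the $S_{4/3}$ norm and feed the resulting expression into Corollary~\ref{cor-S4-hankel}. Write $g=f*\varphi$, so that $g_k=f_k*\varphi=T_\varphi f_k$ componentwise; here $T_\varphi\colon H^1\to H^2$ is bounded because $\varphi\in\BMOA^{(2)}=(H^1,H^2)_{cb}$ by Proposition~\ref{thm-hardy}. After a routine truncation we may assume $f=\sum_{k\le N}f_k\otimes e_k$ has only finitely many nonzero coordinates, and set $V\colon\ell^2\to H^1$, $Ve_k=f_k$ (finite rank). A direct computation gives $(T_\varphi V)^*(T_\varphi V)=\overline{[\langle g_k,g_l\rangle]_{k,l}}=\overline{\int_\T gg^*\,dm}$, whence $\bigl\|(\int_\T gg^*\,dm)^{1/2}\bigr\|_{S_{4/3}}=\bigl\|\,|T_\varphi V|\,\bigr\|_{S_{4/3}}=\|T_\varphi V\|_{S_{4/3}(\ell^2,H^2)}$.

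Next I would invoke $S_{4/3}$--$S_4$ trace duality: $\|T_\varphi V\|_{S_{4/3}}=\sup\{|\Tr(WT_\varphi V)|:W\colon H^2\to\ell^2\text{ finite rank},\ \|W\|_{S_4}\le1\}$. For such a $W$ put $h_k:=W^*e_k\in H^2$; then $[\langle h_k,h_l\rangle]_{k,l}=WW^*$, so $\bigl(\sum_{k,l}|\langle h_k,h_l\rangle|^2\bigr)^{1/4}=\|WW^*\|_{S_2}^{1/2}=\|W\|_{S_4}\le1$. Unwinding the trace and using the adjoint multiplier $T_\varphi^*=T_{\varphi^\dag}\colon H^2\to\BMOA$ (legitimate since $\widehat{\varphi^\dag}(n)=\overline{\widehat\varphi(n)}$ for $n\ge0$) gives $\Tr(WT_\varphi V)=\sum_k\langle g_k,h_k\rangle_{H^2}=\sum_k\langle f_k,\psi_k\rangle_{H^1,\BMOA}$ with $\psi_k:=h_k*\varphi^\dag$. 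By the $\ell^2$-valued Fefferman duality $(H^1(\T;\ell^2))^*=\BMOA(\T;\ell^2)=L^\infty(\ell^2)/\overline{H_0^\infty(\ell^2)}$ under the natural pairing, this is at most $\|f\|_{H^1(\ell^2)}\cdot\|\psi\|_{\BMOA(\ell^2)}$.

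It remains to show $\|\psi\|_{\BMOA(\ell^2)}\le\|\varphi\|_{\BMOA^{(2)}}$, which is the crux. By Page's theorem (the $\ell^2$-valued Nehari--Sarason--Page theorem recalled in \S\ref{sec-nsp}), $\|\psi\|_{\BMOA(\ell^2)}$ equals the operator norm of the column of Hankel operators $(\Gamma_{\psi_k})_k\colon\overline{H^2}\to H^2\otimes_2\ell^2$, i.e. $\bigl\|\sum_k\Gamma_{\psi_k}^*\Gamma_{\psi_k}\bigr\|_{B(\overline{H^2})}^{1/2}$. Now apply Corollary~\ref{cor-S4-hankel} with $d=1$ and with $(\varphi,(f_k))$ there replaced by $(\varphi^\dag,(h_k))$: since $(\varphi^\dag)^\dag=\varphi$, the symbol $\varphi^\dag*(\varphi^\dag)^\dag=\varphi*\varphi^\dag$ has nonnegative Fourier coefficients $|\widehat\varphi(n)|^2$, so $\Gamma_{\varphi^\dag*(\varphi^\dag)^\dag}=\Gamma_{\varphi*\varphi^\dag}\in\hank(\overline{H^2},H^2)$ (as $\varphi\in\BMOA^{(2)}$), and $\psi_k=h_k*\varphi^\dag$. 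Corollary~\ref{cor-S4-hankel} then gives $\bigl\|\sum_k\Gamma_{\psi_k}^*\Gamma_{\psi_k}\bigr\|^{1/2}\le\|\Gamma_{\varphi*\varphi^\dag}\|^{1/2}\bigl(\sum_{k,l}|\langle h_k,h_l\rangle|^2\bigr)^{1/4}\le\|\varphi\|_{\BMOA^{(2)}}$, using $\|\Gamma_{\varphi*\varphi^\dag}\|^{1/2}=\|\varphi*\varphi^\dag\|_{\BMOA}^{1/2}=\|\varphi\|_{\BMOA^{(2)}}$. Chaining the estimates yields $|\Tr(WT_\varphi V)|\le\|\varphi\|_{\BMOA^{(2)}}\|f\|_{H^1(\ell^2)}$; taking the supremum over $W$ and letting $N\to\infty$ (Fatou for $(\int gg^*\,dm)^{1/2}$) completes the proof.

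The main obstacle is precisely that last inequality $\|\psi\|_{\BMOA(\ell^2)}\le\|\varphi\|_{\BMOA^{(2)}}$; everything else is bookkeeping with trace duality, the $(\cdot)^\dag$-involution, and truncation. Equivalently, one may avoid quoting Corollary~\ref{cor-S4-hankel} and argue that $T_{\varphi^\dag}\colon H^2_{OH}\to\BMOA$ is completely bounded with cb-norm $\|\varphi\|_{\BMOA^{(2)}}$ (Corollary~\ref{cor-higher-2-inf} with $d=1$, equivalently Proposition~\ref{thm-hardy}) and amplify by $\mathrm{Id}$ on the column operator space $C$, using --- as in the proof of Corollary~\ref{cor-S4-hankel} --- the identification of the norm of a column vector in $C\otimes_{\min}H^2_{OH}$ with the $S_4$-norm of the square root of its Gram matrix; this is the same computation dressed in operator-space language.
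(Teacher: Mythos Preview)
Your argument is correct. It is, however, the dual of the route taken in the paper, and it is worth seeing the difference.

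The paper argues \emph{forward}: from Proposition~\ref{thm-hardy} one has $\|T_\varphi\|_{(H^1,H^2)_{cb}}=\|\varphi\|_{\BMOA^{(2)}}$, so amplifying by the column space $C$ via the Haagerup tensor product gives a contraction $T_\varphi\otimes\mathrm{Id}\colon H^1\otimes_h C\to H^2\otimes_h C$ of norm $\le\|\varphi\|_{\BMOA^{(2)}}$. One then identifies $H^1\otimes_h C\cong H^1(\ell^2)$ isometrically, and, using that $H^2$ carries the $OH$ structure together with the interpolation property of $\otimes_h$, identifies $H^2\otimes_h C\cong(R\otimes_h C,\,C\otimes_h C)_{1/2}\cong(S_1,S_2)_{1/2}=S_{4/3}$. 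That is the whole proof: no trace duality, no Page theorem, no appeal to Corollary~\ref{cor-S4-hankel}.

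Your proof runs the same computation from the other end: you dualise $S_{4/3}$ to $S_4$, dualise $H^1(\ell^2)$ to $L^\infty(\ell^2)/\overline{H_0^\infty(\ell^2)}$, and then land exactly on the bound provided by Corollary~\ref{cor-S4-hankel} --- which in turn is proved in the paper via $C\otimes_h H^2_{OH}\cong S_4$, the dual identification to the one above. So the two proofs are formally equivalent; the paper's is shorter because it avoids the detour through duality and through Corollary~\ref{cor-S4-hankel}. Your final paragraph already spots this: the alternative you sketch there (amplify $T_{\varphi^\dag}\colon H^2_{OH}\to\BMOA$ by $C$) \emph{is} the paper's argument, up to replacing $T_\varphi\colon H^1\to H^2$ by its adjoint. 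One minor remark: Corollary~\ref{cor-S4-hankel} is stated under the hypothesis $\varphi\in H^1$, whereas here you only know $\varphi\in\BMOA^{(2)}$; this is harmless since the proof of Corollary~\ref{cor-S4-hankel} only uses boundedness of $\Gamma_{\varphi*\varphi^\dag}$, but you should say so (or simply truncate $\varphi$ as well as $f$).
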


\begin{remark}\label{rk-FS}
The inequalities obtained in Corollary \ref{cor-FS-mul} are called  Fourier-Schur multiplier inequalities since they  can be rewritten as 
\begin{align*}
\Big\| \Big(  \widehat{\varphi}(0) \widehat{f}(0),  \widehat{\varphi}(1) \widehat{f}(1), \cdots, \widehat{\varphi}(n) \widehat{f}(n), \cdots\Big)\Big\|_{S_{4/3}} \le  \| \varphi\|_{\BMOA^{(2)}} \cdot \| f\|_{H^1(\ell^2)}. 
\end{align*}
\end{remark}

The exponent $4/3$ in Corollary  \ref{cor-FS-mul} is optimal, this can be seen from 
\begin{proposition}\label{prop-43-optimal}
Let $p>0$. Assume that there exist $q\in [1,\infty)$ and a numerical constant $c> 0$ such that for any $\varphi\in \BMOA^{(2)}$ and any $f\in H^q(\ell^2)$, 
\[
\Big\| \Big(\int_\T \big[(f*\varphi)  (f*\varphi)^*\big] dm \Big)^{1/2} \Big\|_{S_{p}} \le   c  \| \varphi \|_{\BMOA^{(2)}} \cdot \| f\|_{H^q(\ell^2)}. 
\]
Then $p \ge 4/3$. 
\end{proposition}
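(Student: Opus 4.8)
The plan is to derive the conclusion $p\ge 4/3$ from the assumed inequality by testing it against an explicit one-parameter family of \emph{lacunary} examples and letting the parameter tend to infinity. The guiding idea is that, for a Hadamard-lacunary symbol, the $\BMOA$-norm (hence the $\BMOA^{(2)}$-norm) is as small as possible --- comparable to the $\ell^2$-norm of its Fourier coefficients --- while the left-hand side of the inequality stays of the size of ``$S_p$ of a projection''; the ensuing mismatch of exponents forces $p\ge 4/3$. By Corollary~\ref{cor-FS-mul} the exponent $4/3$ is actually attained, so this family is essentially extremal.

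Concretely, I would fix a large integer $K$, put $\Lambda=\Lambda_K=\{2^k:1\le k\le K\}$, and take
\[
\varphi=\varphi_K=\sum_{k=1}^K e^{i2^k\theta},\qquad f=f_K=\sum_{n\in\Lambda}e^{in\theta}\,e_n\in H^q(\ell^2),
\]
where $(e_n)_{n\ge 0}$ is the standard basis of $\ell^2$. Then I would compute the three quantities entering the hypothesis. Since $|\widehat{\varphi}(n)|^2=\mathds{1}(n\in\Lambda)=\widehat{\varphi}(n)$, the definition \eqref{def-bmoap} gives $\|\varphi\|_{\BMOA^{(2)}}=\|\varphi\|_{\BMOA}^{1/2}$; the Fefferman-type formula recalled after \eqref{def-bmoap}, combined with the observation that every block $[km,(k+1)m)$ with $k\ge 1$ meets $\Lambda$ in at most one point, gives $\|\varphi\|_{\BMOA}\le C\,K^{1/2}$, while $\|\varphi\|_{\BMOA}\ge\|\varphi\|_{H^2}=K^{1/2}$ because $\BMOA=L^\infty/\overline{H_0^\infty}\hookrightarrow H^2$ contractively. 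Hence $\|\varphi\|_{\BMOA^{(2)}}\approx K^{1/4}$. Next, the coordinate functions of $f$ all have constant modulus, so $\|f(e^{i\theta})\|_{\ell^2}^2=|\Lambda|=K$ for every $\theta$; consequently $\|f\|_{H^q(\ell^2)}=K^{1/2}$ for \emph{every} $q\in[1,\infty)$, and independently of the precise vector-valued convention, since the fibre norm is constant. Finally, $\widehat{\varphi}(n)\widehat{f}(n)=\mathds{1}(n\in\Lambda)\,e_n$, so $(f*\varphi)(e^{i\theta})=\sum_{n\in\Lambda}e^{in\theta}e_n$ and therefore
\[
\int_\T\big[(f*\varphi)(f*\varphi)^*\big]\,dm=P_\Lambda,\qquad \Big\|\Big(\int_\T\big[(f*\varphi)(f*\varphi)^*\big]\,dm\Big)^{1/2}\Big\|_{S_p}=\|P_\Lambda\|_{S_p}=K^{1/p},
\]
where $P_\Lambda$ is the rank-$K$ coordinate projection onto $\overline{\spann}\{e_n:n\in\Lambda\}$.

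Substituting these evaluations into the assumed inequality would give $K^{1/p}\le c\,\|\varphi\|_{\BMOA^{(2)}}\|f\|_{H^q(\ell^2)}\le C\,K^{1/4}\cdot K^{1/2}=C\,K^{3/4}$ for all $K$; letting $K\to\infty$ forces $1/p\le 3/4$, i.e. $p\ge 4/3$, which is the assertion.

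The main (though fairly mild) obstacle is the two-sided estimate $\|\varphi_K\|_{\BMOA}\approx K^{1/2}$: the upper bound must be extracted from the combinatorics of lacunary sets inside dyadic blocks via the Fefferman-type formula --- or, alternatively, one may invoke the classical fact that a Hadamard-lacunary analytic polynomial has $\BMOA$-norm comparable to the $\ell^2$-norm of its coefficients. One should also verify that $\|f_K\|_{H^q(\ell^2)}=K^{1/2}$ is genuinely independent of $q$ and of the chosen vector-valued $H^q$ structure; both of these points reduce to the constancy of the fibre norms of $f_K$. Everything else is routine bookkeeping with $S_p$-norms of a projection.
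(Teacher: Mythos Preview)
Your proposal is correct and follows essentially the same lacunary-testing approach as the paper. The only difference is presentational: the paper first reduces, via the two lacunary Lemmas~\ref{lem-lacunary-Hq} and~\ref{lem-lacunary-bmo}, to the scalar inequality $(\sum|d_n\lambda_n|^p)^{1/p}\le c'(\sum|d_n|^4)^{1/4}(\sum|\lambda_n|^2)^{1/2}$ and then specializes, whereas your direct choice of $f_K$ with constant fibre norm lets you bypass the vector-valued $H^q$ lacunary estimate entirely.
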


\begin{proof}[Proof of Corollary \ref{cor-FS-mul}]
  Let $C$ (resp. $R$) denote the column (resp. row) Hilbert space, which by definition, consists of all bounded operators in $B(\ell^2)$ whose all but the first column  (resp. row) entries vanish. 

Take any $\varphi\in \BMOA^{(2)}$. By Proposition~\ref{thm-hardy},  the Fourier multiplier $T_{\widehat{\varphi}}$ associated with the sequence $(\widehat{\varphi}(n))_{n\ge 0}$  belongs to $(H^1, H^2)_{cb}$.  Hence, by using the Haagerup tensor product $\otimes_h$ (see \cite[Chapter 5]{Pisier-Operator-space-book}),  we have 
\[
\| H^1 \otimes_h C \xrightarrow{T_{\widehat{\varphi}} \otimes Id} H^2 \otimes_h C\| \le \| T_{\widehat{\varphi}}\|_{(H^1, H^2)_{cb}} = \|\varphi\|_{\BMOA^{(2)}}. 
\]
By noticing the following relations (see \cite[p.18, formula (1.1) and Prop. 2.1]{Pisier-Non-Commutative-vector})
\[
H^1\otimes_h C \xhookrightarrow[\textit{embedding}]{\textit{isometric}} R\otimes_h H^1 \otimes_h C  \xrightarrow[\simeq]{\textit{isometric}} S_1[H^1] \xrightarrow[\simeq]{\textit{isometric}} H^1(S_1),
\]
we obtain an isometric isomorphism (as Banach spaces): 
\[
H^1 \otimes_h C \xrightarrow[\simeq]{\textit{isometric}} H^1(\ell^2). 
\]
Now since $H^2$ is equipped with the $OH$ structure, by applying the complex interpolation (see  \cite[Thm. 5.22 and Cor. 7.11]{Pisier-Operator-space-book}), we have 
\[
H^2 \otimes_h C \xrightarrow[\textit{isometric}]{\textit{complete}}(R, C)_{\frac{1}{2}} \otimes_h C \xrightarrow[\textit{isometric}]{\textit{complete}}(R\otimes_h C, C\otimes_h C)_{\frac{1}{2}}. 
\]
In the setting of Banach spaces, we have the following isometric isomorphisms (in fact, the first one is completely isometric): 
\[
R\otimes_h C \xrightarrow[\simeq]{\textit{isometric}} S_1 \an   C\otimes_h C \xrightarrow[\simeq]{\textit{isometric}} S_2,
\]
hence 
\[
(R\otimes_h C, C\otimes_h C)_{\frac{1}{2}} \xrightarrow[\simeq]{\textit{isometric}} (S_1, S_2)_{\frac{1}{2}} = S_{4/3}. 
\]
It follows that 
\[
\Big\|H^1(\ell^2) \xrightarrow[\simeq]{\textit{isometric}}   H^1 \otimes_h C \xrightarrow{T_{\widehat{\varphi}} \otimes Id} H^2 \otimes_h C  \xrightarrow[\simeq]{\textit{isometric}}S_{4/3}\Big\| \le \|\varphi\|_{\BMOA^{(2)}}. 
\]
In other words,  the following  map 
\begin{align*}
\begin{array}{ccc}
H^1(\ell^2)&  \xrightarrow{\qquad} & S_{4/3}
\\
f & \mapsto &  (\widehat{\varphi}(0) \widehat{f}(0), \cdots, \widehat{\varphi}(n) \widehat{f}(n), \cdots)
\end{array}
\end{align*}
is bounded with norm no larger than $\|\varphi\|_{\BMOA^{(2)}}$.  This completes the whole proof. 
\end{proof}

For proving  Proposition~\ref{prop-43-optimal}, we need the following classical results on the lacunary Fourier series. 

\begin{lemma}[{see, e.g., \cite[Prop. 3.1]{Blasco-TAMS}}]\label{lem-lacunary-Hq}
For any $q\in [1, \infty)$, there exists a numerical constant $c \ge 1$ such that for any sequence $(v_n)_{n=0}^\infty$ of vectors in $\ell^2$, 
\[
 \frac{1}{c}   \Big(\sum_{n = 0}^\infty \| v_n\|_{\ell^2}^2 \Big)^{1/2} \le \big\| \sum_{n = 0}^\infty v_n e^{i 2^n \theta}\big\|_{H^q(\ell^2)} \le c \Big(\sum_{n = 0}^\infty \| v_n\|_{\ell^2}^2 \Big)^{1/2}. 
\]
\end{lemma}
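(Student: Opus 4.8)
The plan is to bypass the literature and give a self-contained argument at the level of $L^q(\ell^2)$-norms. Since all the frequencies $2^n$ are non-negative, the function $F(\theta):=\sum_{n\ge 0}v_ne^{i2^n\theta}$ automatically belongs to $H^q(\ell^2)$ as soon as it lies in $L^q(\ell^2)$, so it suffices to establish the two-sided bound
\[
\tfrac1c\Big(\sum_{n\ge 0}\|v_n\|_{\ell^2}^2\Big)^{1/2}\ \le\ \|F\|_{L^q(\ell^2)}\ \le\ c\Big(\sum_{n\ge 0}\|v_n\|_{\ell^2}^2\Big)^{1/2}.
\]
The only genuinely non-trivial ingredient will be the vector-valued $\Lambda(4)$-property of the lacunary set $\{2^n\}$, namely $\|F\|_{L^4(\ell^2)}\le c_4\big(\sum_n\|v_n\|_{\ell^2}^2\big)^{1/2}$; everything else is H\"older's inequality and the log-convexity of $L^p$-norms on the probability space $(\T,m)$, together with Parseval in $\theta$, which gives $\|F\|_{L^2(\ell^2)}^2=\sum_n\|v_n\|_{\ell^2}^2$.

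First I would record the two trivial halves. Since $m$ is a probability measure, for $q\le 2$ one has $\|F\|_{L^q(\ell^2)}\le\|F\|_{L^2(\ell^2)}=(\sum_n\|v_n\|_{\ell^2}^2)^{1/2}$ by H\"older, which is the required upper bound in that range; and for $q\ge 2$ one has $\|F\|_{L^q(\ell^2)}\ge\|F\|_{L^2(\ell^2)}=(\sum_n\|v_n\|_{\ell^2}^2)^{1/2}$, which is the required lower bound in that range. In both cases the constant is $1$, so only two cases remain: the upper bound for $2\le q<\infty$ and the lower bound for $1\le q<2$.

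Next I would prove the $\Lambda(4)$-estimate, which is the heart of the matter. Writing $\langle\cdot,\cdot\rangle$ for the $\ell^2$-inner product, one has $\|F\|_{L^4(\ell^2)}^4=\int_\T\langle F(\theta),F(\theta)\rangle^2\,dm=\|\langle F,F\rangle\|_{L^2(\T)}^2$, while $\langle F(\theta),F(\theta)\rangle=\sum_{m,n\ge 0}\langle v_m,v_n\rangle e^{i(2^m-2^n)\theta}$. The key combinatorial observation is that the representation of a non-zero integer $k$ in the form $2^m-2^n$ with $m>n\ge 0$ is unique: if $2^m-2^n=2^{m'}-2^{n'}$ with $m\ge m'$, then $2^m-2^{m'}=2^n-2^{n'}\le 2^n<2^m$ forces $m=m'$ and hence $n=n'$. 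Expanding $\|\langle F,F\rangle\|_{L^2(\T)}^2$ by Parseval in $\theta$, the Fourier coefficient at frequency $0$ equals $\sum_n\|v_n\|_{\ell^2}^2$, while the coefficient at each $k\ne 0$ is a single term $\langle v_{m_k},v_{n_k}\rangle$ corresponding to distinct ordered pairs for distinct $k$; summing and using Cauchy--Schwarz gives $\|\langle F,F\rangle\|_{L^2(\T)}^2\le 2\big(\sum_n\|v_n\|_{\ell^2}^2\big)^2$, i.e. $\|F\|_{L^4(\ell^2)}\le 2^{1/4}\big(\sum_n\|v_n\|_{\ell^2}^2\big)^{1/2}$. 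For the general upper bound when $2\le q<\infty$ one can then either invoke the classical scalar $\Lambda(q)$-property of $\{2^n\}$ applied coordinatewise together with Minkowski's inequality in $L^{q/2}$, or interpolate between $q=2$ and $q=4$ and iterate; the precise route is immaterial for the statement.

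Finally, for the lower bound when $1\le q<2$, choose $\theta=\tfrac{q}{4-q}\in(0,1)$, so that $\tfrac12=\tfrac\theta q+\tfrac{1-\theta}4$, and apply the Lyapunov (log-convexity) inequality for $L^p$-norms on $(\T,m)$ to the non-negative scalar function $\theta\mapsto\|F(\theta)\|_{\ell^2}$:
\[
\|F\|_{L^2(\ell^2)}\ \le\ \|F\|_{L^q(\ell^2)}^{\theta}\,\|F\|_{L^4(\ell^2)}^{1-\theta}.
\]
Substituting the $\Lambda(4)$-estimate $\|F\|_{L^4(\ell^2)}\le 2^{1/4}\|F\|_{L^2(\ell^2)}$ and cancelling the factor $\|F\|_{L^2(\ell^2)}^{1-\theta}$ yields $\|F\|_{L^2(\ell^2)}\le 2^{(1-\theta)/(4\theta)}\|F\|_{L^q(\ell^2)}$, i.e. the desired lower bound with an explicit constant (equal to $\sqrt 2$ when $q=1$). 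Taking $c$ to be the maximum of the finitely many constants produced in the various ranges, and noting $c\ge 1$, completes the argument. The proof has no serious obstacle: the single point requiring an idea is the $\Lambda(4)$-estimate, and there the uniqueness of binary differences $2^m-2^n$ makes it entirely elementary; alternatively, one may simply cite \cite[Prop.~3.1]{Blasco-TAMS} or the classical randomization (Khintchine--Kahane plus Sidonicity of $\{2^n\}$) argument.
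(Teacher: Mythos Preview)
The paper does not prove this lemma at all; it simply cites \cite[Prop.~3.1]{Blasco-TAMS} as a known result. Your proposal therefore supplies strictly more than the paper does, and the argument is essentially correct: the $\Lambda(4)$-estimate via the unique representation of nonzero integers as $2^m-2^n$ is clean, and the log-convexity trick (interpolating $L^2$ between $L^q$ and $L^4$) gives the lower bound for $q<2$ with the explicit constant you state. Together with the trivial H\"older bounds this covers $q\in[1,4]$ in a fully self-contained way.

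One small caveat: for the upper bound when $q>4$, your phrase ``interpolate between $q=2$ and $q=4$ and iterate'' does not actually get you beyond $q=4$, since $\|F(\theta)\|_{\ell^2}^2$ is no longer a lacunary trigonometric polynomial and there is nothing to iterate on. Your first alternative---apply the scalar $\Lambda(q)$-property of $\{2^n\}$ to each coordinate $F_j$ and then use Minkowski in $L^{q/2}$ to pass from $\sum_j\|F_j\|_{L^q}^2$ to $\|F\|_{L^q(\ell^2)}^2$---is the correct route and works for all $q\ge 2$, but it imports the scalar lacunary inequality rather than deriving it. Since you already flag this and offer the Khintchine--Kahane/Sidonicity route as well, the proposal is fine as written; just drop the misleading ``iterate'' remark.
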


\begin{lemma}[{see, e.g., \cite[Thm. 6.12]{Pavlovic-book}}]\label{lem-lacunary-bmo}
There exists a numerical constant $c>0$ such that for any  complex sequence $(a_n)_{n = 0}^\infty$,
\[
\big\| \sum_{n= 0}^\infty a_n e^{i 2^n \theta}\big\|_{\BMOA} \le   c \Big(\sum_{n = 0}^\infty |a_n|^2 \Big)^{1/2}.
\]
\end{lemma}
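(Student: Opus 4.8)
The plan is to exploit the duality $\BMOA=(H^1)^*$ in the precise normalization fixed in \S\ref{sec-nsp}. Since $\BMOA\simeq L^\infty/\overline{H_0^\infty}$ is the dual of $H^1\subset L^1$ under the pairing $\langle f,\psi\rangle=\sum_{k\ge 0}\widehat{f}(k)\overline{\widehat{\psi}(k)}$, for $\varphi=\sum_{n\ge 0}a_n e^{i2^n\theta}$ one has
\[
\|\varphi\|_{\BMOA}\;=\;\sup_{\|f\|_{H^1}\le 1}\Big|\sum_{k\ge 0}\widehat{f}(k)\,\overline{\widehat{\varphi}(k)}\Big|\;=\;\sup_{\|f\|_{H^1}\le 1}\Big|\sum_{n\ge 0}\widehat{f}(2^n)\,\overline{a_n}\Big|.
\]
By Cauchy--Schwarz this is at most $\big(\sup_{\|f\|_{H^1}\le1}\sum_{n\ge0}|\widehat{f}(2^n)|^2\big)^{1/2}\big(\sum_{n\ge0}|a_n|^2\big)^{1/2}$, so the claim reduces to the classical Paley inequality for $H^1$: there is a numerical constant $c$ with $\big(\sum_{n\ge0}|\widehat{f}(2^n)|^2\big)^{1/2}\le c\,\|f\|_{H^1}$ for every $f\in H^1$ (the frequencies $\{2^n\}$ being a Hadamard lacunary set). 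I would simply quote this (e.g.\ from \cite{Pavlovic-book}, or from Zygmund/Duren); if a self-contained argument is wanted, the standard route is via the Riesz factorization $f=g\,h$ with $g,h\in H^2$, $\|g\|_{H^2}^2=\|h\|_{H^2}^2=\|f\|_{H^1}$, after which $\widehat{f}(2^n)=\sum_{p+q=2^n}\widehat{g}(p)\widehat{h}(q)$ and the lacunarity forces near-disjointness of the relevant index ranges.

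An alternative route stays entirely inside the framework of this paper. By the Nehari--Sarason--Page identification \eqref{PSN-bis}, $\|\varphi\|_{\BMOA}=\|\Gamma_\varphi\|_{B(\ell^2)}$ with $\Gamma_\varphi=[\widehat{\varphi}(i+j)]_{i,j\ge0}=\sum_{n\ge0}a_n\Gamma_{2^n}$, where $\Gamma_{2^n}(i,j)=\ch(i+j=2^n)$ as in \eqref{def-gamma-n}. I would split $\ell^2$ along the dyadic blocks of indices $V_k=\overline{\spann}\{e_j:2^{k-1}\le j<2^k\}$ (with $V_0=\C e_0$). A direct inspection of the flip $e_j\mapsto e_{2^n-j}$ shows that $\Gamma_{2^n}$ sends $V_k$ to $\{0\}$ when $2^n$ is much smaller than $2^k$, to $V_n$ when $2^n$ is much larger than $2^k$, and otherwise into a lower block; consequently $\Gamma_\varphi$ decomposes (up to the triangle inequality for three pieces) into a ``lower'' block operator $L$, its transpose, and a partial diagonal $\Delta$. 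Because distinct blocks $V_k$ are mutually orthogonal and the images of the various $V_k$ under the relevant flips are pairwise disjoint, a short computation of $L^*L$ and $\Delta^*\Delta$ yields $\|L\|,\|\Delta\|\lesssim\big(\sum_n|a_n|^2\big)^{1/2}$, whence $\|\Gamma_\varphi\|\lesssim\big(\sum_n|a_n|^2\big)^{1/2}$.

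I expect the main obstacle to be, in the first approach, supplying (or cleanly citing) Paley's inequality, and in the second, the routine but fussy bookkeeping of the few boundary cases --- the first row and column of $\Gamma_\varphi$, and the contributions where $2^n$ is comparable to $2^k$ --- needed to make the block decomposition literally correct. Either way the constant produced is universal, which is all the statement requires; for the final write-up I would most likely record just the duality reduction and invoke the classical Paley inequality.
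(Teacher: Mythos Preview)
The paper does not prove this lemma at all; it merely records it with a citation to \cite[Thm.~6.12]{Pavlovic-book}. Your first approach --- dualizing to Paley's inequality for $H^1$ --- is correct, standard, and exactly the kind of one-line justification one would give if pressed; your second approach via a block decomposition of the Hankel matrix $\Gamma_\varphi=\sum_n a_n\Gamma_{2^n}$ is also viable but, as you note, involves some bookkeeping that the simpler duality argument avoids.
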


\begin{proof}[Proof of Proposition \ref{prop-43-optimal}]
Fix any $q \in [1, \infty)$.  By assumption, 
\[
\Big\| \Big(\sum_{n=0}^\infty |\widehat{\varphi}(n)|^2 \widehat{f}(n) \widehat{f}(n)^*\Big)^{1/2}\Big\|_{S_p} \le c \Big\| \sum_{n  =0}^\infty |\widehat{\varphi}(n)|^2 e^{in \theta}\Big\|_{\BMOA}^{1/2}  \| f\|_{H^q(\ell^2)}.
\]
In the above inequality, by taking $\varphi\in \BMOA^{(2)}$ and $f\in H^q(\ell^2)$ with  Fourier supports on the set of dyadic integers, we obtain 
\[
\Big\| \Big(\sum_{n=0}^\infty |d_n|^2 v_n v_n^*\Big)^{1/2}\Big\|_{S_p} \le c \Big\| \sum_{n  =0}^\infty |d_n|^2 e^{i 2^n \theta}\Big\|_{\BMOA}^{1/2}  \Big\| \sum_{n = 0}^\infty  v_n e^{i 2^n \theta}\Big\|_{H^q(\ell^2)}.
\]
Then by Lemmas \ref{lem-lacunary-Hq} and \ref{lem-lacunary-bmo}, there exists a numerical constant $c'> 0$ such that 
\begin{align*}
  \Big\| \Big(\sum_{n=0}^\infty |d_n|^2 v_n v_n^*\Big)^{1/2}\Big\|_{S_p}  \le c' \Big(\sum_{n  =0}^\infty |d_n|^4 \Big)^{1/4}  \Big(\sum_{n = 0}^\infty  \|v_n\|_{\ell^2}^2\Big)^{1/2}.
\end{align*}
Hence, by setting  $v_n = \lambda_n e_n$ with $\lambda_n \in \C$ and $e_n\in \ell^2$ the natural basis vector of $\ell^2$, we have 
\[
\Big(\sum_{n =0}^\infty |d_n\lambda_n|^p \Big)^{1/p} \le c' \Big(\sum_{n  =0}^\infty |d_n|^4 \Big)^{1/4}  \Big(\sum_{n = 0}^\infty  |\lambda_n|^2\Big)^{1/2}. 
\]
Clearly, the above inequality holds for all $(d_n)_{n\ge 0}$ and $(\lambda_n)_{n\ge 0}$ only if $p \ge 4/3$.  
\end{proof}

\subsection{Exact complete bounded norms of Carleman embeddings}
Let $L_a^2(\D)$ be the Bergman space on $\D$ with respect to  the normalized area measure $dA(z)= \frac{dxdy}{\pi}$. Recall the famous Carleman inequality: for any $f\in H^1$, we have $
\| f\|_{L_a^2(\D)} \le \| f\|_{H^1}$. 
 Namely, the Carleman embedding 
\[
i: H^1\rightarrow L_a^2(\D)
\] is contractive (see \cite{Gamelin-Varleman-embedding-monthly}  for an elementary proof). 

The space $L_a^2(\D)$ carries the \oss  inherited from that of $L^2(\D)$. Since the natural \oss on $L^2(\D)$ is the Pisier's operator Hilbert space structure $OH$, by the uniqueness of $OH$,  we have  
\begin{align}\label{berg-hardy-ciso}
\begin{array}{cccc}
M: & L_a^2(\D)& \xrightarrow[\simeq]{\textit{completely isometric}}& H^2
\\
&  \sqrt{n+1} z^n & \mapsto &  e^{in\theta}
\end{array}.
\end{align}

\begin{corollary}\label{cor-carleman}
$\|i: H^1\rightarrow L_a^2(\D)\|_{cb} = \sqrt{\pi}$. 
\end{corollary}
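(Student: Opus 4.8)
The statement to be proven is that the completely bounded norm of the Carleman embedding $i\colon H^1\rightarrow L_a^2(\D)$ equals $\sqrt{\pi}$. The natural strategy is to transport the problem, via the completely isometric isomorphism $M$ of \eqref{berg-hardy-ciso}, into the language of Fourier multipliers between $H^1$ and $H^2$, where Proposition~\ref{thm-hardy} gives us the exact formula $(H^1,H^2)_{cb}=\BMOA^{(2)}$ with the explicit norm \eqref{BMOA-os}. Concretely, the composition $M\circ i\colon H^1\rightarrow H^2$ sends $e^{in\theta}\mapsto \frac{1}{\sqrt{n+1}}e^{in\theta}$, so it is exactly the Fourier multiplier $T_m$ with symbol $m=(m_n)_{n\ge 0}$, $m_n=(n+1)^{-1/2}$. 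Since $M$ is completely isometric, $\|i\|_{cb}=\|M\circ i\|_{cb}=\|T_m\|_{(H^1,H^2)_{cb}}=\|\varphi_m\|_{\BMOA^{(2)}}$ where $\varphi_m=\sum_{n\ge 0}(n+1)^{-1/2}e^{in\theta}$.

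By the definition \eqref{def-bmoap} of $\BMOA^{(2)}$, we have $\|\varphi_m\|_{\BMOA^{(2)}}=\|\sum_{n\ge 0}|m_n|^2 e^{in\theta}\|_{\BMOA}^{1/2}=\|\psi\|_{\BMOA}^{1/2}$, where $\psi=\sum_{n\ge 0}\frac{1}{n+1}e^{in\theta}$. Thus the whole problem reduces to the single scalar computation
\[
\|i\|_{cb}^2 = \Big\|\sum_{n=0}^\infty \frac{1}{n+1}e^{in\theta}\Big\|_{\BMOA},
\]
where the $\BMOA$-norm is the one fixed in \eqref{bmoa-Linf}, namely $\|\psi\|_{\BMOA}=\|\psi\|_{L^\infty/\overline{H_0^\infty}}$, equivalently (by \eqref{PSN-bis}) the operator norm of the Hankel matrix $[\widehat\psi(i+j)]_{i,j\ge 0}=[(i+j+1)^{-1}]_{i,j\ge 0}$. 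This last matrix is precisely the classical \emph{Hilbert matrix}, whose operator norm on $\ell^2(\N)$ is the well-known value $\pi$. Therefore $\|i\|_{cb}^2=\pi$ and $\|i\|_{cb}=\sqrt\pi$, as claimed.

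First I would state the reduction via $M$ and Proposition~\ref{thm-hardy} cleanly, identifying $M\circ i$ with the multiplier $T_m$, $m_n=(n+1)^{-1/2}$; this is immediate from \eqref{berg-hardy-ciso} since $M$ carries the orthonormal basis $\{\sqrt{n+1}\,z^n\}$ of $L_a^2(\D)$ to $\{e^{in\theta}\}$, and $i(e^{in\theta})=z^n=(n+1)^{-1/2}\cdot\sqrt{n+1}\,z^n$. Next I would invoke the second isometric equality of Proposition~\ref{thm-hardy}, $(H^1,H^2)_{cb}=\BMOA^{(2)}$, together with \eqref{def-bmoap}, to rewrite $\|T_m\|_{cb}$ as $\|\sum_n(n+1)^{-1}e^{in\theta}\|_{\BMOA}^{1/2}$. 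Then via the Nehari--Sarason--Page identification \eqref{PSN-bis} I would recognize the relevant $\BMOA$-norm as the $\ell^2$-operator norm of the Hilbert matrix $[(i+j+1)^{-1}]_{i,j\ge 0}$, and cite its classical value $\pi$ (for instance via Hilbert's inequality / the Schur test with weight $k\mapsto (k+1/2)^{-1/2}$, or any standard reference).

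The only genuine work is the last step — evaluating $\|[(i+j+1)^{-1}]\|_{B(\ell^2)}=\pi$ — and even that is classical rather than novel, so I do not expect a serious obstacle; the potential pitfall is purely bookkeeping: one must be careful that the normalization of the Bergman space (using $dA(z)=\frac{dxdy}{\pi}$, so $\|z^n\|_{L_a^2(\D)}^2=(n+1)^{-1}$) matches the stated isometry \eqref{berg-hardy-ciso}, and that the $\BMOA$-norm in play is the specific one from \eqref{bmoa-Linf} rather than some equivalent-but-unequal variant, since the whole point of Corollary~\ref{cor-carleman} is an exact constant and not an equivalence.
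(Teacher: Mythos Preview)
Your proposal is correct and follows essentially the same route as the paper's own proof: transport $i$ to the multiplier $T_m$, $m_n=(n+1)^{-1/2}$, via the complete isometry $M$ of \eqref{berg-hardy-ciso}, apply the second isometric equality of Proposition~\ref{thm-hardy} to get $\|i\|_{cb}=\big\|\sum_{n\ge 0}(n+1)^{-1}e^{in\theta}\big\|_{\BMOA}^{1/2}$, and then recognize this via \eqref{PSN-bis} as the square root of the operator norm of the Hilbert matrix $[(i+j+1)^{-1}]_{i,j\ge 0}$, which equals $\pi$. Your extra care about the normalization of $dA$ and the specific choice of $\BMOA$-norm \eqref{bmoa-Linf} is exactly the bookkeeping the paper relies on implicitly.
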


\begin{remark}
Applying   \eqref{cb-Sp} to the Carleman embedding $i: H^1\rightarrow L_a^2(\D)$ and using  the duality $(S_\infty[L_a^2(\D)])^* = S_1[L_a^2(\D)]$ together with  the non-commutative Fubini theorem \eqref{nc-fubini}, we can reformulate Corollary \ref{cor-carleman} as follows:  for any $f\in H^1(S_1)$ and $g \in  \C[z] \otimes S_\infty$ (with $\sqrt{\pi}$ optimal):
\begin{align}\label{cb-carleman-ineq}
\Big| \int_\D \Tr \big(f(z) \overline{g(z)}\big) dA(z) \Big| \le \sqrt{\pi} \| f\|_{H^1(S_1)}  \Big\|  \int_\D  \big[ g(z) \otimes \overline{g(z)}  \big] dA(z) \Big\|_{B(\ell^2\otimes_2 \ell^2)}^{1/2}.
\end{align}
\end{remark}

\begin{proof}[Proof of Corollary \ref{cor-carleman}] 
By \eqref{berg-hardy-ciso}, $\| i: H^1\rightarrow L_a^2(\D)\|_{cb} = \| M\circ i: H^1\rightarrow H^2\|_{cb}$. Clearly, $M\circ i$ coincides with the Fourier multiplier corresponding to  the sequence $(1/\sqrt{n+1})_{n=0}^\infty$. Hence, by the second isometric equality in Proposition~\ref{thm-hardy}, we have 
\begin{align}\label{carleman-bmoa-fn}
\| i: H^1\rightarrow L_a^2(\D)\|_{cb}  =  \Big\| \sum_{n=0}^\infty \frac{1}{\sqrt{n+1}} e^{i n\theta} \Big\|_{\BMOA^{(2)}} = \Big\| \left [\frac{1}{i+j+1} \right]_{i, j \ge 0} \Big\|_{B(\ell^2)}^{1/2} = \sqrt{\pi},
\end{align}
where  the operator norm of the  Hilbert matrix $[1/(i+j+1)]_{i,j\ge 0}$ is determined by Schur and can be found in, e.g.,  \cite[p.226]{HLP-inequality} or \cite{Magnus-AJM}. 
\end{proof}

For any integer $d\ge 2$, write $L_a^2(\D^d)=L_a^2(\D^d, dA^{\otimes d})$ with $dA^{\otimes d}(z) = dA(z_1)\cdots dA(z_d)$. Let  
$
i_d: H^1(\T^d) \rightarrow L_a^2(\D^d)
$
 dentoe the canonical embedding.  
The  higher dimensional analogue of Corollary~\ref{cor-carleman} is 

\begin{corollary}\label{cor-higher-carleman}
For any integer $d \ge 2$, we have $\|i_d: H^1(\T^d) \rightarrow L_a^2(\D^d)\|_{cb} =  \pi^{d/2}$. 
\end{corollary}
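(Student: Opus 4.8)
The plan is to reduce the $d$-dimensional Carleman embedding to a single Hankel-operator computation, exactly paralleling the one-dimensional argument behind Corollary~\ref{cor-carleman}. Since $L^2(\D^d,dA^{\otimes d})$ carries Pisier's operator Hilbert space structure and such a structure on a Hilbert space is unique, the Bergman space $L_a^2(\D^d)$, with its inherited \oss, is completely isometrically isomorphic to $H^2(\T^d)$ through the map $M_d$ sending the normalised monomial $\prod_{j=1}^d\sqrt{n_j+1}\,z_j^{n_j}$ to $e^{in\cdot\theta}$ for $n=(n_1,\dots,n_d)\in\N^d$ (this is the $d$-fold tensor power of \eqref{berg-hardy-ciso}, using that $\{\prod_j\sqrt{n_j+1}\,z^{n}\}_{n\in\N^d}$ is an orthonormal basis of $L_a^2(\D^d)$). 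Hence $\|i_d\|_{cb}=\|M_d\circ i_d\|_{cb}$, and $M_d\circ i_d\colon H^1(\T^d)\to H^2(\T^d)$ is precisely the Fourier multiplier on $\T^d$ with symbol $m_n=\prod_{j=1}^d(n_j+1)^{-1/2}$.

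Next I would invoke the $d$-dimensional analogue of the second isometric equality in Proposition~\ref{thm-hardy}. The $d$-dimensional version is obtained by first identifying $(H^1(\T^d),H^2(\T^d))_{cb}$ isometrically with $(H^2(\T^d),\hank(\overline{H^2(\T^d)},H^2(\T^d)))_{cb}$ via transposition (the transpose of a Fourier multiplier is again a Fourier multiplier, $(H^2(\T^d))^{*}=\overline{H^2(\T^d)}$ because $H^2(\T^d)$ is an operator Hilbert space, and the closed span of the analytic Hankel directions inside $(H^1(\T^d))^{*}$ is completely isometric to $\hank(\overline{H^2(\T^d)},H^2(\T^d))$ --- here one needs only the outer-factorisation identity $\|f\|_{H^1(\T^d)}=\inf\{\|g\|_{H^2(\T^d)}\|h\|_{H^2(\T^d)}:f=gh\}$ on the polydisc, not the open higher-dimensional Nehari--Sarason--Page theorem), and then applying Corollary~\ref{cor-higher-2-inf}. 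Writing $\varphi\in H^1(\T^d)$ for the symbol $m$, this yields $\|M_d\circ i_d\|_{cb}=\|\Gamma_{\varphi*\varphi^\dag}\|_{\hank(\overline{H^2(\T^d)},H^2(\T^d))}^{1/2}$. Because the formal series $m$ itself need not define an $H^1(\T^d)$-function, this identity is first established for the dilated symbols $m(r)=(r^{|n|}m_n)_{n}$, $0<r<1$, and then passed to the limit $r\to1$ by the standard dilation argument used throughout the paper (Lemma~\ref{lem-Pr}, Lemma~\ref{lem-dilation-bis}).

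Finally, I would compute the Hankel norm. Since $\widehat{\varphi^\dag}(n)=\overline{\widehat\varphi(n)}=m_n$, one has $\widehat{\varphi*\varphi^\dag}(n)=m_n^2=\prod_{j=1}^d(n_j+1)^{-1}$, so $\Gamma_{\varphi*\varphi^\dag}$ is represented by the matrix $\bigl[\prod_{j=1}^d(p_j+q_j+1)^{-1}\bigr]_{p,q\in\N^d}$. Under the identification $\ell^2(\N^d)=\ell^2(\N)^{\otimes d}$ this is the $d$-fold tensor power $\Lambda^{\otimes d}$ of the classical Hilbert matrix $\Lambda=[(i+j+1)^{-1}]_{i,j\ge0}$, whose operator norm is $\pi$ (Schur, see \cite[p.226]{HLP-inequality}); as the norm of a tensor product of bounded Hilbert-space operators is the product of the norms, $\|\Gamma_{\varphi*\varphi^\dag}\|=\pi^{d}$, and therefore $\|i_d\|_{cb}=\pi^{d/2}$, recovering Corollary~\ref{cor-carleman} when $d=1$. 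The one genuinely non-routine step is the transposition identification in the second paragraph: it is exactly the assertion that, because the target is the vector-valued space $H^2(\T^d)$ rather than scalars, the absence of a higher-dimensional Nehari theorem causes no loss, and this is where I would expect to spend most of the work.
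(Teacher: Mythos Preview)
Your argument contains a genuine gap at exactly the point you flag as ``non-routine''. The identity
\[
\|f\|_{H^1(\T^d)}=\inf\bigl\{\|g\|_{H^2(\T^d)}\|h\|_{H^2(\T^d)}:f=gh\bigr\}
\]
is \emph{false} on the polydisc for $d\ge 2$. Strong factorisation of $H^1(\T^d)$ as a single product of two $H^2$-functions already fails (Rudin), and the weak-factorisation version with norm equality is \emph{equivalent} to the isometric Nehari theorem on $\T^d$, which is precisely what Ortega-Cerd\`a and Seip \cite{Ortega-2012} showed does not hold. So you cannot identify the analytic part of $(H^1(\T^d))^*$ completely isometrically with $\hank(\overline{H^2(\T^d)},H^2(\T^d))$, and hence you cannot conclude $(H^1(\T^d),H^2(\T^d))_{cb}=(H^2(\T^d),\hank)_{cb}$. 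What you \emph{do} have is the complete contraction $(H^1(\T^d))^*\to\hank$, which after transposition only yields
\[
\|i_d\|_{cb}=\|T_m\|_{(H^1(\T^d),H^2(\T^d))_{cb}}\ge \|T_m\|_{(H^2(\T^d),\hank)_{cb}}=\|\Gamma_{m\cdot\overline{m}}\|^{1/2}=\pi^{d/2},
\]
i.e.\ the lower bound (and this is a perfectly good alternative to the paper's tensorised-extremiser lower bound).

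For the upper bound the paper proceeds differently and avoids Nehari on $\T^d$ altogether. Using the $OH$ structure on $L_a^2(\D^d)$ it writes $\|i_d\|_{cb}$ as a norm in $(L^\infty(\T^d)/H^1(\T^d)^\circ)\otimes_{\min}\overline{(L^\infty(\T^d)/H^1(\T^d)^\circ)}$, and then \emph{lifts} along the complete quotient $L^\infty(\T^d)\to L^\infty(\T^d)/H^1(\T^d)^\circ$ using the explicit symbol $\psi_d(z)=\psi(z_1)\cdots\psi(z_d)$, where $\psi\in L^\infty(\T)$ with $\|\psi\|_\infty=\pi$ and $\widehat{\psi}(n)=1/(n+1)$ for $n\ge 0$ is supplied by the \emph{one}-variable Nehari theorem together with the Hilbert-matrix computation. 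This gives $\|i_d\|_{cb}\le\|\psi_d\|_{L^\infty(\T^d)}^{1/2}=\pi^{d/2}$. Your route via Corollary~\ref{cor-higher-2-inf} cannot close this direction without the missing polydisc Nehari; you need to replace that step by the lifting argument.
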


\begin{proof}  
By  the norm equality \eqref{carleman-bmoa-fn} of the Hilbert matrix  and the Nehari Theorem, there exists a function $\psi \in L^\infty(\T)$ with $\| \psi\|_{L^\infty(\T)} = \pi$ such that 
$\widehat{\psi}(n) = \frac{1}{n+1}$ for all $n\ge 0$.
Define $\psi_d(z) = \psi(z_1) \cdots \psi(z_d)$. Then 
$
\| \psi_d\|_{L^\infty(\T^d)} = \pi^d. 
$  Write $e_n(z_j) = z_j^n$ for $n\ge 1$ and $e_n(z_j)= \overline{z_j}^{|n|}$ for $n<0$.  For any $\alpha = (\alpha_1,\cdots, \alpha_d)\in\Z^d$, denote $|\alpha| = |\alpha_1|+\cdots+|\alpha_d|$ and $z^\alpha = e_{\alpha_1}(z_1) \cdots e_{\alpha_d}(z_d)$. Since $L_a^2(\D^d)$ carries Pisier's operator Hilbert space structure and the operator space dual of $H^1(\T^d)$ is given by $(H^1(\T^d))^*  = L^\infty(\T^d)/H^1(\T^d)^\circ$ (where $H^1(\T^d)^\circ$ is the annihilator of $H^1(\T^d)$ in $L^\infty(\T^d)$), we have 
\begin{align*}
& \|i_d: H^1(\T^d) \rightarrow L_a^2(\D^d)\|_{cb}
\\
   =&   \sup_{0<r<1} \big\| \sum_{\alpha \in \N^d}  \prod_{j=1}^d \frac{ r^{|\alpha|}}{\alpha_j+1}  z^\alpha \otimes  \overline{z^\alpha} \big\|_{(L^\infty(\T^d)/H^1(\T^d)^\circ) \otimes_{min}( \overline{ L^\infty(\T^d)/H^1(\T^d)^\circ})}^{1/2}
\\
\le & \sup_{0<r<1} \big\| \sum_{\alpha \in \Z^d}  r^{|\alpha|}  \widehat{\psi_d}(\alpha)   z^\alpha \otimes  \overline{z^\alpha} \big\|_{L^\infty(\T^d) \otimes_{min} \overline{L^\infty(\T^d)}}^{1/2}  
\\
=&   \sup_{0<r<1} \big\| \sum_{\alpha \in \Z^d}  r^{|\alpha|}\widehat{\psi_d}(\alpha)   z^\alpha   \overline{w^\alpha} \big\|_{L^\infty(\T^d \times \T^d)}^{1/2} 
\le    \| \psi_d\|_{L^\infty(\T^d \times \T^d)}^{1/2} = \pi^{d/2}. 
\end{align*}

Conversely, by Corollary~\ref{cor-carleman}, for any $\varepsilon>0$,  there exist  functions  $F\in S_1[H^1(\T)]=  H^1(\T; S_1)$  and $G\in S_\infty[L_a^2(\D)]$ such that 
\[
\| F \|_{S_1[H^1(\T)]} = 1, \quad \big \| \int_{\D} G(z_1) \otimes \overline{G(z_1)} dA(z_1) \big \|_{B(\ell^2\otimes_2 \ell^2)} = 1
\]
and 
\[
\int_\D\Tr  \big(F(z_1) \overline{G(z_1)}\big) dA(z_1)  \ge \pi^{1/2}-\varepsilon.
\]
Now we define an $S_1(\ell^2\otimes_2 \cdots \otimes_2 \ell^2)$-valued function 
$
F_d(z): = F(z_1)\otimes \cdots \otimes F(z_d)
$ and an $S_\infty(\ell^2\otimes_2 \cdots \otimes_2 \ell^2)$-valued function $G_d(z): = G(z_1)\otimes \cdots \otimes G(z_d)$. 
Then, 
\[
\| F_d\|_{H^1(\T^d; S_1(\ell^2\otimes_2 \cdots \otimes_2 \ell^2))} = \int_{\T^d} \|F(z_1)\|_{S_1} \cdots \| F(z_d)\|_{S_1} dm(z_1)\cdots dm(z_d) = 1
\]
and 
\begin{align*}
&\big\|\int_{\D^d} G_d(z) \otimes \overline{G_d(z)} dA^{\otimes d}(z)\big\|_{B(\ell^2 \otimes_2 \cdots \otimes_2\ell^2)}
\\
 =&\big\|\big[\int_{\D} G(z_1) \otimes \overline{G(z_1)} dA(z_1)\big]  \otimes \cdots \otimes \big[\int_{\D} G(z_d) \otimes \overline{G(z_d)} dA(z_d)\big] \big\|_{B(\ell^2 \otimes_2 \cdots \otimes_2\ell^2)} = 1.
\end{align*}
On the other hand, 
\begin{align*}
\|F_d\|_{S_1[L_a^2(\D^d)} & \ge \int_{\D^d}\Tr  \big(F_d(z) \overline{G_d(z)}\big) dA^{\otimes d}(z)  
\\
& = \big(\int_\D\Tr  \big(F(z_1) \overline{G(z_1)}\big) dA(z_1)\big)^d \ge (\pi^{1/2}-\varepsilon)^d.
\end{align*}
Since $\varepsilon>0$ is arbitrary, this completes the proof of the inverse inequality. 
\end{proof}

\subsection{Lifting property of Fourier multipliers in $(H^1, \BMOA)_{cb}$}

\begin{corollary}\label{cor-hardy-one-inf}
A  Fourier multiplier $T_m$ belongs to $(H^1, \BMOA)_{cb}$ if and only if there exists a function $\psi\in L^\infty$ such that $T_m$ satisfies the following commutative diagram : 
\begin{equation}\label{cd-diag}
\begin{tikzcd}
L^1  \arrow{r}{K_\psi}& L^\infty\arrow[d, "Q" swap, "\text{quotient}"]
\\
H^1 \arrow{r}{T_m}\arrow[u, "\text{inclusion}" swap, "\mathcal{I}"]&  L^\infty/\overline{H_0^\infty} \simeq \BMOA 
\end{tikzcd},
\end{equation}
where $K_\psi(f)= f*\psi$. Moreover, 
$
\|T_m\|_{cb} = \inf \big\{ \|\psi\|_{\infty}\big| \text{$\psi$ satisfies \eqref{cd-diag}}\big\}$. 
\end{corollary}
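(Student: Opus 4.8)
The plan is to reduce the whole statement to the isometric identification $(H^1,\BMOA)_{cb}=\BMOA$ of Proposition~\ref{thm-hardy}, combined with the convention \eqref{bmoa-Linf} that the norm on $\BMOA$ is the quotient norm of $L^\infty/\overline{H_0^\infty}$. Denote by $\varphi_m=\sum_{n\ge0}m_ne^{in\theta}$ the symbol of $T_m$ and by $Q\colon L^\infty\to L^\infty/\overline{H_0^\infty}\simeq\BMOA$ the quotient map; recall $\overline{H_0^\infty}=\{g\in L^\infty:\widehat g(n)=0\ \text{for all}\ n\ge0\}$, so that for $\psi\in L^\infty$ one has $Q(\psi)=\varphi_m$ if and only if $\widehat\psi(n)=m_n$ for every $n\ge0$. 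Consequently $\varphi_m$ belongs to $\BMOA=\Ran(Q)$ precisely when such a lifting $\psi$ exists, and by definition of the quotient norm
\[
\|\varphi_m\|_{\BMOA}=\inf\bigl\{\|\psi\|_{L^\infty}:\psi\in L^\infty,\ \widehat\psi(n)=m_n\ \text{for all}\ n\ge0\bigr\}.
\]

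Next I would identify the set of $\psi$ realizing the diagram \eqref{cd-diag} with exactly this set of symbol liftings. For $\psi\in L^\infty$ the convolution operator $K_\psi(f)=f*\psi$ maps $L^1$ boundedly into $L^\infty$ (Young's inequality), so it is a legitimate choice in \eqref{cd-diag}; and for $f\in H^1$ the function $f*\psi$ has Fourier coefficients $\widehat f(n)\widehat\psi(n)$, so $Q(f*\psi)\in\BMOA$ is the element with non-negative Fourier coefficients $(\widehat f(n)\widehat\psi(n))_{n\ge0}$, whereas $T_mf$ has non-negative Fourier coefficients $(m_n\widehat f(n))_{n\ge0}$. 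Testing on $f=e^{ik\theta}$ for each $k\ge0$ shows that $Q\circ K_\psi\circ\mathcal I=T_m$ holds if and only if $\widehat\psi(n)=m_n$ for all $n\ge0$; the converse implication is immediate. Hence $\psi$ satisfies \eqref{cd-diag} precisely when $\widehat\psi(n)=m_n$ for all $n\ge0$.

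Putting the two steps together with Proposition~\ref{thm-hardy}: $T_m\in(H^1,\BMOA)_{cb}$ $\iff$ $\varphi_m\in\BMOA$ $\iff$ there is $\psi\in L^\infty$ realizing \eqref{cd-diag}, and in that case $\|T_m\|_{cb}=\|\varphi_m\|_{\BMOA}=\inf\{\|\psi\|_{L^\infty}:\psi\text{ satisfies }\eqref{cd-diag}\}$, which is the claimed formula (the infimum is in fact attained, by weak-$*$ compactness of $L^\infty$-balls, but this is not needed). There is no real obstacle here: all the analytic weight sits in Proposition~\ref{thm-hardy} (itself built on Proposition~\ref{thm-doubling} and the Nehari--Sarason--Page theorem), and what remains is the elementary verification that $K_\psi\colon L^1\to L^\infty$ is bounded and the bookkeeping translating commutativity of \eqref{cd-diag} into the coefficient condition $\widehat\psi(n)=m_n$ for $n\ge0$.
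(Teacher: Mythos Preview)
Your proof is correct and follows essentially the same approach as the paper: both reduce everything to Proposition~\ref{thm-hardy} together with the quotient-norm definition \eqref{bmoa-Linf} of $\|\cdot\|_{\BMOA}$, and both identify the $\psi$'s making the diagram commute with the $L^\infty$-lifts of $\varphi_m$. The only minor difference is that for the converse direction (diagram $\Rightarrow$ $T_m\in(H^1,\BMOA)_{cb}$ with $\|T_m\|_{cb}\le\|\psi\|_\infty$) the paper argues directly from the factorization $T_m=Q\circ K_\psi\circ\mathcal I$, using that $\mathcal I$ and $Q$ are completely contractive and that $\|K_\psi\|_{CB(L^1,L^\infty)}=\|K_\psi\|_{B(L^1,L^\infty)}=\|\psi\|_{L^\infty}$ by the minimal \oss on $L^\infty$, whereas you invoke Proposition~\ref{thm-hardy} a second time; both routes are valid.
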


The above lifting property \eqref{cd-diag} of Fourier multipliers in $(H^1, \BMOA)_{cb}$ implies
\begin{corollary}\label{cor-iso}
$
(H^1, \BMOA)_{cb} = (H^1, H^\infty)$ with equal norms.  
\end{corollary}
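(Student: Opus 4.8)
The plan is to deduce Corollary~\ref{cor-iso} from the lifting characterization of Corollary~\ref{cor-hardy-one-inf}, combined with the classical Hankel-matrix description of bounded multipliers $H^1\to H^\infty$. Throughout, for a sequence $m=(m_n)_{n\ge0}$ write $\varphi_m=\sum_{n\ge0}m_n e^{in\theta}$ for its formal symbol, so that $\Gamma_{\varphi_m}=[m_{i+j}]_{i,j\ge0}$ in the notation of \eqref{PSN-bis}. The key elementary observation I would record first is that the commutative diagram \eqref{cd-diag} holds for a given $\psi\in L^\infty$ if and only if $\widehat\psi(n)=m_n$ for all $n\ge0$, and that in this case $T_m f=f*\psi$ for every $f\in H^1$. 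Indeed, $\widehat{f*\psi}(n)=\widehat f(n)\widehat\psi(n)$ vanishes for $n<0$ (since $\widehat f(n)=0$ there) and equals $m_n\widehat f(n)$ for $n\ge0$, so $f*\psi=\sum_{n\ge0}m_n\widehat f(n)e^{in\theta}=T_m f$; applying the quotient $Q$ (which only discards negative frequencies) then gives $T_m$, while conversely testing the diagram on $f=z^k$ forces $\widehat\psi(k)=m_k$.

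Granting this, one inclusion is immediate: if $\psi\in L^\infty$ satisfies \eqref{cd-diag}, then $T_mf=f*\psi\in L^\infty\cap H^1=H^\infty$ with $\|T_mf\|_\infty\le\|f\|_{H^1}\|\psi\|_\infty$, so $T_m\in(H^1,H^\infty)$ with $\|T_m\|_{(H^1,H^\infty)}\le\|\psi\|_\infty$. Taking the infimum over admissible $\psi$ and using the norm formula in Corollary~\ref{cor-hardy-one-inf} gives $(H^1,\BMOA)_{cb}\subseteq(H^1,H^\infty)$ and $\|T_m\|_{(H^1,H^\infty)}\le\|T_m\|_{(H^1,\BMOA)_{cb}}$.

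For the reverse inclusion I would show that any $T_m\in(H^1,H^\infty)$ admits an admissible lift $\psi$ with $\|\psi\|_\infty\le\|T_m\|_{(H^1,H^\infty)}$. The crux is the estimate $\|\Gamma_{\varphi_m}\|_{B(\ell^2)}\le\|T_m\|_{(H^1,H^\infty)}$. To prove it, represent the Hankel bilinear form by polynomials: for finitely supported $a,b$ with $\|a\|_{\ell^2},\|b\|_{\ell^2}\le1$ set $A(z)=\sum_j a_jz^j$, $B(z)=\sum_i b_iz^i$, which are polynomials with $\|A\|_{H^2},\|B\|_{H^2}\le1$ and
\[
\sum_{i,j\ge0}m_{i+j}a_ib_j=\sum_{k\ge0}m_k\widehat{AB}(k)=\bigl(T_m(AB)\bigr)(1),
\]
the last quantity being meaningful since $T_m(AB)$ is a polynomial. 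As $T_m(AB)\in H^\infty$, we get $|(T_m(AB))(1)|\le\|T_m(AB)\|_{H^\infty}\le\|T_m\|_{(H^1,H^\infty)}\|AB\|_{H^1}\le\|T_m\|_{(H^1,H^\infty)}\|A\|_{H^2}\|B\|_{H^2}$, and taking the supremum over $a,b$ yields $\|\Gamma_{\varphi_m}\|_{B(\ell^2)}\le\|T_m\|_{(H^1,H^\infty)}$. By Nehari's theorem and the convention \eqref{bmoa-Linf}, this reads $\|\varphi_m\|_{\BMOA}=\|\Gamma_{\varphi_m}\|_{B(\ell^2)}\le\|T_m\|_{(H^1,H^\infty)}$, so there exists $\psi\in L^\infty$ with $\widehat\psi(n)=m_n$ for all $n\ge0$ and $\|\psi\|_\infty\le\|T_m\|_{(H^1,H^\infty)}$ (exactly, invoking attainment in Nehari's theorem, or up to an arbitrary $\varepsilon>0$ otherwise). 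Such $\psi$ satisfies \eqref{cd-diag}, so Corollary~\ref{cor-hardy-one-inf} gives $T_m\in(H^1,\BMOA)_{cb}$ with $\|T_m\|_{(H^1,\BMOA)_{cb}}\le\|\psi\|_\infty\le\|T_m\|_{(H^1,H^\infty)}$. Combined with the previous paragraph, this establishes the identity of the two spaces with equal norms.

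The only genuinely substantive point is the inequality $\|\Gamma_{\varphi_m}\|_{B(\ell^2)}\le\|T_m\|_{(H^1,H^\infty)}$ (classical, but using the factorization of $H^1$ as products of $H^2$-functions implicitly through the polynomial substitution, together with Nehari's theorem); everything else is bookkeeping about Fourier supports and the quotient identification $\BMOA\simeq L^\infty/\overline{H_0^\infty}$. As an alternative route one may bypass Corollary~\ref{cor-hardy-one-inf} and argue directly from Proposition~\ref{thm-hardy}: there $\|T_m\|_{(H^1,\BMOA)_{cb}}=\|\varphi_m\|_{\BMOA}=\inf\{\|\psi\|_\infty:\psi\in L^\infty,\ \widehat\psi(n)=m_n\ \forall n\ge0\}$, and the two displayed estimates identify this infimum with $\|T_m\|_{(H^1,H^\infty)}$.
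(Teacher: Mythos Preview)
Your proof is correct. Both you and the paper use Corollary~\ref{cor-hardy-one-inf} for the inequality $\|T_m\|_{(H^1,H^\infty)}\le\|T_m\|_{(H^1,\BMOA)_{cb}}$, via the observation that a lift $\psi$ forces $T_mf=f*\psi\in H^\infty$. The difference lies in the reverse inequality. The paper argues more directly: since $H^\infty$ carries the minimal \oss, $\|T_m\|_{(H^1,H^\infty)}=\|T_m\|_{(H^1,H^\infty)_{cb}}$, and composing with the completely contractive quotient $Q|_{H^\infty}:H^\infty\to\BMOA$ immediately gives $\|T_m\|_{(H^1,\BMOA)_{cb}}\le\|T_m\|_{(H^1,H^\infty)}$. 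You instead prove the Hankel estimate $\|\Gamma_{\varphi_m}\|\le\|T_m\|_{(H^1,H^\infty)}$ by a bilinear-form computation with point evaluation at $1$, then invoke Nehari to produce a lift and feed it back into Corollary~\ref{cor-hardy-one-inf}. This is a valid but longer route: in effect you are reproving one half of the classical identity $(H^1,H^\infty)=\BMOA$ (which the paper's remark after the corollary cites as an alternative path, via Proposition~\ref{thm-hardy}). The paper's quotient-map argument is shorter and avoids the point-evaluation step; your argument has the virtue of making the Hankel-matrix mechanism explicit.
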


\begin{remark}
Corollary~\ref{cor-iso} can also be proved by combining  Proposition~\ref{thm-hardy} and 
the  equality $(H^1, H^\infty) = \BMOA$.
It is worthwhile to mention that  a general multiplier in $(H^1, \BMOA)$   need not to be  liftable to a bounded one from $H^1$ to $H^\infty$, since 
\[
(H^1, H^\infty) \xlongequal{isometric} \BMOA  \ne  \mathcal{B}  \xlongequal{isomorphic} (H^1, \BMOA)  , 
\]
 where $\mathcal{B}$ is the Bloch space on $\D$ (see \cite[Section 1.3]{Hedenmalm-Zhukehe-Berman-space-GTM}) and  the last isomorphic equality is due to  Mateljevi\'c and  Pavlovi\'c ( \cite{Mateljevic-Pacific-journal-math1990} and  \cite[Thm. 11.10]{Pavlovic-book}).  
\end{remark}

\begin{proof}[Proof of Corollary \ref{cor-hardy-one-inf}]
For any $T_m\in (H^1, \BMOA)_{cb}$, by Proposition~\ref{thm-hardy}, we have 
\[
\| T_m\|_{(H^1, \BMOA)_{cb}}  = \Big\| \sum_{n=0}^\infty m_n e^{in \theta}\Big\|_{\BMOA}. 
\]
Therefore,  by recalling our choice  \eqref{bmoa-Linf} of the norm on $\BMOA$, for any $\varepsilon> 0$, we can find $\psi\in L^\infty$ such that 
\[
\| \psi\|_{L^\infty} \le \| T_m\|_{(H^1, \BMOA)_{cb}} +\varepsilon  \an   \mathcal{R}_{+} (\psi) (e^{i\theta})= \sum_{n=0}^\infty m_n e^{in\theta},
\]
where $\mathcal{R}_{+}$ is the Riesz projection. Define $K_\psi: L^1 \rightarrow L^\infty$ by  $K_\psi(f)= f*\psi$ and the  commutative diagram \eqref{cd-diag} follows immediately. 

Conversely, assume that $T_m$ satisfies \eqref{cd-diag}. Since both the embedding map $H^1 \rightarrow L^1$ and the quotient map $L^\infty \rightarrow L^\infty/\overline{H_0^\infty}$ are completely contractive, by the property for minimal \oss on $L^\infty$ (or the maximal \oss on $L^1$) (see \cite[Chapter 3]{Pisier-Operator-space-book}),  we have 
\[
\| T_m\|_{(H^1, \BMOA)_{cb}} \le \|K_\psi\|_{CB(L^1, L^\infty)} = \| K_\psi\|_{B(L^1, L^\infty)} = \| \psi\|_{L^\infty}.
\] 
This completes the whole proof. 
\end{proof}

\begin{proof}[Proof of Corollary \ref{cor-iso}]
Fix any Fourier multiplier $T_m$ defined on $H^1$. Since the subspace $H^\infty \subset L^\infty$ carries the miminal \oss (see \cite[Chapter 3]{Pisier-Operator-space-book}), we have 
\[
\| T_m\|_{(H^1, H^\infty)} = \| T_m\|_{(H^1, H^\infty)_{cb}}.
\]
The  quotient map $Q: L^\infty\rightarrow \BMOA = L^\infty/\overline{H_0^\infty}$ is  completely contractive. Hence, by factorizing $T_m: H^1 \rightarrow \BMOA$ as  
\[
H^1\xrightarrow{T_m} H^\infty \xrightarrow{Q|_{H^\infty}} \BMOA,
\]
we obtain 
\begin{align*}
\|T_m\|_{(H^1, \BMOA)_{cb}}  \le \| T_m\|_{(H^1, H^\infty)_{cb}} \cdot \| Q|_{H^\infty}: H^\infty\rightarrow \BMOA\|_{cb}
 \le \| T_m\|_{(H^1, H^\infty)}.
\end{align*}

Conversely,  assume that $\|T_m\|_{(H^1, \BMOA)_{cb}}< 1$. By Corollary \ref{cor-hardy-one-inf}, there exists a function  $\psi$ on $\T$ with   $\| \psi\|_{L^\infty}< 1$ and 
\begin{align}\label{psi-riesz}
 \mathcal{R}_{+} (\psi) (e^{i\theta})= \sum_{n=0}^\infty m_n e^{in\theta}, 
\end{align}
such that $T_m: H^1\rightarrow \BMOA$  factorizes as  
\[
H^1 \xhookrightarrow[\textit{embedding}]{\mathcal{I}} L^1\xrightarrow[\textit{convolution}]{K_\psi} L^\infty \xtwoheadrightarrow[\textit{quotient}]{Q} \BMOA.
\]
Observe that $(K_\psi \circ \mathcal{I}) (H^1) \subset H^\infty$.  By \eqref{psi-riesz},   the multiplier $T_m: H^1 \rightarrow H^\infty$  coincides with  the map $K_\psi \circ \mathcal{I}: H^1 \rightarrow H^\infty$. Consequently, 
\[
\| T_m\|_{(H^1, H^\infty)} \le \| K_\psi \circ \mathcal{I}: H^1 \rightarrow H^\infty\| \le \| \psi\|_{L^\infty}<1. 
\]
This completes the whole proof.
\end{proof}

\subsection{Failure of hyper-complete-contractivity for  the Poisson semigroup.}

Recall the Poisson convolution $P_r$  on $\T$  given by \eqref{def-Poisson}.  The hypercontractivity result due to Janson (see \cite{Janson-Poisson-hypercontractivity} or \cite{Weissler-Poisson-hypercontractivity}) says that $P_r: H^1\rightarrow H^2$ is contractive if and only if $r\le \sqrt{1/2}$. However, Proposition~\ref{thm-hardy} implies that there is no completely contractive version of hypercontractivity for Poisson semigroup.  

\begin{corollary}\label{cor-hyper}
 $\| P_r\|_{(H^1, H^2)_{cb}} = (1-r^4)^{-1/2}>1$ for any $r\in (0, 1)$.  
\end{corollary}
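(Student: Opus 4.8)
The plan is to reduce everything to a single rank-one Hankel matrix computation via the identification $(H^1,H^2)_{cb}=\BMOA^{(2)}$ established in Proposition~\ref{thm-hardy}. First I would observe that the Poisson convolution $P_r$ is the Fourier multiplier with symbol $\varphi_r=\sum_{n=0}^\infty r^n e^{in\theta}$, so that by the second isometric equality of Proposition~\ref{thm-hardy},
\[
\|P_r\|_{(H^1,H^2)_{cb}}=\|\varphi_r\|_{\BMOA^{(2)}}=\Big\|\sum_{n=0}^\infty r^{2n}e^{in\theta}\Big\|_{\BMOA}^{1/2},
\]
using $|\widehat{\varphi_r}(n)|^2=r^{2n}$ and the definition \eqref{def-bmoap} of $\BMOA^{(2)}$. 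Note the inner symbol $\psi_r:=\sum_{n\ge0}r^{2n}e^{in\theta}=(1-r^2e^{i\theta})^{-1}$ lies in $H^\infty\subset\BMOA$, so the quantity is finite.

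Next I would invoke the Nehari--Sarason--Page description recalled in \eqref{PSN-bis}, which gives the isometric identification $\|\psi_r\|_{\BMOA}=\big\|[\widehat{\psi_r}(i+j)]_{i,j\ge0}\big\|_{B(\ell^2)}=\big\|[r^{2(i+j)}]_{i,j\ge0}\big\|_{B(\ell^2)}$. The key structural point is that this Hankel matrix is rank one: writing $v=(r^{2i})_{i\ge0}\in\ell^2$, one has $[r^{2(i+j)}]_{i,j\ge0}=vv^{*}$, whence its operator norm equals $\|v\|_{\ell^2}^2=\sum_{i=0}^\infty r^{4i}=(1-r^4)^{-1}$. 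Combining this with the display above yields $\|P_r\|_{(H^1,H^2)_{cb}}=(1-r^4)^{-1/2}$, and since $r\in(0,1)$ this is strictly greater than $1$, which completes the proof.

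There is essentially no obstacle here: the entire content has already been front-loaded into Proposition~\ref{thm-hardy} (itself a consequence of the self-absorption of $\N$ via Proposition~\ref{thm-doubling}) and the Nehari--Sarason--Page theorem. The only thing to be careful about is bookkeeping with the normalization \eqref{bmoa-Linf} of the $\BMOA$ norm — with that convention all the identifications above are genuine isometries rather than mere isomorphisms, so the constant $(1-r^4)^{-1/2}$ is exact. If one wished to avoid explicitly citing Proposition~\ref{thm-hardy} one could argue directly from Proposition~\ref{thm-doubling}: $\|P_r\|_{(H^1,H^2)_{cb}}=\|\sum_n r^n\Gamma_n\otimes\Gamma_n\|^{1/2}=\|\sum_n r^{2n}\Gamma_n\|^{1/2}$, and $\sum_n r^{2n}\Gamma_n=[r^{2(i+j)}]_{i,j\ge0}$ is the same rank-one matrix; but routing through Proposition~\ref{thm-hardy} is the cleanest.
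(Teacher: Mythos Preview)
Your proof is correct and follows essentially the same route as the paper: identify $P_r$ as the Fourier multiplier with sequence $(r^n)_{n\ge0}$, apply the isometric identification $(H^1,H^2)_{cb}=\BMOA^{(2)}$ from Proposition~\ref{thm-hardy}, and then compute the resulting Hankel norm via the rank-one factorization $[r^{2(i+j)}]_{i,j\ge0}=vv^*$ with $\|v\|_{\ell^2}^2=(1-r^4)^{-1}$. Your additional remarks on the normalization of the $\BMOA$ norm and the alternative direct route through Proposition~\ref{thm-doubling} are apt but not needed for the argument.
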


\begin{proof}
Given any $r\in (0, 1)$, the Poisson convolution $P_r: H^1\rightarrow H^2$ is just the Fourier multiplier associated with the sequence $(r^n)_{n=0}^\infty$. Therefore, by  Proposition~\ref{thm-hardy}, 
\[
\| P_r\|_{(H^1, H^2)_{cb}} = \Big\| \sum_{n = 0}^\infty r^{n} e^{in \theta} \Big\|_{\BMOA^{(2)}} = \big\|[ r^{2i + 2j} ]_{i, j \ge 0} \big\|_{B(\ell^2)}^{1/2}. 
\]
Since $[ r^{2i + 2j} ]_{i, j \ge 0} = v_r v_r^*$ with $ v_r= (1, r^2, r^4, \cdots, r^{2i}, \cdots )$ a row vector, we have
\[
\big\|[ r^{2i + 2j} ]_{i, j \ge 0} \big\|_{B(\ell^2)} = \| v_r\|_{\ell^2}^2 =  \sum_{n =0}^\infty r^{4n}  = \frac{1}{1-r^4}. 
\]
This completes the whole proof. 
\end{proof}


\end{document}